\definecolor{cite}{rgb}{0.30,0.60,1.00}
\definecolor{url}{rgb}{0.00,0.00,0.80}
\definecolor{link}{rgb}{0.40,0.10,0.20}
\DeclareSymbolFont{cyrletters}{OT2}{wncyr}{m}{n}
\DeclareMathSymbol{\Sha}{\mathalpha}{cyrletters}{"58}
\newcommand{\yichao}[1]{{\color{red}  $\heartsuit\heartsuit\heartsuit$ Yichao: [#1]}}
\newcommand{\yifeng}[1]{{\color{blue}  $\spadesuit\spadesuit\spadesuit$ Yifeng: [#1]}}
\numberwithin{equation}{subsection}
\theoremstyle{plain}
\newtheorem{proposition}{Proposition}[section]
\newtheorem{corollary}[proposition]{Corollary}
\newtheorem{lem}[proposition]{Lemma}
\newtheorem{theorem}[proposition]{Theorem}
\theoremstyle{definition}
\newtheorem{definition}[proposition]{Definition}
\newtheorem{notation}[proposition]{Notation}
\newtheorem{assumption}[proposition]{Assumption}
\theoremstyle{remark}
\newtheorem{remark}[proposition]{Remark}
\newtheorem{example}[proposition]{Example}
\renewcommand{\b}[1]{\mathbf{#1}}
\renewcommand{\c}[1]{\mathcal{#1}}
\renewcommand{\d}[1]{\mathbb{#1}}
\newcommand{\f}[1]{\mathfrak{#1}}
\renewcommand{\r}[1]{\mathrm{#1}}
\renewcommand{\sf}[1]{\mathsf{#1}}
\renewcommand{\(}{\left(}
\renewcommand{\)}{\right)}
\newcommand{\res}{\mathbin{|}}
\newcommand{\ang}[1]{\langle{#1}\rangle}
\newcommand{\bA}{\b A}
\newcommand{\bC}{\b C}
\newcommand{\bE}{\b E}
\newcommand{\bF}{\b F}
\newcommand{\bH}{\b H}
\newcommand{\bI}{\b I}
\newcommand{\bL}{\b L}
\newcommand{\bP}{\b P}
\newcommand{\bQ}{\b Q}
\newcommand{\bR}{\b R}
\newcommand{\bZ}{\b Z}
\newcommand{\bff}{\b f}
\newcommand{\cA}{\c A}
\newcommand{\cB}{\c B}
\newcommand{\cD}{\c D}
\newcommand{\cI}{\c I}
\newcommand{\cM}{\c M}
\newcommand{\cO}{\c O}
\newcommand{\cP}{\c P}
\newcommand{\cS}{\c S}
\newcommand{\cX}{\c X}
\newcommand{\cY}{\c Y}
\newcommand{\dG}{\d G}
\newcommand{\dP}{\d P}
\newcommand{\dS}{\d S}
\newcommand{\dT}{\d T}
\newcommand{\fB}{\f B}
\newcommand{\fD}{\f D}
\newcommand{\fH}{\f H}
\newcommand{\fM}{\f M}
\newcommand{\fN}{\f N}
\newcommand{\fX}{\f X}
\newcommand{\fa}{\f a}
\newcommand{\fb}{\f b}
\newcommand{\fc}{\f c}
\newcommand{\fd}{\f d}
\newcommand{\fe}{\f e}
\newcommand{\fl}{\f l}
\newcommand{\fm}{\f m}
\newcommand{\fp}{\f p}
\newcommand{\fq}{\f q}
\newcommand{\fr}{\f r}
\newcommand{\fs}{\f s}
\newcommand{\fw}{\f w}
\newcommand{\rB}{\r B}
\newcommand{\rG}{\r G}
\newcommand{\rH}{\r H}
\newcommand{\rI}{\r I}
\newcommand{\rM}{\r M}
\newcommand{\rN}{\r N}
\newcommand{\rR}{\r R}
\newcommand{\rS}{\r S}
\newcommand{\rT}{\r T}
\newcommand{\sfM}{\sf M}
\newcommand{\sfh}{\sf h}
\newcommand{\tR}{\mathtt{R}}
\newcommand{\tS}{\mathtt{S}}
\newcommand{\tT}{\mathtt{T}}
\newcommand{\CC}{\mathbf{C}}
\newcommand{\QQ}{\mathbf{Q}}
\newcommand{\ZZ}{\mathbf{Z}}
\newcommand{\RR}{\mathbf{R}}
\newcommand{\FF}{\mathbf{F}}
\newcommand{\xra}{\xrightarrow}
\newcommand{\hra}{\hookrightarrow}
\newcommand{\deR}{\mathrm{dR}}
\newcommand{\bfSh}{\mathbf{Sh}}
\newcommand{\es}{\mathrm{es}}
\newcommand{\Gys}{\mathrm{Gys}}
\newcommand{\pres}[2]{\prescript{#1}{}{#2}}
\newcommand{\ac}{\r{ac}}
\newcommand{\Cl}{\r{Cl}}
\newcommand{\dr}{\r{dR}}
\newcommand{\inv}{\r{inv}}
\newcommand{\loc}{\r{loc}}
\newcommand{\op}{\r{op}}
\newcommand{\rat}{\r{rat}}
\newcommand{\sing}{\r{sing}}
\newcommand{\TF}{\widetilde{F}}
\newcommand{\tor}{\r{tor}}
\newcommand{\unr}{\r{unr}}
\newcommand{\ur}{\r{ur}}
\newcommand{\pr}{\mathrm{pr}}
\newcommand{\tri}{\r{tri}}
\newcommand{\Iw}{\mathrm{Iw}}
\newcommand{\tcD}{\tilde\cD}
\DeclareMathOperator{\AJ}{AJ}
\DeclareMathOperator{\Aut}{Aut}
\DeclareMathOperator{\CH}{CH}
\DeclareMathOperator{\coker}{coker}
\DeclareMathOperator{\disc}{disc}
\DeclareMathOperator{\End}{End}
\DeclareMathOperator{\Fr}{Fr}
\DeclareMathOperator{\Frob}{Frob}
\DeclareMathOperator{\Gal}{Gal}
\DeclareMathOperator{\GL}{GL}
\DeclareMathOperator{\Hom}{Hom}
\DeclareMathOperator{\Ind}{Ind}
\DeclareMathOperator{\Ker}{ker}
\DeclareMathOperator{\Lie}{Lie}
\DeclareMathOperator{\Mat}{Mat}
\DeclareMathOperator{\Nm}{Nm}
\DeclareMathOperator{\ord}{ord}
\DeclareMathOperator{\Res}{Res}
\DeclareMathOperator{\Sh}{Sh}
\DeclareMathOperator{\SL}{SL}
\DeclareMathOperator{\Spec}{Spec}
\DeclareMathOperator{\Tr}{Tr}
\DeclareMathOperator{\Art}{Art}
\begin{document}

\title[Supersingular locus, level raising, and Selmer groups]
{Supersingular locus of Hilbert modular varieties, arithmetic level raising, and Selmer groups}

\author{Yifeng Liu}
\address{Department of Mathematics, Northwestern University, Evanston IL 60208, United States}
\email{liuyf@math.northwestern.edu}

\author{Yichao Tian}
\address{Mathematisches Institut, Universit\"{a}t Bonn, 53115 Bonn, Germany}
\email{tian@math.uni-bonn.de}

\date{\today}
\subjclass[2010]{11G05, 11R34, 14G35}

\begin{abstract}
  This article has three goals. First, we generalize the result of Deuring and Serre on the characterization of supersingular locus to all Shimura varieties given by totally indefinite quaternion algebras over totally real number fields. Second, we generalize the result of Ribet on arithmetic level raising to such Shimura varieties in the inert case. Third, as an application to number theory, we use the previous results to study the Selmer group of certain triple product motive of an elliptic curve, in the context of the Bloch--Kato conjecture.
\end{abstract}

\maketitle

\tableofcontents

\section{Introduction}

The study of special loci of moduli spaces of abelian varieties starts from Deuring and Serre. Let $N\geq 4$ be an integer and $p$ a prime not dividing $N$. Let $Y_0(N)$ be the coarse moduli scheme over $\bZ_{(p)}$ parameterizing elliptic curves with a cyclic subgroup of order $N$. Let $Y_0(N)_{\bF_p}^{\r{ss}}$ denote the supersingular locus of the special fiber $Y_0(N)_{\bF_p}$, which is a closed subscheme of dimension zero. Deuring and Serre proved the following deep result (see, for example \cite{Ser96}) characterizing the supersingular locus:
\begin{align}\label{E:deuring}
Y_0(N)_{\bF_p}^{\r{ss}}(\bF_p^\ac)\cong B^\times\backslash\widehat{B}^\times/\widehat{R}^\times.
\end{align}
Here, $B$ is the definition quaternion algebra over $\bQ$ ramified at $p$, and $R\subseteq B$ is any Eichler order of level $N$. Moreover, the induced action of the Frobenius element on $B^\times\backslash\widehat{B}^\times/\widehat{R}^\times$ coincides with the Hecke action given by the uniformizer of $B\otimes_\bQ\bQ_p$.

One main application of the above result is to study congruence of modular forms. Let $f=q+a_2q^2+a_3q^3+\cdots$ be a normalized cusp new form of level $\Gamma_0(N)$ and weight $2$. Let $\fm_f$ be the ideal of the away-from-$Np$ Hecke algebra generated by $\rT_v-a_v$ for all primes $v\nmid Np$. We assume that $f$ is not dihedral. Take a sufficiently large prime $\ell$, not dividing $Np(p^2-1)$. Using the isomorphism \eqref{E:deuring} and the Abel--Jacobi map (over $\bF_{p^2}$), one can construct a map
\begin{align}\label{E:ribet}
\Gamma(B^\times\backslash\widehat{B}^\times/\widehat{R}^\times,\bF_\ell)/\fm_f\to\rH^1(\bF_{p^2},\rH^1(Y_0(N)\otimes\bF_p^\ac,\bF_\ell(1))/\fm_f)
\end{align}
where $\Gamma(B^\times\backslash\widehat{B}^\times/\widehat{R}^\times,\bF_\ell)$ denotes the space of $\bF_\ell$-valued functions on $B^\times\backslash\widehat{B}^\times/\widehat{R}^\times$. In \cite{Rib90}, Ribet proved that the map \eqref{E:ribet} is surjective. Note that the right-hand side is nonzero if and only if $\ell\mid a_p^2-(p+1)^2$, in which case the dimension is $1$. From this, one can construct a normalized cusp new form $g$ of level $\Gamma_0(Np)$ and weight $2$ such that $f\equiv g\mod\ell$ when $\ell\mid a_p^2-(p+1)^2$.

This article has three goals. First, we generalize the result of Deuring and Serre to all Shimura varieties given by totally indefinite quaternion algebras over totally real number fields. Second, we generalize Ribet's result  to such Shimura varieties in the inert case. Third, as an application to number theory, we use the previous results to study Selmer groups of certain triple product motives of elliptic curves, in the context of the Bloch--Kato conjecture.

For the rest of Introduction, we denote $F$ a totally real number field, and $B$ a \emph{totally indefinite} quaternion algebra over $F$. Put $G\coloneqq\Res_{F/\bQ}B^\times$ as a reductive group over $\bQ$.

\subsection{Supersingular locus of Hilbert modular varieties}

Let $p$ be a rational prime that is unramified in $F$. Denote by $\Sigma_p$ the set of all places of $F$ above $p$, and put $g_\fp\coloneqq[F_\fp:\bQ_p]$ for every $\fp\in\Sigma_p$.
Assume that $B$ is unramified at all $\fp\in \Sigma_p$. Fix a maximal order $\cO_B$ in $B$. Let $K^p\subseteq G(\bA^\infty)$ be a neat open compact subgroup in the sense of Definition \ref{D:neat-subgroup}. We have a coarse moduli scheme $\bfSh(G,K^p)$ over $\bZ_{(p)}$ parameterizing abelian varieties with real multiplication by $\cO_B$ and $K^p$-level structure (see Section \ref{S:quaternion-PEL} for details). Its generic fiber is a Shimura variety; in particular, we have the following well-known complex uniformization
\[
\bfSh(G,K^p)(\bC)\cong G(\QQ)\backslash(\bC-\bR)^{[F:\bQ]}\times G(\bA^{\infty})/K^pK_p,
\]
where $K_p$ is a hyperspecial maximal subgroup of $G(\bQ_p)$. The supersingular locus of $\bfSh(G,K^p)$, that is, the maximal closed subset of $\bfSh(G,K^p)\otimes\bF_p^\ac$ on which the parameterized abelian variety (over $\bF_p^\ac$) has supersingular $p$-divisible group, descends to $\bF_p$, denoted by $\bfSh(G,K^p)^{\r{ss}}_{\bF_p}$. Our first result provides a global description of the subscheme $\bfSh(G,K^p)^{\r{ss}}_{\bF_p}$.

To state our theorem, we need to introduce another Shimura variety. Let $B'$ be the quaternion algebra over $F$, unique up to isomorphism, such that the Hasse invariants of $B'$ and $B$ differ exactly at all archimedean places and all $\fp\in\Sigma_p$ with $g_\fp$ odd.
Similarly, put $G'\coloneqq\Res_{F/\bQ}B'^\times$ and identify $G'(\bA^{\infty,p})$ with $G(\bA^{\infty,p})$.
We put
\[
\bfSh(G',K^p)(\bF_{p}^{\ac})\coloneqq G'(\QQ)\backslash G'(\bA^{\infty})/K^pK'_p,
\]
where $K'_p$ is the a maximal open compact subgroup of $G'(\bQ_p)$. We denote by  $\bfSh(G',K^p)_{\bF_{p}^{\ac}}$  the corresponding   scheme over $\FF_p^{\ac}$, i.e. copies of $\Spec(\FF_p^{\ac})$ indexed by  $\bfSh(G',K^p)(\bF_{p}^{\ac})$.

\begin{theorem}(Theorem \ref{T:supersingular-quaternion})\label{T:main1}
Let $h$ be the least common multiple of $(1+g_\fp-2\lfloor g_\fp/2\rfloor)g_\fp$ for $\fp\in\Sigma_p$. We have\footnote{The notation here is simplified. In fact, in the main text and particularly Theorem \ref{T:supersingular-quaternion}, $B'$, $G'$, $\fB$, $\fa$ and $W(\fa)$ are denoted by $B_{\tS_{\r{max}}}$, $G_{\tS_{\r{max}}}$, $\fB_\emptyset$, $\underline\fa$ and $W_\emptyset(\underline\fa)$, respectively.}
\[
\bfSh(G,K^p)_{\FF_p}^{\mathrm{ss}}\otimes\bF_{p^h}=\bigcup_{\fa\in\fB}W(\fa).
\]
Here
\begin{itemize}
  \item $\fB$ is a set of cardinality $\prod_{\fp\in\Sigma_p}\binom{g_\fp}{\lfloor g_\fp/2\rfloor}$ equipped with a natural action by $\Gal(\bF_{p^h}/\bF_p)$;

  \item the base change $W(\fa)\otimes \FF_p^{\ac}$ is a $(\sum_{\fp\in\Sigma_p}\lfloor g_{\fp}/2\rfloor)$-th iterated $\dP^1$-fibration over $\bfSh(G',K^p)_{\bF_{p}^{\ac}}$, equivariant under prime-to-$p$ Hecke correspondences. \footnote{One should consider $\bfSh(G',K'^p)_{\FF_p^{\ac}}$ as the $\FF_p^{\ac}$-fiber of a Shimura variety attached to $G'$. However, it seems impossible to define the correct Galois action on $\bfSh(G',K'^p)_{\FF_p^{\ac}}$ using the formalism of Deligne homomorphisms when $g_{\fp}$ is odd for at least one $\fp\in \Sigma_p$. When  $g_{\fp}$ is odd for all $\fp\in \Sigma_p$, we will define the correct Galois action by $\Gal(\FF_p^{\ac}/\FF_p)$ using  superspecial locus. See  the discussion after Theorem~\ref{T:main2}.}
\end{itemize}
In particular, $\bfSh(G,K^p)_{\FF_p}^{\mathrm{ss}}$ is proper and of equidimension $\sum_{\fp\in\Sigma_p}\lfloor g_{\fp}/2\rfloor$.
\end{theorem}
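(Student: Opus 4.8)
\emph{Overview.} By the moduli description of $\bfSh(G,K^p)$, a geometric point of its special fibre is an abelian variety $A$ with $\cO_B$-action, compatible polarization and $K^p$-level structure, and it is supersingular precisely when $A[p^\infty]$ is, the prime-to-$p$ level structure varying in an \'etale family. The plan is therefore to (i) identify the superspecial locus with $\bfSh(G',K^p)(\bF_p^\ac)$, (ii) describe, prime by prime at $p$ via Dieudonn\'e theory, the supersingular locus over the superspecial locus, and (iii) globalize and read off the two assertions. For (i): decomposing $A[p^\infty]=\prod_{\fp\in\Sigma_p}A[\fp^\infty]$ and using that $\cO_{B_\fp}\cong M_2(\cO_{F_\fp})$ ($B$ unramified at $\fp$, $\fp$ unramified over $p$), Morita equivalence turns each factor into a $p$-divisible $\cO_{F_\fp}$-module of height $2g_\fp$ with signature $(1,\dots,1)$, superspecial iff its Dieudonn\'e module $M$ satisfies $FM=VM$, in which case it is unique over $\bF_p^\ac$. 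The isogeny class of a superspecial point being rigid, a standard argument (Serre--Tate rigidity plus a Dieudonn\'e/mass computation generalizing Deuring's theorem) shows the superspecial locus is finite of dimension $0$ and, compatibly with prime-to-$p$ Hecke correspondences, canonically in bijection with $I(\QQ)\backslash I(\bA^\infty)/K^pK'_p$, where $I$ is the $\QQ$-group of $\cO_B$-linear, polarization-compatible self-quasi-isogenies of a fixed superspecial point: $I(\QQ_\ell)=G(\QQ_\ell)$ for $\ell\ne p$, $I(\RR)$ is anisotropic modulo centre, and $I(\QQ_p)=\prod_\fp B'^\times_\fp$ with $B'_\fp$ division for $g_\fp$ odd and $M_2(F_\fp)$ for $g_\fp$ even; hence $I\cong G'$ and the superspecial locus is $\bfSh(G',K^p)(\bF_p^\ac)$. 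The $p$-power Frobenius acts through a product of uniformizers of the $B'_\fp$, which forces $h$ to be divisible by $(1+g_\fp-2\lfloor g_\fp/2\rfloor)g_\fp$ at each $\fp$ (the factor $2$ for odd $g_\fp$ being the quadratic twist needed to realize the superspecial object over a finite field; cf.\ the footnote).

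\emph{Local structure at $\fp$.} Fix $\fp$, set $g=g_\fp$. Over $\bF_p^\ac$, a supersingular point gives a Dieudonn\'e module $M=\bigoplus_{\tau\in\ZZ/g}M_\tau$ with each $M_\tau$ free of rank $2$ over $W(\bF_p^\ac)$, $F$ and $V$ shifting $\tau$ by $\pm1$, $FV=VF=p$, $FM_{\tau-1}$ of colength $1$ in $M_\tau$, and all slopes $1/2$; mod $p$ this gives a Hodge line $\omega_\tau\subset M_\tau/pM_\tau$ for each $\tau$. I would first attach to $M$ the set $T_\fp\subseteq\ZZ/g$ of positions at which $\omega_\tau$ is in ``generic'' (non-superspecial) position and, by a direct analysis of the $(F,V)$-chain under the isoclinic-slope-$1/2$ constraint, show that $\#T_\fp=\lfloor g/2\rfloor$ and that every such subset occurs. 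Then I would show that for a fixed tuple $(T_\fp)_\fp$ the corresponding locally closed piece of the supersingular locus becomes, over $\bF_p^\ac$, an iterated $\dP^1$-fibration of length $\sum_\fp\lfloor g_\fp/2\rfloor$ over the superspecial locus, the $\dP^1$'s arising by ``adding back'' the generic Hodge lines one at a time — the fibre of each step being the space of admissible positions of one line in a rank-$2$ space, the others and a superspecial frame being fixed (equivalently, identify the fibre of the supersingular locus over a superspecial point with a product of Deligne--Lusztig curves for $\SL_2$, each $\cong\dP^1$). Prime-to-$p$ Hecke-equivariance is automatic, those correspondences touching only the $\bA^{\infty,p}$-factor.

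\emph{Globalization and conclusion.} Put $\fB\coloneqq\prod_{\fp\in\Sigma_p}\{\,T\subseteq\ZZ/g_\fp:\#T=\lfloor g_\fp/2\rfloor\,\}$, of cardinality $\prod_{\fp}\binom{g_\fp}{\lfloor g_\fp/2\rfloor}$, with the $\Gal(\bF_{p^h}/\bF_p)$-action induced by the Frobenius rotation of each $\ZZ/g_\fp$, and for $\fa=(T_\fp)_\fp$ let $W(\fa)$ be the corresponding component. Steps (i)--(ii) give $\bfSh(G,K^p)^{\mathrm{ss}}_{\bF_p}\otimes\bF_{p^h}=\bigcup_{\fa\in\fB}W(\fa)$, with each $W(\fa)\otimes\bF_p^\ac$ a $\bigl(\sum_\fp\lfloor g_\fp/2\rfloor\bigr)$-th iterated $\dP^1$-fibration over $\bfSh(G',K^p)_{\bF_p^\ac}$, equivariantly for prime-to-$p$ Hecke correspondences. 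Since $\bfSh(G',K^p)(\bF_p^\ac)$ is a finite set, each $W(\fa)$ is proper over $\bF_{p^h}$ and of equidimension $\sum_\fp\lfloor g_\fp/2\rfloor$; hence so is their finite union, and since $\bfSh(G,K^p)^{\mathrm{ss}}_{\bF_p}$ already descends to $\bF_p$, both properness and dimension descend along the finite faithfully flat $\Spec\bF_{p^h}\to\Spec\bF_p$. Therefore $\bfSh(G,K^p)^{\mathrm{ss}}_{\bF_p}$ is proper and of equidimension $\sum_{\fp\in\Sigma_p}\lfloor g_\fp/2\rfloor$.

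\emph{Main obstacle.} The heart is the local analysis: proving in the Dieudonn\'e picture that the invariant ranges over exactly the $\binom{g_\fp}{\lfloor g_\fp/2\rfloor}$ subsets of size $\lfloor g_\fp/2\rfloor$ (with no further admissibility constraint — a feature special to the balanced signature $(1,\dots,1)$), that each component is precisely an iterated $\dP^1$-fibration of the stated length over a superspecial point, and that the Frobenius of $\bF_p$ acts so as to yield exactly the value of $h$ claimed. By comparison the superspecial classification (the Deuring-type input) and the globalization are more routine, though the Dieudonn\'e/mass computation underlying the former still needs care in the quaternionic PEL setting.
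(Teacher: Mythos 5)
Your overall strategy (uniformize the supersingular locus by quasi-isogenies from a fixed superspecial point, then analyze the resulting space of Dieudonn\'e lattices/Hodge lines directly) is a legitimate alternative to the paper's route, which instead proceeds by induction through the Goren--Oort stratification: partial Hasse invariants cut out divisors on an auxiliary unitary PEL model, each such divisor is (by the earlier work \cite{TX1}) a $\dP^1$-bundle over a Shimura variety with modified ramification set $\tS$, and iterating this produces the strata $W(\fa)$, with the exhaustion statement coming from Proposition \ref{P:ordinary} plus induction on $d_\fp$ and on the places above $p$. However, your proposal has a concrete error at its pivot, step (i). For $\fp$ with $g_\fp$ even, the quasi-endomorphism algebra of a superspecial point at $\fp$ is $M_2(F_\fp)$, but the stabilizer of a superspecial Dieudonn\'e lattice is the unit group of $\cO_D\otimes_{\ZZ_p}\cO_{F_\fp}$ ($D$ the quaternion division algebra over $\QQ_p$), which has reduced discriminant $\fp$ and hence is an Eichler order of level $\fp$ (Iwahori), not the hyperspecial $K'_\fp$. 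Consequently the superspecial locus is \emph{not} in bijection with $\bfSh(G',K^p)(\bF_p^\ac)$ when some $g_\fp$ is even (already for Hilbert modular surfaces at an inert prime the superspecial points and the irreducible components of the supersingular locus are counted by different class numbers), and the plan to realize the theorem's fibration ``over the superspecial locus'' cannot work in the generality of the statement; the paper identifies the base with the superspecial locus only when all $g_\fp$ are odd (Theorem \ref{P:superspecial-quaternion}), and otherwise the base is the discrete Shimura set with hyperspecial level at such $\fp$, which each component meets in many superspecial points rather than fibering over them.

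Second, the heart of your argument is asserted rather than proved, and the specific structural claims you make do not match the actual geometry. The invariant $T_\fp$ (positions where the Hodge line is in generic position) does not satisfy $\#T_\fp=\lfloor g_\fp/2\rfloor$ on all of the supersingular locus (superspecial points have $T_\fp=\emptyset$), and for $g_\fp\geq 3$ the closed sets $W(\fa)$ are not cut out by the vanishing/non-vanishing pattern of the original partial Hasse invariants: the extra conditions coming from the semi-line (and from the inner arcs) are imposed only after passing to quasi-isogenous abelian varieties, i.e.\ they are conditions on iterated essential Verschiebung of successively modified Dieudonn\'e modules, which is precisely what the paper's inductive transfer between Shimura varieties with different $\tS$ encodes. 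Likewise, the fibers through a superspecial point are not a product of Deligne--Lusztig curves: when all $g_\fp$ are odd, the $\binom{g_\fp}{\lfloor g_\fp/2\rfloor}$ components all pass through each superspecial point, crossing there, rather than forming a product. Finally, your analysis is purely pointwise over $\bF_p^\ac$; obtaining the scheme-theoretic statement (that each $W(\fa)$ is an iterated $\dP^1$-fibration in families, smooth over the base, and that the union exhausts the supersingular locus as a closed subscheme) requires deformation-theoretic input on tangent spaces, which in the paper is imported from \cite{TX1}. So while the Rapoport--Zink-style plan could in principle be carried out, as written it both misidentifies the base of the fibration in the even-degree case and leaves the essential local computation unproved, with its announced combinatorial bookkeeping incorrect.
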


If $p$ is inert in $F$ of degree $2$ and $B$ is the matrix algebra, then the results was first proved in \cite{BG99}. If $p$ is inert in $F$ of degree $4$ and $B$ is the matrix algebra, then the results was due to \cite{Yu03}. Assume that $p$ is inert in $F$ of even degree. Then the strata $W(\fa)$ have already been constructed in \cite{TX2}, and the authors proved there that, under certain genericity conditions on the Satake parameters of a fixed automorphic cuspidal representation $\pi$, the cycles $W(\fa)$ give all the $\pi$-isotypic Tate cycles on $\bfSh(G,K^p)_{\FF_p}$.

Similarly, one can define the superspecial locus $\bfSh(G,K^p)^{\r{sp}}_{\bF_p}$ of $\bfSh(G,K^p)$, that is, the maximal closed subset of $\bfSh(G,K^p)\otimes\bF_p^\ac$ on which the parameterized abelian variety has superspecial $p$-divisible group. It is a reduced scheme over $\bF_p$ of dimension zero. We have the following result.

\begin{theorem}[Theorem \ref{P:superspecial-quaternion}]\label{T:main2}
Assume that $g_\fp$ is odd for every $\fp\in\Sigma_p$. For each $\fa\in\fB$ as in the previous theorem, $W(\fa)$ contains the superspecial locus $\bfSh(G,K^p)^{\r{sp}}_{\FF_p}\otimes\bF_{p^h}$, and the iterated $\dP^1$-fibration $\pi_\fa\colon W(\fa)\otimes \FF_p^{\ac}\to \bfSh(G',K^p)_{\bF_{p}^{\ac}}$ induces an isomorphism
\[
\bfSh(G,K^p)^{\r{sp}}_{\bF_p^{\ac}}\xra{\sim}\bfSh(G',K^p)_{\bF_{p}^{\ac}}
\]
compatible with prime-to-$p$ Hecke correspondences.
\end{theorem}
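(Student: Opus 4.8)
The plan is to locate the superspecial locus inside the strata $W(\fa)$ produced by Theorem~\ref{T:main1} and to recognize it as a distinguished section of each iterated $\dP^1$-fibration $\pi_\fa$. Since a superspecial abelian variety is in particular supersingular, we have
\[
\bfSh(G,K^p)^{\r{sp}}_{\bF_p}\otimes\bF_{p^h}\subseteq\bfSh(G,K^p)^{\r{ss}}_{\bF_p}\otimes\bF_{p^h}=\bigcup_{\fa\in\fB}W(\fa),
\]
so it suffices to work inside a single $W(\fa)$, using the explicit construction of $W(\fa)\otimes\bF_p^{\ac}$ from the proof of Theorem~\ref{T:supersingular-quaternion}: it is an iterated $\dP^1$-fibration over $\bfSh(G',K^p)_{\bF_p^{\ac}}$ whose fibres encode, via Dieudonné theory, chains of lattices over the corresponding point, the successive $\dP^1$'s being the projectivizations of rank-two subquotients equipped with their induced $F$ and $V$.

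First I would characterize the superspecial locus Dieudonné-theoretically: a point $x$ is superspecial precisely when $FM_x=VM_x$ for the associated Dieudonné module $M_x$ (equivalently, its $a$-number is maximal), and I would check that, for a point of $W(\fa)$, this holds exactly when at every stage of the $\dP^1$-tower the chosen lattice is stable under both $F$ and $V$. As that condition refers to no combinatorial datum indexing $\fB$, the resulting point lies simultaneously in every $W(\fa)$; hence $\bfSh(G,K^p)^{\r{sp}}_{\bF_p}\otimes\bF_{p^h}\subseteq\bigcap_{\fa\in\fB}W(\fa)$, and in the language of the stratification the superspecial locus is the deepest, zero-dimensional stratum.

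Next, for a fixed $\fa$ I would build the section $s_\fa$ of $\pi_\fa$ sending a point $z\in\bfSh(G',K^p)_{\bF_p^{\ac}}$ to the chain over $z$ formed by the $(F,V)$-stable lattices at every stage; by the previous step its image is exactly $\bfSh(G,K^p)^{\r{sp}}_{\bF_p^{\ac}}$, and uniqueness of the $(F,V)$-stable chain shows that each fibre of $\pi_\fa$ meets the superspecial locus in a single $\bF_p^{\ac}$-point. Since the superspecial locus is reduced of dimension zero and $\bfSh(G',K^p)_{\bF_p^{\ac}}$ is a disjoint union of copies of $\Spec\bF_p^{\ac}$, this bijection on $\bF_p^{\ac}$-points upgrades to the asserted isomorphism of $\bF_p^{\ac}$-schemes, and its compatibility with prime-to-$p$ Hecke correspondences follows from that of $\pi_\fa$ in Theorem~\ref{T:main1}. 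No Galois statement is claimed, in accordance with the footnote to that theorem: when all $g_\fp$ are odd, the Galois structure on the target is in fact \emph{defined} through this isomorphism.

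The main obstacle is the local Dieudonné computation underpinning both identifications: one must pin down, stage by stage in the $\dP^1$-tower, which line in each rank-two subquotient is forced by superspeciality, prove that being superspecial at every stage is equivalent to superspeciality of the whole $p$-divisible group together with its $\cO_B\otimes\bZ_p$-action, and verify that the section $s_\fa$ so produced is independent of $\fa$. This is also where the hypothesis that $g_\fp$ be odd for every $\fp\in\Sigma_p$ enters essentially: only then is $B'$ ramified at every place of $F$ above $p$, which is precisely the condition under which the class set of $G'$ parameterizes superspecial abelian varieties with $\cO_B$-action; if some $g_\fp$ were even, $B'$ would be split at $\fp$ and the superspecial locus would no longer map isomorphically onto $\bfSh(G',K^p)_{\bF_p^{\ac}}$.
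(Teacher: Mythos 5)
Your outline is correct in substance and lands on the right structural picture, but it organizes the proof differently from the paper, and the part you defer as the ``main obstacle'' is precisely what the paper never redoes. In the paper, the superspecial locus is by definition the intersection of all Goren--Oort divisors (simultaneous vanishing of the partial Hasse invariants), the problem is transferred to the auxiliary unitary Shimura variety, and both the containment $\bfSh(G,K^p)^{\r{sp}}\subseteq W(\fa)$ and the isomorphism onto the discrete base are proved by induction on $d_\fp$ (Propositions~\ref{P:p-superspecial-unitary} and~\ref{P:superspecial-unitary}); the only per-stage geometric input is Proposition~\ref{P:GO-divisors}(1), imported from \cite{TX1}, which says that the $\dP^1$-fibration $\pi'_\tau$ restricted to the codimension-two stratum $\bfSh(G'_{\tilde\tS},K'^p)_{k_0,\{\tau,\sigma^{-n_\tau}\tau\}}$ is already an isomorphism, compatibly with the formation of the remaining Goren--Oort divisors. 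Your ``unique $(F,V)$-stable chain'' is exactly this deepest stratum, and the existence and uniqueness of the distinguished point in each iterated-$\dP^1$ fibre (your section $s_\fa$, and the independence of $\fa$) are packaged in that cited isomorphism rather than re-derived, so Theorem~\ref{P:superspecial-quaternion}(1) becomes a one-line consequence. Your route would work, but you would effectively be re-proving Proposition~\ref{P:GO-divisors}(1), and the fibrewise condition should be phrased through the quasi-isogeny $\phi$ and essential Frobenius/Verschiebung (namely $\tilde\omega^{\circ}_{A^\vee,\tilde\tau}=F_{\es}^{n_\tau}\tcD(A)^{\circ}_{\sigma^{-n_\tau}\tilde\tau}$ at every stage) rather than as literal $(F,V)$-stability of a single lattice, since the stages live on quasi-isogenous abelian varieties. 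Finally, your heuristic for where the parity hypothesis enters is right in spirit, but its precise location is combinatorial: when every $g_\fp$ is odd, each $d_\fp$ is odd, so the unique semi-line of $\fa$ supplies the extra divisor condition in the definition of $W(\fa)$ that lets the induction of Proposition~\ref{P:p-superspecial-unitary} close with exactly one superspecial point per fibre; when $g_\fp$ is even this fails and a fibre contains many superspecial points (e.g.\ $p^2+1$ on a Hilbert modular surface at an inert prime), consistent with your remark about $B'$ being split at $\fp$ in that case.
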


We will always identify $\bfSh(G,K^p)^{\r{sp}}_{\bF_p^{\ac}}$ with $\bfSh(G',K^p)_{\bF_{p}^{\ac}}$. Via this identification, $\bfSh(G',K^p)_{\bF_{p}^{\ac}}$ descends to an (\'etale) $\FF_p$-scheme  $\bfSh(G',K^p)_{\bF_{p}}$, and a direct description for the action of $\Gal(\FF_p^{\ac}/\FF_{p^2})$ on $\bfSh(G',K^p)_{\bF_{p}^{\ac}}$ is given in Theorem~\ref{P:superspecial-quaternion}(2).
Then it is easy to see that the iterated $\dP^1$-fibration $\pi_{\fa}$ descends to a morphism of $\FF_{p^{h}}$-schemes:
\[
\pi_{\fa}\colon W(\fa)\to \bfSh(G',K^p)_{\FF_p^{h}}.
\]

A main application of the global description of the supersingular locus is to study the level raising phenomenon, as we will explain in the next section.

\subsection{Arithmetic level raising for Hilbert modular varieties}

We suppose that $g=[F:\bQ]$ is odd. Fix an irreducible cuspidal automorphic representation $\Pi$ of $\GL_2(\bA_F)$ of parallel weight $2$ defined over a number field $\bE$. Let $B$, $G$ be as in the previous section; and let $K$ be a neat open compact subgroup of $G(\bA^\infty)$. Then we have the Shimura variety $\Sh(G,K)$ defined over $\bQ$. Let $\tR$ be a finite set of places of $F$ away from which $\Pi$ is unramified and $K$ is hyperspecial maximal.

Let $p$ be a rational prime inert in $F$ such that the unique prime $\fp$ of $F$ above $p$ is not in $\tR$. Then $K=K^pK_p$ and $\Sh(G,K)$ has a canonical integral model $\bfSh(G,K^p)$ over $\bZ_{(p)}$ as in the previous section. We also choose a prime $\lambda$ of $\bE$ and put $k_\lambda\coloneqq\cO_\bE/\lambda$.

Let $\bZ[\dT^\tR]$ (resp.\ $\bZ[\dT^{\tR\cup\{\fp\}}]$) be the (abstract) spherical Hecke algebra of $\GL_{2,F}$ away from $\tR$ (resp.\ $\tR\cup\{\fp\}$). Then $\Pi$ induces a homomorphism
\[
\phi_{\Pi,\lambda}\colon\bZ[\dT^\tR]\to\cO_\bE\to k_\lambda
\]
via Hecke eigenvalues. Put $\fm\coloneqq\Ker(\phi_{\Pi,\lambda}\res_{\bZ[\dT^{\tR\cup\{\fp\}}]})$.

The Hecke algebra $\bZ[\dT^{\tR\cup\{\fp\}}]$ acts on the (\'{e}tale) cohomology group $\rH^\bullet(\bfSh(G,K^p)\otimes\bF_p^\ac,k_\lambda)$. Let $\Gamma(\fB\times\bfSh(G',K^p)(\bF_p^\ac),*)$ be the abelian group of $*$-valued functions on $\fB\times\bfSh(G',K^p)(\bF_p^\ac)$, which admits the Hecke action of $\bZ[\dT^{\tR\cup\{\fp\}}]$ via the second factor. We have a Chow cycle class map
\[
\Gamma(\fB\times\bfSh(G',K^{p})(\bF_p^\ac),\bZ)\to\CH^{(g+1)/2}(\bfSh(G,K^p)_{\FF_p^\ac})
\]
sending a function $f$ on $\fB\times\bfSh(G',K^{p})(\bF_p^\ac)$ to the Chow class of $\sum_{\fa,s}f(\fa,s)\pi_\fa^{-1}(s)$, which is $\bZ[\dT^{\tR\cup\{\fp\}}]$-equivariant. We will show that under certain ``large image'' assumption on the mod-$\lambda$ Galois representation attached to $\Pi$, the above Chow cycle class map (eventually) induces the following Abel--Jacobi map
\begin{align}\label{E:AJ3}
\Gamma(\fB\times\bfSh(G',K^p)(\bF_p^\ac),k_\lambda)/\fm
\to\rH^1(\bF_{p^{2g}},\rH^g(\bfSh(G,K^p)_{\FF^{\ac}_p},k_\lambda((g+1)/2))/\fm).
\end{align}
See Section \ref{ss:statement} for more details. The following theorem is what we call \emph{arithmetic level raising}.

\begin{theorem}[Theorem \ref{T:level_raising}]\label{T:main3}
Suppose that $p$ is a $\lambda$-level raising prime in the sense of Definition \ref{D:level-raising-prime}. In particular, we have the following equalities in $k_\lambda$:
\[
\phi_{\Pi,\lambda}(\rT_\fp)^2=(p^{g}+1)^2,\qquad
\phi_{\Pi,\lambda}(\rS_\fp)=1,
\]
where $\rT_\fp$ (resp.\ $\rS_\fp$) is the (spherical) Hecke operator at $\fp$ represented by $\big(\begin{smallmatrix}p&0\\ 0 &1\end{smallmatrix}\big)\in\GL_2(F_\fp)$ (resp.\ $\big(\begin{smallmatrix}p&0\\ 0 &p\end{smallmatrix}\big)\in\GL_2(F_\fp)$). Then the map \eqref{E:AJ3} is surjective.
\end{theorem}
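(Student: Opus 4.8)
The plan is to realize the Abel--Jacobi map \eqref{E:AJ3} as a Gysin pushforward from cohomology supported along the supersingular locus, to identify its source and target explicitly by means of Theorem~\ref{T:main1} and the level-raising condition, and to deduce surjectivity from a dimension count together with the non-degeneracy of an intersection pairing among the supersingular cycles. For the setup: since $g$ is odd, $h=2g$, so the strata $W(\fa)$ and the cycles $\pi_\fa^{-1}(s)$ are already defined over $\bF_{p^{2g}}$; and since $p$ is unramified in $F$, $B$ is split above $p$, and $K_p$ is hyperspecial, $\bfSh(G,K^p)$ is smooth proper over $\bZ_{(p)}$ of relative dimension $g$, with $\rH^\bullet(\bfSh(G,K^p)_{\bF_p^\ac},k_\lambda)\cong\rH^\bullet(\Sh(G,K)_{\ol\bQ},k_\lambda)$ as Hecke modules and with Frobenius at $p$ acting through the decomposition group. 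Writing $i$, resp.\ $j$, for the immersion over $\bF_{p^{2g}}$ of the supersingular locus $\bfSh^{\r{ss}}$, resp.\ of its open complement $U$, the classes $\cl(\pi_\fa^{-1}(s))$ lift canonically to $\rH^{g+1}_{\bfSh^{\r{ss}}}$. The first input is the vanishing $\rH^n(\bfSh(G,K^p)_{\bF_p^\ac},k_\lambda)_\fm=0$ for $n\neq g$ and the freeness of $\rH^g(\bfSh(G,K^p)_{\bF_p^\ac},k_\lambda)_\fm$ over the local Hecke ring (allowing one to conflate $(-)_\fm$ with $(-)/\fm$), both consequences of the ``large image'' hypothesis on $\ol\rho_{\Pi,\lambda}$ and the genericity at $\tR$ via known results on the torsion cohomology of Hilbert and quaternionic Shimura varieties. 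In particular $\rH^{g+1}(\bfSh(G,K^p)_{\bF_p^\ac},k_\lambda)_\fm=0$, so the cycle classes are homologically trivial modulo $\fm$, the map \eqref{E:AJ3} is defined, and by the Hochschild--Serre filtration over $\bF_{p^{2g}}$ it is identified with the pushforward $i_*$ on $\rH^{g+1}$ modulo $\fm$.

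Next I would identify both sides. Purity for the smooth strata $W(\fa)$ and a \v{C}ech spectral sequence for the cover $\{W(\fa)\}_{\fa\in\fB}$ of $\bfSh^{\r{ss}}$ (whose deeper intersections have strictly smaller dimension) give $\rH^n_{\bfSh^{\r{ss}}}(\bfSh(G,K^p)_{\bF_p^\ac},k_\lambda)_\fm=0$ for $n\le g$ and a surjection, identified with the cycle class map, from $\bigoplus_{\fa\in\fB}\rH^0(W(\fa),k_\lambda)_\fm$ onto $\rH^{g+1}_{\bfSh^{\r{ss}}}(\bfSh(G,K^p)_{\bF_p^\ac},k_\lambda((g+1)/2))_\fm$; since the connected components of $W(\fa)$ are the fibers of $\pi_\fa$, the source is $\Gamma(\fB\times\bfSh(G',K^p)(\bF_p^\ac),k_\lambda)_\fm$. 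On the other side, the level-raising condition forces $\ol\rho_{\Pi,\lambda}|_{G_{F_\fp}}$ to have geometric Frobenius eigenvalues $\{1,p^g\}$ up to an unramified quadratic twist; since $\rH^g(\bfSh(G,K^p)_{\bF_p^\ac},k_\lambda((g+1)/2))/\fm$ is, up to a multiplicity $m$, the Tate twist by $(g+1)/2$ of the $2^g$-dimensional tensor induction from $F_\fp$ to $\bQ_p$ of $\ol\rho_{\Pi,\lambda}|_{G_{F_\fp}}$, geometric $\Frob_{p^g}$ acts on it with eigenvalue $\pm1$ of multiplicity $\binom{g}{(g-1)/2}=\#\fB$ — the sign being the reason one passes to $\bF_{p^{2g}}$ — and all other eigenvalues are non-trivial integral powers of $p^g$. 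Hence $\rH^1(\bF_{p^{2g}},\rH^g(\bfSh(G,K^p)_{\bF_p^\ac},k_\lambda((g+1)/2))/\fm)$ has dimension $m\binom{g}{(g-1)/2}$, while $\Gamma(\fB\times\bfSh(G',K^p)(\bF_p^\ac),k_\lambda)/\fm$ has dimension $m'\binom{g}{(g-1)/2}$, where $m'$ is the multiplicity of $\phi_{\Pi,\lambda}$ among mod-$\lambda$ automorphic forms on $G'$ of level $K^pK'_p$; Jacquet--Langlands together with minimality of both multiplicities (again from the large-image hypothesis) gives $m=m'$. Thus the source and target of \eqref{E:AJ3} have the same finite dimension, and it suffices to prove that \eqref{E:AJ3} is injective.

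To prove injectivity I would dualize: Poincaré duality on $\bfSh(G,K^p)$ and local Tate duality over $\bF_{p^{2g}}$ make both source and target of \eqref{E:AJ3} self-dual and identify the composite of \eqref{E:AJ3} with its transpose with the endomorphism of $\Gamma(\fB\times\bfSh(G',K^p)(\bF_p^\ac),k_\lambda)/\fm$ given by the matrix of height pairings $\langle\,\r{AJ}(\pi_\fa^{-1}(s)),\,\r{AJ}(\pi_{\fa'}^{-1}(s'))\,\rangle$; one must show this matrix is invertible after localizing at the non-Eisenstein ideal $\fm$. Its entries are computed from the self-intersection formulas for the iterated $\dP^1$-fibrations $W(\fa)$, from the prime-to-$p$ Hecke action, and from the action of $\Frob_{p^{2g}}$ controlled by the Eichler--Shimura congruence relation at $\fp$ (valid because $p$ is a prime of good reduction); the outcome should be that the matrix is built from Brandt-type Hecke operators on $G'$, which are invertible modulo a non-Eisenstein ideal. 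This last step is the main obstacle — it is the analogue of the hardest part of Ribet's original argument — and, should a direct evaluation of the pairing prove unwieldy, the alternative is to input an Ihara-type lemma at $\fp$ for $\bfSh(G,K^p)$ and argue through a semistable model with Iwahori level at $\fp$, the supersingular locus of the good-reduction model being the singular locus of that semistable model. A secondary but genuine obstacle is the package of vanishing, freeness, and multiplicity-matching statements invoked above, which is exactly where the precise form of the ``large image'' hypothesis and the genericity at $\tR$ must be chosen with care.
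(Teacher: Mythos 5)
Your overall architecture (Gysin map from cohomology supported on the supersingular locus, Frobenius-eigenvalue analysis of $\rH^g/\fm$ forced by the congruence $\phi_{\Pi,\lambda}(\rT_\fp)^2=(p^g+1)^2$, reduction to non-degeneracy of an intersection pairing among the cycles) is the right shape, and your computation that the target has dimension $\binom{g}{(g-1)/2}\cdot\dim(\Pi_B^\infty)^K$ matches the paper. But there are two genuine gaps. First, your reduction of surjectivity to injectivity rests on the claim $m=m'$, i.e.\ that $\Gamma(\fB\times\bfSh(G',K^p)(\bF_p^\ac),k_\lambda)/\fm$ has the same dimension as the target. This does not follow from Jacquet--Langlands plus the large-image hypothesis: $G'$ is ramified at $\fp$, so $m'$ counts mod-$\lambda$ forms that are \emph{new} (Steinberg) at $\fp$ and congruent to $\Pi$, whereas $m=\dim(\Pi_B^\infty)^K$ counts forms unramified at $\fp$. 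The equality of these two multiplicities is a mod-$\lambda$ level-raising-with-multiplicity statement that is at least as strong as the theorem you are proving; assuming it is essentially circular. The paper never needs it --- it proves surjectivity directly, and the dimension of the source is only identified under extra hypotheses (see the remark after Theorem \ref{T:main3} in the introduction).

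Second, the injectivity step you defer is not filled by either of your proposed routes as stated. The non-degeneracy of the intersection matrix of the cycles $\pi_\fa^{-1}(s)$ (essentially the matrix $\Res\circ\Gys$ computed in \cite{TX2}, whose determinant is a unit mod $\lambda$ by condition (L2)) gives injectivity of the Gysin map $\bigoplus_\fa\rH^1(\bfSh(G_{\emptyset_\fa},K^p)_{\FF_p^\ac},\cdot)_\fm\to\rH^g(\cdot)_\fm$, but by itself it does not control the cokernel of the composite with the cycle class maps from the Shimura set; and your alternative --- an Ihara-type lemma for $\bfSh(G,K^p)$ itself via an Iwahori-level semistable model whose singular locus is the supersingular locus --- does not literally exist for $g>1$, since the supersingular locus has dimension $(g-1)/2$ and is not a divisor. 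What the paper actually does is interpolate an intermediate object: each stratum $W(\fa)$ is a $\dP^1$-bundle over the ``supersingular'' locus of an auxiliary quaternionic Shimura \emph{curve} $\bfSh(G_{\emptyset_\fa},K^p)$ (produced by the Goren--Oort/unitary comparison), Ihara's lemma in Cheng's form is applied to that curve with Iwahori level at $\fp$ to get surjectivity of the curve-level map $\Phi_\fm(\fa)$, and only then is the \cite{TX2} intersection matrix used to show that $\Gys$ is surjective onto the $\sigma_\fp^2\approx1$ generalized eigenspace (an isomorphism by a dimension count carried out on the \emph{curve} side, where the multiplicity input is available from \cite{BLR91} and old-forms theory). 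You would need to supply this two-step factorization, or an equivalent substitute, to close the argument.
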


As we will point out in Remarks \ref{R:assumption-wp} and \ref{R:level-raising-prime}, if there exist rational primes inert in $F$, and $\Pi$ is not dihedral and not isomorphic to a twist by a character of any of its internal conjugates, then for all but finitely many prime $\lambda$, there are infinitely many (with positive density) rational primes $p$ that are $\lambda$-level raising primes.

Suppose that the Jacquet--Langlands transfer of $\Pi$ to $B$ exists, say $\Pi_B$. If $(\Pi_B^{\infty,p})^{K^p}$ has dimension $1$ and there is no other automorphic representation of $B^\times(\bA_F)$ (of parallel weight $2$, unramified at $\fp$, and with nontrivial $K^p$-invariant vectors) congruent to $\Pi_B$ modulo $\lambda$, then the target of \eqref{E:AJ3} has dimension $\binom{g}{(g-1)/2}$ over $k_\lambda$.

\begin{remark}
In principle, our method can be applied to prove a theorem similar to Theorem \ref{T:main3} when $B$ is not necessarily totally indefinite but the ``supersingular locus'', defined in an \emph{ad hoc} way if $B$ is not totally indefinite, still appears in the near middle dimension. In fact, the proof of Theorem \ref{T:main3} will be reduced to the case where $B$ is indefinite at only one archimedean places (that is, the corresponding Shimura variety $\Sh(B)$ is a curve). However, we decide not to pursue the most general scenario as that would make the exposition much more complicated and technical. On the other hand, we would like to point out that arithmetic level raising when $1<\dim\Sh(B)<[F:\bQ]$ has arithmetic application as well, for example, to bound the triple product Selmer group (see the next section) with respect to the cubic extension $F/F^\flat$ of totally real number fields with $F^\flat\neq\bQ$.
\end{remark}

Let us explain the meaning of Theorem \ref{T:main3}. Suppose that $\Pi$ admits Jacquet--Langlands transfer, say $\Pi_B$, to $B^\times$ such that $\Pi_B^K\neq\{0\}$. Then the right-hand side of \eqref{E:AJ3} is \emph{nonzero}. In particular, under the assumption of Theorem \ref{T:main3}, the left-hand side of \eqref{E:AJ3} is nonzero as well. One can then deduce that there is an (algebraic) automorphic representation $\Pi'$ of $G'(\bA)=B'^\times(\bA_F)$ (trivial at $\infty$) such that the associated Galois representations of $\Pi'$ and $\Pi$ with coefficient $\cO_\bE/\lambda$ are isomorphic. However, it is obvious that $\Pi'$ cannot be the Jacquet--Langlands transfer of $\Pi$, as $B'$ is ramified at $\fp$ while $\Pi$ is unramified at $\fp$. In this sense, Theorem \ref{T:main3} reveals certain level raising phenomenon. Moreover, this theorem does not only prove the existence of level raising, but also provides an explicit way to realize the congruence relation behind the level raising through the Abel--Jacobi map \eqref{E:AJ3}. As this process involves cycle classes and local Galois cohomology, we prefer to call Theorem \ref{T:main3} \emph{arithmetic level raising}. This is crucial for our later arithmetic application. Namely, we will use this arithmetic level raising theorem to bound certain Selmer groups, as we will explain in the next section.

\subsection{Selmer group of triple product motive}
\label{S:main3}

In this section, we assume that $g=[F:\bQ]=3$; in other words, $F$ is a totally real cubic number field.

Let $E$ be an elliptic curve over $F$. We have the $\bQ$-motive $\otimes\Ind^F_\bQ\sfh^1(E)$ (with coefficient $\bQ$) of rank $8$, which is the multiplicative induction of the $F$-motive $\sfh^1(E)$ to $\bQ$. The \emph{cubic-triple product motive} of $E$ is defined to be
\begin{align*}
\sfM(E)\coloneqq\(\otimes\Ind^F_\bQ\sfh^1(E)\)(2).
\end{align*}
It is canonically polarized. For every prime $p$, the $p$-adic realization of $\sfM(E)$, denoted by $\sfM(E)_p$, is a Galois representation of $\bQ$ of dimension $8$ with $\bQ_p$-coefficients. In fact, up to a twist, it is the multiplicative induction from $F$ to $\bQ$ of the rational $p$-adic Tate module of $E$.

Now we assume that $E$ is modular. Then it gives rise to an irreducible cuspidal automorphic representation $\Pi_E$ of $(\Res_{F/\bQ}\GL_{2,F})(\bA)=\GL_2(\bA_F)$ with trivial central character. Denote by $\tau\colon\pres{L}{G}\to\GL_8(\bC)$ the triple product $L$-homomorphism \cite{PSR87}*{Section 0}, and $L(s,\Pi_E,\tau)$ the triple product $L$-function, which has a meromorphic extension to the complex plane by \cites{Gar87,PSR87}. Moreover, we have a functional equation
\[L(s,\Pi_E,\tau)=\epsilon(\Pi_E,\tau)C(\Pi_E,\tau)^{1/2-s}L(1-s,\Pi_E,\tau)\]
for some $\epsilon(\Pi_E,\tau)\in\{\pm 1\}$ and positive integer $C(\Pi_E,\tau)$. The global root number $\epsilon(\Pi_E,\tau)$ is the product of local ones: $\epsilon(\Pi_E,\tau)=\prod_v\epsilon(\Pi_{E,v},\tau)$, where $v$ runs over all places of $\bQ$. Here, we have $\epsilon(\Pi_{E,v},\tau)\in\{\pm 1\}$ and that it equals $1$ for all but finitely many $v$. Put
\[
\Sigma(\Pi_E,\tau)\coloneqq\{v\res \epsilon(\Pi_{E,v},\tau)=-1\}.
\]
In particular, the set $\Sigma(\Pi_E,\tau)$ contains $\infty$. We have $L(s,\sfM(E))=L(s+1/2,\Pi_E,\tau)$.

Now we assume that $E$ satisfies Assumption \ref{as:elliptic_curve}. In particular, $\Sigma(\Pi_E,\tau)$ has odd cardinality. Let $B^\flat$ be the indefinite quaternion algebra over $\bQ$ with the ramification set $\Sigma(\Pi_E,\tau)-\{\infty\}$, and put $B\coloneqq B^\flat\otimes_\bQ F$. Put $G\coloneqq\Res_{F/\bQ}B^\times$ as before. We will define neat open compact subgroups $K_\fr\subseteq G(\bA)$, indexed by certain integral ideals $\fr$ of $F$. We have the Shimura threefold $\Sh(G,K_\fr)$ over $\bQ$. Put $G^\flat\coloneqq(B^\flat)^\times$ and let $K_\fr^\flat\subseteq G^\flat(\bA)$ be induced from $K_\fr$. Then we have the Shimura curve $\Sh(G^\flat,K_\fr^\flat)$ over $\bQ$ with a canonical finite morphism to $\Sh(G,K_\fr)$. Using this $1$-cycle, we obtain, under certain conditions, a cohomology class
\[
\Theta_{p,\fr}\in\rH^1_f(\bQ,\sfM(E)_p)^{\oplus a(\fr)},
\]
where $\rH^1_f(\bQ,\sfM(E)_p)$ is the \emph{Bloch--Kato Selmer group} (Definition \ref{de:selmer}) of the Galois representation $\sfM(E)_p$ (with coefficient $\bQ_p$), and $a(\fr)>0$ is some integer depending on $\fr$. See Section \ref{ss:selmer} for more details of this construction. We have the following theorem on bounding the Bloch--Kato Selmer group using the class $\Theta_{p,\fr}$.

\begin{theorem}[Theorem \ref{th:selmer}]\label{T:main4}
Let $E$ be a modular elliptic curve over $F$ satisfying Assumption \ref{as:elliptic_curve}. For a rational prime $p$, if there exists a perfect pair $(p,\fr)$ such that $\Theta_{p,\fr}\neq 0$, then we have
\[\dim_{\bQ_p}\rH^1_f(\bQ,\sfM(E)_p)=1.\]
See Definition \ref{de:perfect_pair} for the meaning of perfect pairs, and also Remark \ref{re:perfect}.
\end{theorem}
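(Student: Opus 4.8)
The plan is to bound the Selmer group $\rH^1_f(\bQ,\sfM(E)_p)$ by exhibiting enough local conditions to cut it down, following the Kolyvagin-style Euler system / bipartite Euler system strategy adapted to this triple product setting. First I would establish that the class $\Theta_{p,\fr}$ is indeed a genuinely nonzero element of the Bloch--Kato Selmer group; this is where the construction via the Shimura curve cycle $\Sh(G^\flat,K^\flat_\fr)\to\Sh(G,K_\fr)$ and the $p$-adic Abel--Jacobi map enters, and we must verify that the local conditions defining $\rH^1_f$ are satisfied at all places, in particular at $p$ (crystallinity, using that $E$ has good reduction there or adjusting the prime) and at the ramified places (using local Tate duality and the geometry of the integral model). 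Given that $\sfM(E)_p$ is essentially the multiplicative induction of the Tate module $T_pE$ from $F$ to $\bQ$ and is conjugate self-dual with the sign making the Selmer group a priori of odd-ish parity, producing a single nonzero class should force $\dim\leq 1$ once we show the complementary bound.

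The core of the argument is the upper bound $\dim_{\bQ_p}\rH^1_f(\bQ,\sfM(E)_p)\leq 1$, and for this the key input is \emph{arithmetic level raising} (Theorem~\ref{T:main3}). The idea: suppose for contradiction the Selmer group has dimension $\geq 2$. Then, working modulo $\lambda$ (a prime of $\bE$ above a suitable $\ell$), we have at least a two-dimensional space of Selmer classes in $\rH^1(\bQ,\sfM(E)_p/\lambda)$. Using Chebotarev together with the large-image hypotheses on the residual representation packaged in Assumption~\ref{as:elliptic_curve}, I would choose an auxiliary inert prime $p'$ (an $n$-admissible or $\lambda$-level-raising prime in the sense of Definition~\ref{D:level-raising-prime}) that simultaneously (i) is a level-raising prime so that Theorem~\ref{T:main3} applies and the Abel--Jacobi map \eqref{E:AJ3} is surjective, and (ii) detects the putative extra Selmer class via its restriction being nonzero in the singular quotient $\rH^1_{\sing}(\bQ_{p'},-)$. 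The surjectivity of \eqref{E:AJ3}, combined with the explicit reciprocity law relating the image of $\Theta_{p,\fr}$ under localization at $p'$ to the value of the modular form / automorphic-side quantity governing level raising, then yields a contradiction: the first reciprocity law forces the localization of $\Theta$ to be nonzero while the second (together with the extra Selmer class) forces a relation that cannot hold. This is the standard "if $\Theta\neq 0$ then the Selmer group is no bigger than what $\Theta$ spans" mechanism.

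Concretely the steps in order are: (1) recall the definition of $\sfM(E)_p$, its self-duality and the relevant local conditions, and reduce to a residual (mod $\lambda$) statement by a standard limiting/compatibility argument over a cofinal system of $\lambda$; (2) prove a \emph{first reciprocity law}: the localization at a level-raising prime $p'$ of the cycle class $\Theta_{p,\fr}$ equals (up to units) the image under the Abel--Jacobi map \eqref{E:AJ3} of an explicit function on $\fB\times\bfSh(G',K^p)(\bF_p^\ac)$ built from the quaternionic form, hence is nonzero for suitable $p'$ by Theorem~\ref{T:main3}; (3) prove a \emph{second reciprocity law} / comparison that controls the unramified (finite) part of $\Theta_{p,\fr}$ at such primes; (4) run the Chebotarev argument: given a hypothetical Selmer class $s$ not in the line spanned by the (reduction of) $\Theta$, choose $p'$ with $\loc_{p'}(s)\neq 0$ in the singular part while $\loc_{p'}(\Theta)\neq 0$, and derive a contradiction via the global reciprocity (sum of local invariants $=0$) pairing $s$ against $\Theta$; (5) conclude $\dim\leq 1$, and combine with $\Theta\neq0$ to get equality.

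The main obstacle I expect is step (2), the first reciprocity law: one must match the geometric/cohomological object — the image of the Shimura-curve cycle under the $p'$-adic Abel--Jacobi map, localized at $p'$ — with the combinatorial automorphic object appearing in arithmetic level raising, and this requires a careful analysis of the bad reduction of the integral model $\bfSh(G,K^{p'})$ at $p'$ (the supersingular/superspecial locus description of Theorems~\ref{T:main1} and~\ref{T:main2}), of the weight spectral sequence / vanishing cycles computing $\rH^g$ there, and of how the class of the Shimura curve restricts to the special fiber. Ensuring all the Hecke-equivariance and the precise normalization of the Abel--Jacobi map are compatible — so that "nonzero cycle" really does translate to "nonzero Selmer localization" — is the technical heart. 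A secondary difficulty is verifying, under Assumption~\ref{as:elliptic_curve}, that the residual image is large enough to supply the Chebotarev primes with \emph{both} the level-raising property and the desired behavior on the fixed finite-dimensional space of candidate Selmer classes; this is a group-theoretic input about $\GL_2$-type images in the triple-product representation that must be stated and checked carefully.
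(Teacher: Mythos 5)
There is a genuine structural gap in your step (4): you propose to derive the contradiction by pairing the hypothetical extra Selmer class against $\Theta_{p,\fr}$ itself (localized at a single auxiliary prime $p'$), but this pairing carries no information. Both classes are everywhere finite, and under local Tate duality the finite subspaces are exact annihilators of each other, so every local term $\langle s,\Theta_{p,\fr}\rangle_v$ vanishes and the global reciprocity relation is vacuous. Moreover you ask that the auxiliary prime detect $s$ through a nonzero image in $\rH^1_{\sing}(\bQ_{p'},-)$; that is impossible, since a Selmer class is unramified at such a prime and its singular projection is zero. What the actual argument does is the opposite: by Chebotarev one arranges that the \emph{finite} localizations of the Selmer classes are units (via the maps $\Psi_w$), and these are paired against the \emph{singular} part of a genuinely new, ramified class. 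Concretely, one needs a \emph{pair} of admissible primes $\underline\ell=(\ell,\ell')$ and the cycle class $\Theta^\nu_{p,\fr,\underline\ell}$ on the Shimura variety $\cX_{\fr,\underline\ell}$ attached to the quaternion algebra with ramification enlarged by $\{\fl,\fl'\}$ (two new places, to keep the ramification set of even cardinality). The second reciprocity law — which is where Theorem \ref{T:main3} enters, applied with coefficient prime $p$ and level-raising prime $\ell$, upgraded to an isomorphism (Proposition \ref{pr:refinement}, using $\Delta\neq\emptyset$) — computes the unramified localization $\loc_\ell\Theta^\nu_{p,\fr}$ in terms of a function on a Shimura set; the first reciprocity law (imported from \cite{Liu2}) computes the singular residue $\partial\loc_{\ell'}\Theta^\nu_{p,\fr,\underline\ell}$ in terms of the same Shimura-set data; the contradiction comes from $\sum_v\langle s',\Theta^\nu_{p,\fr,\underline\ell}\rangle_v=0$ applied to a second basis vector $s'$ with $\Psi_w(s')=0$ and $\Psi_{w'}(s')$ a unit. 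Without the construction of these auxiliary ramified classes on the doubly level-raised variety, the mechanism you describe cannot close.

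A secondary but real defect is the coefficient bookkeeping. The theorem only assumes $\Theta_{p,\fr}\neq 0$ with $\bQ_p$-coefficients, so it may be divisible by a high power $p^{\nu_0}$ in the integral lattice; a purely residual (mod one prime) reduction as in your step (1) does not suffice, and in any case the modulus must be a power of the Selmer prime $p$ itself, not a prime $\lambda$ above some auxiliary $\ell$. The paper works modulo $p^\nu$ with $\nu>\nu_0+\nu_\Box$, normalizes $p^{\nu_0}s=\delta^\fd_*\Theta^\nu_{p,\fr}$ after pushing forward by a degeneracy map $\delta^\fd$ (using the $\fr$-cleanness in Definition \ref{de:perfect_pair} to land in a lattice realizing $\sfM(E)_p$), and tracks exact $p$-valuations on both sides of the reciprocity laws: the pairing at $\ell'$ has valuation exactly $\nu_0$, while all other contributions are divisible by $p^{\nu-\nu_\Box}$, whence the contradiction. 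Your proposal, as written, has neither the valuation control nor the two-prime annihilator construction, and it also misassigns which reciprocity law is the new input proved via the arithmetic level raising of this paper.
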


The above theorem is closely related to the Bloch--Kato conjecture. We refer readers to the Introduction of \cite{Liu1} for the background of this conjecture, especially how Theorem \ref{T:main4} can be compared to the seminal work of Kolyvagin \cite{Kol90} and the parallel result \cite{Liu1}*{Theorem~1.5} for another triple product case. In particular, we would like to point out that under the (conjectural) triple product version of the Gross--Zagier formula and the Beilinson--Bloch conjecture on the injectivity of the Abel--Jacobi map, the following two statements should be equivalent:
\begin{itemize}
  \item $L'(0,\sfM(E))\neq 0$ (note that $L(0,\sfM(E))=0$); and
  \item there exists some $\fr_0$ such that for every other $\fr$ contained in $\fr_0$, we have $\Theta_{p,\fr}\neq 0$ as long as $(p,\fr)$ is a perfect pair.
\end{itemize}
Assuming this, then Theorem \ref{T:main4} implies that if $L'(0,\sfM(E))\neq 0$, that is, $\ord_{s=0}L(s,\sfM(E))=1$, then $\dim_{\bQ_p}\rH^1_f(\bQ,\sfM(E)_p)=1$ for all but finitely many $p$. This is certainly evidence toward the Bloch--Kato conjecture for the motive $\sfM(E)$.

At this point, it is not clear how the arithmetic level raising, Theorem \ref{T:main3}, is related to Theorem \ref{T:main4}. We will briefly explain this in the next section.

\subsection{Structure and strategies}

There are four chapters in the main part. In short words, Section \ref{ss:2} is responsible for the basics on Shimura varieties that we will use later; Section \ref{ss:3} is responsible for Theorems \ref{T:main1} and \ref{T:main2}; Section \ref{ss:4} is responsible for Theorem \ref{T:main3}; and Section \ref{ss:5} is responsible for Theorem \ref{T:main4}.

In Section \ref{ss:2}, we study certain Shimura varieties and their integral models attached to both unitary groups of rank $2$ and quaternion algebras, and compare them through Deligne's recipe of connected Shimura varieties. The reason we have to study unitary Shimura varieties is the following: In the proof of Theorems \ref{T:main1}, \ref{T:main2} and \ref{T:main3}, we have to use induction process to go through certain quaternionic Shimura varieties associated to $B$ that are \emph{not} totally indefinite. Those Shimura varieties are not (coarse) moduli spaces but we still want to carry the information from moduli interpretation through the induction process. Therefore, we use the technique of changing Shimura data by studying closely related unitary Shimura varieties, which are of PEL-type. Such argument is coherent with \cite{TX1} in which the authors study Goren--Oort stratification on quaternionic Shimura varieties.

In Section \ref{ss:3}, we first construct candidates for the supersingular locus in Theorem \ref{T:main1} via Goren--Oort strata, which are studied in \cite{TX1}, and then prove that they exactly form the entire supersingular locus, both through an induction argument. As we mentioned previously, during the induction process, we need to compare quaternionic Shimura varieties to unitary ones. At last, we identify and prove certain properties for the superspecial locus, in some special cases.

In Section \ref{ss:4}, we state and prove the arithmetic level raising result. Using the non-degeneracy of certain intersection matrix proved in \cite{TX2}, we can reduce  Theorem \ref{T:main3} to establishing a similar isomorphism on certain quaternionic Shimura curves. Then we use the well-known argument of Ribet together with Ihara's lemma in this context to establish such isomorphism on curves.

In Section \ref{ss:5}, we focus on the number theoretical application of the arithmetic level raising established in the previous chapter. The basic strategy to bound the Selmer group follows the same line as in \cites{Kol90,Liu1,Liu2}. Namely, we construct a family of cohomology classes $\Theta^\nu_{p,\fr,\underline\ell}$ to serve as annihilators of the Selmer group after quotient by the candidate class $\Theta_{p,\fr}$ in rank $1$ case. In the case considered here, those cohomology classes are indexed by an integer $\nu$ as the depth of congruence, and a pair of rational primes $\underline\ell=(\ell,\ell')$ that are ``$p^\nu$-level raising primes'' (see Definition \ref{de:admissible} for the precise terminology and meaning). The key ideal is to connect $\Theta_{p,\fr}$ and various $\Theta^\nu_{p,\fr,\underline\ell}$ through some objects in the middle, that is, some mod-$p^\nu$ modular forms on certain Shimura set. Following past literature, the link between $\Theta_{p,\fr}$ and those mod-$p^\nu$ modular forms is called the \emph{second explicit reciprocity law}; while the link between $\Theta^\nu_{p,\fr,\underline\ell}$ and those mod-$p^\nu$ modular forms is called the \emph{first explicit reciprocity law}. The first law in this context has already been established by one of us in \cite{Liu2}. To establish the second law, we use Theorem \ref{T:main3}; namely, we have to compute the corresponding element in the left-hand side in the isomorphism of Theorem \ref{T:main3} of the image of $\Theta_{p,\fr}$ in the right-hand side.


\subsection{Notation and conventions}
\label{ss:notation}

The following list contains basic notation and conventions we fix throughout the article. We will usually not recall them when we use, as most of them are common.

\begin{itemize}
  \item Let $\Lambda$ be an abelian group and $\cS$ a finite set. We denote by $\Gamma(\cS,\Lambda)$ the abelian group of $\Lambda$-valued functions on $\cS$.

  \item For a finite set $\tS$, we denote by $|S|$ its cardinality.

  \item If a base is not specified in the tensor operation $\otimes$, then it is $\bZ$. For an abelian group $A$, put $\widehat{A}\coloneqq A\otimes(\varprojlim_n\bZ/n)$. In particular, we have $\widehat\bZ=\prod_{l}\ZZ_l$, where $l$ runs over all rational primes. For a fixed rational prime $p$, we put $\widehat\ZZ^{(p)}\coloneqq\prod_{l\neq p}\ZZ_l$.

  \item We denote by $\bA$ the ring of ad\`eles over $\QQ$. For a set $\Box$ of places of $\bQ$, we denote by $\bA^\Box$ the ring of ad\`eles away from $\Box$. For a number field $F$, we put $\bA^\Box_F\coloneqq\bA^\Box\otimes_\bQ F$. If $\Box=\{v_1,\dots,v_n\}$ is a finite set, we will also write $\bA^{v_1,\dots,v_n}$ for $\bA^\Box$.

  \item For a field $K$, denote by $K^\ac$ the algebraic closure of $K$ and put $\rG_K\coloneqq\Gal(K^\ac/K)$. Denote by $\bQ^\ac$ the algebraic closure of $\bQ$ in $\bC$. When $K$ is a subfield of $\bQ^\ac$, we take $\rG_K$ to be $\Gal(\bQ^\ac/K)$ hence a subgroup of $\rG_\bQ$.

  \item For a number field $K$, we denote by $\cO_K$ the ring of integers in $K$. For every finite place $v$ of $\cO_K$, we denote by $\cO_{K,v}$ the ring of integers of the completion of $K$ at $v$.

  \item If $K$ is a local field, then we denote by $\cO_K$ its ring of integers, $\rI_K\subseteq\rG_K$ the inertia subgroup. If $v$ is a rational prime, then we simply write $\rG_v$ for $\rG_{\bQ_v}$ and $\rI_v$ for $\rI_{\bQ_v}$.

  \item Let $K$ be a local field, $\Lambda$ a ring, and $N$ a $\Lambda[\rG_K]$-module. We have an exact sequence of $\Lambda$-modules
      \begin{align*}
      0\to\rH^1_\unr(K,N)\to\rH^1(K,N)\xrightarrow{\partial}\rH^1_\sing(K,N)\to0,
      \end{align*}
      where $\rH^1_\unr(K,N)$ is the submodule of unramified classes.

  \item Let $\Lambda$ be a ring and $N$ a $\Lambda[\rG_\bQ]$-module. For each prime power $v$, we have the localization map $\loc_v\colon\rH^1(\bQ,N)\to\rH^1(\bQ_v,N)$ of $\Lambda$-modules.

  \item Denote by $\dP^1$ the projective line scheme over $\bZ$, and $\dG_m=\Spec\bZ[T,T^{-1}]$ the multiplicative group scheme.

  \item Let $X$ be a scheme. The cohomology group $\rH^\bullet(X,-)$ will always be computed on the \'{e}tale site of $X$. If $X$ is of finite type over a subfield of $\bC$, then $\rH^\bullet(X(\bC),-)$ will be understood as the Betti cohomology of the associated complex analytic space $X(\bC)$.
\end{itemize}

\subsubsection*{Acknowledgements}

The authors would like to thank Liang~Xiao for his collaboration in some previous work with Y.~T., which plays an important role in the current one. They also thank Mladen~Dimitrov for helpful discussion concerning his work of cohomology of Hilbert modular varieties. Y.~L. would like to thank the hospitality of Universit\"{a}t Bonn during his visit to Y.~T. where part of the work was done. Y.~L. is partially supported by NSF grant DMS--1702019 and a Sloan Research Fellowship.

\section{Shimura varieties and moduli interpretations}
\label{ss:2}

In this chapter, we study certain Shimura varieties and their integral models attached to both unitary groups of rank $2$ and quaternion algebras, and compare them through Deligne's recipe of connected Shimura varieties.

Let $F$ be a totally real number field, and $p\geq3$ a rational prime unramified in $F$. Denote by $\Sigma_{\infty}=\Hom_{\QQ}(F,\CC)$ the set of archimedean places of $F$, and $\Sigma_p$ the set of $p$-adic places  of $F$ above $p$. We fix throughout Section \ref{ss:2} and Section \ref{ss:3} an isomorphism $\iota_p\colon\CC\xrightarrow\sim\QQ_{p}^{\ac}$. Via $\iota_p$, we identify $\Sigma_{\infty}$ with the set of $p$-adic embeddings of $F$ via $\iota_p$. For each $\fp\in\Sigma_p$, we put $g_{\fp}\coloneqq[F_{\fp}:\QQ_p]$ and denote by $\Sigma_{\infty/\fp}$ the subset of $p$-adic embeddings that induce $\fp$, so that we have
\[
\Sigma_{\infty}=\coprod_{\fp\in\Sigma_p}\Sigma_{\infty/\fp}.
\]
Since $p$ is unramified in $F$, the Frobenius, denoted by $\sigma$, acts as a cyclic permutation on each $\Sigma_{\infty/\fp}$.

We fix also a totally indefinite quaternion algebra $B$ over $F$ such that $B$ splits at all places of $F$ above $p$.

\subsection{Quaternionic Shimura varieties}
\label{S:quaternion-shimura}

Let $\tS$ be a subset of $\Sigma_{\infty}\cup\Sigma_{p}$ of even cardinality, and put $\tS_{\infty}\coloneqq\tS\cap \Sigma_{\infty}$. For each $\fp\in\Sigma_p$, we put $\tS_{\fp}\coloneqq\tS\cap(\Sigma_{\infty/\fp}\cup \{\fp\})$ and $\tS_{\infty/\fp}=\tS\cap\Sigma_{\infty/\fp}$. We suppose that $\tS_{\fp}$ satisfies the following assumptions.

\begin{assumption}\label{A:assumption-S}
Take $\fp\in\Sigma_p$.
\begin{enumerate}
  \item If $\fp\in\tS$, then $g_{\fp}$ is odd and $\tS_{\fp}=\Sigma_{\infty/\fp}\cup\{\fp\}$.

  \item If $\fp\notin \tS$, then $\tS_{\infty/\fp}$ is a disjoint union of chains of even cardinality under the  Frobenius action on $\Sigma_{\infty/\fp}$, that is, either $\tS_{\fp}=\Sigma_{\infty/\fp}$ has even cardinality or there exist $\tau_1,\dots, \tau_r\in\Sigma_{\infty/\fp}$ and integers $m_1,\dots, m_r\geq 1$ such that
      \begin{equation}\label{E:decomposition-S}
      \tS_{\fp}=\coprod_{i=1}^r\{\tau_i,\sigma^{-1}\tau_i, \dots, \sigma^{-2m_i+1}\tau_i\}
      \end{equation}
      and $\sigma\tau_i,\sigma^{-2m_i}\tau_i\not\in\tS_{\fp}$.
\end{enumerate}
\end{assumption}

Let $B_{\tS}$ denote the quaternion algebra over $F$ whose ramification set is the union of $\tS$ with the ramification set of $B$. We put $G_{\tS}\coloneqq\Res_{F/\bQ}(B^{\times}_{\tS})$. For $\tS=\emptyset$, we usually write $G=G_{\emptyset}$. Then $G_{\tS}$ is isomorphic to $G$ over $F_{v}$ for every place $v\notin \tS$, and we fix an isomorphism
\[
G_{\tS}(\bA^{\infty,p})\cong G(\bA^{\infty,p}).
\]
Let $\tT$ be a subset of $\tS_{\infty}$, and  $\tT_{\fp}=\tS_{\infty/\fp}\cap \tT$ for each $\fp\in \Sigma_p$. Throughout this paper, we will always assume that $|\tT_{\fp}|=\#\tS_{\fp}/2$.
Consider the Deligne homomorphism
\[
\xymatrix@R=0pt{
h_{\tS,\tT}\colon \dS(\bR)=\bC^{\times}\ar[rr] && G_{\tS}(\bR)\cong \GL_2(\bR)^{\Sigma_{\infty}-\tS_{\infty}}\times (\bH^{\times})^{\tT}\times (\bH^{\times})^{\tS_{\infty}-\tT}\\
x+\sqrt{-1}y\ar@{|->}[rr] &&
{\bigg(\big(\begin{smallmatrix}x&y\\ -y &x\end{smallmatrix}\big)^{\Sigma_{\infty}-\tS_{\infty}}, (x^2+y^2)^{ \tT}, 1^{\tS_{\infty}-\tT}\bigg)}
}
\]
where $\bH$ denotes the Hamiltonian algebra over $\bR$. Then $G_{\tS,\tT}\coloneqq(G_{\tS},h_{\tS,\tT})$ is a Shimura datum, whose reflex field $F_{\tS,\tT}$  is the subfield of the Galois closure of $F$ in $\CC$ fixed by the  subgroup stabilizing both $\tS_{\infty}$ and $\tT$. For instance, if $\tS_\infty=\emptyset$, then $\tT=\emptyset$ and  $F_{\tS}=\QQ$. Let $\wp$ denote the $p$-adic place of $F_{\tS,\tT}$ via the embedding $F_{\tS,\tT}\hookrightarrow\CC\xra{\sim}\QQ_p^{\ac}$.

In this article, we fix an open compact subgroup $K_p=\prod_{\fp\in\Sigma_p}K_{\fp}\subseteq G_{\tS}(\QQ_p)=\prod_{\fp\in\Sigma_p}(B_{\tS}\otimes_{F}F_{\fp})^{\times}$, where
\begin{itemize}
  \item $K_{\fp}$ is a hyperspecial subgroup if $\fp\notin \tS$, and

  \item $K_{\fp}=\cO^{\times}_{B_{\fp}}$ is the unique maximal open compact subgroup of $(B_{\tS}\otimes_{F}F_{\fp})^{\times}$ if $\fp\in\tS$.
\end{itemize}
For a sufficiently small open compact subgroup $K^p\subseteq G(\bA^{\infty,p})\cong G_{\tS}(\bA^{\infty,p})$, we have the Shimura variety $\Sh(G_{\tS,\tT},K^p)$ defined over $F_{\tS}$ whose $\CC$-points are given by
\[
\Sh(G_{\tS,\tT}, K^p)(\CC)=G_{\tS}(\QQ)\backslash(\fH^{\pm})^{\Sigma_{\infty}-\tS_{\infty}}\times G_{\tS}(\bA^{\infty})/K^p K_p
\]
where $K=K^pK_p\subseteq G(\bA^{\infty})$, and $\fH^{\pm}=\dP^1(\CC)-\dP^1(\RR)$ is the union of upper and lower half-planes. Note that the  algebraic variety $\Sh(G_{\tS,\tT}, K^p)_{ \QQ^{\ac}}$ over $\QQ^{\ac}$ is independent of $\tT$, but different choices of $\tT$ will give rise to different actions of $\Gal( \QQ^{\ac}/F_{\tS,\tT})$ on $\Sh(G_{\tS,\tT}, K^p)_{ \QQ^{\ac}}$.

When $\tS_{\infty}=\Sigma_{\infty}$, $\Sh(G_{\tS,\tT}, K^p)(\QQ^{\ac})$ is a discrete set and the action of $\Gamma_{F_{\tS,\tT}}\coloneqq\Gal(\QQ^{\ac}/F_{\tS,\tT})$ is given as follows. Note that the Deligne homomorphism $h_{\tS,\tT}$ factors through the center $T_{F}=\Res_{F/\QQ}(\dG_m)\subseteq G_{\tS}$, and the action of $\Gamma_{F_{\tS,\tT}}$  factors thus through its maximal abelian quotient $\Gamma_{F_{\tS,\tT}}^{\mathrm{ab}}$. Let $\mu\colon\dG_{m,F_{\tS,\tT}}\to T_{F}\otimes_{\QQ}F_{\tS,\tT}$ be the Hodge cocharacter (defined over the reflex field $F_{\tS,\tT}$) associated with $h_{\tS,\tT}$. Let  $\mathrm{Art}\colon\bA_{F_{\tS,\tT}}^{\infty,\times}\to \Gamma^{\mathrm{ab}}_{F_{\tS,\tT}}$ denote the  Artin reciprocity map that sends uniformizers to geometric Frobenii. Then the action of  $\Art(g)$ on $\Sh(G_{\tS,\tT}, K^p)(\QQ^{\ac})$ is given by the multiplication by the image of $g$ under the composite map
\[
\bA_{F_{\tS,\tT}}^{\infty,\times}\xra{\mu} T_F(\bA_{F_{\tS,\tT}}^{\infty})=(F\otimes_{\QQ} \bA_{F_{\tS,\tT}}^{\infty})^{\times}\xra{\rN_{F_{\tS,\tT}/\QQ}}\bA_{F}^{\infty,\times}\subseteq G_{\tS}(\bA^{\infty}).
\]
If $\widetilde F$ denotes the Galois closure of $F$ in $\CC$, then the restriction of the action of $\Gamma_{F_{\tS,\tT}}$ to $\Gamma_{\widetilde{F}}$ depends only on $\#\tT$.

We put $\Sh(G_{\tS,\tT})\coloneqq\varprojlim_{K^p}\Sh(G_{\tS,\tT},K^p)$. Let $\Sh(G_{\tS,\tT})^{\circ}$ be the neutral geometric connected component of $\Sh(G_{\tS,\tT})\otimes_{F_\tS}\QQ^{\ac}$, that is, the one containing the image of point
\[
(i^{\Sigma_{\infty}-\tS_{\infty}},1)\in(\fH^{\pm})^{\Sigma_{\infty}-\tS_{\infty}}\times G_{\tS}(\bA^{\infty}).
\]
Then $\Sh(G_{\tS,\tT})^{\circ}\otimes_{\QQ^{\ac},\iota_p}{\QQ_p^{\ac}}$ descends to $\QQ_p^\ur$, the maximal unramified extension of $\QQ_p$ in $\bQ_p^\ac$. Moreover, by Deligne's construction \cite{De79}, $\Sh_{K_p}(G_{\tS,\tT})$ can be recovered from the connected Shimura variety $\Sh(G_{\tS,\tT})^{\circ}$ together with its Galois and Hecke actions (see \cite{TX1}*{2.11} for details in our particular case).

\subsection{An auxiliary CM extension}
\label{S:signature-numbers}

Choose a CM extension $E/F$ such that
\begin{itemize}
  \item $E/F$ is inert at every  place of $F$ where $B$ is ramified,

  \item For $\fp\in\Sigma_p$, $E/F$ is split (resp.\ inert) at $\fp$ if $g_{\fp}$ is even (resp.\ if $g_{\fp}$ is odd).
\end{itemize}
Let $\Sigma_{E,\infty}$ denote the set of complex embeddings of $E$, identified also with the set of $p$-embeddings of $E$ by composing with $\iota_p$. For $\tilde\tau\in\Sigma_{E,\infty}$, we denote by $\tilde\tau^c$ the complex conjugation of $\tilde\tau$. For $\fp\in\Sigma_p$, we denote by $\Sigma_{E,\infty/\fp}$ the subset of $p$-adic embeddings of $E$ inducing $\fp$. Similarly, for a $p$-adic place $\fq$ of $E$, we have the subset $\Sigma_{E,\infty/\fq}\subseteq \Sigma_{E,\infty}$ consisting of $p$-adic embeddings that induce $\fq$.

\begin{assumption}\label{A:assumption-S-tilde}
Consider a subset $\tilde\tS_{\infty}\subseteq \Sigma_{E,\infty}$ satisfying the following
\begin{enumerate}
  \item For each $\fp\in\Sigma_p$, the natural restriction map $\Sigma_{E,\infty/\fp}\to \Sigma_{\infty/\fp}$ induces a bijection $\tilde\tS_{\infty/\fp}\xra{\sim}\tS_{\infty/\fp}$, where $\tilde\tS_{\infty/\fp}=\tilde\tS_{\infty}\cap\Sigma_{E,\infty/\fp}$.

  \item For each $p$-adic place $\fq$ of $E$ above a $p$-adic place $\fp$ of $F$, the cardinality of $\tilde\tS_{\infty/\fq}$ is half of the cardinality of the preimage of $\tS_{\infty/\fp}$ in $\Sigma_{E,\infty/\fq}$.
\end{enumerate}
\end{assumption}
For instance, if $\fp$ splits in $E$ into two places $\fq$ and $\fq^c$ and $\tS_{\fp}$ is given by \eqref{E:decomposition-S}, then the subset
\[
\tilde\tS_{\infty/\fp}=\coprod_{i=1}^r\{\tilde\tau_{i}, \sigma^{-1}\tilde\tau_{i}^c, \dots, \sigma^{-2m_i+2}\tilde\tau_{i},\sigma^{-2m_i+1}\tilde\tau_{i}^c\}
\]
satisfies the requirement. Here, $\tilde\tau_i\in\Sigma_{E,\infty/\fp}$ denotes the lift of $\tau_i$ inducing the $p$-adic place $\fq$. The choice of such a $\tilde\tS_{\infty}$ determines a collection of numbers $s_{\tilde\tau}\in\{0,1,2\}$ for $\tilde\tau\in\Sigma_{E,\infty}$ by the following rules:
\[
s_{\tilde\tau}=\begin{cases}0 &\text{if }\tilde\tau\in\tilde\tS_{\infty},\\
2 &\text{if }\tilde\tau^c\in\tilde\tS_{\infty},\\
1 &\text{otherwise.}\end{cases}
\]
Our assumption on $\tilde\tS_{\infty}$ implies that, for every prime $\fq$ of $E$ above $p$, the set $\{\tilde\tau\in\Sigma_{E,\infty/\fq}\res s_{\tilde\tau}=0\}$ has the same cardinality as $\{\tilde\tau\in\Sigma_{E,\infty/\fq}\res s_{\tilde\tau}=2\}$.

Put $\tilde\tS\coloneqq(\tS,\tilde\tS_{\infty})$ and $T_{E}\coloneqq\Res_{E/\QQ}(\dG_m)$. Consider the
Deligne homomorphism
\[
\xymatrix@R=0pt{
h_{E,\tilde\tS, \tT}\colon  \dS(\RR) = \CC^\times \ar[rr] &&
T_E(\RR) = \prod_{\tau\in \Sigma_{\infty}}(E\otimes_{F,\tau}\RR)^{\times}\cong (\CC^{\times})^{\tS_{\infty}-\tT}\times (\CC^{\times})^{\tT}\times (\CC^{\times})^{\tS_{\infty}^c}
\\
z=x+\sqrt{-1}y \ar@{|->}[rr] && \bigg( (\bar z,\dots, \bar z),(z^{-1},\dots, z^{-1}), (1,\dots, 1)\bigg).}
\]
where, for each $\tau\in\tS_{\infty}$, we identify $E\otimes_{\tau,F}\RR$ with $\CC$ via the embedding $\tilde\tau\colon E\hookrightarrow\CC$ with $\tilde\tau \in\tilde\tS_{\infty}$ lifting $\tau$.
We write $T_{E,\tilde\tS,  \tT}=(T_E, h_{E,\tilde \tS,\tT})$ and put $K_{E,p}\coloneqq(\cO_{E}\otimes\ZZ_p)^{\times}\subseteq T_{E}(\QQ_p)$, the unique maximal open compact subgroup of $T_{E}(\QQ_p)$. For each open compact subgroup $K_{E}^p\subseteq T_{E}(\bA^{\infty,p})$, we have the zero-dimensional  Shimura variety $\Sh(T_{E,\tilde\tS,\tT},K_E)$ whose $\QQ^{\ac}$-points are given by
\[
\Sh(T_{E,\tilde\tS, \tT}, K_E)(\QQ^{\ac})=E^{\times}\backslash T_{E}(\bA^{\infty})/K_E^pK_{E,p}.
\]

\subsection{Unitary Shimura varieties}
\label{S:unitary-Sh-var}

Put $T_F\coloneqq\Res_{F/\bQ}(\dG_{m,F})$. Then the reduced norm on $B_{\tS}$ induces a morphism of $\bQ$-algebraic groups
\[
\nu_{{\tS}}\colon G_{\tS}\to T_{F}.
\]
Note that the center of $G_{\tS}$ is isomorphic to $T_{F}$. Let $G''_{\tilde\tS,\tT}$ denote the quotient of $G_{\tS}\times T_{E}$ by $T_{F}$ via the embedding
\[
T_F\hookrightarrow G_{\tS}\times T_{E},\quad z\mapsto (z, z^{-1}),
\]
and let $G'_{\tilde\tS}$ be the inverse image of $\dG_m\subseteq T_F$ under the norm map
\[
\Nm\colon G''_{\tilde\tS}=(G_{\tS}\times T_{E})/T_F\to T_F,\quad (g,t)\mapsto \nu_{\tS}(g)\Nm_{E/F}(t).
\]
Here, the subscript $\tilde \tS$ is to emphasize that we will take the Deligne homomorphism $h_{\tilde\tS}''\colon \CC^{\times}\to G_{\tilde\tS}''(\RR)$ induced by $h_{\tS,\tT}\times h_{E,\tilde\tS,\tT}$, which is independent of $\tT$.  Note that the image of $h''_{\tilde\tS}$ lies in $G'_{\tilde\tS}(\RR)$, and we denote by $h'_{\tilde\tS}\colon \CC^{\times}\to G'_{\tilde\tS}(\RR)$ the induced map.

As for the quaternionic case, we fix the level at $p$ of the Shimura varieties for $G''_{\tilde\tS}$ and $G'_{\tilde\tS}$ as follows. Let $K_p''\subseteq G''_{\tilde\tS}(\QQ_p)$ be the image of $K_{p}\times K_{E,p}$, and put $K_{p}'\coloneqq K_p''\cap G'_{\tilde\tS}(\QQ_p)$. Note that $K''_p$ (resp.\ $K'_p$) is not a maximal open compact subgroup of $G''_{\tilde\tS}(\QQ_p)$ (resp.\ $G'_{\tilde\tS}(\QQ_p)$), if $\tS$ contains some $p$-adic place $\fp\in\Sigma_p$. For sufficiently open compact subgroups $K''^p\subseteq G''_{\tilde\tS}(\bA^{\infty,p})$ and $K'^p\subseteq G'_{\tilde\tS}(\bA^{\infty,p})$, we get Shimura varieties with $\CC$-points given by
\begin{gather*}
\Sh(G''_{\tilde\tS}, K''^p)(\CC)=G_{\tilde\tS}''(\QQ)\backslash (\fH^{\pm})^{\Sigma_{\infty}-\tS_{\infty}}\times G''_{\tilde\tS}(\bA^{\infty})/K''^pK''_p,\\
\Sh(G'_{\tilde\tS}, K'^p)(\CC)=G_{\tilde\tS}'(\QQ)\backslash(\fH^{\pm})^{\Sigma_{\infty}-\tS_{\infty}}\times G'_{\tilde\tS}(\bA^{\infty})/K'^p K'_p.
\end{gather*}
We put
\[
\Sh(G''_{\tilde\tS})\coloneqq\varprojlim_{K''^p}\Sh(G''_{\tilde\tS},K''^p),\quad \Sh(G'_{\tilde\tS})=\varprojlim_{K'^p}\Sh(G'_{\tilde\tS,\tT}, K'^p).
\]
The common reflex field $E_{\tilde\tS}$ of $\Sh(G_{\tilde\tS}')$ and $\Sh(G''_{\tilde\tS})$ is a subfield of the Galois closure of $E$ in $\CC$. The isomorphism $\iota_p\colon \CC\xrightarrow\sim\QQ_p^{\ac}$ defines a $p$-adic embedding of $E_{\tilde\tS}\hookrightarrow \QQ_{p}^{\ac}$, and hence a $p$-adic place $\tilde\wp$ of $E_{\tilde\tS}$. Then $E_{\tilde\tS}$ is unramified at $\tilde\wp$. Let $\Sh(G''_{\tilde\tS})^{\circ}$ (resp.\ $\Sh(G'_{\tilde\tS})^{\circ}$) denote the neutral geometric connected component of $\Sh(G''_{\tilde\tS})\otimes_{E_{\tilde\tS}}\bQ^\ac$ (resp.\ $\Sh(G'_{\tilde\tS})\otimes_{E_{\tilde\tS}}\bQ^\ac$). Then both $\Sh(G''_{\tilde\tS})^{\circ}\otimes_{\bQ^\ac,\iota_p}\bQ_p^\ac$ and $\Sh(G'_{\tilde\tS})^{\circ}\otimes_{\bQ^\ac,\iota_p}\bQ_p^\ac$ can be descended to $\QQ_p^\ur$.

In summary, we have a diagram of morphisms of algebraic groups
\[
G_{\tS}\leftarrow G_{\tS}\times T_{E}\rightarrow G''_{\tilde\tS}=(G_{\tS}\times T_{E})/T_F\leftarrow G'_{\tilde\tS}
\]
compatible with Deligne homomorphisms, such that the induced morphisms on the derived and adjoint groups are isomorphisms. By Deligne's theory of connected Shimura varieties (see \cite{TX1}*{Corollary~2.17}), such a diagram induces canonical isomorphisms between the neutral geometric connected components of the associated Shimura varieties:
\begin{equation}\label{E:isom-conn-Sh}
\Sh(G_{\tS,\tT})^{\circ}\xleftarrow{\sim} \Sh(G''_{\tilde\tS})^{\circ}\xrightarrow{\sim}\Sh(G'_{\tilde\tS})^{\circ}.
\end{equation}
Since a Shimura variety can be recovered from its neutral connected component together with its Hecke and Galois actions, one can transfer integral models of $\Sh(G'_{\tilde\tS})$ to integral models of $\Sh(G_{\tS,\tT})$ (see \cite{TX1}*{Corollary~2.17}).

\subsection{Moduli interpretation for unitary Shimura varieties}
\label{S:unitary-moduli}

Note that $\Sh(G'_{\tilde\tS},K'^p)$ is a Shimura variety of PEL-type. To simplify notation, let $\cO_{\tilde\wp}$ be the ring of integers of the completion of  $E_{\tilde\tS}$ at $\tilde\wp$. We recall the integral model of $\Sh(G'_{\tilde\tS},K'^p)$ over $\cO_{\tilde\wp}$ defined in \cite{TX1} as follows.


Let $K'^p\subseteq G'_{\tilde\tS}(\bA^{\infty,p})$ be an open compact subgroup such that $K'^pK'_p$ is neat (for PEL-type Shimura data). We put $D_{\tS}\coloneqq B_{\tS}\otimes_{F}E$, which is isomorphic to $\Mat_2(E)$ by assumption on $E$. Denote by $b\mapsto \bar b$ the involution on $D_{\tS}$ given by the product of the canonical involution on $B_{\tS}$ and the complex conjugation on $E/F$. Write $E=F(\sqrt{\fd})$ for some totally negative element $\fd\in F$ that is a $\fp$-adic unit for every $\fp\in\Sigma_p$. We choose also an element $\delta\in D_{\tS}^{\times}$ such that $\bar\delta=\delta$ as in \cite{TX1}*{Lemma~3.8}. Then the conjugation by $\delta^{-1}$ defines a new involution $b\mapsto b^*=\delta^{-1}\bar b\delta$. Consider $W=D_{\tS}$ as a free left $D_{\tS}$-module of rank $1$, equipped with an $*$-hermitian alternating pairing
\begin{equation}\label{E:unitary-alternating}
\psi\colon W\times W\to \QQ, \quad \psi(x,y)=\Tr_{E/\QQ}(\Tr_{D_{\tS}/E}^{\circ}(\sqrt{\fd}x\bar{y}\delta)),
\end{equation}
where $\Tr_{D_{\tS}/E}^{\circ}$ denotes the reduced trace of $D_{\tS}/E$. Then $G'_{\tilde\tS,\tT}$ can be identified with the unitary similitude group of $(W,\psi)$.

We choose an order $\cO_{D_{\tS}}\subseteq D_{\tS}$ that is stable under $*$ and maximal at $p$, and an $\cO_{D_{\tS}}$-lattice $L\subseteq W$ such that $\psi(L,L)\subseteq \ZZ$ and $L\otimes \ZZ_p$ is self-dual under $\psi$. Assume that $K'^p$ is a sufficiently small open compact subgroup of $G'_{\tilde\tS,\tT}(\bA^{\infty,p})$ which stabilizes $L\otimes \widehat{\ZZ}^{(p)}$.


Consider the moduli problem $\underline{\bfSh}(G'_{\tilde\tS},K'^p)$ that associates to each locally noetherian $\cO_{\tilde\wp}$-scheme $S$ the set of isomorphism classes of tuples $(A,\iota,\lambda,\bar\alpha_{K'^p})$, where
\begin{itemize}
  \item $A$ is an abelian scheme over $S$ of dimension $4[F:\QQ]$;

  \item $\iota\colon \cO_{D_{\tS}}\hookrightarrow \End_S(A)$ is an embedding such that the induced action of $\iota(b)$ for $b\in\cO_{E}$ on $\Lie(A/S)$ has characteristic polynomial
      \[
      \det(T-\iota(b)|\Lie(A/S))=\prod_{\tilde\tau\in\Sigma_{E,\infty}}(x-\tilde\tau(b))^{2s_{\tilde\tau}};
      \]

  \item $\lambda\colon A\to A^{\vee}$ is a polarization of $A$ such that
      \begin{itemize}
        \item the Rosati involution defined by $\lambda$ on $\End_S(A)$ induces the involution $b\mapsto b^*$ on $\cO_{D_{\tS}}$,
        \item if $\fp\notin \tS$, $\lambda$ induces an isomorphism of $p$-divisible groups $A[\fp^{\infty}]\xra{\sim} A^\vee[\fp^{\infty}]$, and
        \item if $\fp\in\tS$, then $(\ker \lambda)[\fp^{\infty}]$  is  a finite flat group scheme contained in $A[\fp]$ of rank $p^{4g_{\fp}}$ and the cokernel of induced morphism $\lambda_*\colon\rH^{\deR}_1(A/S)\to\rH^{\deR}_1(A^\vee/S)$ is a locally free module of rank two over $\cO_{S}\otimes_{\ZZ_p}\cO_E/\fp$. Here, $\rH^{\dr}_1(-/S)$ denotes the relative de Rham homology;
      \end{itemize}

  \item $\bar\alpha_{K'^p}$ is a $K'^p$ level structure on $A$, that is, a $K'^p$-orbit of $\cO_{D_{\tS}}$-linear isomorphisms of \'etale sheaves $\alpha\colon L\otimes\widehat{\ZZ}^{(p)}\xra{\sim}\widehat{T}^p(A)$ such that the alternating pairing $\psi\colon L\otimes\widehat{\ZZ}^{(p)}\times L\otimes\widehat{\ZZ}^{(p)}\to \widehat{\ZZ}^{(p)}$ is compatible with the $\lambda$-Weil pairing on $\widehat{T}^p(A)$ via some isomorphism $\widehat{\ZZ}^{(p)}\cong \widehat{\ZZ}^{(p)}(1)$. Here, $\widehat{T}^p(A)=\prod_{l\neq p}T_l(A)$ denotes the product of prime-to-$p$ Tate modules.
\end{itemize}

\begin{remark}\label{R:rational-moduli-unitary}
Sometimes it is convenient to  formulate the moduli problem $\underline{\bfSh}(G'_{\tilde\tS}, K'^p)$ in terms of isogeny classes of abelian varieties: one associates to each locally noetherian $\cO_{\tilde\wp}$-scheme $S$ the equivalence classes of tuples $(A,\iota,\lambda,\bar\alpha^{\rat}_{K'^p})$, where
  \begin{itemize}
    \item $(A,\iota)$ is an abelian scheme \emph{up to prime-to-$p$ isogenies} of dimension $4[F:\QQ]$ equipped with an action $\cO_{D_{\tS}}$ satisfying the determinant conditions as above;

    \item $\lambda$ is a polarization on $A$ satisfying the condition as above;

    \item $\bar\alpha^{\rat}_{K'^p}$ is a rational $K'^p$-level structure on $A$, that is, a $K'^p$-orbit of $\cO_{D_{\tS}}\otimes\bA^{\infty,p}$-linear isomorphisms of \'etale sheaves on $S$:
        \[
        \alpha\colon W\otimes_{\QQ} \bA^{\infty,p}\xra{\sim }\widehat V^p(A)\coloneqq\widehat{T}^p(A)\otimes\bQ
        \]
        such that the pairing $\psi$ on $W\otimes_{\QQ} \bA^{\infty,p}$ is compatible with the $\lambda$-Weil pairing on $\widehat{V}^p(A)$  \emph{up to a scalar in $\bA^{\infty,p,\times}$}.
\end{itemize}
For the equivalence of these two definitions, see \cite{Lan13}.
\end{remark}


\begin{theorem}[\cite{TX1}*{3.14, 3.19}]
The moduli problem $\underline{\bfSh}(G'_{\tilde\tS}, K'^p)$ is representable by a quasi-projective and smooth scheme $\bfSh(G'_{\tilde\tS}, K'^p)$ over $\cO_{\tilde\wp}$ such that
\[
\bfSh(G'_{\tilde\tS}, K'^p)\times_{\cO_{\tilde\wp}}E_{\tilde\tS,\tilde\wp}\cong \Sh(G'_{\tilde\tS}, K'^p)\times_{E_{\tilde\tS}}E_{\tilde\tS,\tT, \tilde\wp}.
\]
Moreover, the projective limit $\bfSh(G'_{\tilde\tS})\coloneqq\varprojlim_{K'^p}\bfSh(G'_{\tilde\tS}, K'^p)$ is an integral canonical model of $\Sh(G'_{\tilde\tS})$ over $\cO_{\tilde\wp}$ in the sense that $\bfSh(G'_{\tilde\tS})$ satisfies the following extension property over $\cO_{\tilde\wp}$: if $S$ is a smooth scheme over $S$, any morphism $S\otimes_{\cO_{\tilde\wp}}E_{\tilde\tS,\tilde\wp}\to \bfSh(G'_{\tilde\tS})$ extends uniquely to a morphism $S\to \bfSh(G'_{\tilde\tS})$.
\end{theorem}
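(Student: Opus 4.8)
The plan is to treat $\underline{\bfSh}(G'_{\tilde\tS},K'^p)$ as the natural integral moduli problem attached to the PEL-type Shimura variety $\Sh(G'_{\tilde\tS},K'^p)$ and to follow the general theory of Kottwitz and Lan \cite{Lan13}, the only genuinely non-formal input being the analysis at the ramified $p$-adic places $\fp\in\tS$. \emph{Representability, quasi-projectivity, and the generic fibre.} Forgetting the $\cO_{D_{\tS}}$-action and retaining only the polarized abelian scheme $(A,\lambda)$ together with a rigidified prime-to-$p$ level structure, one lands in a Siegel-type moduli problem that is representable by a quasi-projective $\ZZ_{(p)}$-scheme (Mumford's geometric invariant theory; see also \cite{Lan13}). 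The remaining data — the $\cO_{D_{\tS}}$-action, the compatibility of $\lambda$ with the involution $*$, the $K'^p$-level structure, and the conditions on $\lambda$ at the places $\fp\in\tS$ — are all cut out by locally closed conditions, so $\underline{\bfSh}(G'_{\tilde\tS},K'^p)$ is representable by a quasi-projective scheme $\bfSh(G'_{\tilde\tS},K'^p)$ over $\cO_{\tilde\wp}$; neatness of $K'^pK'_p$ rules out nontrivial automorphisms of the parametrized objects, which is why one obtains a scheme and not merely an algebraic stack. The identification of the generic fibre with $\Sh(G'_{\tilde\tS},K'^p)$ is the usual comparison between the moduli description and the double-coset description: over $\CC$, Riemann forms and Hodge theory identify $\underline{\bfSh}(G'_{\tilde\tS},K'^p)(\CC)$ with $G'_{\tilde\tS}(\QQ)\backslash(\fH^{\pm})^{\Sigma_{\infty}-\tS_{\infty}}\times G'_{\tilde\tS}(\bA^{\infty})/K'^pK'_p$, and one then descends this identification from $\CC$ to $E_{\tilde\tS,\tT,\tilde\wp}$ via the Shimura--Deligne reciprocity law (\cite{De79}).

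\emph{Smoothness.} Since $\bfSh(G'_{\tilde\tS},K'^p)$ is of finite type over $\cO_{\tilde\wp}$ with smooth generic fibre, it suffices to prove formal smoothness at closed points of the special fibre, which by Grothendieck--Messing theory reduces to analyzing the crystalline deformations of $(A[p^{\infty}],\iota,\lambda)$ subject to the Kottwitz determinant condition. At places away from $p$ there is nothing to do since the level structure is étale. At $\fp\notin\tS$ the level is hyperspecial and $L\otimes\ZZ_p$ is self-dual, so the associated local model is a product of Grassmannians and smoothness is classical (Rapoport--Zink; \cite{Lan13}). The delicate case is $\fp\in\tS$: here $g_{\fp}$ is odd, so by the choice of $E$ the extension $E_{\fp}/F_{\fp}$ is the unramified quadratic extension, $D_{\tS}\otimes_{F}F_{\fp}\cong\Mat_2(E_{\fp})$, and $K'_{\fp}$ is a non-maximal parahoric of $\GL_2(E_{\fp})$. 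One must then compute the corresponding local model, which is cut out by the determinant condition together with the requirement that $\coker\big(\lambda_*\colon\rH^{\dr}_1(A/S)\to\rH^{\dr}_1(A^{\vee}/S)\big)$ be locally free of rank two over $\cO_S\otimes_{\ZZ_p}\cO_E/\fp$; the point — and I expect this to be the main obstacle — is that, for the signatures $s_{\tilde\tau}$ attached to the auxiliary set $\tilde\tS_{\infty}$, this local model turns out to be \emph{smooth}, in contrast with naive $\Gamma_0(\fp)$-type models, which are typically singular. This is exactly where the auxiliary CM extension $E$, the element $\delta$ of \cite{TX1}*{Lemma~3.8}, and the careful choice of $\tilde\tS_{\infty}$ (equivalently, of the collection $(s_{\tilde\tau})_{\tilde\tau}$) are used.

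\emph{The integral canonical model.} Passing to $\bfSh(G'_{\tilde\tS})=\varprojlim_{K'^p}\bfSh(G'_{\tilde\tS},K'^p)$, the transition maps become finite étale, so $\bfSh(G'_{\tilde\tS})$ is formally smooth over $\cO_{\tilde\wp}$. For the extension property, a standard approximation argument (using regularity of both sides together with Zariski--Nagata purity) reduces to the case $S=\Spec R$ with $R$ a complete discrete valuation ring essentially smooth over $\cO_{\tilde\wp}$: one must extend a tuple $(A_K,\iota,\lambda,\bar\alpha^{\rat}_{K'^p})$ over $K=\Frac R$ — which in the projective limit carries a \emph{full} prime-to-$p$ level structure, i.e.\ an isomorphism $W\otimes_{\QQ}\bA^{\infty,p}\xra{\sim}\widehat V^p(A_K)$ defined over $K$ — to a tuple over $R$. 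Such a trivialization being $\rG_K$-equivariant forces $\rG_K$ to act trivially on $\widehat V^p(A_K)$, so for any $\ell\neq p$ the $\ell$-adic Tate module is unramified and $A_K$ has good reduction by the Néron--Ogg--Shafarevich criterion; let $A/R$ be the extending abelian scheme. The $\cO_{D_{\tS}}$-action and the prime-to-$p$ level structure extend uniquely by the Néron mapping property (the latter because it is a trivialization of étale sheaves), and $\lambda$ extends to a polarization of $A$ of the same degree (\cite{Lan13}), so in particular its $\fp$-kernel conditions at $\fp\in\tS$ persist; the determinant condition and the remaining $p$-adic conditions hold over $R$ by flatness of de Rham homology together with their validity on the dense open generic fibre. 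Uniqueness of the extension follows from separatedness. This produces the required morphism $S\to\bfSh(G'_{\tilde\tS})$, and hence $\bfSh(G'_{\tilde\tS})$ is an integral canonical model of $\Sh(G'_{\tilde\tS})$ over $\cO_{\tilde\wp}$.
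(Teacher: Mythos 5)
First, note that the paper does not prove this theorem at all: it is imported verbatim from \cite{TX1}*{3.14, 3.19}, so the comparison is with the proof there. Your outline follows the same (standard Kottwitz--Lan) route as that source: representability by relativizing over a Siegel moduli problem and cutting out the PEL data, smoothness via Grothendieck--Messing/local models, and the extension property via the N\'eron--Ogg--Shafarevich argument at infinite prime-to-$p$ level. Two steps, however, are asserted rather than proven, and one of the justifications you give would not work as written.

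(a) You correctly isolate smoothness at $\fp\in\tS$ as the only non-formal point, but then only ``expect'' the local model to be smooth. The reason it is smooth is concrete and worth stating: by Assumption \ref{A:assumption-S}(1), $\tS_\fp=\Sigma_{\infty/\fp}\cup\{\fp\}$, so every $\tilde\tau\in\Sigma_{E,\infty/\fp}$ has $s_{\tilde\tau}\in\{0,2\}$. Hence $\omega^{\circ}_{A^\vee,\tilde\tau}$ is either $0$ or all of $\rH^{\dr}_1(A/S)^{\circ}_{\tilde\tau}$, the $\fp$-component of the Hodge filtration is rigid, and the deformation problem of $(A[\fp^\infty],\iota,\lambda)$ is trivial; the rank-two cokernel condition on $\lambda_*$ is precisely what pins down the correct (flat, smooth) closed subscheme of the naive $\Gamma_0(\fp)$-type model. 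Without this computation the theorem is not proved. (b) In the extension property you claim the $p$-adic conditions at $\fp\in\tS$ extend ``by flatness of de Rham homology together with their validity on the dense open generic fibre.'' This cannot work: over $E_{\tilde\tS,\tilde\wp}$ the kernel of $\lambda$ is \'etale, so $\lambda_*$ is an isomorphism on de Rham homology and $\cO_S\otimes_{\ZZ_p}\cO_E/\fp=0$; the cokernel condition is \emph{vacuous} in characteristic zero and says something only on the special fibre. The correct argument is that, once $(A,\iota,\lambda,\bar\alpha)$ is extended over the discrete valuation ring $R$, the resulting $R$-point of the naive moduli problem factors through the scheme-theoretic closure of the generic fibre (as $\Spec R$ is integral with generic point in characteristic zero), and the local model analysis of (a) identifies that closure with the locus where the cokernel condition holds. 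So the two gaps are really one: the local computation at $\fp\in\tS$ carries both the smoothness and the extension property, and it is the step you have not supplied.
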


Let $\ZZ_p^{\ur}$ be the ring of integers of $\QQ_p^{\ur}$. The closure of $\Sh(G_{\tilde\tS}')^{\circ}$ in $\bfSh(G'_{\tilde\tS})\otimes_{\cO_{\tilde\wp}}{\ZZ^\ur_p}$, denote by $\bfSh(G'_{\tilde\tS})^{\circ}_{\ZZ_p^{\ur}}$, is a smooth integral canonical model of $\Sh(G'_{\tilde\tS})^{\circ}$ over $\ZZ_{p}^\ur$. By \eqref{E:isom-conn-Sh}, this can also be regarded as an integral canonical model of $\Sh(G_{\tS,\tT})^{\circ}$ over $\ZZ_p^{\ur}$. This induces a smooth integral canonical model $\bfSh(G_{\tS,\tT})$ of $\Sh(G_{\tS,\tT})$ over $\cO_{F_{\tS,\tT},\wp}$ by Deligne's recipe (See \cite{TX1}*{Corollary~2.17}). For any open compact subgroup $K^p\subseteq G_{\tS}(\bA^{\infty,p})$, we define $\bfSh(G_{\tS,\tT}, K^p)$ as the quotient of $\bfSh(G_{\tS,\tT})$ by $K^p$. Then if $K^p$ is sufficiently small, $\bfSh(G_{\tS,\tT}, K^p)$ is a quasi-projective smooth scheme over $\cO_{F_{\tS,\tT},\wp}$, and it is an integral model for $\Sh(G_{\tS,\tT},K^p)$.

\subsection{Moduli interpretation for totally indefinite quaternionic Shimura varieties}
\label{S:quaternion-PEL}

When $\tS=\emptyset$, then $\tT=\emptyset$ and the Shimura variety $\Sh(G,K^p)$ has another moduli interpretation in terms of abelian varieties with real multiplication by $\cO_{B}$. Using this moduli interpretation, one can also construct another integral model of $\Sh(G, K^p)$. The aim of this part is to compare this integral canonical model of $\Sh(G, K^p)$ with $\bfSh(G, K^p)$ constructed in the previous subsection using unitary Shimura varieties.

We choose an element $\gamma\in B^{\times}$ such that
\begin{itemize}
  \item $\bar \gamma =-\gamma$;

  \item $b\mapsto b^*\coloneqq\gamma^{-1}\bar{b}\gamma$ is a positive involution;

  \item $\nu(\gamma)$ is a $\fp$-adic unit for every $p$-adic place $\fp$ of $F$, where $\nu\colon B^{\times}\to F^{\times}$ is the reduced norm map.
\end{itemize}
Put $V\coloneqq B$ viewed as a free left $B$-module of rank $1$, and consider the alternating pairing
\[
\langle\_,\_ \rangle_{F}\colon  V\times  V\to F, \quad \langle x,y\rangle_F=\Tr^{\circ}_{B/F}(x\bar y\gamma),
\]
where $\Tr^{\circ}_{B/F}$ is the reduced trace of $B$. Note that $\langle bx,y\rangle_F=\langle x,b^*y\rangle_F$ for $x,y\in V$ and $b\in B$. We let $G=B^{\times}$ act on $V$ via $g\cdot v= v g^{-1}$ for $g\in G$ and $v\in V$. One has an isomorphism
\[
G\cong \Aut_{B}(V).
\]
Fix an order $\cO_B\subseteq B$ such that
\begin{itemize}
  \item $\cO_B$ contains $\cO_F$, and it is stable under $*$;

  \item $\cO_B\otimes\ZZ_p$ is a maximal order of $B\otimes_{\QQ}\QQ_p\cong \GL_2(F\otimes_{\QQ}\QQ_p)$.

\end{itemize}

Let $K^p\subseteq G(\bA^{\infty,p})$ be an open compact subgroup. 
Consider the moduli problem $\underline{\bfSh}(G,{K^{p}})$ that associates to every $\ZZ_{(p)}$-scheme $T$ the equivalence classes of tuples $(A,\iota,\bar\lambda,\bar\alpha_{K^p})$ where
\begin{itemize}
  \item $A$ is a projective  abelian scheme over $T$ up to prime-to-$p$ isogenies;

  \item $\iota$ is a  real multiplication by $\cO_B$ on $A$, that is, a ring homomorphism $\iota\colon \cO_B\to \End(A)$ such that 
      \[
      \det(T-\iota(b)|\Lie(A))=\rN_{F/\QQ}(\rN^{\circ}_{B/F}(T-b)), \quad b\in \cO_B,
      \]
      where $\rN^{\circ}_{B/F}$ is the reduced norm of $B/F$;

  \item $\bar \lambda$ is an ${F}_+^{p,\times}$-orbit of $\cO_F$-linear prime-to-$p$ polarizations $\lambda\colon A\to A^{\vee}$ such that $\iota(b)^{\vee}\circ\lambda=\lambda\circ\iota(b^*)$ for all $b\in\cO_B$, where $F_{+}^{p,\times}\subseteq F^{\times}$ is the subgroup of totally positive elements that are $\fp$-adic units for all $\fp\in \Sigma_p$;

  \item $\bar\alpha_{K^p}$ is a $K^p$-level structure on $(A,\iota)$, that is, $\bar\alpha_{K^p}$ is a $K^p$-orbit of $B\otimes_\bQ\bA^{\infty,p}$-linear isomorphisms of \'etale sheaves on $T$:
      \[
      \alpha\colon V\otimes_{\QQ}\bA^{\infty,p}\xra{\sim }\widehat{V}^{p}(A).
      \]
 \end{itemize}

 \begin{remark}\label{R:polarization-free}
 By \cite{Zin82}*{Lemma 3.8}, there exists exactly one $F^{p,\times}_+$ orbit of prime-to-$p$ polarizations on $A$ that induces the given positive involution $*$ on $B$. Hence, one may omit $\bar \lambda$ from the definition of the moduli problem $\underline{\bfSh}(G,K^p)$. This is the point of view in \cite{Liu2}. Here, we choose to keep $\bar\lambda$ in order to compare it with unitary Shimura varieties.
 \end{remark}

By \cite{Zin82}*{page~27}, one has a bijection
\[
\underline{\bfSh}(G,K^p)(\CC)\cong G(\QQ)\backslash (\fH^{\pm})^{\Sigma_\infty}\times G(\bA^{\infty})/K^pK_p=  \Sh(G,K^p)(\CC).
\]
Note that an object  $(A,\iota,\bar\lambda,\bar\alpha_{K^p})\in\underline{\bfSh}(G,K^p)(T)$ admits automorphisms $\cO_{F}^{\times}\cap K^p$, which is always non-trivial if $F\neq \QQ$. Here, $\cO_{F}^{\times}$ is considered as a subgroup of $G(\bA^{\infty,p})$ via the diagonal embedding. Thus, the moduli problem $\underline {\bfSh}(G,K^p)$ can not be representable. However, Zink shows in \cite{Zin82}*{Satz~1.7} that $\underline{\bfSh}(G,K^p)$ admits a coarse moduli space $\bfSh(G,K^p)$, which is a projective scheme  over $\ZZ_{(p)}$. This gives an integral model of the Shimura variety $\Sh(G,K^p)$ over $\ZZ_{(p)}$.

We recall briefly Zink's construction of $\bfSh(G,K^p)$. Take $(A,\iota,\bar\lambda,\bar\alpha_{K^p})\in\underline{\bfSh}(G,K^p)(T)$ for some $\ZZ_{(p)}$-scheme $T$. Choose a polarization $\lambda\in \bar\lambda$, and an isomorphism $\alpha\in \bar\alpha_{K^p}$. Then $\lambda$ induces a Weil pairing
\[
\widehat\Psi^{\lambda}\colon \widehat V^{p}(A)\times \widehat V^p(A)\to \bA^{\infty,p}(1),
\]
and there exists a unique $F$-linear alternating pairing
\[
\widehat\Psi^{\lambda}_{F}\colon\widehat{V}^p(A)\times\widehat{V}^p(A)\to \bA_F^{\infty,p}(1)
\]
such that $\widehat\Psi^{\lambda}=\Tr_{F/\QQ}\circ \widehat\Psi^{\lambda}_F$. We fix an isomorphism $\ZZ\cong \ZZ(1)$, and view $\ang{\_,\_}$ as a pairing with values in $F(1)$. Then by \cite{Zin82}*{1.2}, there exists an element $c\in \bA_F^{\infty,p,\times}$ such that
\[
\widehat\Psi^\lambda_{F}(\alpha(x),\alpha(y))=c\langle x, y\rangle_{F},\quad x,y\in V\otimes_{\QQ}\bA^{\infty,p}.
\]
The class of $c$ in $\bA^{\infty,p,\times}_F/\nu(K^p)$, denoted by $c(A,\iota,\lambda,\bar\alpha_{K^p})$, is independent of the choice of $\alpha\in\bar\alpha_{K^p}$.
If $F^{\times}_+\subseteq F^{\times}$ is the subgroup of totally positive elements, then the image of $c(A,\iota,\lambda, \bar\alpha_{K^p})$ in
\[
\bA^{\infty,p,\times }_F /F^{p,\times }_+\nu(K^p)\cong \bA^{\infty,\times}_F/F^{\times}_+\nu(K)
\]
is independent of the choices of both $\lambda$ and $\alpha$.

We choose representatives $c_1,\dots, c_r\in \bA^{\infty,p,\times }_F /\nu(K^p)$ of the finite quotient $\bA^{\infty,p,\times}_F/F^{p,\times}_+\nu(K^p)$, and consider the moduli problem  $\underline{\widetilde\bfSh}(G,K^p)$ that associates to every $\ZZ_{p}$-scheme $T$ equivalence classes of tuple $(A,\iota,\lambda,\bar\alpha_{K^p})$, where
\begin{itemize}
  \item $(A,\iota)$ is an abelian scheme over $T$ up to prime-to-$p$ isogenies equipped with  real multiplication by $\cO_B$;

  \item $\lambda\colon  A\to A^{\vee}$ is a prime-to-$p$ polarization such that $\iota(b)^{\vee}\circ\lambda=\lambda\circ\iota(b^*)$ for all $b\in\cO_B$;

  \item $\bar \alpha_{K^p}$ is a $K^p$-level structure on $A$ such that $c(A,\iota,\lambda, \bar\alpha_{K^p})=c_i$ for some $i=1,\dots,r$.
\end{itemize}

To study the representability of $\underline {\widetilde \bfSh}(G,K^p)$, we need the following notion of neat subgroups.

\begin{definition}\label{D:neat-subgroup}
Let $\tR$ be the ramification set of $B$. For every $g_v\in (B\otimes_{F}F_v)^{\times}$ with $v\notin \tR$, let $\Gamma_{g_{v}}$ denote the subgroup of $F_v^{\ac,\times}$ generated by the eigenvalues of $g_v$. Choose an embedding $\QQ^{\ac}\hookrightarrow F_{v}^{\ac}$. Then $(\Gamma_{g_v}\cap \QQ^{\ac})^{\tor}$ is the subgroup of $\Gamma_{g_v}$ consisting of roots of unity, and it is independent of the embedding $\QQ^{\ac}\hookrightarrow F_{v}^{\ac}$.

Let $\Box$ be a finite set of places of $\bQ$ containing the archimedean place,  and let $\Box_F$ be  the set of places of $F$ above $\Box$. An element $g\in G(\bA^\Box)=(B\otimes_\bQ\bA^\Box)^\times$ is called \emph{neat} if $\bigcap_{v\in\Box_F-\tR}(\Gamma_{g_v}\cap\QQ^{\ac})^{\tor}=\{1\}$. We say  a subgroup $U\subseteq G(\bA^\Box)$ is \emph{neat} if every element $g=g^{\tR}g_{\tR}\in U$ with $\nu(g^{\tR})=1$ is neat. Here, $g^{\tR}\in (B\otimes_F \bA_{F}^{\Box_F\cup\tR})^\times$ (resp. $g_{\tR}\in \prod_{v\in\tR-\Box_F}(B\otimes_FF_v)^{\times}$) is the prime-to-$\tR$ component (resp. $\tR$-component) of $g$.
\end{definition}

Assume from now on that $K^p\subseteq G(\bA^{\infty,p})$ is neat. It is easy to see that each object of $\underline{\widetilde\bfSh}(G,K^p)$ has no non-trivial automorphisms. By a well-known result of Mumford, $\underline{\widetilde\bfSh}(G,K^p)$ is representable by a quasi-projective smooth scheme $\widetilde\bfSh(G,K^p)$ over $\ZZ_{(p)}$. If $B$ is a division algebra, then $\widetilde\bfSh(G,K^p)$ is even projective over $\ZZ_{(p)}$ (see \cite{Zin82}*{Lemma~1.8}).

Let $\cO_{F,+}^{\times}$ be the group of totally positive units of $F$. There is a natural action by $\cO_{F,+}^{\times}\cap \nu(K^p)$ on ${\widetilde{\bfSh}}(G,K^p)$ given by $\xi\cdot(A,\iota, \lambda, \bar\alpha_{K^p})=(A,\iota,\xi\cdot \lambda, \bar\alpha_{K^p})$ for $\xi\in\cO_{F,+}^{\times}$, and the quotient is the moduli problem $\underline{{\bfSh}}(G,K^p)$.
Note that the subgroup $(\cO_{F}^{\times}\cap K^p)^2$ acts trivially on ${\widetilde{\bfSh}}(G,K^p)$. Here, $\cO_{F}^{\times}$ is considered as  a subgroup in the center of $G(\bA^{\infty, p})$. Indeed, if $\xi=\eta^2$ with $\eta\in \cO_{F}^{\times }\cap K^p$, then the multiplication by $\eta$ on $A$ defines an isomorphism $(A,\iota, \lambda,\bar\alpha_{K^p})\xra{\sim}(A,\iota,\xi\cdot\lambda,\bar\alpha_{K^p})$. Put
\[
\Delta_{K^p}\coloneqq (\cO_{F,+}^{\times}\cap \nu(K^p))/(\cO_{F}^{\times}\cap K^p)^2.
\]


\begin{lem}
Assume that $K^p$ is neat. Let $(A,\iota,\bar\lambda,\bar\alpha_{K^p})$ be a $T$-valued point of $\underline{\bfSh}(G,K^p)$. Then the group of automorphisms of $(A,\iota, \bar\lambda, \bar\alpha_{K^p})$ is $\cO_F^{\times}\cap K^p$. Here, $\cO_{F}^{\times}$ is viewed as a subgroup of $G(\bA^{\infty,p})$ via the diagonal embedding.
\end{lem}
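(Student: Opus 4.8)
The plan is to prove the non‑trivial inclusion by showing that every automorphism is multiplication by a totally positive unit of $\cO_F^\times\cap K^p$, after reducing to a geometric point and then invoking the rigidity of the auxiliary moduli scheme $\widetilde\bfSh(G,K^p)$ together with neatness. First, the easy inclusion $\cO_F^\times\cap K^p\hookrightarrow\Aut(A,\iota,\bar\lambda,\bar\alpha_{K^p})$: for $\xi\in\cO_F^\times\cap K^p$, the quasi-isogeny $\iota(\xi)$ commutes with $\iota$ (as $\xi$ is central in $B$), sends a polarization $\lambda\in\bar\lambda$ to $\xi^2\lambda$, which still lies in the $F_+^{p,\times}$-orbit $\bar\lambda$, and acts on $\bar\alpha_{K^p}$ through $\xi^{\pm1}\in K^p$; and $\xi\mapsto\iota(\xi)$ is injective since $\iota$ is. For the reverse inclusion one may assume $T=\Spec k$ with $k$ algebraically closed, because for $T$ connected $\End(A/T)\hookrightarrow\End(A_{\bar t})$, so the automorphism group over $T$ injects into that over a geometric point, whereas the inclusion just proved holds over any base.

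Over $k$, an automorphism $\phi$ is a unit of the semisimple $\bQ$-algebra $C\coloneqq\End^0_{\cO_B}(A)$, which contains $\iota(F)$ in its centre and is stable under the Rosati involution $\dagger$ attached to a chosen $\lambda\in\bar\lambda$, restricting to a positive involution of $C$ fixing $\iota(F)$. By \cite{Zin82}*{Lemma~3.8} compatibility with $\bar\lambda$ is automatic, so the conditions on $\phi$ reduce to commuting with $\iota$ and, via $\alpha$, lying in $K^p\subseteq G(\bA^{\infty,p})$; in particular $\phi^\dagger\phi=\iota(u)$ for some totally positive $u\in F^\times$, and matching the similitude on the level structure gives $u\in\cO_{F,+}^\times\cap\nu(K^p)$. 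I would then record two points. The assignment $\phi\mapsto\phi^\dagger\phi$ is a group homomorphism $\Gamma\coloneqq\Aut(A,\iota,\bar\lambda,\bar\alpha_{K^p})\to\cO_{F,+}^\times\cap\nu(K^p)$ (centrality of $\phi^\dagger\phi$ forces $\phi$ and $\phi^\dagger$ to commute), with finite kernel — the automorphisms preserving the polarization on the nose. And $\Gamma$ is torsion‑free: a $\phi$ with $\phi^N=1$ has $\nu(\phi)$ a totally positive root of unity, hence $\nu(\phi)=1$, so $\phi$ is a neat element of $K^p$; its two reduced‑characteristic eigenvalues are roots of unity in a fixed number field, which neatness forces to be $1$, so $\phi$ is reduced‑unipotent and torsion, i.e.\ $\phi=1$ (this also shows $-1\notin K^p$). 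Hence $\phi\mapsto\phi^\dagger\phi$ embeds $\Gamma$ into $\cO_{F,+}^\times\cap\nu(K^p)$, with image containing $(\cO_F^\times\cap K^p)^2$ (coming from the $\iota(\xi)$'s).

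It remains to see the image is exactly $(\cO_F^\times\cap K^p)^2$, and this is the heart of the matter. Equivalently — since an automorphism of the tuple identifies the corresponding point of $\widetilde\bfSh(G,K^p)$ with its translate under the pertinent element of $\cO_{F,+}^\times\cap\nu(K^p)$, and $\widetilde\bfSh(G,K^p)$ is rigid — it suffices to show that the finite group $\Delta_{K^p}$ acts freely on $\widetilde\bfSh(G,K^p)$. I would again argue via Kronecker's theorem: a point fixed by some $u\in\cO_{F,+}^\times\cap\nu(K^p)$ yields an endomorphism $\psi$ commuting with $\iota$, preserving $\bar\alpha_{K^p}$, and with $\psi^\dagger\psi=\iota(u)$; since $u$ is a totally positive unit, the two reduced‑characteristic eigenvalues of $\psi$ are units all of whose archimedean absolute values coincide, so their ratio is a root of unity, and neatness of $K^p$ applied to the reduced‑norm‑one part makes this ratio trivial, forcing $\psi$ to be multiplication by a totally positive unit of $\cO_F^\times\cap K^p$ and $u\in(\cO_F^\times\cap K^p)^2$. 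Granting this, $\phi\mapsto\phi^\dagger\phi$ carries $\Gamma$ isomorphically onto $(\cO_F^\times\cap K^p)^2$, which is also the image of the injection $\xi\mapsto\iota(\xi)$; hence $\Gamma=\{\iota(\xi):\xi\in\cO_F^\times\cap K^p\}\cong\cO_F^\times\cap K^p$. The hard part is exactly this last step — excluding automorphisms not of the form $\iota(\xi)$, i.e.\ those that would arise from hypothetical extra endomorphisms of $A$ — where neatness must be used essentially; killing torsion is clean, but upgrading "torsion‑free of finite index" to an equality is delicate, and routing it through the rigidity of $\widetilde\bfSh(G,K^p)$ seems the most economical.
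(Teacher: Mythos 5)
Your proof is correct in substance, and at its core it runs on the same engine as the paper's: transport the automorphism into $K^p$ through the level structure, use positivity of the Rosati involution to see that the ratio of its two reduced-characteristic eigenvalues is an algebraic unit with all archimedean absolute values $1$, hence a root of unity, and kill it by neatness. The paper does this in one stroke: for $\eta\in\Gamma$ with $\eta\hat\eta=\xi\in F_+^{\times}$ it applies neatness to $\eta^2\xi^{-1}=\eta\hat\eta^{-1}$, whose eigenvalues are exactly your ratio and its inverse, concludes $\eta=\hat\eta$, and then quotes \cite{Zin82}*{Satz~3.2} to identify Rosati-fixed automorphisms with elements of $F$. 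You instead split off injectivity of $\phi\mapsto\phi^\dagger\phi$ (via torsion-freeness) and determine its image by proving directly that $\Delta_{K^p}$ acts freely on $\widetilde{\bfSh}(G,K^p)$ --- reversing the paper's order, where that freeness is the Corollary deduced from this Lemma. This is not circular, since your freeness argument is self-contained; but it is a repackaging rather than a different method: a fixed point of $u\in\cO_{F,+}^{\times}\cap\nu(K^p)$ is literally an element of $\Gamma$ with similitude factor $u$, so your ``heart'' argument, applied to an arbitrary $\phi\in\Gamma$, already yields $\phi\in\iota(\cO_F^{\times}\cap K^p)$ and hence the Lemma, making the homomorphism/torsion-freeness scaffolding and the detour through $\widetilde{\bfSh}(G,K^p)$ redundant. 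What you gain is avoiding the citation of Zink's Satz~3.2; what the paper gains is brevity.

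Two steps should be made explicit, both at points the paper itself compresses. First, name the element of $K^p$ to which neatness is applied: the natural choice is $\phi(\phi^\dagger)^{-1}=\phi^2\iota(u)^{-1}$, which has reduced norm $1$ and eigenvalues equal to the ratio and its inverse; since neatness of $K^p$ (Definition \ref{D:neat-subgroup}) only constrains elements whose prime-to-$\tR$ reduced norm is $1$, your torsion step needs the total positivity of $\nu(\phi)$ (hence $\nu(\phi)=1$), which requires the similitude comparison you only gesture at, and one must also say why the central factor $\iota(u)^{-1}$ does not take the element out of $K^p$ (the paper's ``its image lies in $K^p$'' glosses the same point). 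Second, passing from ``the two eigenvalues coincide'' to ``$\phi$ is multiplication by a scalar in $F$'' uses semisimplicity of $\phi$; this is available because $\phi^\dagger\phi$ is central, so $\phi$ commutes with $\phi^\dagger$ and is normal for a positive involution --- spell it out, since this is exactly the role played by Satz~3.2 in the paper's version.
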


\begin{proof}
This is a slight generalization of \cite{Zin82}*{Korollar~3.3}. Take $\eta\in\End_{\cO_B}(A)_{\QQ}$ that preserves $\bar\lambda$ and $\bar\alpha_{K^p}$. Then there exists $\xi\in {F_+}^{\times}$ such that $\eta\hat\eta=\xi$, where $\hat{\eta}$ is the Rosati involution of $\eta$ induced by $\bar\lambda$. By \cite{Zin82}*{Satz~3.2}, it is enough to show that $\hat\eta=\eta$. Choose $\alpha\in \bar\alpha_{K^p}$, which induces an embedding
\[
(\End_{\cO_B}(A)\otimes{\QQ})^\times\to(\End_B(V)\otimes_{\QQ}\bA^{\infty, p})^\times\cong G(\bA^{\infty,p}).
\]
Then the image of $\eta$ under this embedding lies in $K^p$. Consider the endomorphism $\eta^2\xi^{-1}\in\End_{\cO_B}(A)\otimes{\QQ}$. Its image in $G(\bA^{\infty,p})$ lies in $K^p$ and has reduced norm equal to $1$. Since $K^p$ is neat,  all the eigenvalues of $\eta^2\xi^{-1}$ are $1$. So $\eta^2\xi^{-1}$ must be trivial, and hence $\eta=\hat{\eta}$.
\end{proof}

\begin{corollary}
Assume that $K^p$ is neat. Then the action of $\Delta_{K^p}$ on ${\widetilde{\bfSh}}(G,K^p)$ is free.
\end{corollary}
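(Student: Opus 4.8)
The plan is to deduce the freeness of the $\Delta_{K^p}$-action from the automorphism computation in the preceding Lemma. Since $\widetilde\bfSh(G,K^p)$ represents $\underline{\widetilde\bfSh}(G,K^p)$, the $\Delta_{K^p}$-action is free if and only if, for every $\bZ_{(p)}$-scheme $T$ and every object $(A,\iota,\lambda,\bar\alpha_{K^p})$ of $\underline{\widetilde\bfSh}(G,K^p)(T)$, the stabilizer in $\Delta_{K^p}$ of the corresponding $T$-point is trivial. As $(\cO_F^\times\cap K^p)^2$ acts trivially on $\widetilde\bfSh(G,K^p)$ (observed just before the definition of $\Delta_{K^p}$), it suffices to show: if $\xi\in\cO_{F,+}^\times\cap\nu(K^p)$ and there is an isomorphism $\xi\cdot(A,\iota,\lambda,\bar\alpha_{K^p})\xra{\sim}(A,\iota,\lambda,\bar\alpha_{K^p})$ in $\underline{\widetilde\bfSh}(G,K^p)(T)$, then $\xi\in(\cO_F^\times\cap K^p)^2$.

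I would first unwind what such an isomorphism is: an $\cO_B$-linear prime-to-$p$ quasi-isogeny $\eta\colon A\to A$ which carries $\bar\alpha_{K^p}$ to itself and satisfies the polarization identity $\eta^\vee\circ(\xi\lambda)\circ\eta=\lambda$. Because $\xi$ is a totally positive unit, it lies in $F_+^{p,\times}$, so the polarization $\xi\lambda$ lies in the same $F_+^{p,\times}$-orbit $\bar\lambda$ as $\lambda$; consequently $\eta$ is in particular an automorphism of the object $(A,\iota,\bar\lambda,\bar\alpha_{K^p})$ of $\underline\bfSh(G,K^p)(T)$. By the preceding Lemma, $\eta$ therefore lies in $\cO_F^\times\cap K^p$; concretely, $\eta=\iota(u)$ for some $u\in\cO_F^\times$ whose image in the centre of $G(\bA^{\infty,p})$ lies in $K^p$.

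It then remains to read off $\xi$ from the polarization identity. Writing $\dagger$ for the Rosati involution of $\End_{\cO_B}(A)_\bQ$ attached to $\lambda$, the compatibility $\iota(b)^\vee\circ\lambda=\lambda\circ\iota(b^*)$ built into the moduli problem turns $\eta^\vee\circ(\xi\lambda)\circ\eta=\lambda$ into the identity $\eta^\dagger\circ\eta=\iota(\xi)^{-1}$ in $\End_{\cO_B}(A)_\bQ$. Since the positive involution $b\mapsto b^*$ restricts to the identity on $F$, one has $\eta^\dagger=\iota(u^*)=\iota(u)$, so $\iota(u^2)=\iota(\xi)^{-1}$; as $\iota$ is faithful on $F$ this forces $\xi=(u^{-1})^2$, and $u^{-1}\in\cO_F^\times\cap K^p$, so $\xi\in(\cO_F^\times\cap K^p)^2$, which finishes the reduction.

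The argument is essentially formal once the Lemma is in hand, so there is no serious obstacle; the mildly delicate points — the closest thing to one — are the identification, via a chosen representative $\alpha\in\bar\alpha_{K^p}$, of the moduli-theoretic automorphism $\eta$ with a unit of $\cO_F$, the fact that scaling $\lambda$ by the totally positive unit $\xi$ leaves its $F_+^{p,\times}$-orbit unchanged (which is exactly what allows us to invoke the Lemma), and keeping track of the exponent so that $\xi$ lands among the squares $(\cO_F^\times\cap K^p)^2$ rather than merely in $\cO_F^\times\cap K^p$.
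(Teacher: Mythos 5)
Your argument is correct and is exactly the intended one: the paper's proof consists of the single sentence ``The same argument as Zink's Korollar~3.4 works,'' and that argument is precisely your deduction from the preceding Lemma (a stabilizing $\xi$ produces an automorphism $\eta$ of $(A,\iota,\bar\lambda,\bar\alpha_{K^p})$, hence $\eta=\iota(u)$ with $u\in\cO_F^\times\cap K^p$, and the Rosati identity forces $\xi=u^{\pm2}\in(\cO_F^\times\cap K^p)^2$). So the proposal matches the paper's approach, merely with the details written out.
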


\begin{proof}
The same argument as \cite{Zin82}*{Korollar~3.4} works.
\end{proof}

We put
\begin{equation}\label{E:moduli-quaternion}
\bfSh(G,K^p)\coloneqq \widetilde {\bfSh}(G,K^p)/\Delta_{K^p},
\end{equation}
which exists as a quasi-projective smooth over $\ZZ_{(p)}$ by \cite{SGA1}*{Expos\'e~VIII,~Corollaire~7.7}. Then $\bfSh(G,K^p)$ is the coarse moduli space of the moduli problem $\underline { \bfSh}(G,K^p)$, and $\widetilde \bfSh(G,K^p)$ is a finite \'etale cover of $\bfSh(G,K^p)$ with Galois group $\Delta_{K^p}$. For each $i=1,\dots, r$, we denote by $\widetilde{\bfSh}^{c_i}(G,K^p)$ the subscheme of $\widetilde{\bfSh}(G,K^p)$ consisting the tuples $(A,\iota, \lambda,\bar\alpha_{K^p})$ with $c(A,\iota, \lambda, \bar\alpha_{K^p})=c_i$. It is clear that each $\widetilde{\bfSh}^{c_i}(G,K^p)$ is stable under the action of $\Delta_{K^p}$. Let $\bfSh^{c_i}(G,K^p) \subseteq \bfSh(G,K^p)$ be the image of $\widetilde{\bfSh}^{c_i}(G,K^p)$ under the morphism \eqref{E:moduli-quaternion}. Note that each $\bfSh^{c_i}(G,K^p) $ is not necessarily defined over $\ZZ_{(p)}$. Actually, using the strong approximation theorem, one sees easily that $\bfSh^{c_i}(G,K^p)(\CC)$ is a connected component of $\bfSh(G,K^p)(\CC)$.

\begin{remark}\label{R:universal-AV}
Assume that $K^p$ is neat.
\begin{enumerate}
  \item Let $(\widetilde\cA,\widetilde\iota)$ be the universal abelian scheme with real multiplication by $\cO_B$ over $\widetilde{\bfSh}(G,K^p)$. Then $\widetilde\cA$ is equipped with a natural descent data relative to the projection $\widetilde{\bfSh}(G,K^p)\to \bfSh(G,K^p)$, since the action of $\Delta_{K^p}$ modifies only the polarization. By \cite{SGA1}*{Expos\'e~VIII,~Corollaire~7.7}, the descent data on  $\widetilde\cA$ is effective. This means that, even though $\bfSh(G,K^p)$ is not a fine moduli space, there exists still a universal family $\cA$ over $\bfSh(G,K^p)$. Moreover, by \'{e}tale descent, $\widetilde\iota$ descends to a real multiplication $\iota$ by $\cO_B$ on the universal family $\cA$ over $\bfSh(G,K^p)$.

  \item In general, $\Delta_{K^p}$ is non-trivial. However,  for any open compact subgroup $K^p\subseteq G(\bA^{\infty,p})$, there exists a smaller open compact subgroup $K'^p\subseteq K^p$ such that $\Delta_{K'^p}$ is trivial.
\end{enumerate}
\end{remark}

We give an interpretation of $\widetilde{\bfSh}(G,K^p)$ in terms of Shimura varieties. Let $G^{\star}\subseteq G$ be the preimage of $\dG_{m,\QQ}\subseteq T_{F}=\Res_{F/\QQ}(\dG_{m,F})$ via the reduced norm map $\nu\colon G\to T_{F}$. The Deligne homomorphism $h_{\emptyset}\colon\dS(\RR)=\CC^{\times}\to G(\RR)$ factors through $G^{\star}(\RR)$, hence induces a map
\[
h_{G^{\star}}\colon \dS(\RR)\to G^{\star}(\RR).
\]
We put $K_{p}^{\star}\coloneqq G^{\star}(\QQ_p)\cap K_p$, which will be the fixed level at $p$ for Shimura varieties attached to $G^{\star}$. For a sufficiently small open compact subgroup $K^{\star p}\subseteq G^{\star}(\bA^{\infty,p})$, we have the associated Shimura variety $\Sh(G^{\star}, K^{\star p})$ defined over $\QQ$, whose $\CC$-points are given by
\[
\Sh(G^{\star}, K^{\star p})(\CC)=G^{\star}(\QQ)\backslash \big((\fH^\pm)^{\Sigma_\infty}\times G^{\star}(\bA^{\infty})/K^{\star p}K^\star_p\big).
\]
Put $\Sh(G^{\star})\coloneqq\varprojlim_{K^{\star p}}\Sh (G^{\star},K^{\star p })$ as usual.

There is a natural action of $\bA^{\infty, p,\times}$ on $\bA_{F}^{\infty,p,\times}/F^{p,\times}_{+}\nu(K^p)$ by multiplication. 
Let $\fc_1,\dots, \fc_h$ denote the equivalence classes modulo $F^{p,\times}_+\bA^{\infty, p,\times}$ of the chosen set  $\{c_1,\dots,c_r\}\subseteq \bA_{F}^{\infty,p,\times}/\nu(K^p)$. We may and do assume that all the $c_i$'s in one equivalence class differ from each other by elements in $\bA^{\infty,p,\times}$. For each $\fc\in \{\fc_1,\dots, \fc_h\}$, we put
\[
\widetilde{\bfSh}^{\fc}(G,K^p)\coloneqq \coprod_{c_i\in \fc} \widetilde{\bfSh}^{c_i}(G,K^p)
\]
and similarly $\bfSh^{\fc}(G,K^p)=\coprod_{c_i\in\fc}\bfSh^{c_i}(G,K^p)$.

\begin{proposition}\label{P:coarse-fine-moduli}
Suppose that $K^p\subseteq G(\bA^{\infty,p})$ is a neat open compact subgroup. For every $\fc\in \{\fc_1,\dots,\fc_h\}$, there exists an element $g^p\in G(\bA^{\infty,p})$ such that if $K^{\star, p}_{\fc}\coloneqq G^{\star}\cap g^{p}K^pg^{p,-1}$, then we have an isomorphism of algebraic varieties over $\QQ$
\[
\widetilde{\bfSh}^{\fc}(G, K^p)\otimes_{\ZZ_{(p)}}\QQ\xra{\sim} \Sh(G^{\star}, K^{\star,p}_{\fc}).
\]
\end{proposition}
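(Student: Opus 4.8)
The plan is to construct the isomorphism first on $\bC$-points, compatibly with the prime-to-$p$ Hecke action, and then to descend it to $\bQ$ by means of the canonical-model formalism already invoked in Section~\ref{S:unitary-Sh-var}. Thus the proof splits into an adelic-uniformization part and a Galois-descent part.

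\emph{Complex points.} Recall that $\Sh(G^{\star},K^{\star p})(\bC)=G^{\star}(\bQ)\backslash\big((\fH^{\pm})^{\Sigma_{\infty}}\times G^{\star}(\bA^{\infty})/K^{\star p}K^{\star}_p\big)$. On the quaternionic side I would unwind Zink's moduli description over $\bC$: a $\bC$-point of $\widetilde{\bfSh}(G,K^p)$ is a tuple $(A,\iota,\bar\lambda,\bar\alpha_{K^p})$ with $A$ taken up to prime-to-$p$ isogeny, refined by a choice of $\lambda$ with $c(A,\iota,\lambda,\bar\alpha_{K^p})=c_i$. Writing $V=H_1(A,\bQ)$, which is a free left $B$-module of rank one, the Hodge decomposition of $V\otimes_{\bQ}\bR$ is encoded by a point of $(\fH^{\pm})^{\Sigma_{\infty}}$ (conjugate under $G(\bR)$ to $h_{\emptyset}$), the level structure $\bar\alpha_{K^p}$ by a class in $G(\bA^{\infty,p})/K^p$ (the $p$-component being rigid because $K_p$ is hyperspecial and $L\otimes\bZ_p$ is self-dual), and $\lambda$ by its similitude constant $c\in\bA_F^{\infty,p,\times}/\nu(K^p)$, on which $G(\bQ)$ acts through $\nu$. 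Quotienting by the prime-to-$p$ quasi-isogenies preserving the data yields a double-coset presentation of $\widetilde{\bfSh}(G,K^p)(\bC)$.

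\emph{Cutting down to $G^{\star}$.} Passing to $\widetilde{\bfSh}^{c_i}(G,K^p)$ and then to $\widetilde{\bfSh}^{\fc}=\coprod_{c_i\in\fc}\widetilde{\bfSh}^{c_i}$ amounts precisely to fixing the class $\fc$ of $c$ in $\bA_F^{\infty,p,\times}/(F^{p,\times}_{+}\bA^{\infty,p,\times}\nu(K^p))$. Since the similitude of the level structure is governed by $\nu\colon G\to T_F$ and $G^{\star}=\nu^{-1}(\dG_{m})$, pinning $c$ down modulo $\bA^{\infty,p,\times}$ cuts the $G(\bA^{\infty,p})$-orbit of level structures to a single $G^{\star}(\bA^{\infty,p})$-orbit once a base point is chosen. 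Using that the reduced norm is surjective on $\bA^{\infty,p}$-points, I would pick $g^p\in G(\bA^{\infty,p})$ whose norm represents $\fc$; then right-translation by $g^p$ carries the $\fc$-locus onto the locus $\{c\in\bA^{\infty,p,\times}\nu(K^p)\}$, which is exactly the image of $G^{\star}(\bA^{\infty,p})$, and conjugates the level subgroup from $K^p$ to $G^{\star}\cap g^pK^pg^{p,-1}$. Keeping track of the remaining $\bQ$-rational automorphisms (which for the $\fc$-locus are governed by $G^{\star}(\bQ)$) and invoking strong approximation for $G^{\der}=G^{\star,\der}$, together with $\nu(G^{\star}(\bQ))=\bQ^{\times}$ and surjectivity of $\nu$ on the relevant real points in order to see that the $G^{\star}$-orbit exhausts the \emph{entire} $\fc$-locus, one obtains a Hecke-equivariant bijection
\[
\widetilde{\bfSh}^{\fc}(G,K^p)(\bC)\;\xrightarrow{\ \sim\ }\;G^{\star}(\bQ)\backslash\big((\fH^{\pm})^{\Sigma_{\infty}}\times G^{\star}(\bA^{\infty})/K^{\star,p}_{\fc}K^{\star}_p\big)=\Sh(G^{\star},K^{\star,p}_{\fc})(\bC),\qquad K^{\star,p}_{\fc}=G^{\star}\cap g^pK^pg^{p,-1}.
\]

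\emph{Descent to $\bQ$, and the main difficulty.} Both sides are defined over $\bQ$ — $\widetilde{\bfSh}(G,K^p)$ by Mumford's theorem, and $\Sh(G^{\star},K^{\star,p}_{\fc})$ because the reflex field of $(G^{\star},h_{G^{\star}})$ is $\bQ$, $h_{G^{\star}}$ being induced by $h_{\emptyset}$. To promote the analytic isomorphism above to an isomorphism of $\bQ$-varieties, I would appeal to Deligne's reconstruction of a Shimura variety from its neutral geometric connected component with its Galois and Hecke actions, as in \eqref{E:isom-conn-Sh} and \cite{TX1}*{2.11, Corollary~2.17}: the neutral component of $\widetilde{\bfSh}(G,K^p)\otimes_{\bZ_{(p)}}\bQ^{\ac}$ is the same connected Shimura variety $\Sh(G)^{\circ}=\Sh(G^{\star})^{\circ}$ (as $G$ and $G^{\star}$ share their derived group and have compatible Deligne homomorphisms), and the induced action on $\pi_0$ — controlled by class field theory of $\bQ$ — is matched on the two sides by the reciprocity law at CM points, which on the moduli side is the main theorem of complex multiplication. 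The genuinely delicate point is exactly this last matching: verifying that the Galois action on CM points (equivalently on $\pi_0$) read off from Zink's moduli interpretation coincides with the one prescribed by the canonical model of $\Sh(G^{\star})$. By contrast, the adelic bookkeeping of the previous paragraph — the choice of $g^p$ and the strong-approximation check that the $G^{\star}$-orbit fills the whole $\fc$-locus, keeping straight the roles of $F^{p,\times}_{+}$, $\bA^{\infty,p,\times}$ and $\nu(K^p)$ — is routine but must be carried out with care.
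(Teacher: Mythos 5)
Your proposal is correct and follows essentially the same route as the paper: establish a Hecke-equivariant bijection on $\CC$-points via the complex uniformization of the moduli problem (the paper does this point-by-point, producing $g^p$ from the level structure of a chosen base point, which amounts to your normalization $\nu(g^p)$ representing $\fc$ — exactly the choice the paper itself makes in Example~\ref{ex:level}), and then descend to $\QQ$ by the theory of canonical models. The delicate point you isolate (matching the Galois action on special points with the canonical-model reciprocity law) is precisely what the paper compresses into the phrase ``by the theory of canonical models.''
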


\begin{proof}
Let $X\cong (\fH^{\pm})^{\Sigma_\infty}$ denote the set of conjugacy classes of $h_{G^{\star}}\colon\dS(\RR)\to G^{\star}(\RR)$. 
We fix a base point $(A_0,\iota_0,\lambda_0, \bar\alpha_{K^p,0})\in \widetilde{\bfSh}^{\fc}(G,K^p)(\CC)$. Put $V_{\QQ}(A_0)\coloneqq\rH_1(A_0(\bC),\QQ)$. We fix an isomorphism  $\eta_0\colon V_{\QQ}(A_0)\xra{\sim}V$ of left $B$-modules and a choice of $\alpha_0\in \bar\alpha_{K^p}$. Then the composite map
\[
(\eta_0\otimes1)\circ \alpha_0\colon V\otimes_{\QQ}\bA^{\infty,p}\to\widehat{V}^p(A_0)\cong V_{\QQ}(A_0)\otimes_{\QQ}\bA^{\infty,p}\to V\otimes_{\QQ}\bA^{\infty,p}
\]
defines an element $g^p\in G(\bA^{\infty,p})$.
Now let $(A,\iota,\lambda,\bar\alpha_{K^p})\in \widetilde{\bfSh}^{\fc_i}(G,K^p)(\CC)$ be another point. There exists also an isomorphism $\eta\colon V_\bQ(A)\xra{\sim}V$ as $B$-modules, and the Hodge structure on $V_\bQ(A)\otimes_{\QQ}\RR=\rH_1(A(\bC),\RR)$ defines an element $x_{\infty}\in X$.
By the definition of $\bfSh^{\fc}(G,K^p)$, there exists an element $\alpha\in\bar\alpha_{K^p}$ such that the isomorphism
\[
h^p\coloneqq (\eta\otimes 1)\circ\alpha\circ\alpha_{0}^{-1}(\eta_0\otimes 1)^{-1}\in G(\bA^{\infty,p})
\]
preserves the alternating pairing $\ang{\_,\_}_F$ on $V\otimes_{\QQ}\bA^{\infty,p}$ up to a scalar in $\bA^{\infty,p,\times}$. Such an element $\alpha$ is unique up to right multiplication by elements in $K^p$, and it follows that $h^p$ is well defined up to right multiplication by elements of $K^{\star,p}_{\fc}\coloneqq g^{p}K^pg^{p,-1}\cap G^{\star}(\bA^{\infty,p})$. Viewing $h^p$ as an element of $G^{\star}(\bA^{\infty})$ with $p$-component equal to $1$, then $(A,\iota, \lambda, \bar\alpha_{K^p})\mapsto [x_{\infty}, h^p ]$ defines a map
\[
f\colon\widetilde{\bfSh}^{\fc}(G, K^p)(\CC)\to\Sh(G^{\star},K^{\star,p})(\CC)\cong G^{\star}(\QQ)\backslash\big(X\times G^{\star}(\bA^{\infty})/K^{\star, p}_{\fc}K^{\star}_p\big).
\]
By the complex uniformization of abelian varieties, it is easy to see that $f$ is bijective, and $f$ descends to an isomorphism of algebraic varieties over $\QQ$ by the theory of canonical models.
\end{proof}

\begin{remark}\label{R:int-model-G-star}
In general, there is no canonical choice for $g^p$ in the above proposition. Different choices of $g^p$ will result in different $K^{\star,p}_{\fc}$, which are conjugate to each other in $G^{\star}(\bA^{\infty,p})$. Consequently, the corresponding $\bfSh(G^{\star},K^{\star,p}_{\fc})$ are isomorphic to each other by the Hecke action of some  elements in $G^{\star}(\bA^{\infty,p})$. However, if $\fc=\fc^{\mathrm{tri}}$ is the trivial equivalence class, $g^p$ has a canonical choice, namely $g^p=1$. In the sequel, we will always take $g^p=1$ if $\fc=\fc^{\tri}$. Applying Proposition~\ref{P:coarse-fine-moduli} to this case, one obtains a moduli interpretation of $\Sh(G^{\star},K^{\star,p})$ as well as an integral model $\bfSh(G^{\star},K^{\star, p})$ over $\ZZ_{(p)}$ of $\Sh(G^{\star},K^{\star, p})$. Explicitly, the integral model $\bfSh(G^{\star}, K^{\star,p})$ parameterizes equivalence classes of tuples $(A,\iota,\lambda,\bar\alpha_{K^{\star,p}})$, where $(A,\iota,\lambda)$ is the same data as in $\widetilde{\bfSh}(G,K^p)$, and $\alpha_{K^{\star,p}}$ is a $K^{\star,p}$-level structure on $A$, that is, an $K^{\star, p}$-orbit of isomorphisms $\alpha\colon V\otimes\bA^{\infty,p}\xra{\sim}\widehat{V}^p(A)$ such that $\ang{\_,\_}_F$ is compatible with $\widehat{\Psi}^{\lambda}_F$ up to a scalar in $\bA^{\infty,p,\times}$.
\end{remark}

\begin{example}\label{ex:level}
Fix a lattice $\Lambda\subseteq V$ stable under $\cO_B$ such that $\langle\Lambda,\Lambda\rangle_F\subseteq\fd_F^{-1}$, where $\fd_F$ is the different of $F/\QQ$, and that $\Lambda\otimes\ZZ_p$ is self-dual under $\langle\_,\_\rangle_F$.

Let $\fM,\fN$ be two ideals of $\cO_F$ such that they are mutually coprime, both prime to $p$ and the ramification set $\tR$ of $B$, and that $\fN$ is contained in $N\cO_F$ for some integer $N\geq 4$. Let $K_{0,1}(\fM,\fN)^p$ be a subgroup of $\gamma\in G(\bA^{\infty,p})$ such that there exists $v\in \Lambda$ with $\gamma v\in(\cO_Fv+\fM\Lambda)\cap(v+\fN\Lambda)$; put $K_{0,1}(\fM,\fN)\coloneqq K_{0,1}(\fM,\fN)^pK_p$. Then $K_{0,1}(\fM,\fN)^p$ is neat and  $\nu(K_{0,1}(\fM,\fN))=\widehat{\cO}_F^{\times}$. We have thus isomorphisms
\[
\bA^{\infty,p,\times }_F /F^{p,\times}_{+}\nu(K_{0,1}(\fM,\fN)^p)
\cong \bA^{\infty,\times}_F/F^{\times}_+ \widehat{\cO}_F^{\times}\cong \Cl^+(F),
\]
where $\Cl^+(F)$ is the strict ideal class group of $F$; and the action of $\bA^{\infty,\times}$ on $\Cl^{+}(F)$ is trivial. We choose prime-to-$p$ fractional ideals  $\fc_1,\dots,\fc_h$ that form a set of representatives  of $\Cl^+(F)$. Then for each $\fc\in\{\fc_1,\dots,\fc_h\}$, the moduli scheme $\widetilde{\bfSh}^{\fc}(G,K_{0,1}(\fM,\fN)^p)$ classifies tuples $(A,\iota,\lambda,C_\fM,\alpha_\fN)$, where
\begin{itemize}
  \item $(A,\iota)$ is a projective abelian scheme equipped with real multiplication by $\cO_B$;

  \item $\lambda\colon A\to A^{\vee}$ is an $\cO_F$-linear polarization such that $\iota(b)^\vee\circ\lambda=\lambda\circ\iota(b^*)$ for $b\in\cO_B$, and the induced map of abelian fppf-sheaves
      \[
      A^{\vee}\xra{\sim}A\otimes_{\cO_F}\fc
      \]
      is an isomorphism;

  \item $C_\fM$ is a finite flat subgroup scheme of $A[\fM]$ that is $\cO_B$-cyclic of order $(\Nm\fM)^2$;

  \item $\alpha_\fN\colon (\cO_F/\fN)^{\oplus 2}\hookrightarrow A[\fN]$ is an embedding of finite \'etale group schemes equivariant under the action of $\cO_{B}\otimes_{\cO_F}\cO_F/\fN\cong \GL_2(\cO_F/\fN)$.
\end{itemize}

Let $g^p_{\fc}\in G(\bA^{\infty,p})$ be such that the fractional ideal attached to the id\`ele $\nu(g^p_{\fc})\in \bA^{\infty,p,\times}_F$ represents the strict ideal class  $\fc$. Put
\[
K^{\star,p}_{\fc_i}\coloneqq g_{\fc}^p K_{0,1}(\fM,\fN)^pg_{\fc}^{p,-1}\cap G^{\star}(\bA^{\infty,p}).
\]
Then we have
\[
\widetilde{\bfSh}^{\fc}(G,K_{0,1}(\fM,\fN)^p)\otimes\QQ\cong \Sh(G^{\star}, K^{\star,p}_{\fc_i}).
\]
More explicitly, if $\Gamma^{\fc}_{0,1}(\fM,\fN)\coloneqq G^{\star}(\QQ)_+\cap K^{\star,p}_{\fc}$, where $G^{\star}(\QQ)_+\subseteq G^{\star}(\QQ)$ is the subgroup of elements with totally positive reduced norms, then
\[
\widetilde{\bfSh}^{\fc}(G,K_{0,1}(\fM,\fN)^p)(\CC)\cong \Sh(G^{\star}, K_{\fc}^{\star, p})(\CC)\cong \Gamma^{\fc}_{0,1}(\fM,\fN)\backslash (\fH^+)^{\Sigma_\infty}.
\]
In particular, $\widetilde \bfSh^{\fc}(G,K_{0,1}(\fM,\fN)^p)\otimes{\QQ}$ is geometrically connected for every $\fc$. In this case, one has $\Delta_{K_{0,1}(\fM,\fN)^p}=\cO_{F,+}^{\times}/\cO_{F,\fN}^{\times, 2}$, where $\cO_{F,\fN}^{\times}$ denotes the subgroup of $\xi\in\cO_F^{\times}$ with $\xi\equiv 1\mod \fN$. It is clear that the action of $\Delta_{K_{0,1}(\fM,\fN)^p}$ preserves $\widetilde\bfSh^{\fc}(G,K_{0,1}(\fM,\fN)^p)$, and one obtains an isomorphism
\[
\bfSh(G,K_{0,1}(\fM,\fN)^p)\cong \coprod_{i=1}^{h} \bfSh^{\fc_i}(G,K_{0,1}(\fM,\fN)^p)
\]
with $\bfSh^{\fc_i}(G,K_{0,1}(\fM,\fN)^p)=\widetilde{\bfSh}^{\fc_i}(G,K_{0,1}(\fM,\fN)^p)/\Delta_{K_{0,1}(\fM,\fN)^p}$. Since  $\Delta_{K_{0,1}(\fM,\fN)^p}$ acts freely on $\widetilde{\bfSh}(G,K_{0,1}(\fM,\fN)^p)$, each $\bfSh^{\fc_i}(G,K_{0,1}(\fM,\fN)^p)$ is a smooth quasi-projective scheme over $\ZZ_{(p)}$.
\end{example}

\if false

\begin{example}
Fix a lattice $\Lambda\subseteq V$ stable under $\cO_B$ such that $\langle\Lambda,\Lambda\rangle_F\subseteq \fd_F^{-1}$, where $\fd_F$ is the different of $F/\QQ$, and that $\Lambda\otimes\ZZ_p$ is self-dual under $\langle\_,\_\rangle_F$. Let $N\geq 4$ be an integer prime to $p$ and  the ramification set $\tR$ of $B$. Let $K_1(N)$ be the subgroup of $\gamma\in G(\bA^{\infty})$ such that there exists $v\in\Lambda$ with $\gamma v\equiv v\mod N\Lambda$. Then $K_1(N)$ is neat and $\nu(K_1(N))=\widehat{\cO}_F^{\times}$. We have thus an isomorphism
\[
\bA^{\infty,p,\times }_F /F^{p,\times}_{+}\nu(K_1(N)^p)\cong \bA^{\infty,\times}_F/F^{\times}_+\widehat{\cO}_F^{\times}\cong \Cl^+(F)
\]
where $\Cl^+(F)$ is the strict ideal class group  of $F$, and the action of $\bA^{\infty,\times}$ on $\Cl^{+}(F)$ is trivial. We choose prime-to-$p$ fractional ideals  $\fc_1,\dots,\fc_h$ that form a set of representatives  of $\Cl^+(F)$. Then for each $\fc\in\{\fc_1,\dots,\fc_h\}$, the moduli scheme   $\widetilde{\bfSh}^{\fc}(G,K_1(N)^p)$ classifies tuples $(A,\iota, \lambda, \alpha_{N})$, where
\begin{enumerate}
  \item $(A,\iota)$ is a projective abelian scheme equipped with real multiplication by $\cO_B$,

  \item $\lambda\colon A\to A^{\vee}$ is an $\cO_F$-linear polarization such that $\iota(b)^\vee\circ \lambda=\lambda\circ\iota(b^*)$ for $b\in\cO_B$, and the induced map of abelian fppf-sheaves
      \[A^{\vee}\xra{\sim}A\otimes_{\cO_F}\fc\]
      is an isomorphism,

  \item $\alpha_N\colon (\mu_N\otimes \fd_F^{-1})^{\oplus 2}\hookrightarrow A[N]$ is an embedding of finite \'etale group schemes equivariant under the action of $\cO_{B}\otimes \ZZ/N\ZZ\cong \GL_2(\cO_F/N\cO_F)$.
\end{enumerate}

Let $g^p_{\fc}\in G(\bA^{\infty,p})$ be such that the fractional ideal attached to the id\`ele $\nu(g^p_{\fc})\in \bA^{\infty,p,\times}_F$ represents the strict ideal class $\fc$. Put
\[
K^{\star,p}_{\fc_i}\coloneqq g_{\fc}^p K_1(N)^pg_{\fc}^{p,-1}\cap G^{\star}(\bA^{\infty,p}).
\]
Then we have
\[
\widetilde{\bfSh}^{\fc}(G,K_1(N)^p)\times_{\ZZ}\QQ\cong \Sh(G^{\star}, K^{\star,p}_{\fc_i}).
\]
More explicitly, if $\Gamma^{\fc}_1(N)\coloneqq G^{\star}(\QQ)_+\cap K^{\star,p}_{\fc}$, where
$G^{\star}(\QQ)_+\subseteq G^{\star}(\QQ)$ is the subgroup of elements with totally positive reduced norms,  then
\[
\widetilde{\bfSh}^{\fc}(G,K_1(N)^p)(\CC)\cong \Sh(G^{\star}, K_{\fc}^{\star, p})(\CC)\cong \Gamma^{\fc}_1(N)\backslash \fH^{[F:\QQ]}.
\]
In particular, each  $\widetilde \bfSh^{\fc}(G,K^p)\otimes_{\ZZ}{\QQ}$  is geometrically connected.  In this case,  one has $\Delta_{K_1(N)^p}=\cO_{F,+}^{\times}/\cO_{F,N}^{\times, 2}$, where $\cO_{F,N}^{\times}$ denotes the subgroup of $\xi\in \cO_F^{\times}$ with $\xi\equiv 1\mod N\cO_F$. It is clear that the action of $\Delta_{K_1(N)^p}$  preserves $\widetilde\bfSh^{\fc}(G,K^p)$, and one obtains
\[
\bfSh(G,K^p)\cong \coprod_{i=1}^{h} \bfSh^{\fc_i}(G,K^p), \quad \text{with }\bfSh^{\fc_i}(G,K^p)=\widetilde{\bfSh}^{\fc_i}(G,K^p)/\Delta_{K_1(N)^p}.
\]
 Since  $\Delta_{K_1(N)^p}$ acts freely on $\widetilde {\bfSh}(G,K^p)$, each $\bfSh^{\fc_i}(G,K^p)$ is a smooth quasi-projective scheme over $\ZZ_{(p)}$.
\end{example}

\fi

\subsection{Comparison of quaternionic and unitary moduli problems}
\label{S:comparison-moduli}

We now compare the integral model $\bfSh(G,K^p)$ defined in \eqref{E:moduli-quaternion} and the one constructed using the unitary Shimura variety $\bfSh(G'_{\tilde\tS}, K'^p)$ with $\tS=\emptyset$. Note that when $\tS=\emptyset$, there is only one choice for $\tilde\tS$, so we write simply $G'$ for $G'_{\tilde\tS}$. By the universal extension property of $\bfSh(G)\coloneqq \varprojlim_{K^p}\bfSh(G,K^p)$, these two integral canonical models are necessarily isomorphic. However, for later applications to the supersingular locus of $\bfSh(G, K^p)_{\FF_p}$, one needs a more explicit comparison between the universal family of abelian varieties over $\bfSh(G)$ (as in Remark \ref{R:universal-AV}(1)) with that over $\bfSh(G')$. It suffices to compare the universal objects over the the neutral connected components via the isomorphism
\[
\bfSh(G)_{\ZZ_p^{\ur}}^{\circ} \xra{\sim }\bfSh(G')^{\circ}_{\ZZ_p^{\ur}}
\]
induced by \eqref{E:isom-conn-Sh}. Here, $\bfSh(G)_{\ZZ_p^{\ur}}^{\circ}$ is defined similarly as $\bfSh(G')^{\circ}_{\ZZ_p^{\ur}}$; in other words, it is the closure of $\Sh(G)^{\circ}$ in $\bfSh(G)\otimes{\ZZ^\ur_p}$.

The natural inclusion $G^{\star}\hra G$ induces also an isomorphism of derived and adjoint groups, and is compatible with Deligne homomorphisms. By Deligne's theory of connected Shimura varieties, it induces an isomorphism of neutral connected components $\bfSh(G^{\star})^{\circ}\cong \bfSh(G)^{\circ}$. Therefore, we are reduced to comparing the universal family over $\bfSh(G^{\star})$ and $\bfSh(G')$.

Recall that we have chosen an element $\gamma\in B^{\times}$ to define the pairing $\ang{\_,\_}_F$ on $V=B$. We take the symmetric element $\delta\in D_{\tS}^{\times}$ in Section \ref{S:unitary-moduli} to be $\delta=\frac{\gamma}{2\sqrt{\fd}}$. One has $W=V\otimes_{F}E$, and
\[
\psi(x\otimes1, y\otimes 1)=\langle x,y\rangle
\]
for any $x,y\in V$. Put $\ang{\_,\_}\coloneqq\Tr_{F/\QQ}\circ \ang{\_,\_}_F$. Then $G^{\star}$ (resp.\ $G'$) can be viewed as the similitude group of $(V,\langle\_,\_\rangle)$ (resp.\ $(W,\psi)$ \eqref{E:unitary-alternating}); and there exists a natural injection $G^{\star}\hookrightarrow G$ compatible with Deligne homomorphisms that induces isomorphisms on the associated derived and adjoint groups. 

We take $\cO_{D_{\emptyset}}=\cO_B\otimes_{\cO_F}\cO_E$. Let $K^{\star p}\subseteq G^{\star}(\bA^{\infty,p})$ and $K'^p\subseteq G'(\bA^{\infty,p})$ be sufficiently small open compact subgroups with $K^{\star p}\subseteq K'^p$. To each point $(A,\iota,\lambda,\bar \alpha_{K^{\star, p}})$ of $\bfSh(G^{\star}, K^{\star, p})$ with values in a $\ZZ_{p}$-scheme $S$, we attach the tuple $(A',\iota',\lambda',\bar\alpha^{\rat}_{K'^p})$, where
\begin{itemize}
  \item $A'=A\otimes_{\cO_F}\cO_E$;

  \item $\iota'\colon \cO_{D_{\emptyset}}\to\End_{S}(A')$ is the action induced by $\iota$;

  \item $\lambda'\colon A'\to A'^{\vee}$ is the prime-to-$p$ polarization given by
    \[
    A'=A\otimes_{\cO_F}\cO_E\xra{\lambda\otimes 1} A^{\vee}\otimes_{\cO_F}\cO_E\xra{1\otimes i}  A^\vee\otimes_{\cO_F}\fd_{E/F}^{-1}\cong A'^\vee,
    \]
    where $\fd_{E/F}^{-1}$ is the inverse of the relative different of $E/F$ and $i\colon\cO_E\to \fd_{E/F}^{-1}$ is the natural inclusion;

  \item $\bar\alpha_{K'^p}^{\rat}$ is a rational $K'^p$-level structure on $A'$ induced by $\bar\alpha_{K^{\star, p}}$ by the compatibility of alternating forms $(V,\langle\_,\_\rangle)$ and $(W,\psi)$. Here, we use the moduli interpretation of $\bfSh(G',K'^p)$ in terms of isogeny classes of abelian varieties (See Remark \ref{R:rational-moduli-unitary}).
\end{itemize}
This defines a morphism
\[
\bfSh(G^{\star},K^{\star p})\to \bfSh(G', K'^p)
\]
over $\ZZ_p$ extending the morphism $\Sh(G^{\star},K'^{\star p})\otimes_\bQ{\QQ_p}\to\Sh(G', K'^p)\otimes_\bQ{\QQ_p}$. Taking the projective limit on the prime-to-$p$ levels, one gets a morphism of schemes over $\ZZ_p$:
\[
\bff\colon \bfSh(G^{\star})\to \bfSh(G')
\]
such that one has an isomorphism of abelian schemes:
\[
\bff^*\cA'\cong \cA\otimes_{\cO_F}\cO_E,
\]
where $\cA$ (resp.\ $\cA'$) is the universal abelian scheme over $\bfSh(G^{\star})$ (resp.\ over $\bfSh(G'_{\tilde\tS})$). By the extension property of the integral canonical model, the map $\bff$ induces an isomorphism
\[
\bff^{\circ}\colon \bfSh(G^{\star})^{\circ}\xra{\sim}\bfSh(G')^{\circ}
\]
which extends the isomorphism $\Sh(G^{\star})^{\circ}\xra{\sim} \Sh(G')^{\circ}$ induced by the morphism of Shimura data on the generic fibers.
Thus the two universal families over $\bfSh(G)^{\circ}$ induced from $\bfSh(G^{\star})$ and $\bfSh(G')$ respectively are related by the relation
\begin{equation}\label{E:comparison-univ-av}
\bff^{\circ,*}(\cA'\res_{\bfSh(G')^{\circ}})\cong \cA\res_{\bfSh(G)^{\circ}}\otimes_{\cO_F}\cO_E.
\end{equation}

\if false
Then
$G^{\star}$ is identified with the similitude group of $(V,\langle \_,\_\rangle)$, that is, for any $\QQ$-algebra $R$, we have
\[G^{\star}(R)=\{g\in\GL_{B\otimes R}(V\otimes R)\res\langle gx,gy\rangle=c\langle x,y\rangle \text{ for some $c\in R^{\times}$}\}.\]
Fix an order $\cO_B\subseteq B$ and an $\cO_B$-lattice $\Lambda\subseteq V$ such that
\begin{itemize}
  \item $\cO_B$ is stable under $*$,

  \item $\cO_B\otimes\ZZ_p$ is a maximal order of $B\otimes_{\QQ}\QQ_p$,

  \item and $(V,\langle\_,\_\rangle)$ induces a perfect pairing
     \[\Lambda\otimes\ZZ_p\times \Lambda\otimes \ZZ_p\to \ZZ_p.\]
\end{itemize}

The Shimura variety $\Sh(G^{\star}, K^{\star p})$ is of PEL-type, and it can be interpreted as a fine moduli space of certain polarized abelian varieties with additional structures. Denote by $b\mapsto \bar b$ the canonical involution on $B$. We choose an element $\gamma\in B^{\times}$ such that
\begin{itemize}
  \item $\bar \gamma =-\gamma$,
  \item $b\mapsto b^*\coloneqq \gamma^{-1} \bar b\gamma$ is a positive involution,
  \item $\nu(\gamma)$ is a $\fp$-adic unit for every $p$-adic place $\fp$ of $F$.
\end{itemize}
Let $V=B$ viewed as a free left $B$-module of rank $1$, and consider the alternating pairing
\[
\langle\_,\_ \rangle\colon  V\times  V\to \QQ, \quad \langle x,y\rangle =\Tr_{F/\QQ}\circ\Tr^{\circ}_{B/F}(x\bar y\gamma),
\]
where $\Tr^{\circ}_{B/F}$ is the reduced trace of $B$. Note that $\langle bx,y\rangle=\langle x, b^*y\rangle$ for $x,y\in V$ and $b\in B$, and
$G^{\star}$ is identified with the similitude group of $(V,\langle \_,\_\rangle)$, that is, for any $\QQ$-algebra $R$, we have
\[G^{\star}(R)=\{g\in\GL_{B\otimes R}(V\otimes R)\res\langle gx,gy\rangle=c\langle x,y\rangle \text{ for some $c\in R^{\times}$}\}.\]
Fix an order $\cO_B\subseteq B$ and an $\cO_B$-lattice $\Lambda\subseteq V$ such that
\begin{itemize}
  \item $\cO_B$ is stable under $*$,

  \item $\cO_B\otimes\ZZ_p$ is a maximal order of $B\otimes_{\QQ}\QQ_p$,

  \item and $(V,\langle\_,\_\rangle)$ induces a perfect pairing
     \[\Lambda\otimes\ZZ_p\times \Lambda\otimes \ZZ_p\to \ZZ_p.\]
\end{itemize}

Let $K^{\star p}\subseteq G^{\star}(\bA^{\infty,p})$ be a sufficiently small open compact subgroup stabilizing $\Lambda\otimes\widehat\ZZ^{(p)}$. We consider the functor that associates to every $\ZZ_{p}$-scheme $S$ the set of isomorphism classes $(A,\iota, \lambda, \eta)$ where
\begin{enumerate}
  \item $A$ is an abelian scheme over $S$ of dimension $2[F:\QQ]$;

  \item $\iota\colon \cO_B\to \End_S(A)$ is an action of $\cO_B$ on $A$ such that the characteristic polynomial of the induced action of $\iota(b)$ on $\Lie(A)$ for $b\in\cO_F$ is
      \[\det(T-\iota(b)|\Lie(A))=\prod_{\tau\in\Sigma_{\infty}}(T-\iota(b))^2;\]

  \item $\lambda\colon A\to A^\vee$ is an $\cO_B$-linear $\ZZ_{(p)}^{\times}$-polarization such that the Rosati involution on $\End_S(A)$ is compatible the involution $*$ on $\cO_B$;

  \item $\bar \eta$ is a $K^{\star p}$-level structure on $A$, that is, a $K^{\star p}$-orbit of $\cO_B\otimes \widehat\ZZ^{(p)}$-linear isomorphisms of \'etale sheaves
      \[
      \eta\colon \Lambda\otimes \widehat\ZZ^{(p)}\xra{\sim}\widehat{T}^p(A)
      \]
      such that the pairing $\langle\_,\_\rangle$ on $\Lambda\otimes\widehat\ZZ^{(p)}$ is compatible with $\lambda$-Weil pairing on $\widehat{T}^p(A)$ via some isomorphism $\widehat{\ZZ}^{(p)}\xra{\sim}\widehat\ZZ^{(p)}(1)$.
\end{enumerate}
Then this functor is representable by a quasi-projective (and projective if $B$ is a division algebra) and smooth scheme $\bfSh(G^{\star},K^{\star p})$ over $\ZZ_{p}$ with generic fiber isomorphic to $\Sh(G^{\star}, K^{\star p})$. Passing to limit on $K^{\star p}$, we get an integral canonical model $\bfSh(G^{\star})$ of $\Sh(G^{\star})$ over $\ZZ_p$.

To pass from the canonical model of the Shimura variety for $G^{\star}$ to  that for $G$, we can proceed in two equivalent ways. The first way uses the following Lemma.

\begin{lem}\label{L:comparison-quaternion}
For each open compact subgroup $K\subseteq G(\bA^{\infty})$, then there exist a finite set $I$ and an open compact subgroup $K_i^{\star}\subseteq G(\bA^{\infty})$ for each $i\in I$  and finite $2$-groups $\Delta_i $ acting freely on $\Sh_{K^{\star}_i}(G^{\star})$ for each $i\in I$ such that
\[
\Sh_K(G)=\bigsqcup_{i\in I} \Sh_{K_i^{\star}}(G^{\star})/\Delta_i.
\]
\end{lem}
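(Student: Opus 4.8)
The plan is to establish the decomposition first on complex points through the complex uniformization, and then to descend it to $\QQ$ by the functoriality of canonical models. Recall that $G^{\star}=\nu^{-1}(\dG_{m,\QQ})\subseteq G$, where $\nu\colon G\to T_F=\Res_{F/\QQ}\dG_{m,F}$ is the reduced norm. Since $B$ is totally indefinite, $\nu$ is surjective on $\QQ$- and on $\bA^{\infty}$-points (Hasse--Schilling--Maass), $G^{\der}=\Res_{F/\QQ}B^1$ satisfies strong approximation, the inclusion $G^{\star}\hra G$ is an isomorphism on derived (hence adjoint) groups, and the Deligne homomorphism $h_{\emptyset}$ factors through $h_{G^{\star}}$; in particular there is a morphism of $\QQ$-varieties $\Sh_{K^{\star}}(G^{\star})\to\Sh_K(G)$ whenever $K^{\star}\subseteq K\cap G^{\star}(\bA^{\infty})$. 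As $T_F$ is abelian, $G^{\star}(\bA^{\infty})$ is normal in $G(\bA^{\infty})$, so $G(\QQ)_+G^{\star}(\bA^{\infty})$ is a subgroup, and $G(\QQ)_+G^{\star}(\bA^{\infty})\backslash G(\bA^{\infty})/K$ is a quotient of the finite set $\pi_0(\Sh_K(G)(\CC))\cong F_+^{\times}\backslash\bA_F^{\infty,\times}/\nu(K)$, hence finite. I would take $I$ to index a set of representatives $t_i\in G(\bA^{\infty})$ for it, so that $\Sh_K(G)(\CC)=\bigsqcup_{i\in I}\Sh_K(G)_i$ into open-and-closed pieces, where $\Sh_K(G)_i$ consists of the classes $[x,g]$ with $g\in G(\QQ)_+G^{\star}(\bA^{\infty})t_iK$.

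Fix $i\in I$, put $K_i^{\star}:=G^{\star}(\bA^{\infty})\cap t_iKt_i^{-1}$ and $H_i:=G(\QQ)_+\cap\bigl(G^{\star}(\bA^{\infty})\,t_iKt_i^{-1}\bigr)$, the latter being a group containing $G^{\star}(\QQ)_+=G(\QQ)_+\cap G^{\star}(\bA^{\infty})$ as a normal subgroup. Absorbing $G(\QQ)_+$ into the symmetric-space factor, and using $G^{\star}(\bA^{\infty})t_iK/K\cong G^{\star}(\bA^{\infty})/K_i^{\star}$ as an $H_i$-set, one identifies $\Sh_K(G)_i$ with $H_i\backslash\bigl((\fH^+)^{\Sigma_{\infty}}\times G^{\star}(\bA^{\infty})/K_i^{\star}\bigr)=\bigl(H_i/G^{\star}(\QQ)_+\bigr)\backslash\Sh_{K_i^{\star}}(G^{\star})(\CC)$. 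It remains to see that this last action factors through a finite quotient $\Delta_i$ which is a $2$-group. Applying $\nu$ and using that $\nu(K)$ is compact, one gets $\nu(H_i)\subseteq\QQ_{>0}^{\times}\cdot\cO_{F,+}^{\times}$; since $\cO_{F,+}^{\times}\cap\QQ_{>0}^{\times}=\{1\}$, the reduced norm identifies $H_i/G^{\star}(\QQ)_+$ with a subgroup of the finitely generated group $\cO_{F,+}^{\times}$. Moreover an element $\gamma\in H_i$ with $\nu(\gamma)=\eta^{2}$ for a unit $\eta\in\cO_F^{\times}\cap K$ differs from the central scalar $\eta$ by an element of $G^{\star}(\QQ)_+$, and the centre acts trivially on $(\fH^+)^{\Sigma_{\infty}}$, so such a $\gamma$ acts trivially on $\Sh_{K_i^{\star}}(G^{\star})$; hence the effective quotient $\Delta_i$ is a subquotient of $\bigl(\cO_{F,+}^{\times}\cap\nu(K)\bigr)/(\cO_F^{\times}\cap K)^{2}$, a finite abelian group --- and a subquotient of the elementary abelian $2$-group $\cO_{F,+}^{\times}/(\cO_F^{\times})^{2}$ under a suitable hypothesis on $K$ (for instance $\cO_F^{\times}\cap K=\cO_F^{\times}$). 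Thus $\Sh_K(G)_i=\Sh_{K_i^{\star}}(G^{\star})(\CC)/\Delta_i$. Freeness of the $\Delta_i$-action amounts to the absence of nontrivial finite-order elements in the relevant arithmetic groups; I would deduce it from the automorphism computation of \cite{Zin82}*{Satz~3.2} exactly as in the Corollary following the neatness lemma above, using that $K$ (hence $K_i^{\star}$) is neat and reducing the general case to this one by descending along a neat normal subgroup.

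To conclude, the morphism $\Sh_{K_i^{\star}}(G^{\star})\to\Sh_K(G)$ is defined over $\QQ$; by the previous paragraph its image $Z_i$ is an open-and-closed $\QQ$-subscheme, and $\Sh_{K_i^{\star}}(G^{\star})\to Z_i$ is finite \'etale (the action being free) and Galois with group $\Delta_i$ on complex points. As being finite \'etale and Galois descends along $\QQ\hra\CC$, the group $\Delta_i$ acts on $\Sh_{K_i^{\star}}(G^{\star})$ over $\QQ$ and $Z_i\cong\Sh_{K_i^{\star}}(G^{\star})/\Delta_i$ over $\QQ$; since $\Sh_K(G)=\bigsqcup_{i\in I}Z_i$ over $\QQ$, the lemma follows. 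The main obstacle I anticipate is the middle step: pinning down exactly which elements of $H_i$ act trivially on $\Sh_{K_i^{\star}}(G^{\star})$, and in particular making sure the groups $\Delta_i$ can be arranged to be $2$-groups (clear when $K$ contains the scalar units, but delicate in general, since units of odd order modulo the level of $K$ would otherwise contribute odd torsion) --- together with the freeness claim for non-neat $K$.
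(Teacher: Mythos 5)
Your skeleton matches the paper's: reduce to complex points, use strong approximation to see that $G(\QQ)_+G^{\star}(\bA^{\infty})\backslash G(\bA^{\infty})/K$ is finite, choose representatives, set $K_i^{\star}=G^{\star}(\bA^{\infty})\cap t_iKt_i^{-1}$, and exhibit each open-and-closed piece of $\Sh_K(G)(\CC)$ as a quotient of $\Sh_{K_i^{\star}}(G^{\star})(\CC)$. The genuine gap is exactly the point you flag at the end: the assertion that $\Delta_i$ is a finite \emph{$2$-group} for every open compact $K$. In your argument the only elements of $H_i$ you show act trivially are those accounted for by central units $\eta\in\cO_F^{\times}\cap K$, so the best bound you obtain is that $\Delta_i$ is a subquotient of $(\cO_{F,+}^{\times}\cap\nu(K))/(\cO_F^{\times}\cap K)^{2}$. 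That group is finite abelian but is \emph{not} a $2$-group in general --- it is precisely the group $\Delta_{K^p}$ appearing later in the paper, which for $K=K_{0,1}(\fM,\fN)$ equals $\cO_{F,+}^{\times}/\cO_{F,\fN}^{\times,2}$ (Example \ref{ex:level}) and typically has odd torsion. So the hypothesis you are forced to add (e.g.\ $\cO_F^{\times}\cap K=\cO_F^{\times}$) is not available, and the $2$-group claim of the lemma is not established by your argument.

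The paper closes this gap by a different bookkeeping of the kernel of the action. It decomposes each piece further into connected components $\Gamma_{h_jg_i}\backslash\fH^{\Sigma_{\infty}}$ with $\Gamma_{h_jg_i}=G(\QQ)_+\cap h_jg_iKg_i^{-1}h_j^{-1}$ and $\Gamma^{\star}_{h_jg_i}=G^{\star}(\QQ)\cap\Gamma_{h_jg_i}$, and then passes to the images $\ol\Gamma_{h_jg_i},\ol\Gamma^{\star}_{h_jg_i}$ in the adjoint group $G^{\mathrm{ad}}(\QQ)$: since the \emph{entire} rational center $Z(\QQ)=F^{\times}$ acts trivially on $\fH^{\Sigma_{\infty}}$ (not merely the central units lying in $K$), the effective group is $\Delta_{i,j}\coloneqq\ol\Gamma_{h_jg_i}/\ol\Gamma^{\star}_{h_jg_i}$, which maps to $G(\QQ)_+/Z(\QQ)G^{\star}(\QQ)_+$; via the reduced norm the latter sits inside $F_+^{\times}/(F^{\times})^{2}\QQ_{>0}$, where every element has order dividing $2$. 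Combined with the norm-unit argument you already carried out, this gives finiteness and the $2$-group property with no hypothesis on $K$. If you replace your ``central units in $K$'' step by this passage to $G^{\mathrm{ad}}(\QQ)$ (i.e.\ quotient by all of $Z(\QQ)$ before measuring the discrepancy between $\Gamma$ and $\Gamma^{\star}$), your proof goes through. As for freeness of the $\Delta_i$-action, which you treat via neatness and \cite{Zin82}*{Satz~3.2} with a hedge in the non-neat case: the paper's own proof of this lemma does not address freeness at all, so that is not the decisive difference between the two arguments.
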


\begin{proof}
It suffices to prove the statement at level of complex points. We have
\[
\Sh_K(G)(\CC)=G(\QQ)\backslash (\fH^{\pm})^{\Sigma_{\infty}}\times G(\bA^{\infty})/K=G(\QQ)_{+}\backslash \big(\fH^{\Sigma_{\infty}}\times G(\bA^{\infty}) /K\big),
\]
where $G(\QQ)_+\subseteq G(\QQ)$ is subgroup of elements with totally positive reduced norm. By the strong approximation theorem, there exist finitely many $g_i\in G(\bA^{\infty})$ such that
\[
G(\bA^{\infty})=\bigsqcup_{i\in I}G(\QQ)_+G^{\star}(\bA^{\infty})g_iK.
\]
Put $K^{\star}_i\coloneqq g_iKg_i^{-1}\cap G^{\star}(\bA^{\infty})$. Then by the strong approximation theorem again, there exist, for each $i\in I$, elements $h_{j}\in G^{\star}(\bA^{\infty})$ indexed by a finite set $J(i)$ such that
\[G^{\star}(\bA^{\infty})=\bigsqcup_{j\in J(i)}G^{\star}(\QQ)_+h_jK^{\star}_i,\]
where $G^{\star}(\QQ)_+=G(\QQ)_+\cap G^{\star}(\QQ)$. Then one has
\begin{align*}
\Sh_{K}(G)(\CC)&=\bigsqcup_{i\in I}\bigsqcup_{j\in J(i)} G(\QQ)_+\backslash\big( \fH^{\Sigma_{\infty}}\times G(\QQ)_+h_jg_iK\big/K)\\
&=\bigsqcup_{i\in I}\bigsqcup_{j\in J(i)}\Gamma_{h_jg_i}\backslash \fH^{\Sigma_{\infty}},
\end{align*}
where $\Gamma_{h_jg_i}\coloneqq G(\QQ)_+\cap h_jg_iKg_i^{-1}h_j^{-1}$. Similarly, one has for each $i\in I$
\begin{align*}
\Sh_{K^{\star}_i}(G^{\star})&=G^{\star}(\QQ)_+\backslash\big(\fH^{\Sigma_{\infty}}\times G^{\star}(\bA^{\infty})/K_i\big)\\
&=\bigsqcup_{j\in J(i)}\Gamma^{\star}_{h_jg_i} \backslash \fH^{\Sigma_{\infty}},
\end{align*}
where $\Gamma^{\star}_{h_jg_i}=G^{\star}(\QQ)_+\cap h_jK^{\star}_ih_j^{-1}=G^{\star}(\QQ)\cap\Gamma_{h_jg_i}$. Let $G^{\mathrm{ad}}$ be the adjoint group of $G$. Denote by $\overline \Gamma_{h_jg_i}$ and $\overline \Gamma^{\star}_{h_jg_i}$ the image of $\Gamma_{h_jg_i}$ and $\Gamma^{\star}_{h_jg_i}$ in $G^{\mathrm{ad}}(\QQ)$ respectively. Then one has
\[\Gamma_{h_jg_i}\backslash \fH^{\Sigma_{\infty}}=\Delta_{i,j}\backslash(\overline \Gamma^{\star}_{h_jg_i}\backslash\fH^{\Sigma_{\infty}})\]
with $\Delta_{i,j}\coloneqq\overline \Gamma_{h_jg_i}/\overline \Gamma^{\star}_{h_jg_i}$. Since $G(\QQ)_{+}/Z(\QQ)G^{\star}_{+}(\QQ)\cong F_{+}^{\times}/F^{\times}\QQ_{+}^{\times}$ is a finite copies of $\ZZ/2\ZZ$,  so is $\Delta_{i,j}$.

\end{proof}

Since the natural inclusion $G^{\star}\to G$ induces isomorphisms on the associated derived and adjoint groups, there exists an isomorphism of neutral geometric connected component
\[
\Sh(G^{\star})^{\circ}\cong \Sh(G)^{\circ},
\]
which can be defined over $\QQ_p^{\ur}$. Thus the canonical integral model $\bfSh(G^{\star})$ induces an integral canonical model $\bfSh(G)^{\circ}_{\ZZ_p^{\ur}}$ over $\ZZ_p^{\ur}$ of the connected Shimura variety $\Sh(G)^{\circ}$. As explained in \cite{TX1}*{Corollary~2.17}, since $\Sh(G)$ can be recovered from $\Sh(G)^{\circ}$ together with its Galois and Hecke action, $\bfSh(G)^{\circ}_{\ZZ_p^{\ur}}$ induces an integral canonical model over $\ZZ_p$ of $\Sh(G)$. By the universal extension property of integral canonical models, such an integral model is necessarily isomorphic to $\bfSh(G)$ that we constructed in the previous subsection. However, for later applications to the supersingular locus of $\bfSh(G)$, we need to understand how the universal family $\cA$ of abelian varieties on $\bfSh(G^{\star})$ is related to the universal family of abelian varieties over $\bfSh(G'_{\tilde\tS})$ with $\tS=\emptyset$.

Note that if $\tS=\emptyset$, the choice of $\tilde\tS_{\infty}$ is trivial, so we write $G'=G'_{\tilde\tS}$ for simplicity. The reflex field of $\Sh(G', K'^p)$ is $\QQ$. We take the symmetric element $\delta\in D_{\tS}^{\times}$ in Section \ref{S:unitary-moduli} to be $\delta=\frac{\gamma}{2\sqrt{\fd}}$. Then one has $W=V\otimes_{F}E$, and
\[\psi(x\otimes1, y\otimes 1)=\langle x,y\rangle\]
for any $x,y\in V$. Since $G^{\star}$ (resp.\ $G'$) can be viewed as the similitude group of $(V,\langle\_,\_\rangle)$ (resp.\ $(W,\psi)$), there exists a natural injection $G^{\star}\hookrightarrow G$ compatible with Deligne homomorphisms which induces isomorphisms between the associated derived and adjoint groups. By Deligne's theory of connected Shimura varieties \cite{De79}, we get a morphism of Shimura varieties $\Sh_{K^{\star}_p}(G^{\star})\to \Sh_{K'_p}(G')$ that induces an isomorphism of the neutral geometric connected components $\Sh_{K^{\star}_p}(G^{\star})^{\circ}\xra{\sim} \Sh_{K'_p}(G')^{\circ}$ over $\QQ_p^{\ur}$.

We take $\cO_{D_{\emptyset}}=\cO_B\otimes_{\cO_F}\cO_E$ and $L=\Lambda\otimes_{\cO_F}\cO_E$ in the definition of $\bfSh(G', K'^p)$. Let $K^{\star p}\subseteq G^{\star}(\bA^{\infty,p})$ and $K'^p\subseteq G'(\bA^{\infty,p})$ be sufficiently small open compact subgroups with $K^{\star p}\subseteq K'^p$. To each point $(A,\iota,\lambda,\eta)$ of $\bfSh(G^{\star}, K'^p)$ with values in a $\ZZ_p$-scheme $S$, we attach a point $(A',\iota',\lambda',\alpha')$, where
\begin{itemize}
  \item $A'=A\otimes_{\cO_F}\cO_E$;

  \item $\iota'\colon \cO_{D_{\emptyset}}\to\End_{S}(A')$ is the action induced by $\iota$;

  \item $\lambda'\colon A'\to A'^{\vee}$ is the prime-to-$p$ polarization given by
    \[
    A'=A\otimes_{\cO_F}\cO_E\xra{\lambda\otimes 1} A^{\vee}\otimes_{\cO_F}\cO_E\xra{1\otimes i}  A^\vee\otimes_{\cO_F}\fd_{E/F}^{-1}\cong A'^\vee,
    \]
    where $\fd_{E/F}^{-1}$ is the inverse of the relative different of $E/F$ and $i\colon\cO_E\to \fd_{E/F}^{-1}$ is the natural inclusion;

  \item $\alpha'$ is a $K'^p$-level structure on $A'$ induced by $\eta$ by the compatibility of alternating forms $(V,\langle\_,\_\rangle)$ and $(W,\psi)$.
\end{itemize}
This defines a morphism $\bfSh(G^{\star},K^{\star p})\to \bfSh(G', K'^p)$ over $\ZZ_p$ extending the morphism $\Sh(G^{\star}, K'^{\star p})_{\QQ_p}\to\Sh(G', K'^p)_{\QQ_p}$. Taking the projective limit on the prime-to-$p$ levels, one gets a morphism of schemes over $\ZZ_p$:
\[\bff\colon \bfSh(G^{\star})\to \bfSh(G')\]
such that one has an isomorphism of abelian schemes:
\[
\bff^*\cA'\cong \cA\otimes_{\cO_F}\cO_E,
\]
where $\cA$ (resp.\ $\cA'$) is the universal abelian scheme over $\bfSh(G^{\star})$ (resp.\ over $\bfSh(G'_{\tilde\tS})$). By the extension property of the integral canonical model, the map $\bff$ induces an isomorphism
\[
\bff^{\circ}\colon \bfSh(G^{\star})^{\circ}\xra{\sim}\bfSh(G')^{\circ}
\]
which extends the isomorphism $\Sh(G^{\star})^{\circ}\xra{\sim} \Sh(G')^{\circ}$ induced by the morphism of Shimura data on the generic fibers.
Thus the two universal families over $\bfSh(G)^{\circ}$ induced from $\bfSh(G^{\star})$ and $\bfSh(G')$ respectively are related by the relation
\begin{equation}\label{E:comparison-univ-av}
\cA'\res_{\bfSh(G)^{\circ}}\cong \cA\res_{\bfSh(G)^{\circ}}\otimes_{\cO_F}\cO_E.
\end{equation}

\fi

\section{Goren--Oort cycles and supersingular locus}
\label{ss:3}

In this chapter, we study the supersingular locus and the superspecial locus of certain Shimura varieties established in the previous chapter.

\subsection{Notation and conventions}
\label{S:notation}

Let $k$ be a perfect field containing all the residue fields of the auxiliary field $E$ in Section \ref{S:signature-numbers} at $p$-adic places, and $W(k)$ be the ring of Witt vectors. Then $\Sigma_{E,\infty}$ is in natural bijection with $\Hom_{\ZZ}(\cO_E,W(k))$, and we have a canonical decomposition
\[
\cO_{D_{\tS}}\otimes_{\ZZ}W(k)\cong \Mat_2(\cO_{E}\otimes_{\ZZ} W(k))=\bigoplus_{\tilde\tau\in\Sigma_{E,\infty}} \rM(W(k)).
\]
Let $S$ be a $W(k)$-scheme, and $N$ a coherent $\cO_S\otimes\cO_{D_{\tS}}$-module. Then one has a canonical decomposition
\[
N=\bigoplus_{\tilde\tau\in\Sigma_{E,\infty}}N_{\tilde\tau},
\]
where $N_{\tilde\tau}$ is a left $\Mat_2(\cO_S)$-module on which $\cO_E$ acts via $\tilde\tau\colon \cO_E\xra{\tilde\tau} W(k)\to\cO_S$. We also denote by $N^{\circ}_{\tilde\tau}$ the direct summand $\fe\cdot N_{\tilde\tau}$ with $\fe=\big(\begin{smallmatrix}1&0\\0&0\end{smallmatrix}\big)\in\Mat_2(\cO_S)$, and we call $M^{\circ}_{\tilde\tau}$ the \emph{reduced $\tilde\tau$-component} of $M$.

Consider a quaternionic Shimura variety $\Sh(G_{\tS,\tT},K^p)$ of type considered in Section \ref{S:quaternion-shimura}, and let $\bfSh(G_{\tilde\tS}', K'^p)$ be the associated unitary Shimura variety  over $\cO_{\tilde\wp}$ as constructed in Section \ref{S:unitary-moduli} for a certain choice of auxiliary CM extension $E/F$.
Let $k_0$ be the smallest subfield of $\bF_p^\ac$ containing all the residue fields of characteristic $p$ of  $E$. Then we have  $k_0\cong\bF_{p^h}$ with $h$ equal to the least common multiple of  $\left\{(1+g_\fp-2\lfloor g_\fp/2\rfloor)g_\fp\res\fp\in\Sigma_p\right\}$. Put
\[
\bfSh(G'_{\tilde\tS},K'^p)_{k_0}\coloneqq\bfSh(G'_{\tilde\tS},K'^p)\otimes_{\cO_{\tilde\wp}}k_0.
\]
The universal abelian scheme over $\bfSh(G'_{\tilde\tS},K'^p)_{k_0}$ is usually denoted by $\cA'_{\tilde\tS}$.

\subsection{Hasse invariants}
\label{S:Hasse-invariants}

We recall first the definition of essential invariant on $\bfSh(G_{\tilde\tS}',K'^p)_{k_0}$ defined in \cite{TX1}*{Section 4.4}. Let $(A,\iota,\lambda,\bar\alpha_{K'^p})$ be an $S$-valued point of $\bfSh(G_{\tilde\tS}', K'^p)_{k_0}$ for some $k_0$-scheme $S$. Recall that $\rH^{\dr}_1(A/S)$ is the relative de Rham homology of $A$. Let $\omega_{A^\vee}$ be the module of invariant differential $1$-forms on $A^\vee$. Then for each $\tilde\tau\in\Sigma_{E,\infty}$, $\rH^{\dr}_1(A/S)_{\tilde\tau}$ is a locally free $\cO_S$-module on $S$ of rank $2$, and one has a Hodge filtration
\[
0\to \omega_{A^{\vee}, \tilde\tau}^{\circ}\to \rH^{\dr}_1(A/S)^{\circ}_{\tilde\tau}\to \Lie(A/S)_{\tilde\tau}^{\circ}\to 0.
\]
We defined, for each $\tilde\tau\in\Sigma_{E,\infty}$, the essential Verschiebung
\[
V_{\es,\tilde\tau}\colon  \rH^{\dr}_1(A/S)^{\circ}_{\tilde\tau}\to \rH^{\dr}_1(A^{(p)}/S)^{\circ}_{\tilde\tau}\cong \rH^{\dr}_1(A/S)^{\circ,(p)}_{\sigma^{-1}\tilde\tau},
\]
to be the usual Verschiebung map if $s_{\sigma^{-1}\tilde\tau}=0$ or $1$, and to be the inverse of Frobenius if $s_{\tilde\tau}=2$.
This is plausible since for $s_{\tilde\tau}=2$, the Frobenius map $F\colon \rH^{\dr}_1(A^{(p)}/S)^{\circ}_{\tilde\tau}\to\rH^{\dr}_1(A/S)_{\tilde\tau}^{\circ}$ is an isomorphism. For every integer $n\geq 1$, we denote by
\[
V_{\es}^{n}\colon \rH_1^{\dr}(A/S)^{\circ}_{\tilde\tau}\to \rH^{\dr}_1(A^{(p^n)}/S)_{\tilde\tau}^{\circ}\cong \rH^{\dr}_1(A/S)_{\sigma^{-n}\tilde\tau}^{\circ, (p^n)}
\]
the $n$-th iteration of the essential Verschiebung.

Similarly, if $S=\Spec k$ is the spectrum of a perfect field $k$ containing $k_0$, then one can define the essential Verschiebung
\[
V_{\es}\colon \tcD(A)^{\circ}_{\tilde\tau}\to \tcD(A)^{\circ}_{\sigma^{-1}\tilde\tau}\quad \text{for all }\tilde\tau\in\Sigma_{E,\infty},
\]
as the usual Verschiebung on Dieudonn\'e modules if $s_{\tilde\tau}=0,1$ and as the inverse of the usual Frobenius if $s_{\tilde\tau}=2$. Here $\tilde\cD(A)$ denote the covariant Dieudonn\'e module of $A[p^{\infty}]$. This is a $\sigma^{-1}$-semi-linear map of $W(k)$-modules. For any integer $n\geq 1$, we denote also by
\[V_{\es}^n\colon \tcD(A)^{\circ}_{\tilde\tau}\to \tcD(A)^{\circ}_{\sigma^{-n}\tilde\tau}\]
the $n$-th iteration of the essential Verschiebung.

Now return to  a general base $S$ over $k_0$. For $\tau\in\Sigma_{\infty}-\tS_{\infty}$, let $n_{\tau}=n_{\tau}(\tS)$ denote the smallest integer $n\geq 1$ such that $\sigma^{-n}\tau\in\Sigma_{\infty}-\tS_{\infty}$. Assumption \ref{A:assumption-S} implies that $n_{\tau}$ is odd. Then for each $\tilde\tau\in\Sigma_{E,\infty}$ with $s_{\tilde\tau}=1$, or equivalently each $\tilde\tau\in\Sigma_{E,\infty}$ lifting some $\tau\in\Sigma_{\infty}-\tS_{\infty}$, the restriction of $V_{\es}^{n_{\tau}}$ to $\omega_{A^\vee, \tilde\tau}^{\circ} $ defines a map
\[
h_{\tilde\tau}(A)\colon \omega^{\circ}_{A^\vee, \tilde\tau}\to \omega_{A^\vee, \sigma^{-n_\tau}\tilde\tau}^{\circ, (p^{n_{\tau}})}\cong(\omega_{A^\vee, \sigma^{-n_\tau}\tilde\tau}^{\circ})^{\otimes p^{n_{\tau}}}.
\]
Applying this construction to the universal object, one gets a global section
\begin{equation}\label{E:Hasse-invariant}
h_{\tilde\tau}\in\Gamma(\bfSh(G_{\tilde\tS}', K'^p)_{k_0}, (\omega^{\circ}_{\cA'^{\vee}_{\tilde\tS}, \sigma^{-n_{\tau}}\tilde\tau})^{\otimes p^{n_{\tau}}}\otimes (\omega^{\circ}_{\cA'^{\vee}_{\tilde\tS},\tilde\tau})^{\otimes -1}).
\end{equation}
called the \emph{$\tau$-th partial Hasse invariant}.

\begin{proposition}\label{P:ordinary}
Let $x=(A,\iota, \lambda,\bar  \alpha_{K'^p})$ be an $\bF_p^\ac$-point of $\bfSh(G_{\tilde\tS}', K'^p)_{k_0}$, and $\fp$ a $p$-adic place of $F$ such that $\tS_{\infty/\fp}\neq \Sigma_{\infty/\fp}$. Assume that $h_{\tilde\tau}(A)\neq 0$ for all $\tilde\tau\in\Sigma_{E,\infty/\fp}$ with $s_{\tilde\tau}=1$. Then the $p$-divisible group $A[\fp^{\infty}]$ is not supersingular.
\end{proposition}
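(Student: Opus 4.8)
The plan is to translate the statement into covariant Dieudonné theory and compute Newton slopes. Write $k=\bF_p^{\ac}$ and $W=W(k)$, and let $\tcD(A)$ be the covariant Dieudonné module of $A[p^\infty]$. Using the $\Mat_2(\cO_E\otimes\ZZ_p)$-action and the idempotent $\fe$ one has the reduced decomposition $\tcD(A)^\circ=\bigoplus_{\tilde\tau\in\Sigma_{E,\infty}}\tcD(A)^\circ_{\tilde\tau}$ with each summand free of rank $2$ over $W$, and the summands indexed by $\Sigma_{E,\infty/\fp}$ form the reduced Dieudonné module of $A[\fp^\infty]$. Since supersingularity is insensitive to isogeny and duality, to the decomposition $A[\fp^\infty]=\prod_{\fq\mid\fp}A[\fq^\infty]$, and to the fact that $\lambda$ makes the factors for $\fq$ and $\fq^c$ dual to each other, I may fix one $p$-adic place $\fq$ of $E$ over $\fp$ and prove that $A[\fq^\infty]$ is not supersingular. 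Put $d\coloneqq[E_\fq:\QQ_p]$, so $\sigma$ permutes $\Sigma_{E,\infty/\fq}$ as a single $d$-cycle, fix $\tilde\tau_0\in\Sigma_{E,\infty/\fq}$, and recall that the Newton polygon of $A[\fq^\infty]$ is read off from the $\sigma^d$-linear operator $F^d$ on the rank-two module $\tcD(A)^\circ_{\tilde\tau_0}$: its two slopes $\mu_1\le\mu_2$ satisfy $\mu_1+\mu_2=v_p(\det F^d)=\sum_{\tilde\tau\in\Sigma_{E,\infty/\fq}}s_{\tilde\tau}$, which equals $d$ by Assumption~\ref{A:assumption-S-tilde}(2), and $A[\fq^\infty]$ is supersingular exactly when $\mu_1=\mu_2=d/2$.

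Next I would pass to the essential Verschiebung. On the isocrystal $\tcD(A)^\circ_{\tilde\tau_0}[1/p]$ one has $V_{\es}^d=p^{c}(F^d)^{-1}$, where $c$ is the number of $\tilde\tau\in\Sigma_{E,\infty/\fq}$ at which $V_{\es}$ is the usual Verschiebung; bookkeeping with $FV=p$ and Assumption~\ref{A:assumption-S-tilde}(2) gives $c=d-m$ with $m\coloneqq\#\{\tilde\tau\in\Sigma_{E,\infty/\fq}\mid s_{\tilde\tau}=2\}=\#\{\tilde\tau\in\Sigma_{E,\infty/\fq}\mid s_{\tilde\tau}=0\}$, hence $v_p(\det V_{\es}^d)=d-2m$. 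The hypothesis $\tS_{\infty/\fp}\neq\Sigma_{\infty/\fp}$ provides a $\tau\in\Sigma_{\infty/\fp}$ with $\tau\notin\tS_\infty$; its lift $\tilde\tau\in\Sigma_{E,\infty/\fq}$ has $s_{\tilde\tau}=1$, so $d-2m=\#\{\tilde\tau\in\Sigma_{E,\infty/\fq}\mid s_{\tilde\tau}=1\}\geq 1$. In particular the reduction of $V_{\es}^d$ modulo $p$ is not injective, and $A[\fq^\infty]$ supersingular would force both slopes of $V_{\es}^d$ to equal $c-d/2=d/2-m$, a quantity that is $>0$. So it will suffice to produce a $V_{\es}^d$-sub-isocrystal of $\tcD(A)^\circ_{\tilde\tau_0}[1/p]$ of slope $0$.

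This is where the hypothesis does its work. For $\tilde\tau\in\Sigma_{E,\infty/\fq}$ with $s_{\tilde\tau}=1$, the embedding $\sigma^{-n_\tau}\tilde\tau$ again has $s=1$ (as $\sigma^{-n_\tau}\tau\in\Sigma_\infty-\tS_\infty$), the intermediate embeddings $\sigma^{-1}\tilde\tau,\dots,\sigma^{-n_\tau+1}\tilde\tau$ lie over $\tS_\infty$, and $h_{\tilde\tau}(A)$ is the restriction of $V_{\es}^{n_\tau}$ to $\omega^\circ_{A^\vee,\tilde\tau}$. Thus the $s=1$ embeddings over $\fq$ are cyclically linked by the partial Hasse invariants, the jumps $n_\tau$ summing to $d$ around the cycle. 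By hypothesis each $h_{\tilde\tau}(A)$ is a nonzero map of line bundles over $\Spec k$, hence an isomorphism $\omega^\circ_{A^\vee,\tilde\tau}\xra{\sim}\omega^\circ_{A^\vee,\sigma^{-n_\tau}\tilde\tau}$; composing them around the cycle shows that $V_{\es}^d$ carries the line $\bar\ell_0\coloneqq\omega^\circ_{A^\vee,\tilde\tau_0}\subseteq\tcD(A)^\circ_{\tilde\tau_0}\otimes_W k$ isomorphically onto itself. Since the reduction of $V_{\es}^d$ is not injective but is bijective on $\bar\ell_0$, it has rank one with image $\bar\ell_0$ and kernel a line transverse to $\bar\ell_0$; then a standard successive-approximation argument lifts $\bar\ell_0$ to a rank-one $W$-direct summand $L\subseteq\tcD(A)^\circ_{\tilde\tau_0}$ stable under $V_{\es}^d$ with $V_{\es}^d|_L$ a $\sigma^{-d}$-linear \emph{automorphism} of $L$. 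Hence $L[1/p]$ is a $V_{\es}^d$-sub-isocrystal of slope $0\neq d/2-m$, so $V_{\es}^d$ is not isoclinic of slope $d/2-m$, so $F^d$ is not isoclinic of slope $d/2$, and $A[\fp^\infty]$ is not supersingular.

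The main obstacle I expect is the bookkeeping tying the integral essential Verschiebung to the Frobenius of the isocrystal with the correct power of $p$ — the identity $V_{\es}^d=p^{c}(F^d)^{-1}$ with $c=d-m$ — together with the verification that the non-vanishing of the $h_{\tilde\tau}(A)$ really amounts to the reduced period-$d$ essential Verschiebung acting invertibly on the Hodge line $\bar\ell_0$ (so that the composite of partial Hasse invariants around the $\fq$-cycle is literally $V_{\es}^d$ restricted to $\bar\ell_0$). Granting those, the lift of the unit-root line and the resulting slope comparison are routine. One could also shortcut part of this by quoting the description of the non-supersingular locus of $\bfSh(G'_{\tilde\tS},K'^p)_{k_0}$ in terms of partial Hasse invariants from \cite{TX1}.
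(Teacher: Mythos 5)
Your argument is correct and follows essentially the same route as the paper's proof: the non-vanishing of the partial Hasse invariants around the $\fq$-cycle makes the reduced $g_\fq$-fold essential Verschiebung invertible on the Hodge line, which lifts to a unit-root direct summand $L$ of $\tcD(A)^\circ_{\tilde\tau_0}$ and forces the slope $1-m/g_\fq>1/2$ (equivalently, in your formulation, prevents $F^{g_\fq}$ from being isoclinic of slope $g_\fq/2$). The only differences are presentational — you phrase the conclusion via isoclinicity of $V_{\es}^d=p^{d-m}(F^d)^{-1}$ where the paper exhibits the slope $1-m/g_\fq$ directly from $L=\bigcap_n(V^{g_\fq}/p^m)^n\tcD(A)^\circ_{\tilde\tau}$ — and both hinge on the same count $\#\{s_{\tilde\tau}=2\}=\#\{s_{\tilde\tau}=0\}$ together with $\#\{s_{\tilde\tau}=1\}\geq 1$.
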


\begin{proof}
The covariant Dieudonn\'e module $\tilde\cD(A)$ of $A[p^{\infty}]$ is a free $W(\bF_p^\ac)\otimes_{\ZZ}\cO_{D_{\tS}}$-module of rank $1$. Then the covariant Dieudonn\'e module of $A[\fp^{\infty}]$ is given by
\[
\tilde\cD(A[\fp^{\infty}])=\bigoplus_{\tilde\tau\in\Sigma_{E,\infty/\fp}}\tilde\cD(A)_{\tilde\tau}^{\circ, \oplus 2},
\]
and there exists a canonical isomorphism
\[
\tilde\cD(A)_{\tilde\tau}^{\circ}/p\tilde\cD(A)_{\tilde\tau}^{\circ}\cong \rH^{\dr}_1(A/\bF_p^\ac)^{\circ}_{\tilde\tau}.
\]


By assumption, for all $\tilde\tau\in\Sigma_{E,\infty/\fp}$ lifting some $\tau\in\Sigma_{\infty/\fp}-\tS_{\infty/\fp}$, the map
\[
h_{\tilde\tau}(A)\colon \omega_{A^\vee,\tilde\tau}^{\circ}\to \omega_{A^{\vee}, \sigma^{-n_{\tau}}\tilde\tau}^{\circ, (p^n)}
\]
is non-vanishing. Thus it is an isomorphism, as both the source and the target are one-dimensional $\bF_p^\ac$-vector spaces. For each $\tilde\tau\in\Sigma_{E,\infty/\fp}$ lifting some $\tau\in\Sigma_{\infty/\fp}-\tS_{\infty/\fp}$, choose a basis $e_{\tilde\tau}$ for $\omega_{A^{\vee}, \tilde\tau}^{\circ}$, and extend it to a basis $(e_{\tilde\tau},f_{\tilde\tau})$ of $\rH^{\dr}_1(A/\bF_p^\ac)^{\circ}_{\tilde\tau}$. If we consider $V_{\es}$ as a $\sigma^{-1}$-linear map on $\rH^{\dr}_1(A/\bF_p^\ac)^{\circ}_{\tilde\tau}$, then one has
\[
V_{\es}^{n_{\tau}}(e_{\tilde\tau}, f_{\tilde\tau})=(e_{\sigma^{-n_{\tau}}\tilde\tau},f_{\sigma^{-n_{\tau}}\tilde\tau})
\begin{pmatrix}u_{\tilde\tau} &0\\ 0& 0\end{pmatrix}
\]
with $u_{\tilde\tau}\in\bF_p^{\ac,\times}$.

Let $\fq$ be a $p$-adic place of $E$ above $\fp$. By our choice of $E$,  $g_{\fq}\coloneqq[E_{\fq}:\QQ_p]$  is always even  no matter whether $\fp$ is split or inert in $E$.   To prove the proposition, it suffices to show that the $p$-divisible group $A[\fq^{\infty}]$ is not supersingular. By composing the essential Verschiebung maps on all $\rH^{\dr}_1(A/S)^{\circ}_{\tilde\tau}$ with $\tilde\tau\in\Sigma_{E,\infty/\fq}$, we get
\[
V_{\es}^{g_{\fq}}(e_{\tilde\tau},f_{\tilde\tau})=(e_{\tilde\tau}, f_{\tilde\tau})\begin{pmatrix}
\bar a_{\tilde\tau} & 0 \\ 0& 0
\end{pmatrix}
\]
with $\bar a_{\tilde\tau}\in\bF_p^{\ac,\times}$ for all $\tilde\tau\in\Sigma_{E,\infty/\fq}$ with $s_{\tilde\tau}=1$. Now, note that  $V_{\es}^{g_{\fq}}$ on $\rH^{\dr}_1(A/\bF_p^\ac)^{\circ}_{\tilde\tau}$ is nothing but the reduction modulo $p$ of the $\sigma^{-g_{\fq}}$-linear map
\[
V^{g_{\fq}}/p^{m}\colon \tilde\cD(A)_{\tilde\tau}^{\circ}\to \tilde\cD(A)^{\circ}_{\tilde\tau},
\]
where $m$ is the number of $\tilde\tau\in\Sigma_{E,\infty/\fq}$ with $s_{\tilde\tau}=2$. If $(\tilde e_{\tilde\tau},\tilde f_{\tilde\tau})$ is a  lift of $(e_{\tilde\tau}, f_{\tilde\tau})$ to a basis of $\tilde\cD(A)^{\circ}_{\tilde\tau}$ over $W(\bF_p^\ac)$, then $V^{g_{\fq}}/p^m$ on $\tilde\cD(A)^{\circ}_{\tilde\tau}$ is given by
\[
\frac{V^{g_{\fq}}}{p^m}(\tilde e_{\tilde\tau},\tilde f_{\tilde\tau})=(\tilde e_{\tilde\tau}, \tilde f_{\tilde\tau})\begin{pmatrix}  a_{\tilde\tau} & pb_{\tilde\tau}\\ p c_{\tilde\tau} & p d_{\tilde\tau} \end{pmatrix}
\]
for some $a_{\tilde\tau}\in W(\bF_p^\ac)^{\times}$ lifting $\bar a_{\tilde\tau}$ and $b_{\tilde\tau }, c_{\tilde\tau}, d_{\tilde\tau}\in W(\bF_p^\ac)$. Put
\[
L\coloneqq\bigcap_{n\geq 1} \big(\frac{V^{g_{\fq}}}{p^m}\big)^n\tilde\cD(A)^{\circ}_{\tilde\tau}.
\]
It is easy to see that $L$ is a $W(\bF_p^\ac)$-direct summand of $\tilde\cD(A)^{\circ}_{\tilde\tau}$ of rank one, on which $V^{g_{\fp}}/p^m$ acts bijectively. It follows that $1-m/g_{\fq}$ is a slope of the $p$-divisible group $A[\fq^{\infty}]$. By our choice of the $s_{\tilde\tau}$'s  in Section \ref{S:signature-numbers}, the two sets  $\{\tilde\tau\in\Sigma_{E,\infty/\fq}\res s_{\tilde\tau}=2\}$ and $\{\tilde\tau\in\Sigma_{E,\infty/fq}\res s_{\tilde\tau}=0\}$ have the same cardinality, hence $2m<g_{\fq}$, i.e. $1-m/g_{\fq}>1/2$. Therefore, $A[\fq^{\infty}]$ hence  $A[\fp^{\infty}]$, are not supersingular.

\end{proof}

\subsection{Goren--Oort divisors}

For each $\tau\in\Sigma_{\infty}-\tS_{\infty}$, let $\bfSh(G_{\tilde\tS}', K'^p)_{k_0, \tau}$ be the closed subscheme of $\bfSh(G_{\tilde\tS}', K'^p)_{k_0}$ defined by the vanishing of $h_{\tilde\tau}$ for some $\tilde\tau\in\Sigma_{E,\infty}$ lifting $\tau$.
By \cite{TX1}*{Lemma~4.5}, $h_{\tilde\tau}$ vanishes at a point $x$ of $\bfSh(G_{\tilde\tS,\tT}', K'^p)_{k_0}$ if and only if $h_{\tilde\tau^c}$ vanishes at $x$. In particular, $\bfSh(G_{\tilde\tS}', K'^p)_{k_0,\tau}$ does not depend on the choice of $\tilde\tau$ lifting $\tau$. We call $\bfSh(G_{\tilde\tS}',K'^p)_{k_0,\tau}$ the $\tau$-th \emph{Goren--Oort divisor} of $\bfSh(G_{\tilde\tS}', K'^p)_{k_0}$. For a non-empty subset $\Delta\subseteq \Sigma_{\infty}-\tS_{\infty}$, we put
\begin{equation*}
\bfSh(G_{\tilde\tS}',K'^p)_{k_0,\Delta}\coloneqq\bigcap_{\tau\in\Delta}\bfSh(G_{\tilde\tS}',K'^p)_{k_0,\tau}.
\end{equation*}
According to \cite{TX1}*{Proposition~4.7}, $\bfSh(G_{\tilde\tS}', K'^p)_{k_0, \Delta}$ is a proper and smooth closed subvariety of $\bfSh(G_{\tilde\tS}',K'^p)_{k_0}$ of codimension $\#\Delta$; in other words, the union $\bigcup_{\tau\in\Sigma_{\infty}-\tS_{\infty}}\bfSh(G_{\tilde\tS}', K'^p)_{k_0, \tau}$
is a strict normal crossing divisor of $\bfSh(G_{\tilde\tS}', K'^p)_{k_0}$.

In \cite{TX1}, we gave an explicit description of $\bfSh(G_{\tilde\tS}', K'^p)_{k_0,\tau}$ in terms of another unitary Shimura variety of type in Section \ref{S:unitary-moduli}. To describe this, let $\fp\in\Sigma_p$ denote the $p$-adic place induced by $\tau$. Set
\begin{equation}\label{D:S-tau}
\tS_{\tau}=\begin{cases}\tS\cup \{\tau, \sigma^{-n_{\tau}}\tau\} & \text{if }\Sigma_{\infty/\fp}\neq \tS_{\infty/\fp}\cup \{\tau\},\\
\tS\cup \{\tau, \fp\} &\text{if }\Sigma_{\infty/\fp}=\tS_{\infty/\fp}\cup\{\tau\},
\end{cases}
\end{equation}
We fix a lifting $\tilde\tau\in\Sigma_{E,\infty}$ of $\tau$, and take $\tilde\tS_{\tau, \infty}$ to be $\tilde\tS_{\infty}\cup\{\tilde\tau,\sigma^{-n_{\tau}}\tilde\tau^c\}$ if $\Sigma_{\infty/\fp}\neq \tS_{\infty/\fp}\cup \{\tau\}$, and to be $\tilde\tS_{\infty}\cup \{\tilde\tau\}$ if $\Sigma_{\infty/\fp}=\tS_{\infty/\fp}\cup\{\tau\}$. This choice of $\tilde\tS_{\tau,\infty}$ satisfies Assumption \ref{A:assumption-S-tilde}. We note that both $D_{\tS}$ and $D_{\tS_{\tau}}$ are isomorphic to $\Mat_2(E)$. We fix an isomorphism $D_{\tS}\cong D_{\tS_{\tau}}$, and let $\cO_{D_{\tS_{\tau}}}$ denote the order of $D_{\tS_{\tau}}$ correspond to $\cO_{D_{\tS}}$ under this isomorphism.

\begin{proposition}[\cite{TX1}*{Theorem~5.2}]\label{P:GO-divisors}
Under the above notation, there exists a canonical projection
\[
\pi_{\tau}'\colon \bfSh(G_{\tilde\tS}', K'^p)_{k_0,\tau}\to \bfSh(G'_{\tilde\tS_{\tau}}, K'^p)_{k_0}
\]
where
\begin{enumerate}
  \item if $\Sigma_{\infty/\fp}\neq \tS_{\infty/\fp}\cup \{\tau\}$, then $\pi'_{\tau}$ is a $\dP^1$-fibration over $\bfSh(G'_{\tilde\tS_{\tau}},K'^p)_{k_0}$ such that the restriction of $\pi'_{\tau}$ to ${\bfSh(G_{\tilde\tS}',K'^p)_{k_0,\{\tau,\sigma^{-n_{\tau}}\tau\}}}$ is an isomorphism;

  \item if $\Sigma_{\infty/\fp}=\tS_{\infty/\fp}\cup\{\tau\}$, then $\pi'_{\tau}$ is an isomorphism.
\end{enumerate}
Moreover, $\pi'_{\tau}$ is equivariant under prime-to-$p$ Hecke correspondences when $K'^p$ varies, and there exists a $p$-quasi-isogeny
\[
\phi\colon\cA'_{\tilde\tS}\res_{\bfSh(G'_{\tilde\tS},K'^p)_{k_0,\tau}}\to\pi'^{*}_{\tau}\cA'_{\tilde\tS_{\tau}}
\]
that is compatible with polarizations and $K'^p$-level structures on both sides, and that induces an isomorphism of relative de Rham homology groups
\[
\phi_{*,\tau}\colon \rH^{\dr}_1(\cA'_{\tilde\tS}\res_{\bfSh(G'_{\tilde\tS}, K'^p)_{k_0,\tau}}/\bfSh(G'_{\tilde\tS},K'^p)_{k_0,\tau})^{\circ}_{\tilde\tau'}\cong  \rH^{\dr}_1(\cA'_{\tilde\tS_{\tau}}/ \bfSh(G'_{\tilde\tS_{\tau}}, K'^p))^{\circ}_{\tilde\tau'}
\]
for any $\tilde\tau'\in \Sigma_{E,\infty/\fp}$ lifting some $\tau'\in\Sigma_{\infty}- \tS_{\tau,\infty/\fp}$.

\end{proposition}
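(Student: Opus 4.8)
The plan is to build $\pi'_\tau$ from an explicit $p$-quasi-isogeny of abelian schemes and then to read off its geometry from the local model. First I would record the Dieudonn\'e-theoretic moduli description of the divisor: on a $k_0$-scheme $S$, a point $(A,\iota,\lambda,\bar\alpha_{K'^p})$ lies on $\bfSh(G_{\tilde\tS}',K'^p)_{k_0,\tau}$ exactly when the composite $\omega^{\circ}_{A^\vee,\tilde\tau}\hookrightarrow\rH^{\dr}_1(A/S)^{\circ}_{\tilde\tau}\xra{V_{\es}^{n_\tau}}\rH^{\dr}_1(A/S)_{\sigma^{-n_\tau}\tilde\tau}^{\circ,(p^{n_\tau})}$ vanishes, i.e.\ $\omega^{\circ}_{A^\vee,\tilde\tau}\subseteq\ker(V_{\es}^{n_\tau})$. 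Over the smooth divisor (\cite{TX1}*{Proposition~4.7}) this kernel is a line subbundle --- the intervening reduced $\fp$-components all have signature in $\{0,2\}$ since $n_\tau$ is odd by Assumption~\ref{A:assumption-S}, so $V_{\es}$ induces isomorphisms along the chain except at the two ends --- hence the Hodge line at $\tilde\tau$ is \emph{determined} there; by \cite{TX1}*{Lemma~4.5} the same holds at $\tilde\tau^c$.

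Next I would construct over the divisor a $p$-quasi-isogeny $\phi\colon\cA'_{\tilde\tS}\res_{\bfSh(G_{\tilde\tS}',K'^p)_{k_0,\tau}}\to\cB$ that modifies the Dieudonn\'e crystal only through the $\fp$-component, and there only in the reduced $\tilde\tau$-, $\tilde\tau^c$-, $\sigma^{-n_\tau}\tilde\tau$- and $(\sigma^{-n_\tau}\tilde\tau)^c$-parts together with the chain between them, so arranged that the transported Hodge filtration vanishes at $\tilde\tau$ and is everything at $\sigma^{-n_\tau}\tilde\tau$; this gives $\cB$ the signature attached to $\tilde\tS_\tau$. The recipe exhibits $\cB$ as isogenous to $\cA'_{\tilde\tS}$ through a finite flat subgroup scheme of $p$-power order, so $\cB$ is an abelian scheme, and I would check that it carries the induced $\cO_{D_{\tS_\tau}}$-action (via the fixed isomorphism $D_{\tS}\cong D_{\tS_\tau}$), satisfies the determinant condition for $G'_{\tilde\tS_\tau}$, is equipped with a polarization $\lambda'$ induced from $\lambda$ whose $\fp$-part realizes the kernel/cokernel conditions in the moduli problem defining $\bfSh(G'_{\tilde\tS_\tau},K'^p)$, and carries a $K'^p$-level structure transported through $\phi$. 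This produces $\pi'_\tau$, and $\phi_{*,\tau}$ is an isomorphism on every reduced component over $\tau'\notin\tS_{\tau,\infty/\fp}$ by construction. Hecke-equivariance in $K'^p$ and descent of the whole picture to $k_0$ are then formal, since all constructions are functorial in the prime-to-$p$ level and $\phi$ is an isomorphism away from $p$.

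Then I would analyze the fibers by deformation theory. In case~(2) the place $\fp$ is absorbed into $\tS_\tau$ and no modulus is created or lost; the inverse quasi-isogeny exists unconditionally because the de Rham homology at $\fp$ is rigidified by the PEL datum there, so $\pi'_\tau$ is an isomorphism. In case~(1), recovering $(A,\iota,\lambda,\bar\alpha_{K'^p})$ from a point of $\bfSh(G'_{\tilde\tS_\tau},K'^p)_{k_0}$ requires exactly one extra datum, the Hodge line $\omega^{\circ}_{A^\vee,\tilde\tau}$ inside a rank-two bundle --- every other component being pinned down by the signature constraints --- so a local-model / Grothendieck--Messing computation identifies the fiber with $\dP^1$ and shows $\pi'_\tau$ is smooth. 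On the smaller stratum $\bfSh(G_{\tilde\tS}',K'^p)_{k_0,\{\tau,\sigma^{-n_{\tau}}\tau\}}$ the extra vanishing of $h_{\sigma^{-n_\tau}\tilde\tau}$ forces this line to a unique value, whence $\pi'_\tau$ restricts to an isomorphism there.

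The hard part will be twofold. First, promoting the pointwise Dieudonn\'e-module recipe for $\phi$ to a genuine morphism of crystals over an arbitrary $k_0$-base while simultaneously matching the delicate polarization condition at $\fp$ in the target moduli problem (finite flat kernel of the prescribed rank, locally free cokernel of rank two) --- this is where one must be most careful, and where the combinatorics forced by Assumption~\ref{A:assumption-S} (the even chains, the oddness of $n_\tau$) are really used. Second, controlling the fiber of $\pi'_\tau$ in case~(1) through the local model precisely enough to see a $\dP^1$-bundle and rule out spurious components.
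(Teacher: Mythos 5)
Your proposal follows essentially the same route as the paper, which itself only cites \cite{TX1}*{Theorem~5.2} and sketches the construction on $\bF_p^\ac$-points: one modifies the (reduced) Dieudonn\'e module along the chain from $\tilde\tau$ to $\sigma^{-n_\tau}\tilde\tau$ using the vanishing of $h_{\tilde\tau}$, checks the resulting abelian variety has the signature of $\tilde\tS_\tau$, and identifies the fiber with the choice of the Hodge line $\omega^{\circ}_{A^\vee,\sigma^{-n_\tau}\tilde\tau}$ in a rank-two bundle. The two hard points you flag --- globalizing the pointwise recipe to crystals over an arbitrary base with the correct polarization conditions, and the tangent-space/local-model computation needed to upgrade the bijection to an isomorphism in case~(2) --- are exactly the parts the paper defers to \cite{TX1}, so your plan is correct and consistent with the intended argument.
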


Here, we are content with explaining the map $\pi'_{\tau}$ and the quasi-isogeny $\phi$ on $\bF_p^\ac$-points. Take $x=(A,\iota_A,\lambda_A,\alpha_A)\in\bfSh(G'_{\tilde\tS,\tT},K'^p)_{k_0,\tau}(\bF_p^\ac)$. Denote by $\tcD(A)^{\circ}=\bigoplus_{\tilde\tau'\in\Sigma_{E,\infty}}\tcD(A)^{\circ}_{\tilde\tau'}$ the reduced covariant Dieudonn\'e module as usual. Consider a $W(\bF_p^\ac)$-lattice $M^{\circ}=\bigoplus_{\tilde\tau'\in\Sigma_{E,\infty}}M_{\tilde\tau'}$ of $\tcD(A)^{\circ}[1/p]$ such that
\[
M_{ \tilde\tau'}^{\circ}=
\begin{cases}
F_{\es}^{n_{\tau}-\ell}\tcD(A)_{\sigma^{-n_{\tau}}\tilde\tau}^{\circ}
& \text{if  }\tilde\tau'=\sigma^{-\ell} \tilde\tau \text{ with $0\leq \ell \leq n_{\tau}-1$,}\\
\frac{1}{p}F_{\es}^{n_{\tau}-\ell}\tcD(A)_{\sigma^{-n_{\tau}}\tilde\tau^c}^{\circ}
& \text{if }\tilde\tau'=\sigma^{-\ell}\tilde\tau^c \text{ with $0\leq \ell\leq n_{\tau}-1$ and $\Sigma_{\infty/\fp}\neq \tS_{\infty/\fp}\cup \{\tau\}$},\\
\tcD(A)^{\circ}_{\tilde\tau'} &\text{otherwise}.
\end{cases}
\]
Note that the condition that $h_{\tilde\tau}(A)=0$ is equivalent to $\tilde\omega_{A^\vee,\tilde\tau}^{\circ}=F_{\es}^{n_{\tau}}(\tcD(A)^{\circ}_{\sigma^{-n_{\tau}}\tilde\tau})$, where $\tilde\omega_{A^\vee,\tilde\tau}^{\circ}$ denotes the preimage of $\omega^{\circ}_{A^\vee,\tilde\tau}$ under the natural reduction map
\[
\tcD(A)^{\circ}_{\tilde\tau}\to \tcD(A)^{\circ}_{\tilde\tau}/p\tcD(A)^{\circ}_{\tilde\tau}\cong\rH^{\dr}_1(A/\bF_p^\ac)^{\circ}_{\tilde\tau}.
\]
Using this property, one checks easily that $M^{\circ}$ is a Diedonn\'e submodule of $\tcD(A)^{\circ}[1/p]$. Put $M\coloneqq M^{\circ,\oplus 2}$ equipped with the natural action of $\cO_{D_{\tS}}\otimes \ZZ_p\cong \Mat_2(\cO_{E}\otimes \ZZ_p)$. Then $M$ corresponds to a $p$-divisible group $G$ equipped with an $\cO_{D_{\tS}}$-action and an $\cO_{D_{\tS}}$-linear isogeny $\phi_p\colon A[p^{\infty}]\to G$. Thus there exists an abelian variety $B$ over $\bF_p^\ac$ with $B[p^{\infty}]=G$ and a $p$-quasi-isogeny $\phi\colon A\to B$ such that $\phi_p$ is obtained by taking the $p^{\infty}$-torsion of $\phi$. Moreover, by construction, it is easy to see that
\[
\dim \Lie(B)_{\tilde\tau'}^{\circ}
=\begin{cases}
\dim(\Lie(A)_{\tilde\tau'}^{\circ}) &\text{if $\tilde\tau'\neq\tilde\tau, \sigma^{-n_{\tau}}\tilde\tau$,}\\
0 &\text{if }\tilde\tau'=\tilde\tau, \sigma^{-n_{\tau}}\tilde\tau^{c},\\
2 & \text{if }\tilde\tau'=\tilde\tau^c, \sigma^{-n_{\tau}}\tilde\tau.
\end{cases}
\]
In other words, the $\cO_E$-action on $B$ satisfies Kottwitz' condition for $\bfSh(G'_{\tilde\tS_{\tau}}, K'^p)$. Moreover, $\lambda_A$ and $\alpha_A$ induce an $\cO_{D_{\tS_{\tau}}}$-linear prime-to-$p$ polarization $\lambda_B$ via the fixed isomorphism $\cO_{D_{\tS}}\simeq\cO_{D_{\tS_{\tau}}}$ and a $K'^p$-level structure $\alpha_B$ on $B$, respectively, such that $(B,\iota_{B},\lambda_B,\bar\alpha_B)$ is an $\bF_p^\ac$-point of $\bfSh(G_{\tilde\tS_{\tau}},K'^p)$. The resulting map
$(A,\iota_A,\lambda_A,\bar\alpha_A)\mapsto(B,\iota_{B},\lambda_B,\bar \alpha_B)$ is nothing but $\pi'_{\tau}$.

If $\Sigma_{\infty/\fp}\neq \tS_{\infty/\fp}\cup\{\tau\}$, then $\sigma^{-n_{\tau}}\tau\neq \tau$ and we have $\tcD(B)^{\circ}_{\sigma^{-n_{\tau}}\tilde\tau}=\tcD(A)_{\sigma^{-n_{\tau}}\tilde\tau}^{\circ}$ by construction. To recover $A$ from $B$, it suffices to ``remember'' the line $\omega^{\circ}_{A^\vee, \sigma^{-n_{\tau}}\tilde\tau}$ inside the two dimensional $\bF_p^\ac$-vector space
\[
\tcD(A)_{\sigma^{-n_{\tau}}\tilde\tau}^{\circ}/p\tcD(A)^{\circ}_{\sigma^{-n_{\tau}}\tilde\tau}
=\tcD(B)^{\circ}_{\sigma^{-n_{\tau}}\tilde\tau}/p\tcD(B)^{\circ}_{\sigma^{-n_{\tau}}\tilde\tau}.
\]
This means that the fiber of $\pi'_{\tau}$ over a point $(B,\iota_B,\lambda_B,\bar\alpha_B)\in\bfSh(G_{\tilde\tS_{\tau}}',K'^p)$ is isomorphic to $\dP^1$. On the other hand, if $\Sigma_{\infty/\fp}=\tS_{\infty/\fp}\cup\{\tau\}$ then $n_{\tau}=[F_{\fp}:\QQ_p]$ is odd, one can completely recover $A$ from $B$, and thus $\pi'_{\tau}$ induces a bijection on closed points\footnote{To show that $\pi'_{\tau}$ is indeed an isomorphism, one has to check also that $\pi'_{\tau}$ induces  isomorphisms of tangent spaces to each closed point. This is the most technical part of \cite{TX1}. For more details,  see \cite{TX1}*{Lemma 5.20}.}. The moreover part of the statement follows from the construction of $\pi'_{\tau}$.

\if false
The basic idea for the proof of this Proposition is to show that if $(A,\iota,\lambda, \alpha)$  is a point in $X'_{\tilde\tS,\tau}$, then $A$ is quasi-isogenous to an abelian variety parameterized by $\bfSh_{K'}(G'_{\tilde\tS,\tau})$ or $\bfSh_{K'^pK'_{\tau,p}}(G'_{\tilde\tS,\tau})$.

For later applications, we describe this in more detail. Consider the moduli space $Y'_{\tilde\tS, \tau}$ which associates to each $k_0$-scheme $S$ the isomorphism classes of tuples $(A,\iota_A,\lambda_A,\alpha_A,B,\iota_B, \lambda_B, \alpha_B; C, \iota_C; \phi_A,\phi_B)$ satisfying the following conditions:
\begin{enumerate}
  \item[(i)] $(A,\iota_A,\lambda_A,\alpha_{A})$ is an $S$-point of $X'_{\tilde\tS,\tau}$;

  \item[(ii)] $(B,\iota_B,\lambda_B,\alpha_B)$ is an $S$-point of $\bfSh_{K'}(G'_{\tilde\tS_{\tau}})$ (resp.\ of $\bfSh_{K'^pK'_{\tau,p}}(G'_{\tilde\tS_{\tau}})$) if $\Sigma_{\infty/\fp}\neq \tS_{\infty/\fp}\cup \{\tau\}$ (resp.\ if $\Sigma_{\infty/\fp}=\tS_{\infty/\fp}\cup \{\tau\}$);

  \item[(iii)] $C$ is an abelian scheme over $S$ of dimension $4[F:\QQ]$, and $\iota_C\colon \cO_{D_{\tS}}\to \End_{S}(C)$ is an action of $\cO_{D}$ on $C$;

  \item[(iv)] $\phi_A\colon A\to C$ and $\phi_B\colon B\to C$ are $\cO_D$-linear isogenies such that $\ker(\phi_A)$ and $\ker(\phi_B)$ are contained in the $p$-torsion of $A$ and $B$ respectively, and that the induced morphism on the de Rham homology groups satisfy certain conditions specified in \cite{TX1}*{Section 5.15};

  \item[(v)] We also require that the $p$-isogeny
      \[A\xrightarrow{\phi_A}C\xleftarrow{\phi_B}B\]
      is compatible with the polarizations and the $K'^p$-level structures on $A$ and $B$.

\end{enumerate}
We get thus a diagram of morphisms
\[
\xymatrix{& Y'_{\tilde\tS,\tau}\ar[rd]^{\pr_2}\ar[ld]_{\pr_1}^{\cong}\\
X'_{\tilde\tS,\tau} && \bfSh_{K'_{\tau}}(G'_{\tilde\tS_{\tau}}),}
\]
where $K'_{\tau}=K'$ if $\Sigma_{\infty/\fp}\neq \tS_{\infty/\fp}\cup \{\tau\}$ and $K'=K'^pK'_{\tau, p}$ if $\Sigma_{\infty/\fp}= \tS_{\infty/\fp}\cup \{\tau\}$, $\pr_1$ sends a tuple
$(A,\iota_A,\lambda_A,\alpha_A,B,\iota_B, \lambda_B, \alpha_B; C, \iota_C; \phi_A,\phi_B)$ to $(A,\iota_A,\lambda_A,\alpha_A)$ and $\pr_2$ sends such a tuple to $(B,\iota_B, \lambda_B, \alpha_B)$.

Then $\pr_1$ is an isomorphism by \cite{TX1}{Proposition~5.17}, and $\pr_2$ is a $\bP^1$-fibration over $\bfSh_{K'}(G'_{\tilde\tS_{\tau}})$ if  $\Sigma_{\infty/\fp}\neq \tS_{\infty/\fp}\cup \{\tau\}$ and it is an isomorphism if $\Sigma_{\infty/\fp}=\tS_{\infty/\fp}\cup \{\tau\}$ by \cite{TX1}*{Proposition~5.23}.

\fi

\subsection{Periodic semi-meanders}

Following \cite{TX2}, we iterate the construction of Goren--Oort divisors to produce some closed subvarieties called Goren--Oort cycles. To parameterize those cycles, one need to recall some combinatory data introduced in \cite{TX2}*{Section 3.1}.


For a prime $\fp\in\Sigma_p$, put $d_{\fp}(\tS)\coloneqq g_{\fp}-\#\tS_{\infty/\fp}$. If there is no confusion, then we write $d_{\fp}=d_{\fp}(\tS)$ for simplicity. Consider the cylinder $C\colon x^2+y^2=1$ in $3$-dimensional Euclidean space, and let $C_0$ be the section with $z=0$. We write $\Sigma_{\infty/\fp}=\{\tau_0,\dots, \tau_{g_{\fp}-1}\}$ such that $\tau_j=\sigma\tau_{j-1}$ for $j\in\ZZ/g_{\fp}\ZZ$. For $0\leq j\leq g_{\fp}-1$, we use $\tau_j$ to label the point $(\cos\frac{2\pi j}{f_{\fp}},\sin\frac{2\pi j}{g_{\fp}},0)$ on $C_0$. If $\tau_j\in\tS_{\infty/\fp}$, then we put a plus sign at $\tau_j$; otherwise, we put a node at $\tau_j$. We call such a picture \emph{the band} associated to $\tS_{\infty/\fp}$. We often draw the picture on the 2-dimensional $xy$-plane by thinking of $x$-axis modulo $g_{\fp}$. We put the points $\tau_0, \dots, \tau_{g_{\fp}-1}$ on the $x$-axis with coordinates $x=0,\dots, g_{\fp}-1$ respectively. For example, if $g_{\fp}=6$ and $\tS_{\infty/\fp}=\{\tau_1,\tau_3,\tau_4\}$, then we draw the band as
\[
\psset{unit=0.3}
\begin{pspicture}(-.5,-0.3)(5.5,0.3)
\psset{linecolor=black}
\psdots(0,0)(2,0)(5,0)
\psdots[dotstyle=+](1,0)(3,0)(4,0)
\end{pspicture}
.\]

A \emph{periodic semi-meander} for $\tS_{\infty/\fp}$ is a collection of curves (called \emph{arcs}) that link two nodes of the band for $\tS_{\infty/\fp}$, and straight lines (called \emph{semi-lines}) that links a node to the infinity (that is, the direction $y\to +\infty$ in the $2$-dimensional picture) subject to the following conditions:
\begin{enumerate}
  \item All the arcs and semi-lines lie on the cylinder above the band (that is to have positive $y$-coordinate in the $2$-dimensional picture).

  \item Every node of the band for $\tS_{\infty/\fp}$ is exactly one end point of an arc or a semi-line.

  \item There are no intersection points among these arcs and semi-lines.
\end{enumerate}
The number of arcs is denoted by $r$ (so $r \leq d_{\fp}/2$), and the number of semi-lines $d_{\fp}-2r$ is called the \emph{defect} of the periodic semi-meander. Two periodic semi-meanders are considered as the same if they can be continuously deformed into each other while keeping the above three properties in the process. We use $\fB(\tS_{\infty/\fp}, r)$ denote the set of semi-meanders for $\tS_{\infty/\fp}$ with $r$ arcs (up to continuous deformations). For example, if $g_{\fp}=7$, $r=2$, and $\tS_{\infty/\fp} =\{\tau_1,\tau_4\}$, then we have $d_{\fp}=5$ and
\begin{eqnarray*}
\fB(\tS_{\infty/\fp}, 2)=\
\Big\{
\psset{unit=0.6}
\begin{pspicture}(-0.25,-0.1)(3.25,0.75)
\psset{linewidth=1pt}
\psset{linecolor=red}
\psarc{-}(1.25,0){0.25}{0}{180}
\psarc{-}(2.75,0){0.25}{0}{180}
\psline{-}(0,0)(0,0.75)
\psset{linecolor=black}
\psdots(0,0)(1,0)(1.5,0)(2.5,0)(3.0)
\psdots[dotstyle=+](0.5,0)(2,0)
\end{pspicture},
\begin{pspicture}(-0.25,-0.1)(3.25,0.75)
\psset{linewidth=1pt}
\psset{linecolor=red}
\psarc{-}(-0.25,0){0.25}{0}{90}
\psarc{-}(3.25,0){0.25}{90}{180}
\psarc{-}(2,0){0.5}{0}{180}
\psline{-}(1,0)(1,0.75)
\psset{linecolor=black}
\psdots(0,0)(1,0)(1.5,0)(2.5,0)(3.0)
\psdots[dotstyle=+](0.5,0)(2,0)
\end{pspicture},\
\begin{pspicture}(-0.25,-0.1)(3.25,0.75)
\psset{linewidth=1pt}
\psset{linecolor=red}
\psarc{-}(0.5,0){0.5}{0}{180}
\psarc{-}(2.75,0){0.25}{0}{180}
\psline{-}(1.5,0)(1.5,0.75)
\psset{linecolor=black}
\psdots(0,0)(1,0)(1.5,0)(2.5,0)(3.0)
\psdots[dotstyle=+](0.5,0)(2,0)
\end{pspicture},\
\begin{pspicture}(-0.25,-0.1)(3.25,0.75)
\psset{linewidth=1pt}
\psset{linecolor=red}
\psarc{-}(1.25,0){0.25}{0}{180}
\psarc{-}(-0.25,0){0.25}{0}{90}
\psarc{-}(3.25,0){0.25}{90}{180}
\psline{-}(2.5,0)(2.5,0.75)
\psset{linecolor=black}
\psdots(0,0)(1,0)(1.5,0)(2.5,0)(3.0)
\psdots[dotstyle=+](0.5,0)(2,0)
\end{pspicture},\
\begin{pspicture}(-0.25,-0.1)(3.25,0.75)
\psset{linewidth=1pt}
\psset{linecolor=red}
\psarc{-}(0.5,0){0.5}{0}{180}
\psarc{-}(2,0){0.5}{0}{180}
\psline{-}(3,0)(3,0.75)
\psset{linecolor=black}
\psdots(0,0)(1,0)(1.5,0)(2.5,0)(3.0)
\psdots[dotstyle=+](0.5,0)(2,0)
\end{pspicture},\\
\nonumber
\psset{unit=0.6}
\begin{pspicture}(-0.25,-0.1)(3.25,0.75)
\psset{linewidth=1pt}
\psset{linecolor=red}
\psarc{-}(2,0){0.5}{0}{180}
\psbezier{-}(1,0)(1,1)(3,1)(3,0)
\psline{-}(0,0)(0,0.75)
\psset{linecolor=black}
\psdots(0,0)(1,0)(1.5,0)(2.5,0)(3,0)
\psdots[dotstyle=+](0.5,0)(2,0)
\end{pspicture},\
\begin{pspicture}(-0.25,-0.1)(3.25,0.75)
\psset{linewidth=1pt}
\psset{linecolor=red}
\psarc{-}(2.75,0){0.25}{0}{180}
\psbezier{-}(1.5,0)(1.5,1)(3.5,1)(3.5,0)
\psbezier{-}(-2,0)(-2,1)(0,1)(0,0)
\psline{-}(1,0)(1,0.75)
\psset{linecolor=black}
\psdots(0,0)(1,0)(1.5,0)(2.5,0)(3.0)
\psdots[dotstyle=+](0.5,0)(2,0)
\end{pspicture},\
\begin{pspicture}(-0.25,-0.1)(3.25,0.75)
\psset{linewidth=1pt}
\psset{linecolor=red}
\psarc{-}(-0.25,0){0.25}{0}{90}
\psarc{-}(3.25,0){0.25}{90}{180}
\psbezier{-}(-2,0)(-2,1)(1,1)(1,0)
\psbezier{-}(2.5,0)(2.5,1)(4.5,1)(4.5,0)
\psline{-}(1.5,0)(1.5,0.75)
\psset{linecolor=black}
\psdots(0,0)(1,0)(1.5,0)(2.5,0)(3.0)
\psdots[dotstyle=+](0.5,0)(2,0)
\end{pspicture},\
\begin{pspicture}(-0.25,-0.1)(3.25,0.75)
\psset{linewidth=1pt}
\psset{linecolor=red}
\psarc{-}(0.5,0){0.5}{0}{180}
\psbezier{-}(-.5,0)(-.5,1)(1.5,1)(1.5,0)
\psbezier{-}(3,0)(3,1)(5,1)(5,0)
\psline{-}(2.5,0)(2.5,0.75)
\psset{linecolor=black}
\psdots(0,0)(1,0)(1.5,0)(2.5,0)(3.0)
\psdots[dotstyle=+](0.5,0)(2,0)
\end{pspicture},\
\begin{pspicture}(-0.25,-0.1)(3.25,0.75)
\psset{linewidth=1pt}
\psset{linecolor=red}
\psarc{-}(1.25,0){0.25}{0}{180}
\psbezier{-}(0,0)(0,1)(2.5,1)(2.5,0)
\psline{-}(3,0)(3,0.75)
\psset{linecolor=black}
\psdots(0,0)(1,0)(1.5,0)(2.5,0)(3.0)
\psdots[dotstyle=+](0.5,0)(2,0)
\end{pspicture} \Big\}.
\end{eqnarray*}

It is easy to see that the cardinality of $\fB(\tS_{\infty/\fp}, r)$ is $\binom{d_{\fp}}{r}$. In fact, the map that associates to each element $\fa\in\fB(\tS_{\infty/\fp}, r)$ the set of right end points of arcs in $\fa$ establishes a bijection between $\fB(\tS_{\infty/\fp}, r)$ and the subsets with cardinality $r$ of the $d_{\fp}$-nodes in the band of $\tS_{\infty/\fp}$.

\subsection{Goren--Oort cycles and supersingular locus}

We fix a lifting $\tilde\tau\in\Sigma_{E,\infty/\fp}$ for each $\tau\in\Sigma_{\infty/\fp}-\tS_{\infty/\fp}$.

For a periodic semi-meander $\fa\in\fB(\tS_{\infty/\fp},r)$ with $r\geq 1$, we put
\begin{equation}\label{E:definition-Sa}
\tS_{\fa}\coloneqq\tS\cup\{\tau\in\Sigma_{\infty/\fp}\res \text{$\tau$ is an end point of some arc in $\fa$}\}.
\end{equation}
For an arc $\delta$ in $\fa$, we use $\tau^+_\delta$ and $\tau^-_\delta$ to denote its right and left end points respectively. We take
\[
\tilde\tS_{\fa,\infty}=\tilde\tS_{\infty}\cup \{\tilde\tau_\delta^+,\tilde\tau_\delta^{-,c}\res \delta \text{ is an arc of $\fa$}\}.
\]
Here, $\tilde\tau_\delta^+$ denotes the fixed lifting of $\tau_\delta^+$, and $\tilde\tau_\delta^{-,c}$ the conjugate of the fixed lifting $\tilde\tau_\delta^-$ of $\tau_\delta^-$. We fix an isomorphism $G'_{\tilde\tS_{\fa}}(\bA^{\infty})\cong G'_{\tilde\tS}(\bA^{\infty})$, and consider $K'^p$ as an open compact subgroup of $G'_{\tilde\tS_{\fa}}(\bA^{\infty,p})$. We may thus speak of the unitary Shimura variety $\bfSh(G'_{\tilde\tS_{\fa}},K'^p)$.

Following \cite{TX2}*{Section 3.7}, for every $\fa\in\fB(\tS_{\infty/\fp},r)$, we construct a closed subvariety $Z'_{\tilde\tS}(\fa)\subseteq\bfSh(G'_{\tilde\tS},K'^p)_{k_0}$ of codimension $r$, which is an $r$-th iterated $\dP^1$-fibration over $\bfSh(G'_{\tilde\tS_{\fa}}, K'^p)_{k_0}$. We make the induction on $r\geq 0$. When $r=0$, we put simply $Z'_{\tilde\tS}(\fa)\coloneqq\bfSh(G'_{\tilde\tS},K'^p)_{k_0}$. Assume now $r\geq1$. An arc in $\fa$ is called \emph{basic}, if it does not lie below any other arcs. Choose such a basic arc $\delta$, and put $\tau\coloneqq\tau^+_\delta$ and $\tau^-\coloneqq\tau_\delta^-$ for simplicity. We note that $\tau^-=\sigma^{-n_{\tau}}\tau$. Consider the Goren--Oort divisor $\bfSh(G'_{\tilde\tS},K'^p)_{k_0,\tau}$, and let $\pi'_{\tau}\colon\bfSh(G'_{\tilde\tS},K'^p)_{k_0,\tau}\to \bfSh(G'_{\tilde\tS_{\tau}},K'^p)_{k_0}$ be the $\dP^1$-fibration given by Proposition~\ref{P:GO-divisors}. Let $\fa_{\delta}\in\fB(\tS_{\fa, \infty/\fp}, r-1)$ be the periodic semi-meander for $\tS_{\fa}$ obtained from $\fa$ by replacing the nodes at $\tau,\tau^-$ with plus signs and removing the arc $\delta$.
For instance, if
\[
\fa=\psset{unit=0.6}
\begin{pspicture}(-0.25,-0.1)(3.25,0.75)
\psset{linewidth=1pt}
\psset{linecolor=red}
\psarc{-}(2,0){0.5}{0}{180}
\psbezier{-}(1,0)(1,1)(3,1)(3,0)
\psline{-}(0,0)(0,0.75)
\psset{linecolor=black}
\psdots(0,0)(1,0)(1.5,0)(2.5,0)(3,0)
\psdots[dotstyle=+](0.5,0)(2,0),
\end{pspicture}
\]
 then $\tS_{\fa}=\tS\cup\{\tau_2,\tau_3,\tau_5,\tau_6\}$, and the arc $\delta$ connecting $\tau_3$ and $\tau_5$ is the unique basic arc in $\fa$, and
\[
\fa_{\delta}=\psset{unit=0.6}
\begin{pspicture}(-0.25,-0.1)(3.25,0.75)
\psset{linewidth=1pt}
\psset{linecolor=red}
\psbezier{-}(1,0)(1,1)(3,1)(3,0)
\psline{-}(0,0)(0,0.75)
\psset{linecolor=black}
\psdots(0,0)(1,0)(3,0)
\psdots[dotstyle=+](1.5,0)(2.5,0)(0.5,0)(2,0)
\end{pspicture}.
\]
By induction hypothesis, we have constructed a closed subvariety $Z'_{\tilde\tS_{\tau}}(\fa_{\delta})\subseteq\bfSh(G'_{\tilde\tS_{\fa}},K'^p)_{k_0}$ of codimension $r-1$. Then we define $Z'_{\tilde\tS}(\fa)$ as the preimage of $Z'_{\tilde\tS_{\tau}}(\fa_{\delta})$ via $\pi'_{\tau}$. We denote by
\[
\pi'_{\fa}\colon Z'_{\tilde\tS}(\fa)\to \bfSh(G'_{\tilde\tS_{\fa}}, K'^p)_{k_0}
\]
the canonical projection. In summary, we have a diagram
\[
\xymatrix{Z'_{\tilde\tS}(\fa)\ar@{^(->}[r]\ar@/_2pc/[dd]_{\pi'_{\fa}}\ar[d] &\bfSh(G'_{\tilde\tS}, K'^p)_{k_0,\tau} \ar[d]^{\pi'_{\tau}}\ar@{^(->}[r] & \bfSh(G'_{\tilde\tS}, K'^p)_{k_0}\\
Z'_{\tilde\tS_{\tau}}(\fa_{\delta})\ar[d]^{\pi'_{\fa_{\delta}}}\ar@{^(->}[r]& \bfSh(G'_{\tilde\tS_{\tau}}, K'^p)_{k_0}\\
\bfSh(G'_{\tilde\tS_{\fa}}, K'^p)_{k_0}}
\]
where the square is cartesian. By the induction hypothesis, the morphism $\pi'_{\fa_{\delta}}$ is an $(r-1)$-th iterated $\dP^1$-fibration. It follows that $\pi'_{\fa}$ is an $r$-th iterated $\dP^1$-fibration.

We explain the relationship between Goren--Oort cycles and the $\fp$-supersingular locus of $\bfSh(G'_{\tilde\tS}, K'^p)_{k_0}$. Take $\fa\in\fB(\tS_{\infty/\fp},\lfloor d_{\fp}/2\rfloor)$. If $d_{\fp}$ is even, then we put $W'_{\tilde\tS}(\fa)\coloneqq Z'_{\tilde\tS}(\fa)$. If $d_{\fp}$ is odd, then we let $\tau(\fa)\in\Sigma_{\infty/\fp}$ denote the end point of the unique semi-line in $\fa$, and define $W'_{\tilde\tS}(\fa)$ by the following Cartesian diagram:
\[
\xymatrix{W'_{\tilde\tS}(\fa)\ar@{^(->}[r]\ar[d] &Z'_{\tilde\tS}(\fa)\ar[d] ^-{\pi'_{\fa}}\\
\bfSh(G'_{\tilde\tS_{\fa}},K'^p)_{k_0,\tau(\fa)}\ar@{^(->}[r] & \bfSh(G'_{\tilde\tS_{\fa}}, K'^p)_{k_0}.}
\]
We put
\begin{align}\label{E:tS-a}
\tilde\tS^*_{\fa}\coloneqq
\begin{cases}
\tilde\tS_{\fa}=(\tS_\fa,\tilde\tS_{\fa,\infty}) &\text{if  $d_{\fp}$ is even},\\
(\tS_{\fa}\cup\{\fp\},\tilde\tS_{\fa,\infty}\cup\{\tilde\tau(\fa)\}) &\text{if $d_{\fp}$ is odd}.
\end{cases}
\end{align}
Note that the underlying set $\tS^*_{\fa}$ of $\tilde\tS^*_{\fa}$ is independent of $\fa\in\fB(\tS_{\infty/\fp},\lfloor d_{\fp}/2\rfloor)$, namely all $\tS^*_\fa$ are equal to
\begin{equation}\label{E:S-a}
\tS(\fp)\coloneqq
\begin{cases}
\tS\cup \Sigma_{\infty/\fp} & \text{if $d_{\fp}$ is even,}\\
\tS\cup \Sigma_{\infty/\fp}\cup\{\fp\} &\text{if $d_{\fp}$ is odd}.
\end{cases}
\end{equation}
If $d_{\fp}$ is odd, then we have an isomorphism
\[
\bfSh(G'_{\tilde\tS_{\fa}},K'^p)_{k_0,\tau(\fa)}\cong\bfSh(G'_{\tilde\tS^*_{\fa}},K'^p)_{k_0}
\]
by Proposition~\ref{P:GO-divisors}. Thus, regardless of the parity of $d_{\fp}$, one has a $\lfloor d_{\fp}/2\rfloor$-th iterated $\dP^1$-fibration equivariant under prime-to-$p$ Hecke correspondences:
\[
\pi'_{\fa}\res_{W'_{\tilde\tS}(\fa)}\colon W'_{\tilde\tS}(\fa)\to \bfSh(G'_{\tilde\tS_{\fa}^*}, K'^p)_{k_0}.
\]

\begin{theorem}\label{T:p-supersingular-unitary}
Under the notation above, the union
\[
\bigcup_{\fa\in\fB(\tS_{\infty/\fp}, \lfloor d_{\fp}/2\rfloor)} W'_{\tilde\tS}(\fa)
\]
is exactly the $\fp$-supersingular locus of $\bfSh(G'_{\tilde\tS}, K'^p)_{k_0}$, that is, the maximal closed subset where the universal $\fp$-divisible group $\cA'_{\tilde\tS}[\fp^{\infty}]$ is supersingular.
\end{theorem}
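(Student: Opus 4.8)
The plan is to prove the two set-theoretic inclusions on $\bF_p^\ac$-points and observe that this suffices: the $\fp$-supersingular locus is the smallest Newton stratum of $\bfSh(G'_{\tilde\tS},K'^p)_{k_0}$, hence a closed reduced subscheme, while $\bigcup_{\fa}W'_{\tilde\tS}(\fa)$ is a finite union of smooth (hence reduced) closed subvarieties, so equality on geometric points upgrades to an equality of closed subschemes. Both inclusions will be anchored on the degenerate case $d_\fp\coloneqq d_\fp(\tS)=0$ and then propagated by an induction on $d_\fp$ that descends along the fibrations $\pi'_\tau$ of Proposition~\ref{P:GO-divisors}. I expect $d_\fp=0$ to be the only place where genuine work is needed; the rest is an assembly of Propositions~\ref{P:ordinary} and~\ref{P:GO-divisors} together with the semi-meander combinatorics of \cite{TX2}.

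So first I would isolate the lemma that $d_\fp(\tS)=0$ implies $\cA'_{\tilde\tS}[\fp^\infty]$ is supersingular over all of $\bfSh(G'_{\tilde\tS},K'^p)_{k_0}$. If $\fp\in\tS$, then $\cO_{B_{\tS}}\otimes_FF_\fp$ is a maximal order in the quaternion division algebra over $F_\fp$ and the polarization condition at $\fp$ in the moduli problem pins the $\fp$-divisible group down up to isogeny, giving supersingularity; this is implicit in \cite{TX1}*{3.14, 3.19}. If $\fp\notin\tS$ and $\tS_{\infty/\fp}=\Sigma_{\infty/\fp}$, then every $s_{\tilde\tau}$ with $\tilde\tau$ above $\fp$ equals $0$ or $2$, with the two values equinumerous above each $p$-adic place $\fq$ of $E$ over $\fp$; rerunning the Dieudonné-module computation in the proof of Proposition~\ref{P:ordinary}, now with no index of signature $1$ (so that in the notation there one has $2m=g_\fq$), shows that $\cA'_{\tilde\tS}[\fq^\infty]$ is isoclinic of slope $1/2$ for every $\fq\mid\fp$.

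Granting the lemma, the inclusion $\bigcup_{\fa}W'_{\tilde\tS}(\fa)\subseteq\{\fp\text{-supersingular locus}\}$ is direct. Unwinding the recursive construction of $W'_{\tilde\tS}(\fa)$ for $\fa\in\fB(\tS_{\infty/\fp},\lfloor d_\fp/2\rfloor)$ — peeling its $\lfloor d_\fp/2\rfloor$ basic arcs one at a time, and, when $d_\fp$ is odd, finishing along the Goren--Oort divisor indexed by $\tau(\fa)$ — realizes $\pi'_{\fa}\colon W'_{\tilde\tS}(\fa)\to\bfSh(G'_{\tilde\tS^*_\fa},K'^p)_{k_0}$, where $d_\fp(\tS^*_\fa)=0$, as a composite of the maps of Proposition~\ref{P:GO-divisors}, each carrying its $\fp$-quasi-isogeny $\phi$. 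Composing the $\phi$'s produces a $\fp$-quasi-isogeny between $\cA'_{\tilde\tS}[\fp^\infty]\res_{W'_{\tilde\tS}(\fa)}$ and $(\pi'_{\fa})^*\cA'_{\tilde\tS^*_\fa}[\fp^\infty]$; since a quasi-isogeny preserves Newton polygons and the target is supersingular by the lemma, so is the source.

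For the reverse inclusion I would induct on $d_\fp$. If $d_\fp=0$ the whole variety is its $\fp$-supersingular locus, which equals $W'_{\tilde\tS}(\fa)$ for the unique arc-free $\fa$; if $d_\fp=1$, Proposition~\ref{P:ordinary} forces any supersingular point onto the unique Goren--Oort divisor above $\fp$, which is $W'_{\tilde\tS}(\fa)$ for the unique $\fa\in\fB(\tS_{\infty/\fp},0)$. For $d_\fp\ge2$, let $x=(A,\iota,\lambda,\bar\alpha_{K'^p})$ be an $\bF_p^\ac$-point with $A[\fp^\infty]$ supersingular; Proposition~\ref{P:ordinary} yields $\tilde\tau$ above $\fp$ with $s_{\tilde\tau}=1$ and $h_{\tilde\tau}(A)=0$, and writing $\tau\in\Sigma_{\infty/\fp}-\tS_{\infty/\fp}$ for the place below $\tilde\tau$, the point $x$ lies on $\bfSh(G'_{\tilde\tS},K'^p)_{k_0,\tau}$. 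Since $d_\fp\ge2$ we are in case~(1) of Proposition~\ref{P:GO-divisors}, with $\tS_\tau=\tS\cup\{\tau,\sigma^{-n_\tau}\tau\}$ and $d_\fp(\tS_\tau)=d_\fp-2$; the image $y\coloneqq\pi'_\tau(x)$ is again supersingular because the $\phi$ of Proposition~\ref{P:GO-divisors} is a $\fp$-quasi-isogeny. By the inductive hypothesis $y\in W'_{\tilde\tS_\tau}(\fb)$ for some $\fb\in\fB(\tS_{\tau,\infty/\fp},\lfloor d_\fp(\tS_\tau)/2\rfloor)$, and re-attaching to $\fb$ the basic arc with endpoints $\tau$ and $\sigma^{-n_\tau}\tau$ (basic since it arches over the plus-signs $\sigma^{-1}\tau,\dots,\sigma^{-n_\tau+1}\tau$ of $\tS_{\infty/\fp}$) produces $\fa\in\fB(\tS_{\infty/\fp},\lfloor d_\fp/2\rfloor)$ with $W'_{\tilde\tS}(\fa)=(\pi'_\tau)^{-1}\bigl(W'_{\tilde\tS_\tau}(\fb)\bigr)$ by the recursion defining the Goren--Oort cycles — here one also uses that in the odd case $\tau(\fa)=\tau(\fb)$, as $\tS^*_\fa$ and $\tS^*_\fb$ have the same underlying set. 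Hence $x\in W'_{\tilde\tS}(\fa)$, closing the induction; combining the two inclusions gives the theorem. The only genuine obstacle is the base case $d_\fp(\tS)=0$, i.e.\ the Dieudonné-module input identifying the deepest Goren--Oort stratum (resp.\ the $\fp$-ramified locus) with a uniformly supersingular locus.
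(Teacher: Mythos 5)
Your proposal is correct and follows essentially the same route as the paper: induction on $d_\fp$, with the base case $d_\fp=0$ settled by the Dieudonn\'e-module slope computation (all $s_{\tilde\tau}\in\{0,2\}$ above $\fp$ in equal number forces slope $1/2$), the inclusion $\bigcup_\fa W'_{\tilde\tS}(\fa)\subseteq\{\fp\text{-supersingular locus}\}$ obtained by transporting supersingularity along the $\fp$-quasi-isogenies of Proposition~\ref{P:GO-divisors}, and the reverse inclusion by combining Proposition~\ref{P:ordinary} with the inductive hypothesis and re-attaching an arc to $\fb$. The only cosmetic difference is your case split in the base case according to whether $\fp\in\tS$: the paper runs the same slope computation uniformly, since the signature condition alone (not the division-algebra structure of $\cO_{B_{\tS}}\otimes_F F_\fp$ or the polarization condition) is what forces isoclinicity.
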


\begin{proof}
We proceed by induction on $d_{\fp}\geq 0$. If $d_{\fp}=0$, then $\fB(\tS_{\infty/\fp}, 0)$ consists only of the trivial periodic semi-meander (that is, the one without any arcs or semi-lines). In this case, one has to show that the whole $\bfSh(G'_{\tilde\tS}, K'^p)_{k_0}$ is $\fp$-supersingular. First, we have $s_{\tilde\tau}\in\{0,2\}$ for all $\tilde\tau\in\Sigma_{E,\infty/\fp}$, and Assumption~\ref{A:assumption-S-tilde}(2) implies that the number of $\tilde\tau\in\Sigma_{E,\infty/\fp}$ with $s_{\tilde\tau}=2$ equals exactly to the number of $\tilde\tau\in\Sigma_{E,\infty/\fp}$ with $s_{\tilde\tau}=0$. Now consider a point $x=(A,\iota,\lambda,\alpha)\in\bfSh(G'_{\tilde\tS},K'^p)(\bF_p^\ac)$. Then, for every $\tilde\tau\in\Sigma_{E,\infty/\fp}$, the $2g_{\fp}$-th iterated essential Verschiebung
\[
V_{\es}^{2g_{\fp}}=\frac{V^{2g_{\fp}}}{p^{g_{\fp}}}\colon \tcD(A)^{\circ}_{\tilde\tau}\to\tcD(A)^{\circ}_{\sigma^{-2g_{\fp}}\tilde\tau}=\tcD(A)^{\circ}_{\tilde\tau}
\]
is bijective, no matter whether $\fp$ is split or inert in $E$. It follows immediately that $1/2$ is the only slope of the Dieudonn\'e module $\bigoplus_{\tilde\tau\in\Sigma_{E,\infty/\fp}}\tcD(A)_{\tilde\tau}=\tcD(A[\fp^{\infty}])$, so that $A[\fp^{\infty}]$ is supersingular.

Assume now $d_{\fp}\geq 1$. We prove first that the union $\bigcup_{\fa\in\fB(\tS_{\infty/\fp}, \lfloor d_{\fp}/2\rfloor)}W'_{\tilde\tS}(\fa)$ is contained in the $\fp$-supersingular locus of $\bfSh(G'_{\tilde\tS}, K'^p)_{k_0}$. Fix $\fa\in\fB(\tS_{\infty/\fp},\lfloor d_{\fp}/2\rfloor)$. Then one has a projection
\[
\pi'_{\fa}\res_{W'_{\tilde\tS}(\fa)}\colon W'_{\tilde\tS}(\fa)\to \bfSh(G_{\tilde\tS_{\fa}}, K'^p)_{k_0}
\]
and a $p$-quasi-isogeny
\[
\phi_{\fa}\colon \cA'_{\tilde\tS}\res_{W'_{\tilde\tS}(\fa)}\to \pi'^{*}_{\fa}\cA'_{\tilde\tS_{\fa}}
\]
by the construction of $\pi'_{\fa}$ and Proposition~\ref{P:GO-divisors}. Note that $d_{\fp}(\tS_{\fa})=0$, and by the discussion above, $\cA'_{\tilde\tS_{\fa}}[\fp^{\infty}]$ is supersingular over the whole $\bfSh(G_{\tilde\tS_{\fa}}, K'^p)_{k_0}$. It follows that $\cA'_{\tilde\tS}[\fp^{\infty}]$ is supersingular over $W_{\tilde\tS}(\fa)$.

To complete the proof, it remains to show that if $x\in\bfSh(G'_{\tilde\tS}, K'^p)(\bF_p^\ac)$ is a $\fp$-supersingular point, then $x\in W'_{\tilde\tS}(\fa)(\bF_p^\ac)$ for some $\fa\in\fB(\tS_{\infty/\fp},\lfloor d_{\fp}/2\rfloor)$. By Proposition~\ref{P:ordinary}, there exists $\tau\in\Sigma_{\infty/\fp}$ such that $x\in\bfSh(G'_{\tilde\tS},K'^p)_{k_0,\tau}(\bF_p^\ac)$. Consider the $\dP^1$-fibration $\pi'_{\tau}\colon\bfSh(G'_{\tilde\tS}, K'^p)_{k_0,\tau}\to\bfSh(G'_{\tilde\tS_{\tau}}, K'^p)_{k_0}$. Since $\cA'_{\tilde\tS,x}$ is $p$-quasi-isogenous to $\cA'_{\tilde\tS_{\tau},\pi'_{\tau}(x)}$, we see that $\pi'_{\tau}(x)$ lies in the $\fp$-supersingular locus of $\bfSh(G'_{\tilde\tS_{\tau}},K'^p)_{k_0}$. By the induction hypothesis, $\pi'_{\tau}(x)\in W'_{\tilde\tS_{\tau}}(\fb)(\bF_p^\ac)$ for some periodic semi-meander $\fb\in\fB(\tS_{\tau, \infty/\fp},\lfloor d_{\fp}/2-1\rfloor)$. Now let $\fa$ be the periodic semi-meander obtained from $\fb$ by adjoining an arc $\delta$ connecting $\sigma^{-n_{\tau}}\tau$ and $\tau$ so that $\tau$ is the right end point of $\delta$. Then $\fa\in\fB(\tS_{\infty/\fp},\lfloor d_{\fp}/2\rfloor)$, and $\delta$ is a basic arc of $\fa$ such that $\fb=\fa_{\delta}$. To finish the proof, it suffices to note that $W'_{\tilde\tS}(\fa)=\pi'^{-1}_{\tau}(W'_{\tilde\tS_{\tau}}(\fb))$ by definition.
\end{proof}

\begin{definition}\label{D:superspecial-unitary}
We put
\[
\bfSh(G'_{\tilde\tS},K'^p)^{\fp-\r{sp}}_{k_0}\coloneqq \bfSh(G'_{\tilde\tS}, K'^p)_{k_0, \Sigma_{\infty/\fp}},
\]
and call it the \emph{$\fp$-superspecial locus} of $\bfSh(G'_{\tilde\tS}, K'^p)_{k_0}$. 
\end{definition}

We have the proposition characterizing the $\fp$-superspecial locus.

\begin{proposition}\label{P:p-superspecial-unitary}
Let $\fp\in\Sigma_p$ be such that $d_{\fp}$ is odd, and take $\fa\in\fB(\tS_{\infty/\fp},(d_{\fp}-1)/2)$. Then $\bfSh(G'_{\tilde\tS},K'^p)^{\fp-\r{sp}}_{k_0}$ is contained in $W'_{\tilde\tS}(\fa)$, and the restriction of $\pi'_{\fa}$ to $\bfSh(G'_{\tilde\tS},K'^p)_{k_0}^{\fp-\r{sp}}$ induces an isomorphism
\[
\bfSh(G'_{\tilde\tS},K'^p)_{k_0}^{\fp-\r{sp}}\xra{\sim}\bfSh(G'_{\tilde\tS_{\fa}^*},K'^p)_{k_0},
\]
which is equivariant under prime-to-$p$ Hecke correspondences.
\end{proposition}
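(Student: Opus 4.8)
The plan is to prove the proposition by induction on $d_\fp = g_\fp - \#\tS_{\infty/\fp}$, which we may assume odd, and to run the argument parallel to that of Theorem~\ref{T:p-supersingular-unitary} while keeping track of the finer Goren--Oort structure. Write $\cX^{\r{sp}} \coloneqq \bfSh(G'_{\tilde\tS},K'^p)^{\fp-\r{sp}}_{k_0}$, which by Definition~\ref{D:superspecial-unitary} equals $\bigcap_{\tau'\in\Sigma_{\infty/\fp}-\tS_{\infty/\fp}}\bfSh(G'_{\tilde\tS},K'^p)_{k_0,\tau'}$. When $d_\fp=1$ there is a unique $\tau_0\in\Sigma_{\infty/\fp}-\tS_{\infty/\fp}$, so $\cX^{\r{sp}} = \bfSh(G'_{\tilde\tS},K'^p)_{k_0,\tau_0}$; the set $\fB(\tS_{\infty/\fp},0)$ consists of the trivial semi-meander $\fa_0$ with $\tau(\fa_0)=\tau_0$, $\tS_{\fa_0}=\tS$ and $Z'_{\tilde\tS}(\fa_0)=\bfSh(G'_{\tilde\tS},K'^p)_{k_0}$, whence $W'_{\tilde\tS}(\fa_0)=\bfSh(G'_{\tilde\tS},K'^p)_{k_0,\tau_0}=\cX^{\r{sp}}$; and since $\Sigma_{\infty/\fp}=\tS_{\infty/\fp}\cup\{\tau_0\}$, Proposition~\ref{P:GO-divisors}(2) provides the isomorphism $\pi'_{\fa_0}=\pi'_{\tau_0}\colon\cX^{\r{sp}}\xra{\sim}\bfSh(G'_{\tilde\tS_{\tau_0}},K'^p)_{k_0}=\bfSh(G'_{\tilde\tS^*_{\fa_0}},K'^p)_{k_0}$, which is equivariant under prime-to-$p$ Hecke correspondences. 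This settles the base case.

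For the inductive step assume $d_\fp\geq 3$, fix $\fa\in\fB(\tS_{\infty/\fp},(d_\fp-1)/2)$, choose a basic arc $\delta$ of $\fa$ and set $\tau\coloneqq\tau^+_\delta$, $\tau^-\coloneqq\tau^-_\delta=\sigma^{-n_\tau}\tau$, so that (as in Section~\ref{ss:3}) $\tS_\tau=\tS\cup\{\tau,\tau^-\}$, $\fa_\delta\in\fB(\tS_{\tau,\infty/\fp},(d_\fp-3)/2)=\fB(\tS_{\tau,\infty/\fp},(d_\fp(\tS_\tau)-1)/2)$, $\tau(\fa_\delta)=\tau(\fa)$, $(\tilde\tS_\tau)^*_{\fa_\delta}=\tilde\tS^*_\fa$, $\pi'_\fa=\pi'_{\fa_\delta}\circ\pi'_\tau$ on $Z'_{\tilde\tS}(\fa)$, and $W'_{\tilde\tS}(\fa)=(\pi'_\tau)^{-1}(W'_{\tilde\tS_\tau}(\fa_\delta))$, where $\pi'_\tau\colon\bfSh(G'_{\tilde\tS},K'^p)_{k_0,\tau}\to\bfSh(G'_{\tilde\tS_\tau},K'^p)_{k_0}$. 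Since $\tau$ and $\tau^-$ are both nodes of the band of $\tS_{\infty/\fp}$, we have $\cX^{\r{sp}}\subseteq\bfSh(G'_{\tilde\tS},K'^p)_{k_0,\{\tau,\tau^-\}}$, and by Proposition~\ref{P:GO-divisors}(1) the map $\pi'_\tau$ restricts to an isomorphism $\iota_\tau\colon\bfSh(G'_{\tilde\tS},K'^p)_{k_0,\{\tau,\tau^-\}}\xra{\sim}\bfSh(G'_{\tilde\tS_\tau},K'^p)_{k_0}$. The crux of the argument is the following \emph{Key Claim}: $\iota_\tau$ carries the closed subscheme $\cX^{\r{sp}}$ isomorphically onto $\bfSh(G'_{\tilde\tS_\tau},K'^p)^{\fp-\r{sp}}_{k_0}$. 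Granting it, the inductive hypothesis applied to $\bfSh(G'_{\tilde\tS_\tau},K'^p)_{k_0}$ and $\fa_\delta$ gives a containment $\bfSh(G'_{\tilde\tS_\tau},K'^p)^{\fp-\r{sp}}_{k_0}\subseteq W'_{\tilde\tS_\tau}(\fa_\delta)$ and an isomorphism $\pi'_{\fa_\delta}\colon\bfSh(G'_{\tilde\tS_\tau},K'^p)^{\fp-\r{sp}}_{k_0}\xra{\sim}\bfSh(G'_{\tilde\tS^*_\fa},K'^p)_{k_0}$; since $\iota_\tau=\pi'_\tau$ on $\cX^{\r{sp}}$, composing yields $\cX^{\r{sp}}\subseteq(\pi'_\tau)^{-1}(W'_{\tilde\tS_\tau}(\fa_\delta))=W'_{\tilde\tS}(\fa)$ together with an isomorphism $\pi'_\fa|_{\cX^{\r{sp}}}=\pi'_{\fa_\delta}\circ\iota_\tau|_{\cX^{\r{sp}}}\colon\cX^{\r{sp}}\xra{\sim}\bfSh(G'_{\tilde\tS^*_\fa},K'^p)_{k_0}$, compatible with prime-to-$p$ Hecke correspondences because every map involved is.

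It remains to prove the Key Claim, and this is where essentially all the work lies. Since $\Sigma_{\infty/\fp}-\tS_{\tau,\infty/\fp}=(\Sigma_{\infty/\fp}-\tS_{\infty/\fp})\setminus\{\tau,\tau^-\}$, inside $\bfSh(G'_{\tilde\tS},K'^p)_{k_0,\{\tau,\tau^-\}}$ the locus $\cX^{\r{sp}}$ is cut out by the vanishing of the $\tau'$-th partial Hasse invariants $h_{\tilde\tau'}$ of $\bfSh(G'_{\tilde\tS},K'^p)_{k_0}$ for $\tau'$ running over $\Sigma_{\infty/\fp}-\tS_{\tau,\infty/\fp}$, while $\bfSh(G'_{\tilde\tS_\tau},K'^p)^{\fp-\r{sp}}_{k_0}$ is cut out inside $\bfSh(G'_{\tilde\tS_\tau},K'^p)_{k_0}$ by the vanishing of the $\tau'$-th partial Hasse invariants for the same set of $\tau'$; so it suffices to check, for each such $\tau'$, that on $\bfSh(G'_{\tilde\tS},K'^p)_{k_0,\{\tau,\tau^-\}}$ the zero locus of $h_{\tilde\tau'}$ coincides with the pullback along $\iota_\tau$ of the zero locus of the $\tau'$-th partial Hasse invariant downstairs. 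By the "moreover" part of Proposition~\ref{P:GO-divisors}, the $p$-quasi-isogeny $\phi$ identifies $\rH^{\dr}_1(\cA'_{\tilde\tS})^{\circ}_{\tilde\tau''}$ with $\rH^{\dr}_1(\cA'_{\tilde\tS_\tau})^{\circ}_{\tilde\tau''}$, compatibly with Hodge filtrations and with the (essential) Verschiebung on the $\fp$-part, for every $\tilde\tau''$ lifting an element of $\Sigma_\infty-\tS_{\tau,\infty/\fp}$; the two partial Hasse invariants at $\tau'$ would agree verbatim under this identification were it not that the integers $n_{\tau'}(\tS)$ and $n_{\tau'}(\tS_\tau)$ may differ, which happens precisely when the backward $\sigma$-orbit of $\tau'$ meets $\tau$ or $\tau^-$ before meeting another node. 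The point is that on $\bfSh(G'_{\tilde\tS},K'^p)_{k_0,\{\tau,\tau^-\}}$ the vanishing of $h_{\tilde\tau}$ and $h_{\tilde\tau^-}$ pins down the reduced $\tilde\tau$- and $\tilde\tau^-$-components of the de Rham homology as images of iterated essential Frobenius, which is exactly the input needed for the explicit Dieudonné lattice $M^{\circ}$ entering the construction of $\pi'_\tau$ before Proposition~\ref{P:GO-divisors}; a direct computation with the iterated essential Verschiebung then shows that $h_{\tilde\tau'}$ and the $\iota_\tau$-pullback of the $\tau'$-th partial Hasse invariant downstairs have the same zero locus. Coordinating these Verschiebung powers $n_{\tau'}$ against the lattice $M^{\circ}$ is the main obstacle; once it is carried out the Key Claim, and hence the proposition, follows by induction.
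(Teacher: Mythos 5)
Your proof is correct and follows essentially the same route as the paper's: induction on $d_\fp$, with the base case $d_\fp=1$ immediate and the inductive step carried out by choosing a basic arc $\delta$ and using the isomorphism $\bfSh(G'_{\tilde\tS},K'^p)_{k_0,\{\tau,\tau^-\}}\xra{\sim}\bfSh(G'_{\tilde\tS_\tau},K'^p)_{k_0}$ from Proposition~\ref{P:GO-divisors}. Your ``Key Claim'' is precisely the step the paper dispatches by saying that this isomorphism is ``compatible with the construction of Goren--Oort divisors,'' so your extra paragraph matching partial Hasse invariants via the quasi-isogeny $\phi$ is just a more explicit rendering of a step the paper cites from \cite{TX1}.
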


\begin{proof}
We proceed by induction on $d_{\fp}\geq 1$. If $d_{\fp}=1$, then all the $\fp$-supersingular locus is $\fp$-superspecial, and the $\fp$-supersingular locus consists of only one stratum $W'_{\tilde\tS}(\fa)$. So the statement is clear.

Assume now $d_{\fp}>1$. Choose a basic arc $\delta $ of $\fa$. Let $\tau$ (resp. $\tau^{-}$) be the right (resp. left) node of $\delta$, and $\fa_{\delta}$ be the semi-meander obtained from $\fa$ by removing the arc $\delta$. Then one has a commutative diagram
\begin{equation*}
\xymatrix{
W'_{\tilde\tS}(\fa)\ar[r]\ar[d]& Z'_{\tilde\tS}(\fa)\ar[r]\ar[d] & \bfSh(G'_{\tilde\tS}, K'^p)_{k_0, \tau}\ar[d]^{\pi'_{\tau}}\\
W'_{\tilde\tS_{\tau}}(\fa_{\delta}) \ar[d]\ar[r] & Z'_{\tilde\tS_{\tau}}(\fa_{\delta})\ar[d]^{\pi'_{\fa_{\delta}}}\ar[r] & \bfSh(G'_{\tilde{\tS}_{\tau}}, K'^p)_{k_0}\\
\bfSh(G'_{\tilde{\tS}_{\fa}}, K'^p)_{k_0, \tau(\fa)}\ar[r]\ar[d]^{\cong} & \bfSh(G'_{\tilde\tS_{\fa}}, K'^p)_{k_0}\\
\bfSh(G'_{\tilde{\tS}^*_{\fa}}, K'^p)_{k_0}
}
\end{equation*}
where all the squares are cartesian; all horizontal maps are closed immersions; and all vertical arrows are iterated $\dP^1$-bundles. By the induction hypothesis, the $\fp$-superspecial locus $\bfSh(G'_{\tilde\tS_{\tau}},K'^p)_{k_0}^{\fp-\r{sp}}$ is contained in $W'_{\tilde\tS_{\tau}}(\fa_{\delta})$ and the restriction of $\pi'_{\fa_{\delta}}$ induces an isomorphism
\begin{equation}\label{E:isom-superspecial}
\bfSh(G'_{\tilde\tS_{\tau}},K'^p)_{k_0}^{\fp-\r{sp}}\xra{\sim}\bfSh(G'_{\tilde\tS^*_{\fa}},K'^p)_{k_0}.
\end{equation}
Now by Proposition~\ref{P:GO-divisors}, the restriction of $\pi'_{\tau}$ induces an isomorphism
\[
\bfSh(G_{\tilde\tS},K'^p)_{k_0,\{\tau,\tau^-\}}\xra{\sim}\bfSh(G'_{\tilde\tS_{\tau}},K'^p)_{k_0}
\]
compatible with the construction of Goren--Oort divisors. Thus, $\pi'_{\tau}$ sends $\bfSh(G'_{\tilde\tS},K'^p)_{k_0}^{\fp-\r{sp}}$ isomorphically to $\bfSh(G'_{\tilde\tS_{\tau}},K'^p)_{k_0}^{\fp-\r{sp}}$. The statement now follows immediately by composing with the isomorphism \eqref{E:isom-superspecial}.
\end{proof}

\subsection{Total supersingular and superspecial loci}
We study now the total supersingular locus of $\bfSh(G'_{\tilde\tS}, K'^p)_{k_0}$, that is, the maximal closed subset where the universal $p$-divisible group $\cA'_{\tilde\tS}[p^{\infty}]$ is supersingular. Put
\[
\fB_{\tS}\coloneqq\{\underline\fa=(\fa_{\fp})_{\fp\in\Sigma_p}\res \fa_{\fp}\in\fB(\tS_{\infty/\fp}, \lfloor d_{\fp}/2\rfloor)\},
\]
and $r=\sum_{\fp\in\Sigma_p}\lfloor d_{\fp}/2\rfloor$. We attach to each $\underline\fa$ an $r$-dimensional closed subvariety $W'_{\tilde\tS}(\underline{\fa})\subseteq \bfSh_{K'}(G'_{\tilde\tS})_{k_0}$ as follows. We write $\Sigma_{p}=\{\fp_1,\dots,\fp_m\}$, that is, we choose an order for the elements of $\Sigma_p$. We put $\tS_1\coloneqq\tS_{\fa_{\fp_1}}$ and $\tilde\tS^*_1\coloneqq\tilde\tS_{\fa_{\fp_1}}^*$ (see \eqref{E:tS-a}); put inductively $\tS_{i+1}\coloneqq(\tS_i)_{\fa_{\fp_{i+1}}}$, $\tilde\tS^*_{i+1}=\tilde{(\tS_i)}{}_{\fa_{\fp_{i+1}}}^*$ for $1\leq i\leq m-1$; and finally put $\tS_{\underline\fa}\coloneqq\tS_m$ and $\tilde\tS_{\underline\fa}^*\coloneqq\tilde\tS^*_m$. For $\fa_{\fp_1}\in\fB(\tS_{\infty/\fp}, \lfloor d_{\fp_1}/2\rfloor)$, we have constructed a $\lfloor d_{\fp_1}/2\rfloor$-th iterated $\dP^1$-fibration
\[
\pi'_{\fa_{\fp_1}}\res_{W'_{\tilde\tS}(\fa_{\fp_1})}\colon W'_{\tilde\tS}(\fa_{\fp_1})\to\bfSh(G'_{\tilde\tS_1^*}, K'^p)_{k_0}.
\]
Now, applying the construction to $\fa_{\fp_2}\in\fB(\tS_{\infty/\fp_2},\lfloor d_{\fp_2}/2\rfloor) $ and $\bfSh(G'_{\tilde\tS_1^*},K'^p)_{k_0}$, we have a closed subvariety $W'_{\tilde\tS_1^*}(\fa_{\fp_2})\subseteq\bfSh(G'_{\tilde\tS_1^*},K'^p)_{k_0}$ of codimension $\lceil d_{\fp_2}/2\rceil$. We put
\[
W'_{\tilde\tS}(\fa_{\fp_1},\fa_{\fp_2})\coloneqq(\pi'_{\fa_{\fp_1}})^{-1}(W'_{\tilde\tS_1^*}(\fa_{\fp_2})).
\]
Then there exists a canonical projection
\[
\pi'_{\fa_{\fp_1},\fa_{\fp_2}}\colon W'_{\tilde\tS}(\fa_{\fp_1},\fa_{\fp_2})\xra{\pi'_{\fa_{\fp_1}}\res_{W'_{\tilde\tS}(\fa_{\fp_1},\fa_{\fp_2})}} W'_{\tilde\tS_1^*}(\fa_{\fp_2})\xra{\pi'_{\fa_{\fp_2}}\res_{W'_{\tilde\tS_1^*}(\fa_{\fp_2})}} \bfSh(G'_{\tilde\tS_2^*},K'^p)_{k_0}.
\]
Repeating this construction, we finally get a closed subvariety $W'_{\tilde\tS}(\underline\fa)\subseteq \bfSh(G'_{\tilde\tS},K'^p)_{k_0}$ of codimension $\sum_{\fp\in\Sigma}\lceil d_{\fp}/2\rceil$ together with a canonical projection
\[
\pi'_{\underline\fa}\colon W'_{\tilde\tS}(\underline\fa)\to \bfSh(G'_{\tilde\tS_{\underline\fa}^*},K'^p)_{k_0}.
\]
Note that the underlying set $\tS_{\underline\fa}^*$ of $\tilde\tS_{\underline\fa}^*$ is independent of $\underline\fa\in\fB_{\tS}$, namely all of them are equal to
\begin{equation}\label{E:S-max}
\tS_{\max}\coloneqq\Sigma_{\infty}\cup\{\fp\in\Sigma_p\res g_{\fp}\coloneqq[F_{\fp}:\QQ_p]\text{ is odd}\}.
\end{equation}
Thus $\bfSh(G'_{\tilde\tS_{\underline\fa}^*},K'^p)_{k_0}$ is a Shimura variety of dimension $0$, and $\pi'_{\underline\fa}$ is by construction an $r$-th iterated $\dP^1$-fibration over $\bfSh(G'_{\tilde\tS^*_{\underline\fa}},K'^p)_{k_0}$. We note that $W'_{\tilde\tS}(\underline\fa)$ does not depend on the order $\fp_1,\dots, \fp_m$ of the places of $F$ above $p$.

\begin{theorem}\label{T:supersingular-unitary}
The total supersingular locus of $\bfSh(G'_{\tilde\tS}, K'^p)_{k_0}$ is given by
\[
\bfSh(G'_{\tilde\tS},K'^p)_{k_0}^{\mathrm{ss}}\coloneqq\bigcup_{\underline\fa\in\fB_{\tS}} W'_{\tilde\tS}(\underline\fa),
\]
where each $W'_{\tilde\tS}(\underline\fa)$ is a $\sum_{\fp\in\Sigma_p}\lfloor d_{\fp}/2\rfloor$-th iterated $\dP^1$-fibration over some discrete Shimura variety $\bfSh(G'_{\tilde\tS^*_{\underline\fa}},K'^p)_{k_0}$. In particular, $\bfSh(G'_{\tilde\tS},K'^p)_{k_0}^{\mathrm{ss}}$ is proper and of equidimension $\sum_{\fp\in\Sigma_p}\lfloor d_{\fp}/2\rfloor$.
\end{theorem}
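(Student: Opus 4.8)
The plan is to deduce the theorem from the one-prime result, Theorem~\ref{T:p-supersingular-unitary}, by an induction that resolves the places of $\Sigma_p$ one at a time. The starting observation is that $\cA'_{\tilde\tS}[p^\infty]=\bigoplus_{\fp\in\Sigma_p}\cA'_{\tilde\tS}[\fp^\infty]$, and a $p$-divisible group admitting such a decomposition is supersingular (all Newton slopes $=1/2$) precisely when each summand $\cA'_{\tilde\tS}[\fp^\infty]$ is supersingular; hence the total supersingular locus is the intersection over $\fp\in\Sigma_p$ of the $\fp$-supersingular loci. The key technical input is a refinement of Proposition~\ref{P:GO-divisors}: the Dieudonn\'e lattice $M^\circ$ used there to build the $p$-quasi-isogeny $\phi$ only alters the components $\tilde\tau'\in\Sigma_{E,\infty/\fp}$ (the ``otherwise'' case leaves $\tcD(A)^\circ_{\tilde\tau'}$ untouched), so $\phi$ restricts to an \emph{isomorphism} on the $\fq$-divisible parts for every $\fq\in\Sigma_p$ with $\fq\neq\fp$. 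Consequently, resolving a basic arc over a place $\fp$ changes $\tS$ only inside $\Sigma_{\infty/\fp}\cup\{\fp\}$, so $d_\fq(\tS_{\fa})=d_\fq(\tS)$ for $\fq\neq\fp$, and the locus of $\fq$-supersingularity on $W'_{\tilde\tS}(\fa)$ is the $\pi'_{\fa}$-preimage of the locus of $\fq$-supersingularity downstairs.

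With this in hand I would induct on $n(\tS)\coloneqq\#\{\fp\in\Sigma_p\res d_\fp(\tS)>0\}$, the inductive claim being the full assertion of the theorem for the datum $\tilde\tS$. If $n(\tS)=0$, then every $d_\fp=0$, $\fB_\tS$ has a single element $\underline\fa$ (with all $\fa_\fp$ trivial), $W'_{\tilde\tS}(\underline\fa)=\bfSh(G'_{\tilde\tS},K'^p)_{k_0}$, and the $d_\fp=0$ argument in the proof of Theorem~\ref{T:p-supersingular-unitary} shows the whole space is $\fp$-supersingular for every $\fp$, i.e.\ supersingular; so the base case holds. If $n(\tS)>0$, fix some $\fp$ with $d_\fp(\tS)>0$. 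By the construction preceding the theorem and its order-independence, every $W'_{\tilde\tS}(\underline\fa)$ with $\fa=\fa_\fp$ and $\underline\fa'=(\fa_\fq)_{\fq\neq\fp}$ equals $(\pi'_{\fa})^{-1}\bigl(W'_{\tilde\tS^*_{\fa}}(\underline\fa')\bigr)$, where $\pi'_{\fa}\colon W'_{\tilde\tS}(\fa)\to\bfSh(G'_{\tilde\tS^*_{\fa}},K'^p)_{k_0}$ is the $\lfloor d_\fp/2\rfloor$-th iterated $\dP^1$-fibration of Theorem~\ref{T:p-supersingular-unitary}, $W'_{\tilde\tS^*_{\fa}}(\underline\fa')$ is the analogous construction for $\tilde\tS^*_{\fa}$, and $n(\tS^*_{\fa})=n(\tS)-1$ since $d_\fp(\tS^*_{\fa})=0$ while $d_\fq$ is unchanged for $\fq\neq\fp$.

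For the inclusion $\bigcup_{\underline\fa}W'_{\tilde\tS}(\underline\fa)\subseteq\bfSh(G'_{\tilde\tS},K'^p)_{k_0}^{\mathrm{ss}}$: on $W'_{\tilde\tS}(\fa)$ the group $\cA'_{\tilde\tS}[\fp^\infty]$ is supersingular by Theorem~\ref{T:p-supersingular-unitary}, while the composite $p$-quasi-isogeny from Proposition~\ref{P:GO-divisors} identifies $\cA'_{\tilde\tS}[\fq^\infty]\res_{W'_{\tilde\tS}(\fa)}$ with $\pi'^{*}_{\fa}\cA'_{\tilde\tS^*_{\fa}}[\fq^\infty]$ for $\fq\neq\fp$; pulling back to $W'_{\tilde\tS}(\underline\fa)=(\pi'_{\fa})^{-1}(W'_{\tilde\tS^*_{\fa}}(\underline\fa'))$ and invoking the induction hypothesis (which gives $W'_{\tilde\tS^*_{\fa}}(\underline\fa')\subseteq\bfSh(G'_{\tilde\tS^*_{\fa}},K'^p)_{k_0}^{\mathrm{ss}}$, hence supersingularity at every $\fq\neq\fp$ there) shows $\cA'_{\tilde\tS}[p^\infty]$ is supersingular on $W'_{\tilde\tS}(\underline\fa)$. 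For the reverse inclusion, let $x$ be an $\bF_p^\ac$-point in the supersingular locus of $\bfSh(G'_{\tilde\tS},K'^p)_{k_0}$; being supersingular at $\fp$ it lies in $W'_{\tilde\tS}(\fa)$ for some $\fa\in\fB(\tS_{\infty/\fp},\lfloor d_\fp/2\rfloor)$ by Theorem~\ref{T:p-supersingular-unitary}, and $y\coloneqq\pi'_{\fa}(x)$ satisfies $\cA'_{\tilde\tS^*_{\fa}}[\fq^\infty]_y\cong\cA'_{\tilde\tS}[\fq^\infty]_x$ supersingular for $\fq\neq\fp$, and $\cA'_{\tilde\tS^*_{\fa}}[\fp^\infty]_y$ supersingular because $d_\fp(\tS^*_{\fa})=0$; thus $y$ is supersingular, so the induction hypothesis gives $y\in W'_{\tilde\tS^*_{\fa}}(\underline\fa')$ for some $\underline\fa'$, which, since $(\tS^*_{\fa})_{\infty/\fq}=\tS_{\infty/\fq}$ for $\fq\neq\fp$, is naturally a tuple in $\prod_{\fq\neq\fp}\fB(\tS_{\infty/\fq},\lfloor d_\fq/2\rfloor)$. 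Setting $\underline\fa=(\fa,\underline\fa')$ (permitted by order-independence) yields $x\in(\pi'_{\fa})^{-1}(W'_{\tilde\tS^*_{\fa}}(\underline\fa'))=W'_{\tilde\tS}(\underline\fa)$, so the two closed subsets agree.

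Finally, for the structural claims: each of the (at most $\#\Sigma_p$) stages of the construction of $W'_{\tilde\tS}(\underline\fa)$ is, by Proposition~\ref{P:GO-divisors}, either an isomorphism or a $\dP^1$-fibration, and the number of $\dP^1$-fibrations occurring is $\sum_{\fp\in\Sigma_p}\lfloor d_\fp/2\rfloor=r$; hence $\pi'_{\underline\fa}\colon W'_{\tilde\tS}(\underline\fa)\to\bfSh(G'_{\tilde\tS^*_{\underline\fa}},K'^p)_{k_0}$ is an $r$-th iterated $\dP^1$-fibration over the zero-dimensional Shimura variety $\bfSh(G'_{\tilde\tS^*_{\underline\fa}},K'^p)_{k_0}$, a finite $k_0$-scheme and in particular proper. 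Therefore every $W'_{\tilde\tS}(\underline\fa)$ is proper and of pure dimension $r$, and since $\fB_\tS$ is finite the union $\bfSh(G'_{\tilde\tS},K'^p)_{k_0}^{\mathrm{ss}}$ is proper and equidimensional of dimension $r$. I expect the step requiring the most care to be the refinement of Proposition~\ref{P:GO-divisors} isolated above --- namely that the $p$-quasi-isogenies respect the decomposition $\cA'_{\tilde\tS}[p^\infty]=\bigoplus_{\fp}\cA'_{\tilde\tS}[\fp^\infty]$ and are isomorphisms away from the place being resolved --- since this is exactly what lets the induction decouple the different places of $\Sigma_p$.
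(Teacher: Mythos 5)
Your proof is correct and follows exactly the route the paper takes: the paper's own proof of this theorem is a one-line induction on the number of places $\fp\in\Sigma_p$ with $d_\fp\neq 0$, reducing to Theorem~\ref{T:p-supersingular-unitary}, and your write-up is a faithful expansion of that induction. The "refinement" you isolate --- that the $p$-quasi-isogeny of Proposition~\ref{P:GO-divisors} is an isomorphism on the $\fq$-divisible parts for $\fq\neq\fp$ --- is indeed the implicit point making the induction decouple the places, and it is visible from the definition of the lattice $M^\circ$, which only modifies components indexed by $\Sigma_{E,\infty/\fp}$.
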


\begin{proof}
This follows immediately from Theorem~\ref{T:p-supersingular-unitary} by induction on the number of $p$-adic places $\fp\in\Sigma_p$ such that $d_{\fp}\neq 0$.
\end{proof}

\begin{remark}
It is clear that the total supersingular locus is the intersection of all $\fp$-supersingular loci for $\fp\in\Sigma_p$. It follows that
\[
W'_{\tilde\tS}(\underline\fa)=\bigcap_{\fp\in\Sigma_p}W'_{\tilde\tS}(\fa_{\fp}),
\]
and the intersection is transversal.
\end{remark}

Similarly to Definition~\ref{D:superspecial-unitary}, we can define the total superspecial locus of $\bfSh(G'_{\tilde{\tS}}, K'^p)_{k_0}$ as
\[
\bfSh(G'_{\tilde{\tS}}, K'^p)_{k_0}^{\r{sp}}\coloneqq \bfSh(G'_{\tilde{\tS}}, K'^p)_{k_0, \Sigma_{\infty}}.
\]
We have the following analogue of Proposition~\ref{P:p-superspecial-unitary}.
\begin{proposition}\label{P:superspecial-unitary}
For each $\underline{\fa}\in \fB_{\tS}$, $W'_{\tilde{\tS}}(\underline{\fa})$ contains $\bfSh(G'_{\tilde{\tS}}, K'^p)_{k_0}^{\r{sp}}$, and each geometric irreducible component of $W'_{\tilde{\tS}}(\underline \fa)$ contains exactly one point of $\bfSh(G'_{\tilde{\tS}}, K'^p)_{k_0}^{\r{sp}}$. In other words,  the restriction of $\pi'_{\underline \fa}$ induces an isomorphism
\[
\bfSh(G'_{\tilde{\tS}}, K'^p)_{k_0}^{\r{sp}}\xra{\sim} \bfSh(G'_{\tilde\tS^*_{\underline\fa}},K'^p)_{k_0}.
\]
\end{proposition}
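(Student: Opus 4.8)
The plan is to reduce the statement to the single-prime case --- Proposition~\ref{P:p-superspecial-unitary} together with its evident analogue when $d_\fp$ is even --- by an induction over the primes of $F$ above $p$. First observe that the last assertion is a formal consequence of the other two: $W'_{\tilde\tS}(\underline\fa)$ is an iterated $\dP^1$-fibration over the zero-dimensional scheme $\bfSh(G'_{\tilde\tS^*_{\underline\fa}},K'^p)_{k_0}$, so its geometric irreducible components are exactly the fibres of $\pi'_{\underline\fa}$; hence, granting the containment $\bfSh(G'_{\tilde\tS},K'^p)^{\r{sp}}_{k_0}\subseteq W'_{\tilde\tS}(\underline\fa)$ and that $\pi'_{\underline\fa}$ restricts to a bijection of $\bfSh(G'_{\tilde\tS},K'^p)^{\r{sp}}_{k_0}$ onto $\bfSh(G'_{\tilde\tS^*_{\underline\fa}},K'^p)_{k_0}$, each fibre meets the superspecial locus in exactly one point. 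So it remains to prove the containment and the isomorphism.

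For the containment, since $\Sigma_\infty=\coprod_{\fp\in\Sigma_p}\Sigma_{\infty/\fp}$ the total superspecial locus $\bfSh(G'_{\tilde\tS},K'^p)_{k_0,\Sigma_\infty}$ equals $\bigcap_{\fp\in\Sigma_p}\bfSh(G'_{\tilde\tS},K'^p)^{\fp-\r{sp}}_{k_0}$, while by the Remark after Theorem~\ref{T:supersingular-unitary} one has $W'_{\tilde\tS}(\underline\fa)=\bigcap_{\fp\in\Sigma_p}W'_{\tilde\tS}(\fa_\fp)$; so the containment reduces to the inclusion $\bfSh(G'_{\tilde\tS},K'^p)^{\fp-\r{sp}}_{k_0}\subseteq W'_{\tilde\tS}(\fa_\fp)$ for each single $\fp$. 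For $d_\fp$ odd this is part of Proposition~\ref{P:p-superspecial-unitary}; for $d_\fp$ even the identical induction on $d_\fp$, fed at each step by Proposition~\ref{P:GO-divisors} in place of the odd-degree input, gives both this inclusion and the fact that $\pi'_{\fa_\fp}$ restricts to an isomorphism $\bfSh(G'_{\tilde\tS},K'^p)^{\fp-\r{sp}}_{k_0}\xra{\sim}\bfSh(G'_{\tilde\tS^*_{\fa_\fp}},K'^p)_{k_0}$. Thus the single-prime conclusion is available for either parity of $d_\fp$.

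For the isomorphism I would induct on $t\coloneqq\#\{\fp\in\Sigma_p\res d_\fp\neq0\}$. If $t=0$ then $\tS_\infty=\Sigma_\infty$, $\fB_\tS$ is a singleton, $\tilde\tS^*_{\underline\fa}=\tilde\tS$, and $\pi'_{\underline\fa}$ is the identity, so there is nothing to prove. If $t\geq1$, choose $\fp_1\in\Sigma_p$ with $d_{\fp_1}\neq0$ and place it first in the ordering of $\Sigma_p$ used to build $\pi'_{\underline\fa}$; put $\underline\fa'\coloneqq(\fa_\fp)_{\fp\neq\fp_1}$ and $\tilde\tS_1^*\coloneqq\tilde\tS^*_{\fa_{\fp_1}}$, so that $d_{\fp_1}(\tS_1^*)=0$ and $\underline\fa'\in\fB_{\tS_1^*}$ involves only the primes $\neq\fp_1$. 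By the construction in the subsection ``Total supersingular and superspecial loci'', $W'_{\tilde\tS}(\underline\fa)$ is the preimage of $W'_{\tilde\tS_1^*}(\underline\fa')$ under $\pi'_{\fa_{\fp_1}}\res_{W'_{\tilde\tS}(\fa_{\fp_1})}\colon W'_{\tilde\tS}(\fa_{\fp_1})\to\bfSh(G'_{\tilde\tS_1^*},K'^p)_{k_0}$, and $\pi'_{\underline\fa}$ equals the composite $\pi'_{\underline\fa'}\circ(\pi'_{\fa_{\fp_1}}\res_{W'_{\tilde\tS}(\underline\fa)})$, where $\pi'_{\underline\fa'}\colon W'_{\tilde\tS_1^*}(\underline\fa')\to\bfSh(G'_{\tilde\tS^*_{\underline\fa}},K'^p)_{k_0}$ (here one uses $(\tilde\tS_1^*)^*_{\underline\fa'}=\tilde\tS^*_{\underline\fa}$). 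By the single-prime conclusion, $\pi'_{\fa_{\fp_1}}$ restricts to an isomorphism $\bfSh(G'_{\tilde\tS},K'^p)^{\fp_1-\r{sp}}_{k_0}\xra{\sim}\bfSh(G'_{\tilde\tS_1^*},K'^p)_{k_0}$, and for each $i\neq1$ this isomorphism sends $\bfSh(G'_{\tilde\tS},K'^p)^{\fp_1-\r{sp}}_{k_0}\cap\bfSh(G'_{\tilde\tS},K'^p)^{\fp_i-\r{sp}}_{k_0}$ onto $\bfSh(G'_{\tilde\tS_1^*},K'^p)^{\fp_i-\r{sp}}_{k_0}$: indeed the Goren--Oort $p$-quasi-isogeny $\phi_{\fa_{\fp_1}}\colon\cA'_{\tilde\tS}\res_{W'_{\tilde\tS}(\fa_{\fp_1})}\to\pi'^{*}_{\fa_{\fp_1}}\cA'_{\tilde\tS_1^*}$ (Proposition~\ref{P:GO-divisors}, iterated as in the proof of Theorem~\ref{T:p-supersingular-unitary}) modifies only the $\fp_1$-divisible group --- in its construction the modified Dieudonn\'e lattice $M^\circ$ agrees with the unmodified one on every component indexed by $\Sigma_{E,\infty/\fp_i}$ with $i\neq1$ --- so it is an isomorphism on $\fp_i$-divisible groups, compatibly with polarizations and $\cO_E$-actions, and hence a point of $W'_{\tilde\tS}(\fa_{\fp_1})$ is $\fp_i$-superspecial exactly when its image is. Intersecting over $i\neq1$ shows that $\pi'_{\fa_{\fp_1}}$ restricts to an isomorphism $\bfSh(G'_{\tilde\tS},K'^p)^{\r{sp}}_{k_0}\xra{\sim}\bfSh(G'_{\tilde\tS_1^*},K'^p)^{\r{sp}}_{k_0}$; composing with the isomorphism supplied by the inductive hypothesis for $\bfSh(G'_{\tilde\tS_1^*},K'^p)$ (which has $t-1$ primes with $d\neq0$) yields the required isomorphism, and since every map occurring is equivariant under prime-to-$p$ Hecke correspondences, so is the composite.

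The substantive point here is combinatorial rather than geometric: one has to check that the iterated construction of $W'_{\tilde\tS}(\underline\fa)$ and $\pi'_{\underline\fa}$ genuinely factors off one prime at a time as above, with the signature data propagating correctly (the identity $(\tilde\tS_1^*)^*_{\underline\fa'}=\tilde\tS^*_{\underline\fa}$), and --- more delicately --- that the composite Goren--Oort quasi-isogeny over $W'_{\tilde\tS}(\fa_{\fp_1})$ is an honest isomorphism away from $\fp_1$, which amounts to tracking exactly which Dieudonn\'e components are altered at each application of Proposition~\ref{P:GO-divisors}. Granting this bookkeeping, nothing else is needed beyond Propositions~\ref{P:GO-divisors} and~\ref{P:p-superspecial-unitary}.
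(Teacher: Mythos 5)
Your proof is correct and follows the same route the paper intends: the paper's own proof is the one-line "this follows immediately from Proposition~\ref{P:p-superspecial-unitary}," i.e.\ exactly your reduction to the single-prime case by induction on the primes above $p$ (compare the proof of Theorem~\ref{T:supersingular-unitary}). Your write-up merely makes explicit the two points the paper leaves implicit --- the even-$d_\fp$ analogue of Proposition~\ref{P:p-superspecial-unitary} and the fact that the Goren--Oort quasi-isogeny leaves the Dieudonn\'e components over the other primes untouched, hence preserves $\fp_i$-superspeciality for $i\neq 1$ --- both of which you justify correctly.
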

\begin{proof}
	This follows immediately from Proposition~\ref{P:p-superspecial-unitary}.
	\end{proof}

\subsection{Applications to quaternionic Shimura varieties}
\label{S:application}


Denote by $\bfSh(G_{\tS,\tT}, K^p)$ the integral model of $\Sh(G_{\tS,\tT},K^p)$ over $\cO_{F_{\tS,\tT}, \wp}$ induced by $\bfSh(G'_{\tilde\tS}, K'^p)$. We assume that the residue field of $\cO_{F_{\tS,\tT},\wp}$ is contained in $k_0$ (e.g. $\tS=\tT=\emptyset$), and  put  $\bfSh(G_{\tS,\tT}, K^p)_{k_0}\coloneqq\bfSh(G_{\tS,\tT},K^p)\otimes_{\cO_{F_{\tS,\tT},\wp}}k_0$. As in \cites{TX1,TX2}, the construction of Goren--Oort divisors can be transferred to $\bfSh(G_{\tS,\tT},K^p)_{k_0}$ for a sufficiently small open compact subgroup $K^p\subseteq G_{\tS}(\bA^{\infty,p})$.


Consider first the connected Shimura variety $\bfSh(G_{\tS})^{\circ}_{\FF_p^{\ac}}\coloneqq\bfSh(G_{\tS})^{\circ}_{\ZZ_p^{\ur}}\otimes_{\ZZ_p^{\ur}}\FF_p^{\ac}$. For each $\tau\in\Sigma_{\infty}$, the Goren--Oort divisor $\bfSh(G'_{\tilde\tS})_{k_0,\tau}=\varprojlim_{K'^p}\bfSh(G'_{\tilde\tS},K'^p)_{k_0,\tau}$ induces a divisor $\bfSh(G'_{\tilde\tS})_{\FF_p^{\ac},\tau}^{\circ}$ on $\bfSh(G'_{\tilde\tS})^{\circ}_{\FF_p^{\ac}}$. By the canonical isomorphism
\[
\bfSh(G_{\tS,\tT})^{\circ}_{\FF_p^{\ac}}\cong \bfSh(G'_{\tilde\tS})^{\circ}_{\FF_p^{\ac}}
\]
from Section \ref{S:comparison-moduli} and Deligne's recipe of recovering $\bfSh(G_{\tS,\tT})_{\FF^{\ac}_p}$ from $\bfSh(G_{\tS,\tT})^{\circ}_{\FF^{\ac}_p}$ \cite{TX1}*{Corollary~2.13}, the divisor $\bfSh(G_{\tS,\tT})_{\FF_p^{\ac},\tau}^{\circ}$ induces a divisor $\bfSh(G_{\tS,\tT})_{\FF_p^{\ac},\tau}$ on $\bfSh(G_{\tS,\tT})_{\FF_p^{\ac}}$. By Galois descent, one gets a divisor $\bfSh(G_{\tS,\tT})_{k_0,\tau}$ on $\bfSh(G_{\tS,\tT})_{k_0}$, which is stable under prime-to-$p$ Hecke action. Finally, we define the Goren--Oort divisors on $\bfSh(G_{\tS,\tT},K^p)_{k_0}$ as the image of Goren--Oort divisors on $\bfSh(G_{\tS,\tT},K^p)_{k_0}$ via the natural projection $\bfSh(G_{\tS,\tT})_{k_0}\to \bfSh(G_{\tS,\tT},K^p)_{k_0}$.


\begin{proposition}\label{P:GO-divisor-quaternion}
Take $\tau\in\Sigma_{\infty/\fp}$ for some $\fp\in\Sigma_p$, and put $\tT_{\tau}\coloneqq\tT\cup \{\tau\}$. There exists a morphism of $\FF_p^{\ac}$-algebraic varieties
\[
\pi_{\tau}\colon \bfSh(G_{\tS, \tT},K^p)_{\FF_p^{\ac},\tau}\to \bfSh(G_{\tS_{\tau}, \tT_{\tau}},K^p)_{\FF_p^{\ac}},
\]
where $\tS_{\tau}$ was defined in \eqref{D:S-tau}, such that
\begin{enumerate}
 	\item it is an isomorphism if $\Sigma_{\infty/\fp}=\tS_{\infty/\fp}\cup \{\tau\}$;
 	
 	\item and it is a $\dP^1$-fibration that descends to  a morphism  $\bfSh(G_{\tS, \tT},K^p)_{k_0,\tau}\to \bfSh(G_{\tS_{\tau}, \tT_{\tau}},K^p)_{k_0}$ if
 $\Sigma_{\infty/\fp}\neq \tS_{\infty/\fp}\cup \{\tau\}$.
\end{enumerate}
 \end{proposition}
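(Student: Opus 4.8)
The plan is to obtain $\pi_\tau$ by transferring the unitary Goren--Oort morphism $\pi'_\tau$ of Proposition~\ref{P:GO-divisors} along exactly the same chain of identifications that was used just above to define the Goren--Oort divisors on $\bfSh(G_{\tS,\tT},K^p)_{k_0}$. First I would record the formalism: by Section~\ref{S:comparison-moduli} together with Deligne's recipe (\cite{TX1}*{Corollary~2.13} for recovering $\bfSh(G_{\tS,\tT})_{\FF_p^{\ac}}$ from its neutral component, \cite{TX1}*{Corollary~2.17} for the integral models), for each admissible pair $(\tS,\tT)$ and each auxiliary datum $\tilde\tS_\infty$ as in Assumption~\ref{A:assumption-S-tilde} there is a canonical, prime-to-$p$ Hecke equivariant isomorphism of connected components $\bfSh(G_{\tS,\tT})^{\circ}_{\FF_p^{\ac}}\cong\bfSh(G'_{\tilde\tS})^{\circ}_{\FF_p^{\ac}}$, and $\bfSh(G_{\tS,\tT})_{\FF_p^{\ac}}$ is reconstructed from this component together with its Hecke action and an action of $\Gal(\FF_p^{\ac}/\FF_p)$ which depends on $\tT$ only through the recipe recalled in Section~\ref{S:quaternion-shimura}. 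Applying this both to $(\tS,\tT)$ and to $(\tS_\tau,\tT_\tau)$ — using the datum $\tilde\tS_{\tau,\infty}$ introduced just before Proposition~\ref{P:GO-divisors}, which was checked there to satisfy Assumption~\ref{A:assumption-S-tilde} — I would identify $\bfSh(G_{\tS,\tT})^{\circ}_{\FF_p^{\ac},\tau}$ with $\bfSh(G'_{\tilde\tS})^{\circ}_{\FF_p^{\ac},\tau}$ (this being the definition of the left side) and $\bfSh(G_{\tS_\tau,\tT_\tau})^{\circ}_{\FF_p^{\ac}}$ with $\bfSh(G'_{\tilde\tS_\tau})^{\circ}_{\FF_p^{\ac}}$.

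Next I would feed in Proposition~\ref{P:GO-divisors}: passing to the limit over $K'^p$, restricting to the neutral component, and base changing to $\FF_p^{\ac}$, the morphism $\pi'_\tau$ yields a prime-to-$p$ Hecke equivariant morphism $\pi'^{\circ}_\tau\colon\bfSh(G'_{\tilde\tS})^{\circ}_{\FF_p^{\ac},\tau}\to\bfSh(G'_{\tilde\tS_\tau})^{\circ}_{\FF_p^{\ac}}$ which is an isomorphism when $\Sigma_{\infty/\fp}=\tS_{\infty/\fp}\cup\{\tau\}$ and a $\dP^1$-fibration otherwise. Transporting along the identifications of the previous step produces a Hecke equivariant morphism $\pi^{\circ}_\tau$ between the corresponding connected components of the quaternionic Shimura varieties, with the same fibration properties.

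The step I expect to be the main obstacle is the last one: promoting $\pi^{\circ}_\tau$ to a morphism of the \emph{full} $\FF_p^{\ac}$-schemes, and in case (2) descending it to $k_0$. By Deligne's recipe a Hecke equivariant morphism of connected components descends to a morphism of Shimura varieties precisely when it also intertwines the two prescribed Galois actions, so the heart of the matter is a Galois-compatibility check. Here I would argue that the choice $\tT_\tau=\tT\cup\{\tau\}$ on the quaternionic side is matched term by term with the choice $\tilde\tS_{\tau,\infty}=\tilde\tS_\infty\cup\{\tilde\tau,\sigma^{-n_\tau}\tilde\tau^c\}$ (resp.\ $\tilde\tS_\infty\cup\{\tilde\tau\}$) on the CM side inside the Hodge-cocharacter/norm recipe that defines the Galois action, so that $\pi^{\circ}_\tau$ intertwines them; concretely this reduces to the fact that $\pi'_\tau$ is already defined over $k_0$ by Proposition~\ref{P:GO-divisors}, hence commutes with $\Gal(\FF_p^{\ac}/k_0)$, while the two quaternionic Galois actions differ from the unitary one only through the torus/norm recipe of Section~\ref{S:quaternion-shimura}, which is insensitive to enlarging $\tS$ away from $p$ in the compatible way dictated by $\tT_\tau$. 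Once this is in place, statement (1) follows at once over $\FF_p^{\ac}$; and in case (2), since $\tS_\tau=\tS\cup\{\tau,\sigma^{-n_\tau}\tau\}$ adds no new $p$-adic place, the residue field of $\cO_{F_{\tS_\tau,\tT_\tau},\wp}$ is again contained in $k_0$, and the Galois equivariance just established shows $\pi_\tau$ descends to a morphism $\bfSh(G_{\tS,\tT},K^p)_{k_0,\tau}\to\bfSh(G_{\tS_\tau,\tT_\tau},K^p)_{k_0}$, which is a $\dP^1$-fibration because $\pi'^{\circ}_\tau$ is. Finally I would pass to a sufficiently small $K^p$ to land on finite level.
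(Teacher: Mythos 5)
Your proposal is correct and follows essentially the same route as the paper, whose entire proof is the single sentence ``This follows immediately from Proposition~\ref{P:GO-divisors}'', relying on the transfer mechanism (comparison of neutral connected components, Deligne's recipe, and the matching of $\tT_\tau$ with $\tilde\tS_{\tau,\infty}$) set up in the paragraphs preceding the proposition and in \cite{TX1}. You have simply made explicit the Galois- and Hecke-equivariance checks that the paper leaves implicit, including the correct observation that descent to $k_0$ is only asserted in case (2) because there $\tS_\tau$ acquires no new $p$-adic place.
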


\begin{proof}
This follows immediately from Proposition~\ref{P:GO-divisors}.
\end{proof}

Now, the construction of Goren--Oort cycles can be transferred to the quaternionic Shimura variety $\bfSh(G_{\tS, \tT},K^p)_{k_0}$. For a periodic semi-meander $\fa\in\fB(\tS_{\infty/\fp},\lfloor d_{\fp}/2\rfloor)$, we construct inductively in the same way as $Z'_{\tilde\tS}(\fa)$  a closed $k_0$-subvariety $Z_{\tS,\tT}(\fa)\subseteq \bfSh(G_{\tS, \tT},K^p)_{k_0}$ such that there exists  a $\lfloor d_{\fp}/2\rfloor$-th iterated $\dP^1$-fibration
\[\pi_{\fa}\colon Z_{\tS,\tT}(\fa)\to \bfSh(G_{\tS_{\fa},\tT_{\fa}}, K^p)_{k_0}\]
according to Proposition~\ref{P:GO-divisor-quaternion}(2),
where $\tS_{\fa}$ is defined in \eqref{E:definition-Sa} and
\[\tT_{\fa}=\tT\cup\{\tau\in \Sigma_{\infty}|\text{$\tau$ is the right end point of an arc in $\fa$}.\}\]
We define similarly
\begin{equation}\label{E:defn-W-a}
W_{\tS,\tT}(\fa)
=\begin{cases}Z_{\tS,\tT}(\fa) &\text{if $d_{\fp}$ is even,}\\
\pi_{\fa}^{-1}(\bfSh(G_{\tS_{\fa},\tT_{\fa}}, K^p)_{k_0, \tau(\fa)}) &\text{if $d_{\fp}$ is odd,}
\end{cases}
\end{equation}
where $\tau(\fa)\in \Sigma_{\infty/\fp}$ is the end point of the unique semi-line of $\fa$. Then $\pi_{\fa}$ induces a $\lfloor d_{\fp}/2\rfloor$-th iterated $\dP^1$-fibration
\begin{equation*}
\pi_{\fa}\res_{W_{\tS,\tT}(\fa)_{\FF_p^{\ac}}}\colon W_{\tS,\tT}(\fa)_{\FF_p^{\ac}}\to \bfSh(G_{\tS(\fp),\tT_{\fa}^*}, K^p)_{\FF_p^{\ac}}
\end{equation*}
where $\tS(\fp)=\tS^*_{\fa}$ is defined in \eqref{E:S-a}, and
\[
\tT_{\fa}^*=
\begin{cases}
\tT_{\fa} &\text{if $d_{\fp}$ is even,}\\
\tT_{\fa}\cup\{\tau(a)\} &\text{if $d_{\fa}$ is odd}.
\end{cases}
\]
Of course, when $d_{\fp}$ is even, the morphism $\pi_{\fa}\res_{W_{\tS,\tT}(\fa)_{\FF_p^{\ac}}}$ is simply the base change to $\FF_p^{\ac}$ of $\pi_{\fa}$.

Similarly, for $\underline\fa=(\fa_{\fp})_{\fp\in\Sigma_p}\in\fB_{\tS}=\prod_{\fp\in \Sigma_p}\fB(\tS_{\infty/\fp}, \lfloor d_{\fp}/2\rfloor)$, we can define a closed subvariety $W_{\tS,\tT}(\underline\fa)\subseteq\bfSh(G_{\tS,\tT},K^p)_{k_0}$ of dimension $r=\sum_{\fp\in\Sigma_p}\lfloor d_{\fp}/2\rfloor$ together with an $r$-th iterated $\dP^1$-fibration
\[
\pi_{\underline\fa}\colon W_{\tS,\tT}(\underline\fa)_{\FF_p^{\ac}}\to \bfSh(G_{\tS_{\max}, \tT_{\underline\fa}^*}, K^p)_{\FF_p^{\ac}},
\]
where $\tS_{\max}$ was defined in \eqref{E:S-max}. Note that one has
\begin{align*}
\bfSh(G_{\tS_{\max}, \tT_{\underline\fa}^*}, K^p)(\FF_p^{\ac})\cong
B_{\tS_{\max}}^{\times}\backslash \widehat{B}^{\times}_{\tS_{\max}}/K^p\prod_{\fp\in\Sigma_p}K_{\fp}
\end{align*}
where $K_{\fp}\cong \GL_2(\cO_{F_{\fp}})$ if $[F_{\fp}:\QQ_p]$ is even, and $K_{\fp}$ is the unique maximal open compact subgroup of $(B_{\tS_{\max}}\otimes_FF_{\fp})^{\times}$ if $[F_{\fp}:\QQ_p]$ is odd.
 We will denote thus the target of $\pi_{\underline{\fa}}$ uniformly by $\bfSh(G_{\tS_{\max}}, K^p)_{\FF_p^{\ac}}$ for all $\underline\fa\in \fB_{\tS}$. In particular, the set of geometric irreducible components of $W_{\tS,\tT}(\fa)$ is in bijection with $\bfSh(G_{\tS_{\max}}, K^p)(\FF_p^{\ac})$.



\subsection{Totally indefinite quaternionic Shimura varieties}

We consider the case $\tS=\emptyset$ (and hence $\tT=\emptyset$), and we write $G=G_{\emptyset}$ and $G'=G'_{\tilde\emptyset}$ for simplicity as usual. Recall that $\bfSh(G,K^p)$ classifies tuples $(A,\iota,\bar\lambda,\bar\alpha_{K^p})$ as defined in Section \ref{S:quaternion-PEL}. Even though it is only a coarse moduli space, there still exists a universal abelian scheme $\cA$ over $\bfSh(G,K^p)$ (See Remark~\ref{R:universal-AV}(1)).

\begin{definition}
Put $\bfSh(G,K^p)_{\FF_p}\coloneqq\bfSh(G,K^p)\otimes{\FF_p}$.
\begin{enumerate}
  \item For each $\fp\in\Sigma_p$, we define the \emph{$\fp$-supersingular locus} of $\bfSh(G,K^p)_{\FF_p}$ as the maximal closed subvariety of $\bfSh(G,K^p)_{\FF_p}$ where the universal $\fp$-divisible group $\cA[\fp^{\infty}]$ is supersingular.

  \item We define the \emph{total supersingular locus} of $\bfSh(G,K^p)_{\FF_p}$ as the intersection of the $\fp$-supersingular locus for all $\fp\in\Sigma_p$.
\end{enumerate}
\end{definition}

 \if false

Let $G^{\star}\subseteq G$ be the subgroup considered in \eqref{S:quaternion-PEL}. The $\fp$-supersingular locus of $\bfSh(G,K^p)_{\FF_p}$ can be well defined. By the canonical isomorphisms of connected Shimura varieties
\[
\bfSh(G^{\star})^{\circ}_{\FF_p^{\ac}}\cong \bfSh(G)^{\circ}_{\FF_p^{\ac}}\cong \bfSh(G')^{\circ}_{\FF_p^{\ac}},
\]
we have two families of abelian varieties over $\bfSh(G)^{\circ}_{\FF^{\ac}_p}$, namely $\cA\res_{\bfSh(G)^{\circ}_{\FF_p^{\ac}}}$ and  $\cA'\res_{\bfSh(G)^{\circ}_{\FF_p^{\ac}}}$ induced by the universal families over $\bfSh(G^{\star})$ and $\bfSh(G')$ respectively. They are related by the relation \eqref{E:comparison-univ-av}. In particular, the $\fp$-supersingular loci for these two families of abelian schemes are the  same; we define it thus as the supersingular locus  of $\bfSh(G)_{\FF_p^{\ac}}^{\circ}$. By \cite{TX1}*{Corollary~2.13}, the $\fp$-supersingular locus of $\bfSh(G)_{\FF_p^{\ac}}^{\circ}$ induces a closed subvariety of $\bfSh(G)_{\FF_p^{\ac}}$, which we define to be the $\fp$-supersingular locus of $\bfSh(G)_{\FF_p^{\ac}}$. By Galois descent, $\fp$-supersingular locus of $\bfSh(G)_{\FF_p^{\ac}}$ descends to a closed subvariety of $\bfSh(G)$ defined over  $\FF_p$ by Galois descent. For any open compact subgroup $K^p\subseteq G(\bA^{\infty,p})$, we define the $\fp$-supersingular locus of $\bfSh(G,K^p)_{\FF_p}$ as the image of  the $\fp$-supersingular locus of $\bfSh(G)_{\FF_p}$ via the natural projection $\bfSh(G)_{\FF_p}\to \bfSh(G,K^p)_{\FF_p}$. Then similarly to Theorem~\ref{T:p-supersingular-unitary}, we have the following

\fi

\begin{theorem}\label{T:p-supersingular-quaternion}
For $\fp\in\Sigma_p$, put $g_{\fp}\coloneqq[F_{\fp}:\QQ_p]$. Then the $\fp$-supersingular locus of $\bfSh(G, K^p)_{\FF_p}$, after base change to $k_0$, is
\[
\bigcup_{\fa\in\fB(\emptyset_{\infty/\fp},\lfloor g_{\fp}/2\rfloor)} W_{\emptyset}(\fa),
\]
where $\fB(\emptyset_{\infty/\fp},\lfloor g_{\fp}/2\rfloor)$ is the set of periodic semi-meanders of $g_{\fp}$-nodes and $\lfloor g_{\fp}/2\rfloor$-arcs, and each $W_{\emptyset}(\fa)$ is defined in \eqref{E:defn-W-a} and $W_{\emptyset}(\fa)_{\FF_p^{\ac}}$ is a $\lfloor g_{\fp}/2\rfloor$-th iterated $\dP^1$-fibration over $\bfSh(G_{\emptyset(\fp)}, K^p)_{\FF_p^{\ac}}$.
\end{theorem}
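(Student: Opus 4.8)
The plan is to deduce the statement from its unitary analogue, Theorem~\ref{T:p-supersingular-unitary} with $\tS=\tT=\emptyset$, by transporting the description along the comparison of connected Shimura varieties established in Section~\ref{S:comparison-moduli} and the transfer of Goren--Oort cycles carried out in Section~\ref{S:application}. Recall that the inclusion $G^{\star}\hra G$ together with the morphism $\bff\colon\bfSh(G^{\star})\to\bfSh(G'_{\tilde\emptyset})$ induces, over $\ZZ_p^{\ur}$, an isomorphism of neutral connected components $\bfSh(G)^{\circ}_{\ZZ_p^{\ur}}\cong\bfSh(G^{\star})^{\circ}_{\ZZ_p^{\ur}}\xra{\sim}\bfSh(G'_{\tilde\emptyset})^{\circ}_{\ZZ_p^{\ur}}$ under which, by \eqref{E:comparison-univ-av}, the universal abelian scheme $\cA$ (the one furnished by Remark~\ref{R:universal-AV}(1), restricted to the connected component) and the universal abelian scheme $\cA'_{\tilde\emptyset}$ of the unitary integral model satisfy $\cA'_{\tilde\emptyset}\res_{\bfSh(G)^{\circ}}\cong\cA\res_{\bfSh(G)^{\circ}}\otimes_{\cO_F}\cO_E$.

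The only genuinely new input beyond Theorem~\ref{T:p-supersingular-unitary} is the elementary observation that, for a geometric point of $\bfSh(G)^{\circ}_{\FF_p^{\ac}}$, the $\fp$-divisible group $\cA[\fp^{\infty}]$ is supersingular if and only if $(\cA\otimes_{\cO_F}\cO_E)[\fp^{\infty}]$ is supersingular in the sense used for the unitary variety, i.e.\ $\cA'_{\tilde\emptyset}[\fq^{\infty}]$ is supersingular for every $p$-adic place $\fq$ of $E$ above $\fp$. Indeed $\cO_E\otimes_{\cO_F}\cO_{F_{\fp}}$ is a finite étale $\cO_{F_{\fp}}$-algebra of rank $2$, a product of unramified extensions since $E/F$ is split or inert at $\fp$ according to the parity of $g_{\fp}$; hence on covariant Dieudonné modules the passage from $\cA[\fp^{\infty}]$ to $(\cA\otimes_{\cO_F}\cO_E)[\fp^{\infty}]$ is a faithfully flat base change that does not alter the Newton slopes, so ``all slopes equal to $1/2$'' holds for one exactly when it holds for the other. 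Combining this with the displayed isomorphism, the $\fp$-supersingular locus of $\bfSh(G)^{\circ}_{\FF_p^{\ac}}$ is carried onto the $\fp$-supersingular locus of $\bfSh(G'_{\tilde\emptyset})^{\circ}_{\FF_p^{\ac}}$, which by Theorem~\ref{T:p-supersingular-unitary} equals $\bigl(\bigcup_{\fa\in\fB(\emptyset_{\infty/\fp},\lfloor g_{\fp}/2\rfloor)}W'_{\tilde\emptyset}(\fa)\bigr)$ intersected with that connected component.

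It then remains to descend from the neutral connected component to $\bfSh(G,K^{p})_{\FF_p}\otimes k_0$. The $\fp$-supersingular locus of $\bfSh(G,K^{p})_{\FF_p}$ --- a Newton stratum for the $\fp$-part of $\cA[p^{\infty}]$, hence a closed subscheme already defined over $\FF_p$ --- pulls back to the $\fp$-supersingular locus of $\bfSh(G)_{\FF_p^{\ac}}$, and the latter is recovered from the connected-component locus above by Deligne's recipe for reconstructing $\bfSh(G_{\emptyset})_{\FF_p^{\ac}}$ from $\bfSh(G_{\emptyset})^{\circ}_{\FF_p^{\ac}}$ together with its Hecke and Galois actions (\cite{TX1}*{Corollary~2.13}), followed by Galois descent to $k_0$. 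This is precisely the procedure by which the Goren--Oort divisors, and then the cycles $W_{\emptyset}(\fa)$ of \eqref{E:defn-W-a}, were defined on the quaternionic side in Section~\ref{S:application}; matching the two descents yields $\bigcup_{\fa}W_{\emptyset}(\fa)$. The assertion that $W_{\emptyset}(\fa)_{\FF_p^{\ac}}$ is a $\lfloor g_{\fp}/2\rfloor$-th iterated $\dP^1$-fibration over $\bfSh(G_{\emptyset(\fp)},K^{p})_{\FF_p^{\ac}}$ is built into the construction of Section~\ref{S:application}, being transported from the corresponding statement for $W'_{\tilde\emptyset}(\fa)$ in Theorem~\ref{T:p-supersingular-unitary}.

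The step I expect to require the most care, rather than real difficulty, is the bookkeeping in this last descent: one must check that the $\fp$-supersingular locus cut out by the universal family on the coarse space $\bfSh(G,K^{p})$ (via Remark~\ref{R:universal-AV}) really corresponds, under \eqref{E:comparison-univ-av} and Deligne's recipe, to the cycles produced on the unitary side, compatibly with prime-to-$p$ Hecke correspondences. An alternative route avoiding the connected-component formalism is to transfer Proposition~\ref{P:ordinary} and the partial Hasse invariants directly to $\bfSh(G,K^{p})_{k_0}$ and rerun the inductive argument of Theorem~\ref{T:p-supersingular-unitary} there, but this only duplicates work already done.
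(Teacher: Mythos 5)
Your proposal is correct and follows essentially the same route as the paper: the paper's proof simply invokes the comparison of universal families from Section~\ref{S:comparison-moduli} (i.e.\ \eqref{E:comparison-univ-av}) to identify the $\fp$-supersingular locus defined via $\cA$ with the one induced from the unitary Shimura variety, and then cites Theorem~\ref{T:p-supersingular-unitary}. Your additional checks --- that supersingularity of $\cA[\fp^{\infty}]$ is equivalent to that of $(\cA\otimes_{\cO_F}\cO_E)[\fq^{\infty}]$ for the places $\fq\mid\fp$ of $E$, and the descent through the connected-component formalism --- are exactly the details the paper leaves implicit.
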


\begin{proof}
According to the discussion of Section \ref{S:comparison-moduli}, the definition of the $\fp$-supersingular locus of $\bfSh(G,K^p)_{\FF_p}$ using the universal family $\cA$ coincides with the one induced the $\fp$-supersingular locus of the unitary Shimura variety $\bfSh(G',K'^p)_{\FF_p}$. The statement then follows from  Theorem~\ref{T:p-supersingular-unitary}.
\end{proof}

\begin{theorem}\label{T:supersingular-quaternion}
Denote by $\bfSh(G,K^p)_{\FF_p}^{\mathrm{ss}}$ the total supersingular locus of $\bfSh(G,K^p)_{\FF_p}$. Then we have
\[
\bfSh(G,K^p)_{\FF_p}^{\mathrm{ss}}\otimes k_0=\bigcup_{\underline\fa\in\fB_{\emptyset}}W_{\emptyset}(\underline\fa),
\]
where $\fB_{\emptyset}$ is the set of tuples $(\fa_{\fp})_{\fp\in\Sigma_{p}}$ with $\fa_{\fp}\in\fB(\emptyset_{\infty/\fp},\lfloor g_{\fp}/2\rfloor)$.
 The base change $W_{\emptyset}(\fa)_{\FF_p^{\ac}}$ of $W_{\emptyset}(\underline\fa)$ to $\FF_p^{\ac}$ is a $(\sum_{\fp\in\Sigma_p}\lfloor g_{\fp}/2\rfloor)$-th iterated $\dP^1$-fibration over $\bfSh(G_{\tS_{\max}},K^p)_{\FF_p^{\ac}}$, equivariant under prime-to-$p$ Hecke correspondences, where $\tS_{\max}$ was defined in \eqref{E:S-max}. In particular, $\bfSh(G,K^p)_{\FF_p}^{\mathrm{ss}}$ is proper and of equidimension $\sum_{\fp\in\Sigma_p}\lfloor g_{\fp}/2\rfloor$.
\end{theorem}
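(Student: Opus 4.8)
The plan is to reduce the statement to its unitary counterpart, Theorem~\ref{T:supersingular-unitary} with $\tS=\tT=\emptyset$, using the comparison set up in Section~\ref{S:comparison-moduli}, and then to import the geometric description of the cycles $W_\emptyset(\underline\fa)$ that has already been recorded in Section~\ref{S:application}. Indeed, specializing the constructions of Section~\ref{S:application} to $\tS=\tT=\emptyset$ (so $d_\fp=g_\fp$), one already has, for each $\underline\fa\in\fB_\emptyset$, a closed subvariety $W_\emptyset(\underline\fa)\subseteq\bfSh(G,K^p)_{k_0}$, an iterated $\dP^1$-fibration $\pi_{\underline\fa}\colon W_\emptyset(\underline\fa)_{\FF_p^\ac}\to\bfSh(G_{\tS_{\max}},K^p)_{\FF_p^\ac}$ with target the finite double-coset set $B_{\tS_{\max}}^\times\backslash\widehat B_{\tS_{\max}}^\times/K^p\prod_{\fp\in\Sigma_p}K_\fp$, and prime-to-$p$ Hecke-equivariance, where $\tS_{\max}$ is as in \eqref{E:S-max}. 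Thus the one assertion that genuinely needs proof is the equality of closed subschemes $\bfSh(G,K^p)^{\ssl}_{\FF_p}\otimes k_0=\bigcup_{\underline\fa\in\fB_\emptyset}W_\emptyset(\underline\fa)$.

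For this I would first transport the supersingularity condition across the comparison. Recall from Section~\ref{S:comparison-moduli} the isomorphism $\bff^{\circ}\colon\bfSh(G^{\star})^{\circ}\xra{\sim}\bfSh(G')^{\circ}$ over $\ZZ_p^{\ur}$ (and, via the inclusion $G^{\star}\hra G$, the identification $\bfSh(G)^{\circ}\cong\bfSh(G^{\star})^{\circ}$) together with the relation $\bff^{\circ,*}(\cA'\res_{\bfSh(G')^{\circ}})\cong\cA\res_{\bfSh(G)^{\circ}}\otimes_{\cO_F}\cO_E$ from \eqref{E:comparison-univ-av}. Since $E/F$ is unramified at every $\fp\in\Sigma_p$ (split if $g_\fp$ is even, inert if $g_\fp$ is odd), for a prime $\fq$ of $E$ above $\fp$ the $p$-divisible group $\cA'[\fq^\infty]$ is isoclinic of slope $1/2$ exactly when $\cA[\fp^\infty]$ is; hence the total supersingular locus computed from $\cA$ on $\bfSh(G)^{\circ}_{\FF_p^\ac}$ agrees with the one computed from $\cA'$ on $\bfSh(G')^{\circ}_{\FF_p^\ac}$. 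Applying Deligne's recipe for recovering the full Shimura variety from its neutral connected component (\cite{TX1}*{Corollary~2.13}) and then passing to the quotient by $K^p$, this identifies $\bfSh(G,K^p)^{\ssl}_{\FF_p^\ac}$ with the total supersingular locus of $\bfSh(G',K'^p)_{\FF_p^\ac}$. By Theorem~\ref{T:supersingular-unitary} the latter is $\bigcup_{\underline\fa\in\fB_\emptyset}W'_{\tilde\emptyset}(\underline\fa)$, and under the same transport of structure—this is precisely how the $W_{\tS,\tT}(\underline\fa)$ are defined in Section~\ref{S:application}—each $W'_{\tilde\emptyset}(\underline\fa)$ corresponds to $W_\emptyset(\underline\fa)$. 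This yields the desired equality over $\FF_p^\ac$, hence over $k_0$ since both sides descend there. (Alternatively, the same conclusion can be reached by iterating Theorem~\ref{T:p-supersingular-quaternion} over $\fp\in\Sigma_p$, exactly as Theorem~\ref{T:supersingular-unitary} is deduced from Theorem~\ref{T:p-supersingular-unitary}.)

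It then remains to address rationality and the two geometric conclusions. The locus $\bfSh(G,K^p)^{\ssl}_{\FF_p}$ is cut out inside the $\FF_p$-scheme $\bfSh(G,K^p)_{\FF_p}$ by a Frobenius-stable condition on $\cA[p^\infty]$, so it is already an $\FF_p$-scheme; the displayed formula is the computation of its base change to $k_0$, and one checks that $\Gal(k_0/\FF_p)$ permutes the index set $\fB_\emptyset$ compatibly with its action on the $W_\emptyset(\underline\fa)$—this is the origin of the Galois action on $\fB$ appearing in Theorem~\ref{T:main1}. For properness and equidimension, each $W_\emptyset(\underline\fa)_{\FF_p^\ac}$ is an iterated $\dP^1$-fibration over a proper zero-dimensional scheme, hence proper and of pure dimension $\sum_{\fp\in\Sigma_p}\lfloor g_\fp/2\rfloor$; a finite union inherits these properties, and properness and equidimension descend to $\FF_p$.

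The step I expect to be the real obstacle is the transport of the supersingularity condition in the second paragraph: one must be sure that the dictionary $\cA'\cong\cA\otimes_{\cO_F}\cO_E$ is sharp enough that the Newton slopes at each place $\fp\in\Sigma_p$ match on the nose, not merely up to isogeny in a way that could blur the closed-subscheme structure, and that the notion of ``$\fp$-supersingular locus'' on the quaternionic side is consistent across the various auxiliary CM choices $E/F$ feeding the inductive construction of Section~\ref{S:application}. A secondary subtlety, flagged in the footnote to Theorem~\ref{T:main1}, is the precise Galois action on $\bfSh(G_{\tS_{\max}},K^p)_{\FF_p^\ac}$; it does not affect the statement as phrased, because the equality is only asserted after $\otimes k_0$, but it is the reason the theorem stops short of a fully canonical description over $\FF_p$.
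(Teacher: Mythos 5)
Your proposal is correct and follows essentially the same route as the paper: the paper proves this by induction on the number of $p$-adic places using Theorem~\ref{T:p-supersingular-quaternion}, which in turn rests on the comparison of universal abelian schemes \eqref{E:comparison-univ-av} and the unitary result Theorem~\ref{T:p-supersingular-unitary} — exactly the two ingredients you invoke (your parenthetical alternative is verbatim the paper's argument). The transport of the slope condition through $\cA'\cong\cA\otimes_{\cO_F}\cO_E$ that you flag as the potential obstacle is unproblematic, since $\cA'[p^\infty]$ is just two copies of $\cA[p^\infty]$ with extra endomorphisms.
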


\begin{proof}
This follows from Theorem~\ref{T:p-supersingular-quaternion} by induction on the number of $p$-adic places $\fp\in \Sigma_p$.
\end{proof}

\begin{remark}
The above theorem is known in the following cases.
\begin{enumerate}
  \item If $p$ is inert in $F$ of degree $2$ and $B$ is the matrix algebra, then the theorem was first proved in \cite{BG99}.

  \item If $p$ is inert in $F$ of degree $4$ and $B$ is the matrix algebra, then the results was due to \cite{Yu03}.

  \item Assume that $p$ is inert in $F$ of even degree. Then the strata $W_{\emptyset}(\underline\fa)$ have already been constructed in \cite{TX2}, and the authors proved there that, under certain genericity conditions on the Satake parameters of a fixed automorphic cuspidal representation $\pi$, the cycles $W_{\emptyset}(\underline\fa)$ give all the $\pi$-isotypic Tate cycles on the quaternionic Shimura variety $\bfSh(G,K^p)_{\FF_p}$.
\end{enumerate}
\end{remark}

We define an action of $\rG_{\FF_p}=\Gal(\FF^\ac_p/\FF_p)$ on the set $\fB_{\emptyset}$ as follows. For each periodic semi-meander $\fa_{\fp}\in\fB(\emptyset_{\infty/\fp},\lfloor g_{\fp}/2\rfloor)$, let $\sigma(\fa_{\fp})$ be the Frobenius translate of $\fa_{\fp}$, that is, there is an arc in $\sigma(\fa_{\fp})$ linking two nodes $x,y$ if and only if there is an arc in $\fa_{\fp}$ linking $\sigma^{-1}(x),\sigma^{-1}(y)$. For $\underline\fa=(\fa_{\fp})_{\fp}$, we put $\sigma(\underline\fa)\coloneqq(\sigma(\fa_{\fp}))_{\fp\in \Sigma_{p}}$. It is clear that the subgroup $\Gal(\FF_p^{\ac}/k_0)$ of $\Gal(\FF_p^{\ac}/\FF_p)$ stabilizes each $\underline {\fa}\in \fB_{\emptyset}$. Then the action of $\Gal({\FF}^{\ac}_p/\FF_p)$ on $\bfSh(G,K^p)^{\r{ss}}_{\FF_p^\ac}$ sends the stratum $W_{\emptyset}(\underline\fa)$ to $W_{\emptyset}(\sigma(\underline \fa))$.

\begin{definition}\label{D:superspecial}
We define the \emph{superspecial locus} $\bfSh(G,K^p)^{\r{sp}}_{\FF_p}$ of $\bfSh(G,K^p)_{\bF_p}$ to be the maximal closed subset where the universal $p$-divisible group $\cA[p^\infty]$ is superspecial.
\end{definition}

Using the universal family of abelian varieties $\cA$ over $\bfSh(G,K^p)$, one can define, for each $\tau\in\Sigma_{\infty}$, a partial Hasse invariant $h_{\tau}$ on $\bfSh(G,K^p)_{k_0}$ similarly to \eqref{E:Hasse-invariant}. We can also define the Goren--Oort divisor $\bfSh(G,K^p)_{k_0,\tau}$ of $\bfSh(G,K^p)_{k_0}$ as being the vanishing locus of $h_{\tau}$. By the relation of universal abelian schemes \eqref{E:comparison-univ-av}, this definition coincides with the one defined by transferring to the unitary Shimura variety $\bfSh(G',K'^p)_{k_0}$. It is easy to see that
\[
\bfSh(G,K^p)^{\r{sp}}_{\FF_p}\otimes k_0=\bigcap_{\tau\in\Sigma_{\infty}}\bfSh(G,K^p)_{k_0,\tau}.
\]

\begin{theorem}\label{P:superspecial-quaternion}
Assume that $g_\fp$ is odd for every $\fp\in\Sigma_p$.
\begin{enumerate}
  \item For each $\underline\fa\in\fB_{\emptyset}$ as in Theorem \ref{T:supersingular-quaternion}, $W_{\emptyset}(\underline\fa)$ contains the superspecial locus $\bfSh(G,K^p)^{\r{sp}}_{\FF_p}\otimes k_0$, and the morphism $\pi_{\underline\fa}\colon W_{\emptyset}(\underline\fa)_{\FF_p^{\ac}}\to \bfSh(G_{\tS_{\max}},K^p)_{\FF_p^{\ac}}$ induces a bijection
      \begin{align*}
      \bfSh(G,K^p)^{\r{sp}}(\bF_p^{\ac})\xra{\sim}\bfSh(G_{\tS_{\max}},K^p)(\FF_p^{\ac})=B^{\times}_{\tS_{\max}}\backslash \widehat{B}^{\times}_{\max}/K^p\prod_{\fp\in \Sigma_p}K_{\fp}
      \end{align*}
      compatible with prime-to-$p$ Hecke correspondences. Here, $K_{\fp}$ is the unique  maximal open compact subgroups of $(B_{\max}\otimes_F F_{\fp})^{\times}$ for each $\fp\in \Sigma_p$.\footnote{Note that the assumption $g_{\fp}$ is odd implies that $B_{\tS_{\max}}$ ramifies at $\fp$.}

  \item Via this  bijection in (1), the action of the arithmetic Frobenius element $\sigma_{p}^2\in \Gal(\FF_p^{\ac}/\FF_{p^2})$ on $\bfSh(G,K^p)^{\r{sp}}(\FF_p^{\ac})$ is given by the  multiplication by the element $\underline{p}^{-1}\in \widehat{F}^{\times}\subseteq \widehat{B}_{\tS_{\max}}^{\times}$, which equals $p^{-1}$ at all $p$-adic places and equals to $1$ at other places.
\end{enumerate}
\end{theorem}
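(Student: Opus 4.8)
The plan is to obtain part (1) by transporting the corresponding unitary statement, Proposition~\ref{P:superspecial-unitary}, and to prove part (2) by a direct Dieudonn\'e/Honda--Tate computation on the isolated superspecial points. \emph{Part (1).} By the remark preceding the theorem, $\bfSh(G,K^p)^{\r{sp}}_{\FF_p}\otimes k_0=\bigcap_{\tau\in\Sigma_{\infty}}\bfSh(G,K^p)_{k_0,\tau}$, and this locus was by definition transported from the unitary side. Via the relation $\bff^{\circ,*}(\cA'\res_{\bfSh(G')^{\circ}})\cong\cA\res_{\bfSh(G)^{\circ}}\otimes_{\cO_F}\cO_E$ of \eqref{E:comparison-univ-av}, together with the fact that a $p$-divisible group with $\cO_F\otimes\ZZ_p$-action is superspecial if and only if its $\cO_E$-base change is, the superspecial locus of $\bfSh(G,K^p)_{k_0}$ matches, on neutral geometric connected components, that of $\bfSh(G',K'^p)_{k_0}$. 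Applying Proposition~\ref{P:superspecial-unitary} with $\tS=\emptyset$, the unitary superspecial locus is contained in $W'_{\tilde\emptyset}(\underline\fa)$ and $\pi'_{\underline\fa}$ restricts to an isomorphism onto $\bfSh(G'_{\tilde\tS^*_{\underline\fa}},K'^p)_{k_0}$. Transporting back by Deligne's recipe for recovering a Shimura variety from its neutral connected component, Galois and Hecke actions — which is exactly how $W_\emptyset(\underline\fa)$, $\pi_{\underline\fa}$ and $\bfSh(G_{\tS_{\max}},K^p)$ were built in Section~\ref{S:application} — and invoking the adelic description $\bfSh(G_{\tS_{\max}},K^p)(\FF_p^{\ac})\cong B_{\tS_{\max}}^{\times}\backslash\widehat B_{\tS_{\max}}^{\times}/K^p\prod_{\fp}K_{\fp}$ recorded there, one obtains the asserted bijection; its compatibility with prime-to-$p$ Hecke correspondences is inherited from that of $\pi'_{\underline\fa}$.

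\emph{Part (2), setup.} Here $\bfSh(G,K^p)^{\r{sp}}_{\FF_p}$ is a genuine closed subscheme of the $\FF_p$-scheme $\bfSh(G,K^p)_{\FF_p}$, so $\rG_{\FF_p}$ acts on its $\FF_p^{\ac}$-points, and by the moduli interpretation of Section~\ref{S:quaternion-PEL} an element $\gamma\in\rG_{\FF_p}$ sends $x=(A,\iota,\bar\lambda,\bar\alpha_{K^p})$ to the $\gamma$-twist of this datum. A superspecial abelian variety $A$ over $\FF_p^{\ac}$ is $\cO_B$-linearly isomorphic to $E_0^{\dim A}$ for a supersingular elliptic curve $E_0/\FF_{p^2}$ whose $p^2$-Frobenius endomorphism is $[\epsilon p]$ for some $\epsilon\in\{\pm1\}$; such an $E_0$ has all its endomorphisms defined over $\FF_{p^2}$, so $(A,\iota,\bar\lambda)$ descends to $\FF_{p^2}$ with central Frobenius endomorphism $\pi_A=[\epsilon p]$, while the prime-to-$p$ structure $\bar\alpha_{K^p}$ need only be defined over a finite extension. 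Consequently $\sigma_p^2\in\Gal(\FF_p^{\ac}/\FF_{p^2})$ fixes $(A,\iota,\bar\lambda)$, fixes the $p$-divisible group with all its extra structure, and replaces $\bar\alpha_{K^p}$ by $\pi_A\circ\bar\alpha_{K^p}$, because the arithmetic Frobenius of $A/\FF_{p^2}$ acts on $\widehat V^p(A)$ as $\pi_A$.

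\emph{Conclusion of part (2), and the main difficulty.} The construction of the bijection in part (1) identifies the $\widehat B_{\tS_{\max}}^{\times,p}$-coordinate of a superspecial point with its prime-to-$p$ level structure (measured against a fixed base point by a quasi-isogeny) and the $\prod_{\fp}(B_{\tS_{\max}}\otimes F_{\fp})^{\times}$-coordinate with its superspecial $p$-divisible group with $\cO_B$-action and polarization. Hence post-composing $\bar\alpha_{K^p}$ with the central scalar $[\epsilon p]$ while leaving the $p$-divisible group untouched multiplies the coordinate $g$ on the right by the id\`ele $(\epsilon p)^{(p)}$ equal to $\epsilon p$ at all finite places not above $p$ and to $1$ at the places above $p$; writing $(\epsilon p)^{(p)}=(\epsilon p)\cdot u\cdot\underline{p}^{-1}$ with $\epsilon p\in F^{\times}\subseteq B_{\tS_{\max}}^{\times}$ and $u\in\prod_{\fp}K_{\fp}$ equal to $\epsilon$ above $p$ and $1$ elsewhere, one gets $\sigma_p^2\cdot[g]=[g\,\underline{p}^{-1}]$, which is the assertion (the answer being central, it is insensitive to $\epsilon$ and to replacing the bijection by a Hecke translate). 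The main obstacle is to make the first sentence of this paragraph precise: one must check that the bijection of part (1), built cycle-theoretically from $\pi_{\underline\fa}$ and the quaternionic/unitary comparison, agrees with the usual adelic parametrization of the superspecial locus by quasi-isogeny classes, so that the effect of $\sigma_p^2$ is really right multiplication by $(\pi_A)^{(p)}$; and one must fix all normalizations — arithmetic versus geometric Frobenius, the sides on which $K^p$ and $\prod_{\fp}K_{\fp}$ act, the direction of the comparison isogenies — so that the exponent comes out as $\underline{p}^{-1}$ rather than $\underline{p}$. The remaining ingredients — the descent of superspecial abelian varieties to $\FF_{p^2}$, the transports in part (1), and the id\`elic identity above — are routine.
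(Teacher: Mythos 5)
Your proposal is correct and follows essentially the same route as the paper: part (1) is deduced from Proposition~\ref{P:superspecial-unitary} via the comparison of universal families and Deligne's recipe, and part (2) rests on the fact that a superspecial abelian variety with its $\cO_B$-action and polarization descends to $\FF_{p^2}$ with central $p^2$-Frobenius $\pm p$ acting on the prime-to-$p$ level structure, followed by the id\`elic rewriting that moves this scalar to the $p$-adic places. Your sign $\epsilon$ plays exactly the role of the paper's closing remark that $\underline{-p}^{-1}$ and $\underline{p}^{-1}$ differ by an element of $K^p\prod_{\fp}K_{\fp}$, and the normalization issue you flag at the end is likewise left implicit in the paper's own proof.
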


\begin{proof}
Statement (1) follows from Proposition~\ref{P:superspecial-unitary}. To prove (2), we take a superspecial point $x=(A,\iota, \bar\lambda,\bar \alpha_{K^p})\in \bfSh(G,K^p)^{\r{sp}}(\FF_p^{\ac})$ as in Section \ref{S:quaternion-PEL}. Then $A$ is  of the form $A=E\otimes_{\bZ}\cI$, where $E$ is a supersingular elliptic curve and $\cI$ is a (left) fractional ideal of $\cO_B$. It is well known that all supersingular elliptic curves are defined over $\FF_{p^2}$,  and the $p^2$-Frobenius endomorphism $F_{E}^2\colon E\to E^{(p^2)}\cong E$ induces is identified with the multiplication by $-p$. It follows that the effect of  $\sigma_{p^2}$ on $\bfSh(G,K^p)^{\r{sp}}(\FF_p^{\ac})$ coincides with the central Hecke action  on $B^{\times}_{\tS_{\max}}\backslash \widehat{B}^{\times}_{\max}/K^p\prod_{\fp\in \Sigma_p}K_{\fp}$  by the id\`ele $\underline{-p}^{-1}\in \widehat{F}^{\times}$ which equals to $-p^{-1}$ at all $p$-adic places and $1$ elsewhere. We conclude by remarking that the difference between $\underline{-p}^{-1}$ and $\underline{p}^{-1}$ lies in $K^p\prod_{\fp}K_{\fp}$.

\end{proof}

In what follows, we will identify $\bfSh(G,K^p)^{\r{sp}}_{\bF_p^{\ac}}$ and $\bfSh(G_{\tS_{\max}},K^p)_{\FF_p^{\ac}}$ through some fixed $\underline\fa$.
Via this identification,  $\bfSh(G_{\tS_{\max}},K^p)_{\FF_p^{\ac}}$ is equipped with a structures of $\bF_p$-scheme, denoted by $\bfSh(G_{\tS_{\max}},K^p)_{\bF_p}$. From Theorem~\ref{T:supersingular-quaternion}(2), the underlying $\bF_{p^2}$-structure on $\bfSh(G_{\tS_{\max}},K^p)_{\FF_p^{\ac}}$ does not depend on the choice of $\underline\fa$.

\begin{corollary}\label{C:Galois-equivariance}
Assume that $g_{\fp}$ is odd for every $\fp\in \Sigma_p$. For $\underline{\fa}\in \fB_{\emptyset}$, the morphism $\pi_{\underline{\fa}}\colon W_{\emptyset}(\underline {\fa})_{\FF_p^{\ac}}\to \bfSh(G_{\tS_{\max}}, K^p)_{\FF_p^{\ac}}$ is equivariant under $\Gal(\FF_p^{\ac}/k_0)$, and hence it  descends to a morphism of $k_0$-schemes:
\[
\pi_{\underline{\fa}}\colon W_{\emptyset}(\underline{\fa})\to \bfSh(G_{\tS_{\max}},K^p)_{k_0}.
\]
\end{corollary}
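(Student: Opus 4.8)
The plan is to reconcile two \emph{a priori} different sources of descent data on $\pi_{\underline\fa}$. On the source side, $W_{\emptyset}(\underline\fa)$ is a closed subvariety of the $k_0$-scheme $\bfSh(G,K^p)_{k_0}$, so $\Gal(\FF_p^{\ac}/k_0)$ acts on $W_{\emptyset}(\underline\fa)_{\FF_p^{\ac}}$; on the target side, $\bfSh(G_{\tS_{\max}},K^p)_{\FF_p^{\ac}}$ was given its $k_0$-structure by transporting the (a priori only $\FF_p$-rational) superspecial locus $\bfSh(G,K^p)^{\r{sp}}_{\FF_p}\otimes k_0=\bigcap_{\tau\in\Sigma_{\infty}}\bfSh(G,K^p)_{k_0,\tau}$ through the isomorphism $\pi_{\underline\fa}|_{\r{sp}}$ of Theorem~\ref{P:superspecial-quaternion}(1). (This is needed precisely because, in the assembly of $\pi_{\underline\fa}$ via Proposition~\ref{P:GO-divisor-quaternion}, the last $\dP^1$-fibration attached to each $\fp$ lands in a Goren--Oort divisor defined over $k_0$, but the ensuing identification of Proposition~\ref{P:GO-divisor-quaternion}(1) with a $0$-dimensional quaternionic Shimura variety is given only over $\FF_p^{\ac}$.) First I would note that, since $W_{\emptyset}(\underline\fa)_{\FF_p^{\ac}}$ is reduced — by Theorem~\ref{T:supersingular-quaternion} it is an iterated $\dP^1$-fibration over the reduced, $0$-dimensional $\bfSh(G_{\tS_{\max}},K^p)_{\FF_p^{\ac}}$, with each connected component irreducible, smooth and projective — and the target is separated, it is enough to check that $\pi_{\underline\fa}$ is $\Gal(\FF_p^{\ac}/k_0)$-equivariant on $\FF_p^{\ac}$-points.

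Then I would use the $0$-dimensionality of the target to factor $\pi_{\underline\fa}$ through the finite set $\pi_0(W_{\emptyset}(\underline\fa)_{\FF_p^{\ac}})$ of connected components; by Theorem~\ref{T:supersingular-quaternion} the resulting map $\pi_0(W_{\emptyset}(\underline\fa)_{\FF_p^{\ac}})\to\bfSh(G_{\tS_{\max}},K^p)(\FF_p^{\ac})$ is a bijection, and it remains to see it is Galois-equivariant. Here the key input is Theorem~\ref{P:superspecial-quaternion}(1): the superspecial locus $\bfSh(G,K^p)^{\r{sp}}(\FF_p^{\ac})$ lies inside $W_{\emptyset}(\underline\fa)(\FF_p^{\ac})$, and, together with the iterated $\dP^1$-fibration structure, this forces each connected component of $W_{\emptyset}(\underline\fa)_{\FF_p^{\ac}}$ to contain exactly one superspecial point. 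Consequently the bijection above is the composite of the canonical bijection $\pi_0(W_{\emptyset}(\underline\fa)_{\FF_p^{\ac}})\xra{\sim}\bfSh(G,K^p)^{\r{sp}}(\FF_p^{\ac})$ — a component goes to its unique superspecial point — followed by $\pi_{\underline\fa}|_{\r{sp}}$. The first map is canonical, hence $\Gal(\FF_p^{\ac}/k_0)$-equivariant because $W_{\emptyset}(\underline\fa)$ and $\bfSh(G,K^p)^{\r{sp}}$ are both defined over $k_0$; and $\pi_{\underline\fa}|_{\r{sp}}$ is equivariant since it is (up to the point below) the very isomorphism defining the $k_0$-structure on the target. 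This gives equivariance of $\pi_{\underline\fa}$, whence Galois descent produces the descended morphism of $k_0$-schemes $\pi_{\underline\fa}\colon W_{\emptyset}(\underline\fa)\to\bfSh(G_{\tS_{\max}},K^p)_{k_0}$.

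The hard part will be the compatibility packaged into the phrase ``the very isomorphism defining the $k_0$-structure'': that structure is pinned down using $\pi_{\underline\fa_0}|_{\r{sp}}$ for one fixed $\underline\fa_0$, so for a general $\underline\fa$ I must verify $\pi_{\underline\fa}|_{\r{sp}}=\pi_{\underline\fa_0}|_{\r{sp}}$ — that is, that the restriction of the Goren--Oort projection to the superspecial locus does not depend on the periodic semi-meander. This should follow from the rigidity of superspecial points used in the proof of Theorem~\ref{P:superspecial-quaternion}: a superspecial point parametrises an abelian variety of the shape $E\otimes_{\bZ}\cI$ with $E$ a supersingular elliptic curve and $\cI$ a fractional ideal of $\cO_{B_{\tS_{\max}}}$, and the $p$-quasi-isogeny furnished by the iterated construction of Proposition~\ref{P:GO-divisors} is forced to collapse it onto the superspecial abelian variety attached to $\cI$ regardless of $\underline\fa$; but making this rigorous by tracking the reduced Dieudonné modules through the lattice recipe of Proposition~\ref{P:GO-divisors} is the delicate step. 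A smaller auxiliary point is to confirm that the iterated $\dP^1$-fibration of Theorem~\ref{T:supersingular-quaternion} has irreducible fibres, so that the connected components of $W_{\emptyset}(\underline\fa)_{\FF_p^{\ac}}$ coincide with the fibres of $\pi_{\underline\fa}$.
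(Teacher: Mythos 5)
Your proposal is correct and is essentially the paper's argument: reduce to $\FF_p^{\ac}$-points, use that the target is discrete so that $\pi_{\underline{\fa}}$ factors through connected components, and control the resulting bijection via the superspecial locus, which is defined over $\FF_p$ (as the common zero locus of the partial Hasse invariants) and meets each geometric component of $W_{\emptyset}(\underline{\fa})_{\FF_p^{\ac}}$ in exactly one point by Theorem \ref{P:superspecial-quaternion}(1); the inclusion $\bfSh(G,K^p)^{\r{sp}}_{\FF_p^{\ac}}\hookrightarrow W_{\emptyset}(\underline{\fa})_{\FF_p^{\ac}}$ is then $\Gal(\FF_p^{\ac}/k_0)$-equivariant because both sides are $k_0$-subschemes of $\bfSh(G,K^p)_{k_0}$.

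The only substantive comment concerns the step you flag as ``the hard part'': you propose to prove the literal identity $\pi_{\underline{\fa}}\res_{\r{sp}}=\pi_{\underline{\fa}_0}\res_{\r{sp}}$ by tracking reduced Dieudonn\'e modules through the lattice recipe of Proposition \ref{P:GO-divisors}. That identity is stronger than what the corollary needs, and you do not have to prove it. It suffices that $\pi_{\underline{\fa}}\res_{\r{sp}}\circ(\pi_{\underline{\fa}_0}\res_{\r{sp}})^{-1}$ be an automorphism of $\bfSh(G_{\tS_{\max}},K^p)(\FF_p^{\ac})$ commuting with $\Gal(\FF_p^{\ac}/k_0)$, and this is exactly what Theorem \ref{P:superspecial-quaternion}(2) delivers: every $\pi_{\underline{\fa}}\res_{\r{sp}}$ transports $\sigma_p^2$ to the same central Hecke translation by $\underline{p}^{-1}$, so all of these identifications induce the same $\FF_{p^2}$-structure on the target; since each $g_\fp$ is odd, $h=\mathrm{lcm}\{2g_\fp\}$ is even, so $\FF_{p^2}\subseteq k_0$ and the induced $k_0$-structure is likewise independent of $\underline{\fa}$ --- this is precisely the remark the paper records immediately before the corollary. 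Your ``delicate step'' therefore dissolves into a citation, and no Dieudonn\'e-module computation is required. The auxiliary point about irreducible fibres is immediate: each fibre of $\pi_{\underline{\fa}}$ is an iterated $\dP^1$-bundle over a point, hence irreducible, so connected components of $W_{\emptyset}(\underline{\fa})_{\FF_p^{\ac}}$ do coincide with the fibres.
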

\begin{proof}
This follows from the definition of underlying $k_0$-structure on $\bfSh(G_{\tS_{\max}},K^p)_{\FF_p^{\ac}}$ and the fact that the inclusion $\bfSh(G, K^p)^{\r{sp}}_{\FF_p^{\ac}}\hookrightarrow W_{\emptyset}(\underline {\fa})_{\FF_p^{\ac}}$ is equivariant under $\Gal(\FF_p^{\ac}/k_0)$.
\end{proof}

\if false

\yifeng{In Section \ref{S:application}, how sufficiently small should $K^p$ be? Can I take any $K^p$ such that $K_pK^p$ is neat?}

{\yichao{If you want each irreducible component of the  supersingular locus to be isomorphic to certain iterated $\dP^1$-bundle, I think you need more restrictive conditions on $K^p$ other than being neat.  Being neat just that the Shimura variety $\bfSh(G, K^p)_{\FF_p}$  is a scheme. }

\yifeng{In the case $\tS=\emptyset$, we do need a direct moduli interpretation of $\Sh(G,K^p)$ for later argument. In particular, the comparison \eqref{E:comparison-univ-av} can be done directly over $\bfSh(G)$. Moreover, one can show directly that $\pi_{\underline\fa}\colon W_{\emptyset}(\underline\fa)\to\bfSh(G_{\tS_{\max}}, K^p)$ is defined over $k_0$. (It doesn't seem obvious to me that the hypersurface $\bfSh(G_{\tS})_{\FF_p^{\ac},\tau}$ is stable under $\Gal(\FF_p^{\ac}/k_0)$ in the non-PEL case.)

In \cite{Zin82}, Zink considers the case where $B$ is a totally indefinite quaternion algebra over $F$, and $p$ a prime inert in $F$ at which $B$ is \emph{ramified}. He shows in \cite{Zin82}*{Section 3} that $\Sh(G,K^p)$ has a moduli interpretation over $\bZ_{(p)}$. It seems to me that his proof works for every $p$ that is unramified in $F$ and such that $K_p$ is maximal.}

\yichao{ Zink shows that $\bfSh(G, K^p)$ classifies the isomorphism classes of  tuples $(A,\iota, \bar \lambda, \bar\alpha_{K^p})$, where
\begin{enumerate}
\item $(A,\iota)$ is an abelian scheme of dimension $4g$ equipped with an action by $\cO_B$,
\item  $\bar \lambda$ is an $\cO_{F,+}^{\times}$-orbit of $\cO_F$-linear prime-to-$p$   polarizations $\lambda$ on $A$,
\item $\bar\alpha_{K^p}$ is a $K^p$-level structure.
\end{enumerate}
He shows that $\bfSh(G,K^p)$ is a quotient by the finite group $(\cO_{F,+}^{\times}\cap \det(K))/(\cO_F^{\times}\cap K)^{2}$ of some fine moduli space $\tilde \cM_{C}$, and the action of $(\cO_{F,+}^{\times}\cap \det(K))/(\cO_F^{\times}\cap K)^{2}$ is free. But I don't understand why he says in \cite{Zin82}*{Korollar~3.4} that $\bfSh(G,K^p)$ is a fine moduli space  since every point $x=(A,\iota, \bar \lambda, \bar\alpha)\in \bfSh(G, K^p)$ has automorphism group $\cO_F^{\times}\cap K$. But it seems that the universal abelian scheme $\cA$ over $\tilde \cM_{C}$ descends to $\bfSh(G,K^p)$ since the action of $(\cO_{F,+}^{\times}\cap \det(K))/(\cO_F^{\times}\cap K)^{2}$ only affect the polarization on $\cA$.}

\yifeng{Let $K=\prod_{v}K_v$ be an open compact subgroup of $B^\times(\bA_F^\infty)$. Denote by $\Delta$ the ramification set of $B$. For every $g_v\in B^\times(F_v)$ with $v\not\in\Delta$, let $\Gamma_{g_v}$ be the subgroup of $(F_v^\ac)^\times$ generated by eigenvalues of $g_v$. The group $((\bQ^\ac)^\times\cap\Gamma_{g_v})_\tor$ does not depend on the embedding $\bQ^\ac\hookrightarrow F_v^\ac$. We say that $g\in B^\times(\bA_F^\infty)$ is neat if $\bigcap_{v\not\in\Delta}((\bQ^\ac)^\times\cap\Gamma_{g_v})_\tor=\{1\}$. We say that $K$ is \emph{neat} if every element $g=g_\Delta g^\Delta\in K$ with $\det g^\Delta=1$ is neat. In fact, if $K$ is contained in $K_1(N)$ for some $N\geq 4$ (and coprime to $\Delta$), then it must be neat.

Now suppose that $p$ is unramified in $F$ such that $K_\fp$ is hyperspecial maximal for every $\fp$ above $p$. Then \cite{Zin82}*{Korollary~3.3} holds for neat $K$, and we can apply the argument of \cite{Zin82}*{Korollary~3.4} to obtain that $\bfSh(G,K^p)$ is the quotient of a fine moduli scheme $\widetilde{\bfSh}(G,K^p)$ under a free action of the finite group $Q_K\coloneqq (\cO_{F,+}^{\times}\cap \det(K))/(\cO_F^{\times}\cap K)^{2}$. Moreover, for every $\tS$, $Q_K$ acts freely on $\widetilde{\bfSh}(G_\tS,K^p)_{k_0}$. Thus all strata of $\bfSh(G,K^p)_{k_0}$ all still $\dP^1$-fibrations.}

\fi

\section{Arithmetic level raising}
\label{ss:4}

In this chapter, we state and prove the arithmetic level raising result.
We suppose that $g=[F:\bQ]$ is odd. Fix an irreducible cuspidal automorphic representation $\Pi$ of $\GL_2(\bA_F)$ of parallel weight $2$ defined over a number field $\bE$.

\subsection{Statement of arithmetic level raising}
\label{ss:statement}

Let $B$ be a totally indefinite quaternion algebra over $F$, and put $G\coloneqq\Res_{F/\bQ}B^\times$. Let $K$ be a neat open compact subgroup of $G(\bA^{\infty})$ (Definition~\ref{D:neat-subgroup}).   Then we have the Shimura variety $\Sh(G,K)$ defined over $\bQ$ whose $\CC$-points are given by
\[
\Sh(G,K)(\CC)=G(\QQ)\backslash(\fH^{\pm})^{\Sigma_{\infty}}\times G(\bA^{\infty})/K.
\]

Let $\tR$ be a finite set of places\footnote{The meaning of $\tR$ changes from here; in particular, it contains the ramification set of $B$, which it previously stands for.} of $F$ away from which $\Pi$ is unramified and $K$ is hyperspecial maximal. Let $\dT^\tR$ be the Hecke monoid away from $\tR$ \cite{Liu2}*{Notation~3.1} (that is, the commutative monoid generated by $\rT_\fq,\rS_\fq,\rS^{-1}_\fq$ with the relation $\rS_\fq\rS_\fq^{-1}=1$ for all primes $\fq\not\in\tR$). Then $\Pi$ induces a homomorphism
\[
\phi^\tR_\Pi\colon\bZ[\dT^\tR]\to\cO_\bE
\]
by its Hecke eigenvalues. For every prime $\lambda$ of $\bE$, we have an attached  Galois representation
\begin{align}\label{eq:galois}
\rho_{\Pi,\lambda}\colon\rG_F=\Gal(F^\ac/F)\to\GL_2(\cO_{\bE_\lambda})
\end{align}
which is unramified outside $\tR\cup\tR_{\lambda}$, where $\tR_{\lambda}$ denotes the subset of  all places of $F$ with the same residue characteristic as $\lambda$. The Galois representation $\rho_{\Pi,\lambda}$ is normalized so that if $\sigma_{\fq}$  denotes an \emph{arithmetic} Frobenius element at $\fq$ for a place $\fq\notin \tR\cup\tR_{\lambda}$, then the characteristic polynomial of $\rho_{\Pi,\lambda}(\sigma_{\fq})$ is given by
\[
X^2-\phi_{\Pi}^{\tR}(\rT_{\fq}) X+ \rN_{F/\QQ}(\fq)\phi_{\Pi}^{\tR}(\rS_{\fq}).
\]
Let $\fm^\tR_{\Pi,\lambda}$ be the kernel of the composite map $\bZ[\dT^\tR]\xrightarrow{\phi_\Pi}\cO_\bE\to\cO_\bE/\lambda$. From now on, we suppose that the following is satisfied.

\begin{assumption}\label{A:assumption-wp}
Let $\ell$ be the underlying rational prime of $\lambda$. Then we assume that
\begin{enumerate}
  \item $\ell$ is coprime to $\tR$, $\disc F$, and the cardinality of $F^\times\backslash\bA_F^{\infty,\times}/(\bA_F^{\infty,\times}\cap K)$;

  \item $\ell\geq g+2$;


  \item the representation $\bar\rho_{\Pi,\lambda}\coloneqq\rho_{\Pi,\lambda}\mod\lambda$ satisfies the condition $(\b{LI}_{\Ind\bar\rho_{\Pi,\lambda}})$ in \cite{Dim05}*{Proposition~0.1};

  \item $\rH^g(\Sh(G,K)_{\bQ^{\ac}},\cO_\bE/\lambda)/\fm^\tR_{\Pi,\lambda}$ has dimension $2^g\dim(\Pi_B^\infty)^K$ over $\cO_\bE/\lambda$, where $\Pi_B$ is the automorphic representation of $G(\bA)$ whose  Jacquet--Langlands transfer to $\GL_2(\bA_F)$ is $\Pi$.

\end{enumerate}
\end{assumption}

\begin{remark}\label{R:assumption-wp}
We have following remarks concerning Assumption \ref{A:assumption-wp}.
\begin{enumerate}

  \item Assumption \ref{A:assumption-wp}(3) implies that $\bar\rho_{\Pi,\lambda}$ is absolutely irreducible.

  \item If $\Pi$ is not dihedral and not isomorphic to a twist by a character of any of its internal conjugates, then Assumption \ref{A:assumption-wp}(3) holds for all but finitely many $\lambda$ by \cite{Dim05}*{Proposition~0.1}. In particular, for such a $\Pi$, the entire Assumption \ref{A:assumption-wp} holds for all but finitely many $\lambda$.

  \item In general, the dimension of $\rH^g(\Sh(G,K)_{\bQ^{\ac}},\cO_\bE/\lambda)/\fm^\tR_{\Pi,\lambda}$ is at least $2^g\dim_E(\Pi_B^\infty)^K$ over $\cO_\bE/\lambda$.
\end{enumerate}
\end{remark}

Let $p$ be a rational prime inert in $F$, coprime to $\tR\cup\{2,\ell\}$. Denote by $\fp$ the unique prime of $F$ above $p$. To ease notation, we put
\[
\phi\coloneqq\phi_{\Pi}^{\tR\cup\{\fp\}}\colon\ZZ[\dT^{\tR\cup\{\fp\}}]\to\cO_{\bE},\qquad
\fm\coloneqq\fm_{\Pi,\lambda}^{\tR\cup\{\fp\}}\subseteq\ZZ[\dT^{\tR\cup\{\fp\}}].
\]
For a $\ZZ[\dT^{\tR\cup\{\fp\}}]$-module $M$, we denote by $M_\fm$ its localization at $\fm$. Write $K=K_p K^p$ where $K_p$ is a hyperspecial maximal subgroup of $G(\bQ_p)$ as $p\not\in\tR$. We have the integral model $\bfSh(G,K^p)$ over $\bZ_p$ defined in Section \ref{S:quaternion-PEL} for the Shimura variety $\Sh(G,K^p)=\Sh(G,K)$. Put $\fB\coloneqq\fB(\emptyset,(g-1)/2)$, the set of periodic semi-meanders attached to $\tS=\emptyset$ with $g$-nodes and $(g-1)/2$-arcs. We note that $k_0$ defined in Subsection~\ref{S:notation} is $\FF_{p^{2g}}$ in the current case. Then Theorem~\ref{T:supersingular-quaternion} asserts that
\[
\bfSh(G,K^p)_{\FF_p}^{\mathrm{ss}}\otimes\bF_{p^{2g}}=\bigcup_{\fa\in\fB}W(\fa),
\]
where each $W(\fa)=W_{\emptyset}(\fa)$ is equipped with a $(g-1)/2$-th iterated $\dP^1$-fibration
\[
\pi_\fa\colon W(\fa)\to\bfSh(G_{\tS_{\max}},K^p)_{\bF_{p^{2g}}}.
\]
Let
\[
\bfSh(G,K^{p})^{\r{sp}}_{\FF_p}\subseteq \bfSh(G,K^p)_{\FF_p}
\]
be the superspecial locus as in Definition \ref{D:superspecial}. By Theorem~\ref{P:superspecial-quaternion}, each $W(\fa)$ for $\fa\in\fB$ contains $\bfSh(G,K^{p})^{\r{sp}}_{\FF_{p^{2g}}}$, and the morphism $\pi_\fa$ induces an isomorphism
\[
\bfSh(G,K^p)^{\r{sp}}_{\bF_{p^{2g}}}\xrightarrow{\sim}\bfSh(G_{\tS_{\max}},K^{p})_{\bF_{p^{2g}}}
\]
which is equivariant under prime-to-$p$ Hecke correspondences, and independent of $\fa$.

Consider the set $\fB\times\bfSh(G_{\tS_{\max}},K^p)(\bF_p^\ac)$, equipped with the diagonal action by $\rG_{\bF_p}$. The Hecke monoid $\dT^{\tR\cup\{\fp\}}$ acts through the second factor. We have a Chow cycle class map
\begin{align}\label{E:AJ0}
\Gamma(\fB\times\bfSh(G_{\tS_{\max}},K^{p})(\bF_p^\ac),\bZ)\to\CH^{(g+1)/2}(\bfSh(G,K^p)_{\FF_p^\ac})
\end{align}
sending a function $f$  on $\fB\times\bfSh(G_{\tS_{\max}},K^p)(\bF_p^\ac)$ to the Chow class of $\sum_{\fa, s} f(\fa,s)\pi_\fa^{-1}(s)$.

\begin{lem}\label{L:equivariant}
The map \eqref{E:AJ0} is equivariant under both $\dT^{\tR\cup\{\fp\}}$ and $\rG_{\bF_p}$.
\end{lem}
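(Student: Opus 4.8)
The plan is to treat the two equivariances separately, reducing each to a compatibility of the fibrations $\pi_{\fa}$ that has, in effect, already been recorded. For the action of $\rG_{\bF_p}$, recall from Corollary~\ref{C:Galois-equivariance} and from the discussion following Theorem~\ref{T:supersingular-quaternion} that every stratum $W(\fa)=W_{\emptyset}(\fa)$ and every morphism $\pi_{\fa}$ is defined over $k_0=\FF_{p^{2g}}$; that an element $\gamma\in\rG_{\bF_p}$ carries $W(\fa)$ onto $W(\gamma\fa)$, the resulting action of $\rG_{\bF_p}$ on $\fB$ factoring through $\Gal(k_0/\FF_p)$; and that the $\FF_p$-structure on $\bfSh(G_{\tS_{\max}},K^p)_{\FF_p^{\ac}}$ is exactly the one transported from the superspecial locus $\bfSh(G,K^p)^{\r{sp}}_{\FF_p^{\ac}}$ through the identification of Theorem~\ref{P:superspecial-quaternion}, compatibly with all the $\pi_{\fa}$. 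Assembling these facts gives the commutation $\gamma\circ\pi_{\fa}=\pi_{\gamma\fa}\circ\gamma$, hence the equality of cycles $\gamma\cdot\pi_{\fa}^{-1}(s)=\pi_{\gamma\fa}^{-1}(\gamma s)$ in $\bfSh(G,K^p)_{\FF_p^{\ac}}$ for every $\fa\in\fB$ and every closed point $s$ of $\bfSh(G_{\tS_{\max}},K^p)$. The $\rG_{\bF_p}$-equivariance of \eqref{E:AJ0} is then the substitution $\fa'=\gamma\fa$, $s'=\gamma s$ in the defining sum: $\gamma\cdot\cl(f)=\sum_{\fa,s}f(\fa,s)\,[\pi_{\gamma\fa}^{-1}(\gamma s)]=\sum_{\fa',s'}(\gamma\cdot f)(\fa',s')\,[\pi_{\fa'}^{-1}(s')]=\cl(\gamma\cdot f)$. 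Equivalently, one observes that $\bigsqcup_{\fa}W(\fa)$ with the maps $\pi_{\fa}$ descends to an $\FF_p$-family of cycles in $\bfSh(G,K^p)$ over the (twisted) $\FF_p$-form of $\fB\times\bfSh(G_{\tS_{\max}},K^p)$, so that \eqref{E:AJ0} is $\rG_{\bF_p}$-equivariant by construction.

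For the action of the Hecke monoid $\dT^{\tR\cup\{\fp\}}$ it suffices, since both sides are $\bZ[\dT^{\tR\cup\{\fp\}}]$-modules, to check compatibility for the monoid generators $\rT_{\fq},\rS_{\fq},\rS_{\fq}^{-1}$ with $\fq\notin\tR\cup\{\fp\}$. Each of these is realized by a prime-to-$p$ Hecke correspondence, which we present through an auxiliary neat level $K'^p\subseteq K^p$ by two finite flat maps $u_1,u_2\colon\bfSh(G,K'^p)_{\FF_p^{\ac}}\to\bfSh(G,K^p)_{\FF_p^{\ac}}$, the operator acting on Chow groups as $(u_1)_*\circ u_2^{*}$ (up to the paper's transpose conventions), while the parallel maps $v_1,v_2\colon\bfSh(G_{\tS_{\max}},K'^p)_{\FF_p^{\ac}}\to\bfSh(G_{\tS_{\max}},K^p)_{\FF_p^{\ac}}$ govern the action on function spaces. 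By the equivariance of the Goren--Oort cycles under prime-to-$p$ Hecke correspondences asserted in Theorems~\ref{T:supersingular-quaternion} and~\ref{P:superspecial-quaternion}, the supersingular strata and the iterated $\dP^1$-fibrations $\pi_{\fa}$ are functorial in the tame level: writing $W_{K^p}(\fa)$ and $\pi_{\fa,K^p}$ to record this level, the maps $u_1,u_2$ restrict to $W_{K'^p}(\fa)\to W_{K^p}(\fa)$, and the resulting squares over $v_1$ and over $v_2$ are Cartesian. It follows that $u_2^{*}[\pi_{\fa,K^p}^{-1}(s)]=\sum_{s'\in v_2^{-1}(s)}[\pi_{\fa,K'^p}^{-1}(s')]$, and that $u_1$ maps each fiber $\pi_{\fa,K'^p}^{-1}(s')$ isomorphically onto $\pi_{\fa,K^p}^{-1}(v_1(s'))$, so $(u_1)_*[\pi_{\fa,K'^p}^{-1}(s')]=[\pi_{\fa,K^p}^{-1}(v_1(s'))]$. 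Applying $(u_1)_*\circ u_2^{*}$ to $\cl(f)$ and regrouping the resulting double sum by the value of $v_1$ recovers $\cl((v_1)_*v_2^{*}f)$, which is by definition $\cl$ of the Hecke translate of $f$. This yields the desired $\dT^{\tR\cup\{\fp\}}$-equivariance.

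The only point that demands genuine care --- and the main obstacle --- is the Cartesianness of the squares relating $W_{K'^p}(\fa)$ to $W_{K^p}(\fa)$ compatibly with $\pi_{\fa}$ over the zero-dimensional bases, equivalently the statement that the $\dP^1$-fibers of $\pi_{\fa}$ are carried to $\dP^1$-fibers by the correspondence. This is the content behind the phrase ``$\pi_{\fa}$ is equivariant under prime-to-$p$ Hecke correspondences'' in the cited theorems, but making it precise means returning to the inductive construction of the Goren--Oort cycles in \cite{TX1}*{Theorem~5.2} and \cite{TX2}*{Section~3.7} and checking its compatibility with shrinking $K^p$. Once this level-functoriality is available, the remainder is the formal push--pull bookkeeping above, together with the evident compatibility of the superspecial $\FF_p$-structure with prime-to-$p$ Hecke correspondences.
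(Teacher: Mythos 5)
Your proposal is correct and follows essentially the same route as the paper, which simply cites the Hecke-equivariance of $\pi_{\fa}$ from Theorem~\ref{P:superspecial-quaternion} and the definition of the $\rG_{\FF_p}$-action on $\bfSh(G_{\tS_{\max}},K^p)(\bF_p^\ac)$; you have merely unpacked the push--pull bookkeeping and the stratum-permutation $\gamma\cdot W(\fa)=W(\gamma\fa)$ that the paper leaves implicit. Your closing remark correctly identifies that the only substantive input is the level-functoriality of the Goren--Oort construction, which is exactly what the cited theorems are being used for.
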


\begin{proof}
The equivariance of $\pi_{\fa}$ under prime-to-$p$ Hecke correspondences follows from Theorem~\ref{P:superspecial-quaternion}. The equivariance under $\rG_{\FF_p}$ follows from the definition of $\rG_{\FF_p}$ on $\bfSh(G_{\tS_{\max}},K^p)(\bF_p^\ac)$.
\end{proof}

\begin{lem}\label{L:vanishing}
Under the notation above, the following statements hold:
\begin{enumerate}
  \item There exists a canonical isomorphism  $\rH^g(\bfSh(G,K^p)_{\bF_p^{\ac}},\cO_{\bE_\lambda})_\fm\to\rH^g(\Sh(G,K)_{\bQ^{\ac}},\cO_{\bE_\lambda})_\fm$ compatible with Galois actions. In particular, we have a canonical isomorphism
      \[
      \rH^1(\bF_{p^h},\rH^g(\bfSh(G,K^p)_{\bF_p^{\ac}},\cO_{\bE}/\lambda((g+1)/{2}))_\fm)
      \cong\rH^1_\unr(\bQ_{p^h},\rH^g(\Sh(G,K)_{\bQ^{\ac}},\cO_{\bE}/\lambda((g+1)/{2}))_\fm)
      \]
      for every integer $h\geq 1$;

  \item $\rH^i(\bfSh(G,K^p)_{\bF_p^\ac},\cO_{\bE_\lambda})_\fm=0$ unless $i=g$;

  \item $\rH^g(\bfSh(G,K^p)_{\bF_p^{\ac}},\cO_{\bE_\lambda})_\fm$ is a finite free $\cO_{\bE_\lambda}$-module.
\end{enumerate}
\end{lem}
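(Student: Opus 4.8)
The plan is to transfer all three assertions to the generic fibre $\Sh(G,K)$ by smooth and proper base change, to deduce the $\fm$-localized statements there from the work of Dimitrov, and to pass from $\cO_\bE/\lambda$ to $\cO_{\bE_\lambda}$ by a routine argument with the long exact cohomology sequence. For the base change, let $\bfSh(G,K^p)^\ast$ denote $\bfSh(G,K^p)$ itself when $B$ is a division algebra, and a smooth proper toroidal compactification of it over $\bZ_p$ in general (transported from the unitary side as in Section~\ref{ss:2}, cf.\ \cite{Lan13}); after localizing at $\fm$ the cohomology of $\bfSh(G,K^p)^\ast$ coincides with that of $\bfSh(G,K^p)$, because the boundary contribution is Eisenstein while $\bar\rho_{\Pi,\lambda}$ is absolutely irreducible (Remark~\ref{R:assumption-wp}(1)). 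Applying smooth and proper base change to $\bfSh(G,K^p)^\ast/\bZ_p$ then yields, for $\Lambda\in\{\cO_\bE/\lambda^n,\cO_{\bE_\lambda}\}$ and all $i$, a canonical isomorphism
\[
\rH^i(\bfSh(G,K^p)_{\bF_p^\ac},\Lambda)_\fm\xra{\sim}\rH^i(\Sh(G,K)_{\bQ_p^\ac},\Lambda)_\fm=\rH^i(\Sh(G,K)_{\bQ^\ac},\Lambda)_\fm,
\]
compatible with the prime-to-$p$ Hecke action (the correspondences extend over $\bZ_p$) and under which the action of $\rG_{\bF_p}$ on the left corresponds to that of $\rG_{\bQ_p}$ on the right; in particular the inertia group $\rI_p$ acts trivially on $\rH^i(\Sh(G,K)_{\bQ^\ac},\Lambda)$. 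The last equality is invariance of \'etale cohomology under extension of algebraically closed base fields.

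Over the generic fibre, Assumption~\ref{A:assumption-wp} and \cite{Dim05} give that $\rH^i(\Sh(G,K)_{\bQ^\ac},\cO_\bE/\lambda)_{\fm^\tR_{\Pi,\lambda}}$ vanishes for $i\neq g$. To replace $\fm^\tR_{\Pi,\lambda}$ by $\fm=\fm^{\tR\cup\{\fp\}}_{\Pi,\lambda}$ I would note that any maximal ideal of the spherical Hecke algebra lying above $\fm$ has the same residual representation $\bar\rho_{\Pi,\lambda}$ — its Frobenius eigenvalues agree with those of $\phi$ at a set of primes of density one, and a semisimple residual representation is determined by these — so the hypothesis $(\b{LI}_{\Ind\bar\rho_{\Pi,\lambda}})$ is unchanged and \cite{Dim05} applies summand by summand; hence $\rH^i(\Sh(G,K)_{\bQ^\ac},\cO_\bE/\lambda)_\fm=0$ for $i\neq g$, and by the base change isomorphism the same holds for $\rH^i(\bfSh(G,K^p)_{\bF_p^\ac},\cO_\bE/\lambda)_\fm$.

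Statements (2) and (3) then follow from the long exact sequence attached to $0\to\cO_{\bE_\lambda}\xrightarrow{\lambda}\cO_{\bE_\lambda}\to\cO_\bE/\lambda\to0$. For $i\neq g$ multiplication by $\lambda$ on $\rH^i(\bfSh(G,K^p)_{\bF_p^\ac},\cO_{\bE_\lambda})_\fm$ is bijective (when $i\notin\{g,g+1\}$) or surjective (when $i=g+1$), and this module is finitely generated over the discrete valuation ring $\cO_{\bE_\lambda}$ — being a direct summand of $\rH^i(\bfSh(G,K^p)^\ast_{\bF_p^\ac},\cO_{\bE_\lambda})$, which is finitely generated by standard finiteness for proper schemes — so it vanishes, giving (2). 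For $i=g$ the vanishing of $\rH^{g-1}(\bfSh(G,K^p)_{\bF_p^\ac},\cO_\bE/\lambda)_\fm$ forces $\lambda$ to be injective on $\rH^g(\bfSh(G,K^p)_{\bF_p^\ac},\cO_{\bE_\lambda})_\fm$, so this finitely generated torsion-free $\cO_{\bE_\lambda}$-module is free, giving (3).

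Finally, for (1): the first isomorphism is the degree-$g$ instance of the base change isomorphism localized at $\fm$, which is $\rG_{\bF_p}$-equivariant by the above. For the second, put $N\coloneqq\rH^g(\Sh(G,K)_{\bQ^\ac},\cO_\bE/\lambda((g+1)/2))_\fm$; since $\rI_p$ acts trivially on $N$ one has $N^{\rI_{\bQ_{p^h}}}=N$, whence $\rH^1_\unr(\bQ_{p^h},N)=\rH^1(\Gal(\bQ_{p^h}^\ur/\bQ_{p^h}),N)$, and under $\Gal(\bQ_{p^h}^\ur/\bQ_{p^h})\cong\Gal(\bF_p^\ac/\bF_{p^h})$ this is $\rH^1(\bF_{p^h},N)$; composing with the degree-$g$ base change isomorphism yields the asserted identification. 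The main obstacle I expect is the treatment of the non-proper case — producing a genuinely smooth proper integral model over $\bZ_p$ and verifying that $\fm$-localization annihilates the boundary cohomology; the remaining steps are either cited (Dimitrov, smooth and proper base change) or formal.
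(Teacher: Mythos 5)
Your proposal is correct in substance, but it takes a genuinely different route from the paper at the crucial comparison step. The paper does not compactify at all: it invokes Lan--Stroh \cite{LS16}*{Corollary~4.6}, which gives the isomorphisms $\rH^i(\bfSh(G,K^p)_{\bF_p^\ac},\cO_{\bE_\lambda})\cong\rH^i(\Sh(G,K)_{\bQ^{\ac}},\cO_{\bE_\lambda})$ compatibly with Hecke and Galois actions \emph{whether or not} $\bfSh(G,K^p)$ is proper over $\bZ_{(p)}$, and then quotes Dimitrov \cite{Dim05}*{Theorem~0.3} together with Assumption~\ref{A:assumption-wp}(3), which already yields concentration in degree $g$ and freeness over $\cO_{\bE_\lambda}$; so no boundary analysis and no mod-$\lambda$ bootstrapping appear in the paper's proof. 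Your route --- a smooth proper toroidal compactification over $\bZ_p$, killing the boundary by non-Eisenstein localization, then smooth proper base change --- is the classical way to get such a comparison and can be completed, but the burden you flag is real and is exactly what the citation of \cite{LS16} is designed to avoid: one must arrange that the prime-to-$p$ Hecke correspondences act compatibly on the excision/localization sequences for a chosen compactification (they do not extend naively to a fixed cone decomposition), and one must know that the boundary cohomology of the \emph{special} fibre is Eisenstein, which you would transfer from the generic fibre by proper base change for the boundary strata; alternatively, since the boundary is a relative normal crossings divisor, $R^qj_*\Lambda$ is computed by Abhyankar's lemma compatibly with base change, and the comparison for the open part then holds even before localizing. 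Your treatment of $\fm$ versus $\fm^{\tR}_{\Pi,\lambda}$ (every maximal ideal of the larger spherical Hecke algebra above $\fm$ has residual representation $\bar\rho_{\Pi,\lambda}$ by Chebotarev) matches Remark~\ref{R:level-raising-prime}(3), and your long-exact-sequence deduction of (2) and (3) from the mod-$\lambda$ vanishing is fine, though redundant given \cite{Dim05}*{Theorem~0.3}. One small inaccuracy: $\rH^i(\bfSh(G,K^p)_{\bF_p^\ac},\cO_{\bE_\lambda})$ is not a direct summand of the cohomology of the compactification before localizing at $\fm$; finiteness should instead be quoted from the general finiteness theorems for \'etale cohomology, or obtained after localization from the identification with the compactified cohomology.
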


\begin{proof}
By \cite{LS16}*{Corollary~4.6}, no matter whether the Shimura variety $\bfSh(G,K^p)$ is proper over $\ZZ_{(p)}$, the canonical maps
\[
\rH^{i}(\bfSh(G,K^p)_{\bF_p^\ac},\cO_{\bE_\lambda})\xra{\sim} \rH^i(\bfSh(G,K^p)_{\QQ_p^{\ac}},\cO_{\bE_\lambda})\xleftarrow{\sim}\rH^i(\Sh(G,K^p)_{\QQ^{\ac}},\cO_{\bE_\lambda})
\]
for all $i\geq 0$
are isomorphisms compatible with Hecke and Galois actions. One gets thus Statement (1) by localizing the Hecke action at $\fm$. Statements (2) and (3) then follow from Assumption \ref{A:assumption-wp}(3) and \cite{Dim05}*{Theorem~0.3}.
\end{proof}

To ease notation, put $\rG'\coloneqq\Gal(\bF_p^\ac/\bF_{p^{2g}})$. Lemma \ref{L:equivariant} induces the following map
\begin{align}\label{E:AJ1}
\Gamma(\fB\times\bfSh(G_{\tS_{\max}},K^p)(\bF_p^\ac),\bZ)^{\rG'}
\to\CH^{(g+1)/2}(\bfSh(G,K^p)_{\bF_{p^{2g}}})
\end{align}
which is equivariant under both $\dT^{\tR\cup\{\fp\}}$ and $\Gal(\bF_{p^{2g}}/\bF_p)$. On the other hand, one has a cycle class map
\begin{align*}
\CH^{(g+1)/2}(\bfSh(G,K^p)_{\FF_{p^{2g}}})
\to\rH^{g+1}(\bfSh(G,K^p)_{\FF_{p^{2g}}},\cO_{\bE_\lambda}((g+1)/{2})).
\end{align*}
However, by the Hochschild--Serre spectral sequence and Lemma \ref{L:vanishing}(1), we have a canonical isomorphism
\begin{align*}
\rH^{g+1}(\bfSh(G,K^p)_{\bF_{p^{2g}}},\cO_{\bE_\lambda}((g+1)/{2}))_\fm
\cong\rH^{1}(\FF_{p^{2g}},\rH^{g}(\bfSh(G,K^p)_{\FF^{\ac}_p},\cO_{\bE_\lambda}((g+1)/{2}))_\fm).
\end{align*}
Therefore, composing with (the localization of) \eqref{E:AJ1} and modulo $\lambda$, we obtain a morphism
\begin{align}\label{E:AJ2}
\Phi_\fm\colon\Gamma(\fB\times\bfSh(G_{\tS_{\max}},K^p)(\bF_p^\ac),\cO_{\bE}/\lambda)_\fm^{\rG'}
\to\rH^1(\bF_{p^{2g}},\rH^g(\bfSh(G,K^p)_{\FF^{\ac}_p},\cO_\bE/\lambda((g+1)/2))_\fm),
\end{align}
called the \emph{unramified level raising map at $\fm$}. It is equivariant under the action of $\Gal(\bF_{p^{2g}}/\bF_p)$.

\begin{definition}\label{D:level-raising-prime}
We say that a rational prime $p$ is a \emph{$\lambda$-level raising prime} (with respect to $\Pi,B,K,\tR$) if
\begin{description}
  \item[(L1)] $p$ is inert in $F$, and coprime to  $\tR\cup\{2,\ell\}$;

%

  \item[(L2)] $\ell\nmid\prod_{i=1}^g(p^{2gi}-1)$;

  \item[(L3)] $\phi_{\Pi}^\tR(\rT_\fp)^2\equiv (p^{g}+1)^{2}\mod\lambda$ and $\phi_{\Pi}^\tR(\rS_\fp)\equiv 1\mod\lambda$.
\end{description}
\end{definition}

\begin{remark}\label{R:level-raising-prime}
We have the following remarks concerning level raising primes.
\begin{enumerate}
  \item By a similar argument of \cite{Liu2}*{Lemma~4.11}, one can show there are infinitely many $\lambda$-level raising primes with positive density, as long as there exist rational primes inert in $F$ and $\lambda$ satisfies Assumption \ref{A:assumption-wp}.

  \item By the Eichler--Shimura congruence relation, Definition~\ref{D:level-raising-prime}(L3) is equivalent to saying that $\bar\rho_{\Pi,\lambda}(\sigma_{\fp})$ is conjugate to $\pm\big(\begin{smallmatrix}1 &0\\ 0& p^g\end{smallmatrix}\big)$.

  \item By the Eichler--Shimura congruence relation and the Chebotarev's density theorem, we know that the canonical map
      \[
      \rH^g(\Sh(G,K)_{\bQ^{\ac}},\cO_\bE/\lambda)/\fm
      \to\rH^g(\Sh(G,K)_{\bQ^{\ac}},\cO_\bE/\lambda)/\fm^\tR_{\Pi,\lambda}
      \]
      is an isomorphism of $\cO_\bE/\lambda[\rG_\bQ]$-modules.

\end{enumerate}
\end{remark}

\begin{theorem}[Arithmetic level raising]\label{T:level_raising}
Let $\lambda$ be a prime of $\cO_E$ such that Assumption \ref{A:assumption-wp} is satisfied. Let $p$ be a $\lambda$-level raising prime. Then $\rG'$ acts trivially on $\Gamma(\fB\times\bfSh(G_{\tS_{\max}},K^p)(\bF_p^\ac),\cO_{\bE}/\lambda)_\fm$; and the induced map
\begin{align}\label{E:AJ4}
\Gamma(\fB\times\bfSh(G_{\tS_{\max}},K^p)(\bF_p^\ac),\cO_{\bE}/\lambda)/\fm
\to\rH^1(\bF_{p^{2g}},\rH^g(\bfSh(G,K^p)_{\FF^{\ac}_p},\cO_\bE/\lambda((g+1)/2))/\fm)
\end{align}
is surjective.
\end{theorem}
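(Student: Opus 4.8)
The plan is to follow Ribet's original strategy, adapted to the Hilbert setting, after reducing the higher-dimensional statement to a statement on a quaternionic Shimura curve. The starting point is the non-degeneracy of the intersection matrix of the Goren--Oort cycles $W(\fa)$ proved in \cite{TX2}: this should allow us to replace $\bfSh(G,K^p)$ by a quaternionic Shimura variety $\Sh(B^{(1)})$ attached to a quaternion algebra $B^{(1)}$ over $F$ that is indefinite at exactly one archimedean place (so that $\Sh(B^{(1)})$ is a curve), with the cohomology $\rH^g(\bfSh(G,K^p)_{\FF_p^\ac},\cO_\bE/\lambda)_\fm$ matched, up to a twist and the factor $\binom{g}{(g-1)/2}$, with $\rH^1$ of that curve, and $\bfSh(G_{\tS_{\max}},K^p)$ matched with the relevant Shimura set. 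The point of this reduction is that the superspecial locus of $\bfSh(G,K^p)$ maps isomorphically (Theorem~\ref{P:superspecial-quaternion}) onto $\bfSh(G_{\tS_{\max}},K^p)$, and that under the reduction this becomes exactly the supersingular locus of a Shimura curve with its Deuring--Serre description. One must track the Galois action carefully here, which is where the hypotheses $p$ inert and Definition~\ref{D:level-raising-prime}(L2), (L3) enter.

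Next I would establish the triviality of the $\rG'=\Gal(\FF_p^\ac/\FF_{p^{2g}})$-action on $\Gamma(\fB\times\bfSh(G_{\tS_{\max}},K^p)(\FF_p^\ac),\cO_\bE/\lambda)_\fm$. This should come from the explicit description of the Frobenius action in Theorem~\ref{P:superspecial-quaternion}(2): $\sigma_p^2$ acts by the central idele $\underline p^{-1}$, and since $\phi(\rS_\fp)\equiv 1\pmod\lambda$ by (L3), this central action is trivial modulo $\fm$; the action on the combinatorial set $\fB$ is through $\Gal(\FF_{p^{2g}}/\FF_p)$ and hence trivial on $\rG'$. Combined with (L2), which guarantees that the relevant cohomology of $\dP^1$-bundles and Shimura sets has no extra Euler-factor obstruction, one gets that the source of \eqref{E:AJ4} is just $\Gamma(\fB\times\bfSh(G_{\tS_{\max}},K^p)(\FF_p^\ac),\cO_\bE/\lambda)/\fm$ as asserted.

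The heart of the argument is then the surjectivity of \eqref{E:AJ4} on the Shimura curve. Here I would run the classical Ribet argument: the Abel--Jacobi/cycle-class map from the free module on supersingular points lands in $\rH^1(\FF_{p^2},\rH^1(\text{curve})(1))_\fm$, and its cokernel is controlled by the quotient of $\rH^1$ of the curve by the image of the cycle classes, which by Poincar\'e duality is dual to the kernel of the map from $\rH^1$ into functions on supersingular points, i.e. to the ``new at $\fp$'' failure of injectivity. The surjectivity is then equivalent to an instance of \emph{Ihara's lemma} for the quaternionic Shimura curve: the map $\rH^1(\Sh(B^{(1)})_{\FF_p^\ac})_\fm \to \Gamma(\text{ss points})^{\oplus 2}$ built from the two degeneracy maps has the property that its cokernel has trivial $\fm$-localization. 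This is where Assumption~\ref{A:assumption-wp}(3), the large-image condition $(\mathbf{LI})$ on $\bar\rho_{\Pi,\lambda}$, is essential: it rules out the congruence-module obstructions and makes Ihara's lemma applicable (as in \cite{DT94} and subsequent work). The hypothesis (L3) that $\phi(\rT_\fp)^2\equiv(p^g+1)^2$ is precisely the level-raising congruence making the target of \eqref{E:AJ4} nonzero and of the expected rank, and Assumption~\ref{A:assumption-wp}(4) pins down the multiplicity.

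The main obstacle I expect is twofold. First, making the reduction to the Shimura curve genuinely functorial and Hecke- and Galois-equivariant, with the correct identification of levels, multiplicities, and the twist by $((g+1)/2)$ versus $(1)$; the combinatorial bookkeeping with the semi-meanders $\fB$ and the iterated $\dP^1$-fibrations, and the use of the intersection matrix of \cite{TX2} to show the cycle classes $\pi_\fa^{-1}(s)$ span the right subspace, is delicate. Second, and more seriously, establishing the precise form of Ihara's lemma needed here: over a general totally real $F$ with $B^{(1)}$ a division algebra, one does not have $q$-expansions, so the proof of Ihara's lemma must proceed via the congruence subgroup property / strong approximation for the relevant inner form and the irreducibility of $\bar\rho_{\Pi,\lambda}|_{\rG_{F(\zeta_\ell)}}$ supplied by $(\mathbf{LI})$. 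Verifying that (L1), (L2) together with Assumption~\ref{A:assumption-wp} suffice to invoke this in our exact setting — including the subtlety that $p$ is inert so the local component at $\fp$ is an unramified principal series at the level-raising point — is where the real work lies.
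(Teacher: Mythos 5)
Your proposal follows essentially the same route as the paper: the $\rG'$-triviality from Theorem \ref{P:superspecial-quaternion}(2) together with (L3), the reduction to the quaternionic Shimura curves $\bfSh(G_{\emptyset_\fa},K^p)$ via the Goren--Oort cycles with injectivity of the Gysin maps supplied by the non-degeneracy mod $\lambda$ of the intersection matrix of \cite{TX2} (this is where (L2) enters), and then Ribet's argument on each curve using the dual boundary map, the weight/Frobenius matrix $\big(\begin{smallmatrix}1&\Fr_\fp\\ \Fr_\fp\rS_\fp^{-1}&1\end{smallmatrix}\big)$, and Ihara's lemma for the non-Eisenstein $\fm$ (quoted from \cite{Che13+}), with the final surjectivity of the Gysin step obtained by the eigenvalue decomposition of $\sigma_\fp^2$ on $\rH^g$ and a dimension count. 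The plan is correct in outline and identifies the right role for each hypothesis.
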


\subsection{Proof of arithmetic level raising}
\label{S:proof}

This section is devoted to the proof of Theorem~\ref{T:level_raising}. We assume that we are not in the case where $F=\bQ$ and $B$ is the matrix algebra, since this is already known by Ribet.


For $\fa\in \fB$, denote $\tau(\fa)\in\Sigma_{\infty}$ the end point of the unique semi-line in $\fa$. By the construction in Section \ref{S:application}, for each $\fa\in\fB$, the stratum $W(\fa)$ fits into the following commutative diagram
\begin{align}\label{eq:reduction}
\xymatrix{
W(\fa)\ar@{^(->}[r]\ar[d] &Z_{\emptyset}(\fa)\ar@{^(->}[r] \ar[d]^{\pi_{\fa}} & \bfSh(G,K^p)_{\FF_{p^{2g}}}\\
\bfSh(G_{\emptyset_{\fa}}, K^p)_{\FF_{p^{2g}}, \tau(\fa)}\ar[d]^{\cong} \ar@{^(->}[r] &\bfSh(G_{\emptyset_{\fa}}, K^p)_{\FF_{p^{2g}}}\\
\bfSh(G_{\tS_{\max}},K^p)_{\FF_{p^{2g}}},
}
\end{align}
where the square is Cartesian. Note that $\bfSh(G_{\emptyset_{\fa}},K^p)$ is a \emph{proper} Shimura curve over $\cO_{F,\fp}$ (with $F$ regarded as a subfield of $\bQ^\ac$ determined by $\fa$), and $\bfSh(G_{\emptyset_{\fa}},K^p)_{\FF_{p^{2g}},\tau(\fa)}\cong\bfSh(G_{\tS_{\max}},K^p)_{\FF_{p^{2g}}}$ is exactly its ``supersingular'' locus. Similarly to \eqref{E:AJ1}, we have a Chow class map
\[
\Gamma(\bfSh(G_{\tS_{\max}},K^p)(\FF_p^{\ac}),\ZZ)\to\CH^1(\bfSh(G_{\emptyset_{\fa}},K^p)_{\FF_p^{\ac}}),
\]
which induces an unramified level raising map for the Shimura curve $\bfSh(G_{\emptyset_{\fa}}, K^p)$:
\begin{align}\label{E:curve}
\Phi_\fm(\fa)\colon
\Gamma(\bfSh(G_{\tS_{\max}}, K^p)(\FF_p^{\ac}), \cO_{\bE}/\lambda)_\fm^{\rG'}\to
\rH^1(\FF_{p^{2g}},\rH^1(\bfSh(G_{\emptyset_{\fa}},K^p)_{\FF_{p}^{\ac}},\cO_{\bE}/\lambda(1))_\fm).
\end{align}
The following is an analogue of Theorem~\ref{T:level_raising} for Shimura curves.

\begin{proposition}\label{P:level_raising_curve}
Under the hypothesis of Theorem~\ref{T:level_raising}, the map $\Phi_\fm(\fa)$ is surjective.
\end{proposition}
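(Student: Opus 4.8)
The plan is to reduce Proposition~\ref{P:level_raising_curve} to the classical arithmetic level raising argument of Ribet (as in \cite{Rib90}) for the Shimura curve $\bfSh(G_{\emptyset_\fa},K^p)$, which is a proper curve over $\cO_{F,\fp}$ whose special fiber at $\fp$ has supersingular locus identified with $\bfSh(G_{\tS_{\max}},K^p)_{\FF_{p^{2g}}}$. The key point is that the Eichler--Shimura congruence relation for this curve, combined with the level raising condition (L3) (via Remark~\ref{R:level-raising-prime}(2), which says $\bar\rho_{\Pi,\lambda}(\sigma_\fp)$ is conjugate to $\pm\big(\begin{smallmatrix}1&0\\0&p^g\end{smallmatrix}\big)$), forces the monodromy filtration on $\rH^1(\bfSh(G_{\emptyset_\fa},K^p)_{\FF_p^\ac},\cO_\bE/\lambda)_\fm$ to be nontrivial, so the target of $\Phi_\fm(\fa)$ is the graded piece controlled by the character group (equivalently, the free quotient of the divisor group of the supersingular points).

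First I would recall the weight-monodromy / Picard--Lefschetz picture for the semistable reduction of $\bfSh(G_{\emptyset_\fa},K^p)$ at $\fp$: after base change to $\FF_{p^{2g}}$ the curve has ordinary double points along its supersingular locus, and the cohomology $\rH^1$ of the geometric special fiber sits in an exact sequence relating it to the cohomology of the normalization of components and to the free abelian group on the singular (supersingular) points. Localizing at $\fm$ and using Assumption~\ref{A:assumption-wp}(3) to guarantee $\bar\rho_{\Pi,\lambda}$ is absolutely irreducible (hence the relevant ``Eisenstein'' contributions vanish and $\rH^0$, $\rH^2$ localized at $\fm$ are zero), the inertia-invariants and inertia-coinvariants of $\rH^1_\fm$ are identified, and the quotient $\rH^1(\FF_{p^{2g}},\rH^1_\fm(1))$ becomes the space one wants to hit with the cycle class map. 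Concretely, $\Phi_\fm(\fa)$ becomes identified with the natural map from functions on supersingular points mod $\fm$ to the character group mod $\fm$ mod the image of the ``degree'' relations; surjectivity of this map is precisely Ihara's lemma in this setting, which controls the cokernel by congruence modules. Here I would invoke Ihara's lemma for $\bfSh(G_{\emptyset_\fa},K^p)$ — in the form used by Ribet and its generalizations to Shimura curves over totally real fields (for instance Diamond--Taylor type statements) — together with condition (L2) ($\ell\nmid\prod_i(p^{2gi}-1)$) which ensures that the relevant local terms at $\fp$ are units and no extra congruences interfere, and Assumption~\ref{A:assumption-wp}(1),(2) which remove small-prime and Eisenstein obstructions.

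The precise chain I would carry out: (i) set up the semistable model and its dual graph, identifying the character group of the toric part of the special fiber of the Jacobian with (a quotient of) $\Gamma(\bfSh(G_{\tS_{\max}},K^p)(\FF_p^\ac),\bZ)$, compatibly with the Hecke action of $\dT^{\tR\cup\{\fp\}}$; (ii) write down the Hochschild--Serre / monodromy sequence computing $\rH^1(\FF_{p^{2g}},\rH^1_\fm(1))$ and identify it, after localization at $\fm$ and reduction mod $\lambda$, with the cokernel of the map from the character group to its $\fm$-quotient twisted appropriately; (iii) show $\Phi_\fm(\fa)$ is, up to the identifications in (i)--(ii), literally the mod-$\lambda$ reduction of the map ``function on supersingular points $\mapsto$ associated vanishing cycle class'', and that its surjectivity is equivalent to the statement that the maximal ideal $\fm$ is not in the support of the cokernel of the degree map on the character group; (iv) invoke Ihara's lemma plus condition (L3) (so that the relevant Hecke operator $\rT_\fp^2-(p^g+1)^2\rS_\fp$ lies in $\fm$, making the level-raising deformation nontrivial) to conclude.

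The main obstacle will be step (iii)--(iv): making the identification of $\Phi_\fm(\fa)$ with the classical vanishing-cycles map completely precise, and then citing or adapting a version of Ihara's lemma that applies to the quaternionic Shimura curve $\bfSh(G_{\emptyset_\fa},K^p)$ over the (possibly nontrivial) totally real base $F$ with the given tame level $K^p$ — the literature covers this for $F=\bQ$ (Ribet) and for Shimura curves over totally real fields under mild hypotheses, but one must check that Assumption~\ref{A:assumption-wp}(3), i.e. the condition $(\b{LI}_{\Ind\bar\rho_{\Pi,\lambda}})$, is strong enough to supply the needed irreducibility/large-image input for Ihara's lemma, and that $\ell\geq g+2$ together with (L1)--(L2) rules out the exceptional cases. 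I expect the rest (the monodromy/Hochschild--Serre bookkeeping, the freeness statements from Lemma~\ref{L:vanishing}) to be essentially formal given the results already assembled in the excerpt.
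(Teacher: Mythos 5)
Your overall strategy (Ribet-style level raising via Ihara's lemma plus condition (L3)) is the same as the paper's, but there is a genuine gap in the geometric setup. The curve $X=\bfSh(G_{\emptyset_{\fa}},K^p)$ carries hyperspecial level at $p$ (the place $\fp$ does not lie in $\emptyset_{\fa}$), so it is a smooth proper scheme over $\cO_{F,\fp}$: it has \emph{good} reduction at $\fp$, its special fiber is a smooth curve with the supersingular points sitting as a finite reduced subscheme, the monodromy on $\rH^1$ of the generic fiber is trivial, and the Jacobian of the special fiber has no toric part. Consequently your steps (i)--(iii) — the ``semistable reduction with ordinary double points along the supersingular locus'', the Picard--Lefschetz/vanishing-cycle picture, and the identification of the target with a cokernel of a character-group map — do not apply to this curve; as stated they are vacuous, and the claimed identification of $\Phi_\fm(\fa)$ with a ``vanishing cycles'' map has no content. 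Moreover Ihara's lemma in this context (Lemma \ref{L:Ihara-Lemma}, Cheng's version) is a statement about the two degeneracy maps $\pi_1,\pi_2$ from the \emph{Iwahori-level} curve, so without introducing that curve your invocation of Ihara's lemma in step (iv) has nothing to act on.

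The missing idea, which is the engine of the paper's proof, is the auxiliary curve $X_0(p)=\bfSh(G_{\emptyset_{\fa}},K^p\Iw_p)$: by Carayol it has a semistable model whose special fiber is two copies of $X_{\FF_{p^g}}$ crossing transversally exactly at the supersingular points. The paper first dualizes (Poincar\'e duality plus duality of Galois cohomology over finite fields), reducing surjectivity of $\Phi_\fm(\fa)$ to injectivity of the boundary map $\Phi^*_\fm(\fa)\colon \rH^1(X_{\FF_p^\ac},k_\lambda)_\fm^{\rG'}\to\rH^1(\FF_{p^{2g}},\rH^0(X^{\r{ss}}_{\FF_p^\ac},k_\lambda)_\fm)$ coming from the excision sequence of the supersingular points. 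It then identifies $\rH^0(X^{\r{ss}})_\fm$ with $\ker(i_1^*,i_2^*)$ inside $\rH^1(X_0(p))_\fm$ via the normalization sequence, computes the composite $(i_1^*,i_2^*)\circ(\pi_1^*+\pi_2^*)$ as the matrix $\big(\begin{smallmatrix}1& \Fr_{\fp} \\ \Fr_{\fp}\rS_{\fp}^{-1} & 1\end{smallmatrix}\big)$, and uses (L3) (together with Assumption \ref{A:assumption-wp}(1)) to identify the kernel with the $\Fr_\fp^2=1$ eigenspace of $\rH^1(X)_\fm$; Ihara's lemma (injectivity of $\pi_1^*+\pi_2^*$) then yields the required injection, after two further checks: that $\Fr_\fp^2$ acts trivially on $\rH^0(X^{\r{ss}})_\fm$, and that the resulting injection really is $\Phi^*_\fm(\fa)$ (an explicit diagram chase through $\rH^1_c$ of the ordinary locus). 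So while you named the right external inputs, the proposal as written would stall at your own flagged obstacle (iii)--(iv): you need the Iwahori curve and its degeneracy maps, not a monodromy filtration on the good-reduction curve, to make the identification and to have an Ihara-type statement to apply.
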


To prove this Proposition, we need some preparation. Let $\Iw_p\subseteq G_{\emptyset_{\fa}}(\QQ_p)\cong \GL_2(F_{\fp})$  be the standard Iwahoric subgroup, and $\Sh(G_{\emptyset_{\fa}}, K^p\mathrm{Iw}_p)$ be the Shimura curve attached to $G_{\emptyset_{\fa}}$ of level $K^p\Iw_p$.
By  \cite{Ca86}, $\Sh(G_{\emptyset_{\fa}}, K^p\mathrm{Iw}_p)$ admits an integral model $\bfSh(G_{\emptyset_{\fa}}, K^p\Iw_p)$ over $\cO_{F,\fp}$ with semi-stable reduction. The special fiber $\bfSh(G_{\emptyset_{\fa},K^p\Iw_p})_{\FF_{p^g}}$ consists of two copies of $\bfSh(G_{\emptyset_{\fa}}, K^p\Iw_p)_{\FF_{p^g}}$ cutting transversally at supersingular points. There are two natural degeneracy maps
\[
\pi_1,\pi_2\colon\bfSh(G_{\emptyset_{\fa}},K^p\Iw_p)\to \bfSh(G_{\emptyset_{\fa}},K^p).
\]
Recall the following generalization of Ihara's Lemma to Shimura curves over totally real fields.

\begin{lem}[\cite{Che13+}*{Theorem~1.2}]\label{L:Ihara-Lemma}
Let $\fm$ be a non-Eisenstein maximal ideal of the Hecke algebra $\ZZ[\dT^{\tR\cup\{\fp\}}]$. Then the canonical map
\[
\pi_1^*+\pi^*_2\colon\rH^1(\bfSh(G_{\emptyset_{\fa}},K^p)_{\bQ^{\ac}},\cO_{\bE}/\lambda)_\fm^{\oplus 2}
\to\rH^1(\bfSh(G_{\emptyset_{\fa}}, K^p\Iw_p)_{\bQ^\ac},\cO_{\bE}/\lambda)_\fm
\]
is injective.
\end{lem}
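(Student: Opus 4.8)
Since Lemma~\ref{L:Ihara-Lemma} is quoted verbatim as \cite{Che13+}*{Theorem~1.2} --- the analogue for quaternionic Shimura curves over a totally real field of the Ihara lemma established by Diamond--Taylor over $\bQ$ --- the plan is to invoke that result, but let me outline the argument that lies behind it. First I would replace the \'etale cohomology of $\bfSh(G_{\emptyset_{\fa}},K^p)_{\bQ^{\ac}}$ and of $\bfSh(G_{\emptyset_{\fa}},K^p\Iw_p)_{\bQ^{\ac}}$ by the Betti cohomology of the corresponding complex analytic spaces, which is harmless because the two degeneracy maps $\pi_1,\pi_2$, the Hecke operators in $\ZZ[\dT^{\tR\cup\{\fp\}}]$ (which modify the level only at $\fp$), and the formation of the $\fm$-localization are all compatible with the comparison isomorphism. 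Since we have excluded the case $F=\bQ$ with $B$ the matrix algebra, the quaternion algebra $B_{\emptyset_{\fa}}$ is a division algebra, so both curves are proper and we are dealing with cocompact arithmetic Fuchsian groups attached to $B_{\emptyset_{\fa}}$; as $K^p$ is neat, these groups are moreover torsion-free.

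The second step is to realize the triple $\bigl(\bfSh(G_{\emptyset_{\fa}},K^p),\ \bfSh(G_{\emptyset_{\fa}},K^p\Iw_p),\ \pi_1,\pi_2\bigr)$ through the action of $G_{\emptyset_{\fa}}(\bQ_p)\cong\GL_2(F_{\fp})$ on its Bruhat--Tits tree $\mathcal{T}_{\fp}$: the hyperspecial level corresponds to the vertices and the Iwahori level to the oriented edges, and via strong approximation the relevant $S$-arithmetic subgroup $\Gamma$ of $B_{\emptyset_{\fa}}^{\times}$, with $S$ the set of archimedean places together with $\fp$, acts on $\fH\times\mathcal{T}_{\fp}$ with compact quotient. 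The resulting Bass--Serre graph-of-spaces structure yields a Mayer--Vietoris long exact sequence which identifies, after localizing at $\fm$, the kernel of $\pi_1^*+\pi_2^*$ with a subquotient of the $\fm$-part of the cohomology of $\Gamma$ with $\cO_{\bE}/\lambda$-coefficients; the boundary terms involving $\rH^0$ of the curves are killed by $\fm$-localization, since $\rH^0$ is Eisenstein (Hecke acts on it through $\rT_{\fq}\mapsto\rN(\fq)+1$) and $\fm$ is non-Eisenstein.

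The remaining and genuinely hard step --- the one I expect to be the main obstacle --- is to show that the $\fm$-part of $\rH^1(\Gamma,\cO_{\bE}/\lambda)$ carries no class mapping nontrivially into $\rH^1(\bfSh(G_{\emptyset_{\fa}},K^p))^{\oplus 2}_{\fm}$. Over $\bC$ this is automatic, because the Satake parameters $\alpha_{\fp},\beta_{\fp}$ at $\fp$ of any cuspidal contribution are distinct ($g$ is odd, so $\rN(\fp)=p^g$ is not a square and $\rT_{\fp}=\alpha_{\fp}+\beta_{\fp}$ cannot be $2\alpha_{\fp}$ with $|\alpha_{\fp}|=\rN(\fp)^{1/2}$); but modulo $\lambda$ the two parameters can collide, and ruling this out requires global input. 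Following the Diamond--Taylor method, one argues that a nonzero such class would produce a system of Hecke eigenvalues modulo $\lambda$ for which the smooth $\GL_2(F_{\fp})$-subrepresentation it generates inside the mod-$\lambda$ cohomology fails to contain the Steinberg constituent; combining the classification of smooth mod-$\ell$ representations of $\GL_2$ of a $p$-adic field (Barthel--Livn\'e, Vign\'eras) with the congruence subgroup property for $\SL_2(\cO_F[1/\fp])$ --- valid because the set $S$ has cardinality $[F:\bQ]+1\ge 2$ --- one concludes that the residual Galois representation attached to $\fm$ must be reducible, i.e.\ that $\fm$ is Eisenstein, a contradiction. The bookkeeping needed to control the torsion cohomology as a $\GL_2(F_{\fp})$-module and to handle the $S$-arithmetic groups over a general totally real field rather than over $\bQ$ is precisely the technical content of \cite{Che13+}, which I would cite rather than reproduce.
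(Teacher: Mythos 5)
The paper itself offers no proof of this lemma: it is imported wholesale as \cite{Che13+}*{Theorem~1.2}, so your decision to invoke that result and not reproduce its argument is exactly what the paper does. (Your background sketch is broadly the Ribet/Diamond--Taylor strategy underlying the citation, though note that the relevant $S$-arithmetic group is the unit group of the division algebra $B_{\emptyset_{\fa}}$ rather than $\SL_2(\cO_F[1/\fp])$, and the cited work follows the representation-theoretic route of Diamond--Taylor precisely because the congruence-subgroup shortcut is not available for such groups; since you ultimately defer to the citation, this does not affect the correctness of your proposal.)
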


\begin{proof}[Proof of Proposition~\ref{P:level_raising_curve}]
To simplify notation, let us put $X\coloneqq\bfSh(G_{\emptyset_{\fa}},K^p)$ viewed as a proper smooth scheme over $\cO_{F,\fp}$, the ``supersingular'' locus
\[
X^{\mathrm{ss}}_{\FF_{p^{2g}}}\coloneqq\bfSh(G_{\emptyset_{\fa}},K^p)_{\FF_{p^{2g}},\tau_{\fa}}\cong\bfSh(G_{\tS_{\max}},K^p)_{\FF_{p^{2g}}},
\]
and $X_0(p)=\bfSh(G_{\emptyset_{\fa}},K^p\Iw_p)$. We put also $k_{\lambda}\coloneqq\cO_{\bE}/\lambda$. Consider the canonical short exact sequence
\[
\rH^0(X_{\FF_p^{\ac}},k_{\lambda})\to \rH^0(X_{\FF_p^{\ac}}^{\mathrm{ss}}, k_{\lambda})\to \rH^1_c(X^{\ord}_{\FF_p^{\ac}}, k_{\lambda})\to \rH^1(X_{\FF_p^{\ac}},k_{\lambda})\to0
\]
equivariant under the action of $\rG(\bF_p^\ac/\FF_{p^g})\times\ZZ[\dT^{\tR\cup \{\fp\}}]$, where $X^{\ord}_{\FF_p^{\ac}}=X_{\FF_p^{\ac}}-X^{\mathrm{ss}}_{\FF_p^{\ac}}$ is the ``ordinary'' locus. The first term vanishes after localizing at $\fm$ by Assumption~\ref{A:assumption-wp}(3). Taking Galois cohomology $\rH^i(\FF_{p^{2g}},-)$, one deduces a boundary map
\[
\Phi_\fm^*(\fa)\colon \rH^1(X_{\FF_p^{\ac}}, k_{\lambda})^{\rG'}_\fm\to \rH^1(\FF_{p^{2g}},\rH^0(X^{\mathrm{ss}}_{\FF_p^{\ac}},k_{\lambda})_\fm).
\]
By the Poincar\'e duality and the duality of Galois cohomology over finite fields, it is easy to see that $\Phi^*_\fm(\fa)$ is identified with the dual map of $\Phi_\fm(\fa)$. Therefore, to finish the proof of Proposition~\ref{P:level_raising_curve}, it suffices to show that $\Phi^*_\fm(\fa)$ is injective.

Recall that $X_0(p)_{\FF_{p^g}}$ consists of two copies of $X_{\FF_{p^g}}$. Let $i_1\colon X_{\FF_{p^g}}\to X_0(p)_{\FF_{p^g}}$ be the copy such that $\pi_1\circ i_1$ is the identity, and $i_2\colon X_{\FF_{p^g}}\to X_0(p)_{\FF_{p^g}}$ be the one such that $\pi_2\circ i_2$ is the identity. It is well known that $\pi_2\circ i_1$ is the Frobenius endomorphism of $X_{\FF_{p^g}}$ relative to $\FF_{p^g}$, and $\pi_1\circ i_2$ is the Frobenius endomorphism of $X_{\FF_{p^g}}$ relative to $\FF_{p^g}$ composed with the Hecke action $\rS_{\fp}^{-1}$. Consider the normalization $\delta\colon \widetilde X_0(p)_{ \FF_{p^{g}}}=X_{\FF_{p^g}}\coprod X_{\FF_{p^g}}\to X_0(p)_{\FF_{p^{g}}}$. Then one has an exact sequence of \'etale sheaves
\[
0\to k_{\lambda}\to \delta_*k_{\lambda}\to i_*^{\mathrm{ss}} k_{\lambda}\to 0
\]
on $X_{0}(p)_{\FF_{p^{g}}}$, where $i^{\mathrm{ss}}\colon X^{\mathrm{ss}}_{\FF_{p^{g}}}\to X_0(p)_{\FF_{p^{g}}}$ denotes the closed immersion of the singular locus of $X_0(p)_{\FF_{p^{g}}}$, and the second map $\delta_*k_{\lambda}\to i_*^{\mathrm{ss}}k_{\lambda}$ is given as follows: If $x\in X^{\mathrm{ss}}_{\FF_{p^{g}}}(\FF_p^{\ac})$ is a supersingular geometric point with preimage $\delta^{-1}(x)=(x_1,x_2)$ with $x_j\in i_j(X(\FF_p^{\ac}))$ for $j=1,2$, then $(\delta_*k_{\lambda})_x=k_{\lambda,x_1}\oplus k_{\lambda,x_2}\to k_{\lambda,x}$ is given by $(a,b)\mapsto a-b$. By the functoriality of cohomology, we get
\begin{equation}\label{E:kernel-res}
0=\rH^0(X_{\FF_p^{\ac}}, k_{\lambda})_\fm\to \rH^0(X^{\mathrm{ss}}_{\FF_p^{\ac}},k_{\lambda})_\fm\to \rH^1(X_0(p)_{\FF_p^{\ac}},k_{\lambda})_\fm\xra{(i_1^*, i_2^*)} \rH^1(X_{\FF_p^{\ac}}, k_{\lambda})^{\oplus 2}_\fm\to 0.
\end{equation}
Consider the map
\begin{equation}\label{E:pullback-special}
\pi_1^*+\pi^*_2\colon \rH^1(X_{\FF_p^{\ac}},k_{\lambda})_\fm^{\oplus 2}\to \rH^1(X_0(p)_{\FF_p^{\ac}}, k_{\lambda})_\fm
\end{equation}
induced by the two degeneracy maps $\pi_1,\pi_2\colon X_0(p)\to X$. If $\Fr_{\fp}$ denotes the action on $\rH^1(X_{\FF_{p^g}},k_{\lambda})$ induced by the Frobenius endomorphism relative to $\FF_{p^g}$, then $\Fr_{\fp}=\sigma^{-1}_{\fp}$ and  the composite map
\[
\theta\colon\rH^1(X_{\FF_p^{\ac}},k_{\lambda})_\fm^{\oplus 2}\xra{\pi^*_1+\pi_2^*}\rH^1(X_0(p)_{\FF_p^{\ac}},k_{\lambda})_\fm\xra{(i_1^*, i_2^*)} \rH^1(X_{\FF_p^{\ac}},k_{\lambda})_\fm^{\oplus 2}
\]
is given by the matrix $\big(\begin{smallmatrix}1& \Fr_{\fp} \\ \Fr_{\fp}\rS_{\fp}^{-1} & 1\end{smallmatrix}\big)$.

By Definition \ref{D:level-raising-prime}(L3), we know that the Hecke operator $\rS_\fp$ acts trivially on $\rH^1(X_{\FF_p^{\ac}},k_{\lambda})_\fm$ since the trivial action is the only lifting of the trivial action modulo $\fm$ by Assumption \ref{A:assumption-wp}(1). We see that $\ker\theta$ is identified with the image of the injective morphism
\[
\rH^1(X_{\FF_p^{\ac}}, k_{\lambda})_\fm^{\Fr_{\fp}^2=1}\xra{(-\Fr_{\fp}, \mathrm{Id})}\rH^1(X_{\FF_p^{\ac}}, k_{\lambda})_\fm^{\oplus 2}.
\]
However, by Ihara's Lemma~\ref{L:Ihara-Lemma} and the proper base change, the map $\pi_1^*+\pi_2^*$ in \eqref{E:pullback-special} is injective. Thus, it induces an injection
\[
\Phi^*\colon \rH^1(X_{\FF_p^{\ac}}, k_{\lambda})_\fm^{\Fr_{\fp}^2=1}\cong \ker\theta\to  \ker(i_1^*, i_2^*)\cong  \rH^0(X^{\mathrm{ss}}_{\FF_p^{\ac}}, k_{\lambda})_\fm.
\]
To finish the proof of Proposition~\ref{P:level_raising_curve}, it suffices to show the following claims:
\begin{enumerate}
  \item The action of $\Fr_{\fp}^2$ on $\rH^0(X^{\mathrm{ss}}_{\FF_p^{\ac}}, k_{\lambda})_\fm$ is trivial so that the natural projection
      \[
      \rH^0(X^{\mathrm{ss}}_{\FF_p^{\ac}}, k_{\lambda})_\fm\to \rH^1(\FF_{p^{2g}}, \rH^0(X^{\mathrm{ss}}_{\FF_p^{\ac}}, k_{\lambda})_\fm)\cong \rH^0(X^{\mathrm{ss}}_{\FF_p^{\ac}},k_{\lambda})_\fm/(\Fr^2_{\fp}-1)
      \]
      is an isomorphism.

  \item The morphism $\Phi^*$ is identified with $\Phi^*_\fm(\fa)$.
\end{enumerate}

Claim (1) follows from Assumption \ref{A:assumption-wp}(1), Definition \ref{D:level-raising-prime}(L3) and the observation that $\Fr_{\fp}^2$ acts through the Hecke translation by $(1,\dots,1,p,1,\dots)$ where $p$ is placed at the prime $\fp$.

To prove Claim (2), consider the following commutative diagram
\[
\xymatrix{
& & \rH^1_c(X^{\ord}_{\FF_p^{\ac}}, k_{\lambda})_\fm\ar[r]\ar[d]^{\pi^*_2-\pi^*_1\Fr_{\fp}} & \rH^1(X_{\FF_p^{\ac}}, k_{\lambda})\ar[d]^{\pi^*_2-\pi^*_1\Fr_{\fp}} \ar[r] & 0\\
0\ar[r] & \rH^0(X^{\mathrm{ss}}_{\FF_p^{\ac}}, k_{\lambda})_\fm\ar[r]\ar[d]^{\alpha} & \rH^1_c(X^{\ord}_{\FF_p^{\ac}},k_{\lambda})_\fm^{\oplus 2}\ar[r] \ar@{=}[d] & \rH^1(X_0(p)_{\FF_p^{\ac}}, k_{\lambda})_\fm\ar[r] \ar[d]^{(i^*_1, i^*_2)}  &0\\
0\ar[r] &\rH^0(X^{\mathrm{ss}}_{\FF_p^{\ac}}, k_{\lambda})_\fm^{\oplus 2}
\ar[r] & \rH^1_c(X^{\ord}_{\FF_p^{\ac}},k_{\lambda})_\fm^{\oplus 2} \ar[r] & \rH^1(X_{\FF_p^{\ac}}, k_{\lambda})_\fm^{\oplus 2}\ar[r] &0,
}
\]
where $\alpha$ is the diagonal map, and horizontal rows are exact. Then the coboundary isomorphism $\Ker(i_1^*,i_2^*)\cong\rH^0(X^{\mathrm{ss}}_{\FF_p^{\ac}}, k_{\lambda})_\fm$ given by \eqref{E:kernel-res} coincides with
\[
\Ker(i_1^*,i_2^*)\xra{\sim} \coker\alpha\xleftarrow{\sim} \rH^0(X^{\mathrm{ss}}_{\FF_p^{\ac}},k_{\lambda})_\fm,
\]
where the first isomorphism is deduced from the commutative diagram above by the Snake Lemma, and the second is induced by the injection $\rH^0(X^{\mathrm{ss}}_{\FF_p^{\ac}},k_{\lambda})_\fm\hookrightarrow\rH^0(X^{\mathrm{ss}}_{\FF_p^{\ac}},k_{\lambda})_\fm^{\oplus 2}$ to the second component.

Now take $x\in \rH^1(X_{\FF_p^{\ac}},k_{\lambda})^{\Fr_{\fp}^2=1}_\fm\cong\Ker\theta$, and let $\tilde{x}\in\rH^1_c(X^{\ord}_{\FF_p^{\ac}},k_{\lambda})_\fm$ be a lift of $x$ that is fixed by $\rS_{\fp}$. This is possible as the action of $\rS_{\fp}$ on $\rH^1_c(X^{\ord}_{\FF_p^{\ac}},k_{\lambda})$ is semisimple. Then $\pi_2^*(\tilde{x})-\pi^*_1\Fr_{\fp}(\tilde{x})\in\rH^1_c(X^{\ord}_{\FF_p^{\ac}},k_{\lambda})^{\oplus 2}$ is an element lifting
$\pi_2^*(x)-\pi^*_1\Fr_{\fp}(x)\in\Ker(i_1^*,i_2^*)$, and $\pi_2^*(\tilde x)-\pi^*_1\Fr_{\fp}(\tilde x)$ lies actually in the image of $\rH^0(X^{\mathrm{ss}}_{\FF_p^{\ac}},k_{\lambda})_\fm^{\oplus2} $. Note that
\[
\pi_2^*(\tilde x)-\pi^*_1\Fr_{\fp}(\tilde x)=(\rS_{\fp}^{-1}\Fr_{\fp}(\tilde x),\tilde x)-(\Fr_{\fp}(\tilde x),\Fr_{\fp}^2(\tilde x))
=(0,(1-\Fr^2_{\fp})(\tilde x)).
\]
Since $\Phi^*(x)$ is by definition the image of $\pi_2^*(\tilde x)-\pi^*_1\Fr_{\fp}(\tilde x)$ in $\coker\Delta\cong\rH^1(X^{\mathrm{ss}}_{\FF_p^{\ac}}, k_{\lambda})_\fm$, we get $\Phi^*(x)=(1-\Fr_{\fp}^2)(\tilde x)$. However, this is nothing but the image of $x\in\rH^1(X_{\FF_p^{\ac}},k_{\lambda})^{\rG'}_\fm$ via the coboundary map $\Phi^*_\fm(\fa)$. This finishes the proof of claim, hence also the proof of Proposition~\ref{P:level_raising_curve}.
\end{proof}

Recall that we have, for each $\fa\in \fB$,  an algebraic correspondence
\[
\bfSh(G_{\emptyset_{\fa}},K^p)_{\FF_{p^{2g}}}\xleftarrow{\pi_{\fa}} Z_{\emptyset}(\fa)\xrightarrow{i_{\fa}} \bfSh(G, K^p)_{\FF_{p^{2g}}}.
\]
Let $\Lambda$ be $\cO_{\bE_{\lambda}}$, $\cO_{\bE}/\lambda$ or $\QQ_{\ell}^{\ac}$. We define $\Gys_{\fa}(\Lambda)$ to be the composite map
\[
\rH^1(\bfSh(G_{\emptyset_{\fa}}, K^p)_{\FF_{p}^{\ac}},\Lambda)_\fm\xra{\pi_{\fa}^*}\rH^1(W(\fa)_{\FF_p^{\ac}},\Lambda)_\fm
\xra{\mathrm{Gysin}}\rH^{g}(\bfSh(G,K^p)_{\FF_p^{\ac}},\Lambda((g-1)/2))_\fm,
\]
where the first map is an isomorphism since $\pi_{\fa}$ is a $(g-1)/2$-th iterated $\dP^1$-fibrations, and the second map is the Gysin map induced by the closed immersion $i_{\fa}$. Taking sum, we get a map
\begin{equation*}
\Gys(\Lambda)\coloneqq\sum_{\fa}\Gys_{\fa}(\Lambda)\colon \bigoplus_{\fa\in \fB}\rH^1(\bfSh(G_{\emptyset_{\fa}}, K^p)_{\FF_{p}^{\ac}}, \Lambda)_\fm\to\rH^{g}(\bfSh(G,K^p)_{\FF_p^{\ac}},\Lambda((g-1)/2))_\fm.
\end{equation*}

\begin{proposition}\label{P:injective-Gysin}
Under the assumption of Theorem~\ref{T:level_raising},
\begin{enumerate}
  \item the map $\Gys(\Lambda)$ is injective for $\Lambda=\cO_{\bE_{\lambda}},\cO_{\bE}/\lambda,\QQ_{\ell}^{\ac}$;

  \item the induced map
     \[
     \Gys(\cO_\bE/\lambda)/\fm\colon\bigoplus_{\fa\in \fB}\rH^1(\bfSh(G_{\emptyset_{\fa}}, K^p)_{\FF_{p}^{\ac}}, \cO_\bE/\lambda)/\fm\to\rH^{g}(\bfSh(G,K^p)_{\FF_p^{\ac}},\cO_\bE/\lambda((g-1)/2))/\fm
     \]
     is injective.
\end{enumerate}
\end{proposition}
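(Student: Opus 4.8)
The plan is to deduce Proposition~\ref{P:injective-Gysin} from the non-degeneracy of a suitable intersection pairing, following the strategy of \cite{TX2}. First I would set up the target pairing: by Poincar\'e duality on the proper smooth curve $\bfSh(G_{\emptyset_\fa},K^p)_{\FF_p^\ac}$ and on the middle-dimensional cohomology $\rH^g(\bfSh(G,K^p)_{\FF_p^\ac},\Lambda)_\fm$ (recall this is the only nonvanishing degree after localizing at $\fm$ by Lemma~\ref{L:vanishing}(2)), the transpose of $\Gys_\fa(\Lambda)$ is a restriction-type map $i_\fa^*$ composed with the pushforward along the $\dP^1$-fibration $\pi_\fa$. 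Composing $\Gys_\fb(\Lambda)$ with the transpose of $\Gys_\fa(\Lambda)$ produces, for each pair $(\fa,\fb)\in\fB\times\fB$, an endomorphism of $\rH^1(\bfSh(G_{\emptyset},K^p)_{\FF_p^\ac},\Lambda)_\fm$ (after identifying all the curves $\bfSh(G_{\emptyset_\fa},K^p)$ with a fixed one via the Galois action, which only permutes the base fields), given by an explicit Hecke-type correspondence $T_{\fa,\fb}$ supported on the supersingular locus; these are exactly the operators computed in \cite{TX2}*{Section~3} via the combinatorics of periodic semi-meanders. The key input is that the $\fB\times\fB$ matrix $(T_{\fa,\fb})$ is non-degenerate modulo $\lambda$ after localizing at $\fm$, which is the content of the intersection-matrix computation in \cite{TX2}; its determinant is, up to a unit and Hecke operators that act invertibly on the $\fm$-localization, a product of factors of the form $\phi(\rT_\fp)^2 - (p^{gi}+1)^2$ type expressions, and condition (L2) together with the level-raising hypothesis (L3) and Assumption~\ref{A:assumption-wp}(1) guarantees invertibility.

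Concretely, the steps in order would be: (i) identify the transpose of $\Gys(\Lambda)$ with respect to the Poincar\'e dualities, noting that $\rH^g(\bfSh(G,K^p)_{\FF_p^\ac},\Lambda)_\fm$ is self-dual (Lemma~\ref{L:vanishing}(3) gives freeness over $\cO_{\bE_\lambda}$, so duality behaves well and the reduction mod $\lambda$ is compatible); (ii) compute the composite $\Gys(\Lambda)^\vee\circ\Gys(\Lambda)$ as the matrix of correspondences $(T_{\fa,\fb})_{\fa,\fb\in\fB}$ acting on $\bigoplus_{\fa}\rH^1(\bfSh(G_{\emptyset_\fa},K^p)_{\FF_p^\ac},\Lambda)_\fm$, using diagram~\eqref{eq:reduction} and the projection formula for the $\dP^1$-fibrations $\pi_\fa$; (iii) invoke the non-degeneracy of this matrix from \cite{TX2} under the hypotheses (L2), (L3), to conclude that $\Gys(\Lambda)^\vee\circ\Gys(\Lambda)$ is an isomorphism, hence $\Gys(\Lambda)$ is injective (and in fact split injective), for $\Lambda=\cO_{\bE_\lambda},\cO_\bE/\lambda,\QQ_\ell^\ac$; (iv) for part (2), observe that since $\Gys(\cO_\bE/\lambda)$ is split injective with $\cO_\bE/\lambda$-module cokernel, applying $-\otimes_{\bZ[\dT^{\tR\cup\{\fp\}}]}\bZ[\dT^{\tR\cup\{\fp\}}]/\fm$ (equivalently, quotienting by $\fm$, which is right-exact) preserves injectivity — this uses that a split injection stays injective after any base change. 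Alternatively one argues (iv) directly: the source $\bigoplus_\fa \rH^1(\cdots,\cO_\bE/\lambda)/\fm$ maps to $\rH^g(\cdots,\cO_\bE/\lambda)/\fm$, and since the mod-$\lambda$ Gysin map already has a one-sided inverse up to the invertible matrix $(T_{\fa,\fb})\bmod\fm$, the same inverse descends.

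The main obstacle I expect is step (ii)–(iii): correctly matching the geometrically-defined composite $\Gys^\vee\circ\Gys$ with the combinatorially-defined intersection matrix of \cite{TX2}, and checking that the determinant genuinely becomes a unit in $\cO_\bE/\lambda$ after localizing at $\fm$. This requires (a) keeping careful track of the Galois twists and the fact that the various curves $\bfSh(G_{\emptyset_\fa},K^p)$ for different $\fa$ are Frobenius-conjugate but live over $\FF_p$-structures that differ — so the ``matrix'' is really a matrix of operators intertwined with the $\Gal(\FF_{p^{2g}}/\FF_p)$-action on $\fB$; and (b) verifying that the factors appearing in the determinant of the intersection matrix, when evaluated at the Hecke eigensystem $\phi$ modulo $\lambda$, are nonzero: here one uses that (L3) forces $\phi(\rS_\fp)\equiv 1$ and $\phi(\rT_\fp)^2\equiv(p^g+1)^2$, while (L2) ensures $\ell\nmid\prod_{i=1}^g(p^{2gi}-1)$, which kills the potential vanishing of the ``Euler-factor-type'' entries coming from the iterated $\dP^1$-fibration structure (each $\dP^1$-bundle contributes a factor involving powers of $p$). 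One also needs Assumption~\ref{A:assumption-wp}(4) and the non-Eisenstein hypothesis (Assumption~\ref{A:assumption-wp}(3), via Remark~\ref{R:assumption-wp}(1)) so that $\fm$ is non-Eisenstein and the relevant cohomology is concentrated in the expected degree with the expected rank, making the duality arguments clean. Once the intersection matrix is shown to be invertible mod $\lambda$ at $\fm$, both statements follow formally.
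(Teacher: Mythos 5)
Your proposal is correct and follows essentially the same route as the paper: the paper defines the companion map $\Res_\fa(\Lambda)=\pi_{\fa!}\circ i_\fa^*$ directly (rather than as a Poincar\'e-dual transpose) and shows $\Res(\Lambda)\circ\Gys(\Lambda)$ is an isomorphism by citing the determinant computation of the intersection matrix from \cite{TX2}*{Theorem~4.4(2)}, whose value $\pm p^{\ast}[(\alpha_{\Pi'}-\beta_{\Pi'})^2/(\alpha_{\Pi'}\beta_{\Pi'})]^{t_{g,(g-1)/2}}$ on each $\Pi'$-isotypic component is a unit mod $\lambda$ by (L2). Part (2) then follows exactly as you say, since the composite remains an isomorphism after reduction modulo $\fm$.
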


Before giving the proof of the proposition, we introduce some notation. Let $R_\fm$ be the set of all automorphic representations that contribute to $\rH^{g}(\bfSh(G,K^p)_{\FF_p^{\ac}},\Lambda((g-1)/2))_\fm$. Then it is the same as the set of all automorphic representations that contribute to $\rH^1(\bfSh(G_{\emptyset_{\fa}}, K^p)_{\FF_{p}^{\ac}},\Lambda)_\fm$ for every $\fa$ by the Jacquet--Langlands correspondence. It is finite and contains $\Pi$. We may enlarge $\bE$ such that every automorphic representation $\Pi'\in R_\fm$ is defined over $\bE$. Fix an embedding $\bE_\lambda\hookrightarrow\bQ_\ell^\ac$. Let $\alpha_{\Pi'},\beta_{\Pi'}\in \ZZ_{\ell}^{\ac}$ be the eigenvalues of $\rho_{\Pi',\lambda}(\sigma_{\fp})$, where $\ZZ_{\ell}^{\ac}$ denotes the ring of integers of $\QQ_{\ell}^{\ac}$. By Remark~\ref{R:level-raising-prime}(2), we may assume that $\alpha_{\Pi'}^2$ and $\beta_{\Pi'}^2$ are respectively congruent to $1$ and $p^{2g}$  (modulo the maximal ideal of $\ZZ_{\ell}^{\ac}$); in particular, $\alpha_{\Pi'}/\beta_{\Pi'}$ is not congruent to any $i$-th root of unity for $1\leq i\leq 2g$ by Definition~\ref{D:level-raising-prime}(L2).

\begin{proof}[Proof of Proposition~\ref{P:injective-Gysin}]
Following \cite{TX2}, we consider the composite map
\[
\Res_{\fa}(\Lambda)\colon \rH^{g}(\bfSh(G,K^p)_{\FF_p^{\ac}}, \Lambda)_\fm\xra{i_{\fa}^*}\rH^{g}(W(\fa)_{\FF_p^{\ac}},\Lambda)_\fm\xra{\pi_{\fa!}}\rH^1(\bfSh(G_{\emptyset_{\fa}},K^p), \Lambda)_\fm
\]
for each $\fa\in \fB$, and put
\[
\Res(\Lambda)\coloneqq\bigoplus_{\fa\in\fB}\Res_{\fa}(\Lambda) \colon \rH^{g}(\bfSh(G,K^p)_{\FF_p^{\ac}}, \Lambda)_\fm\to
\bigoplus_{\fa\in \fB}\rH^1(\bfSh(G_{\emptyset_{\fa}}, K^p)_{\FF_{p}^{\ac}}, \Lambda)_\fm.
\]
To prove that $\Gys(\Lambda)$ is injective, it suffices to show that the composite map $\Res(\Lambda)\circ\Gys(\Lambda)$, which is an endomorphism of $\bigoplus_{\fa\in \fB}\rH^1(\bfSh(G_{\emptyset_{\fa}}, K^p)_{\FF_{p}^{\ac}}, \Lambda)_\fm$,  is injective. 

It follows from Lemma~\ref{L:vanishing} that
\begin{equation}\label{E:base-change}
\rH^g(\bfSh(G,K^p)_{\FF_p^{\ac}}, \Lambda)_\fm=\rH^g(\bfSh(G,K^p)_{\FF_p^{\ac}}, \cO_{\bE_{\lambda}})_\fm\otimes_{\cO_{\bE_{\lambda}}}\Lambda,
\end{equation}
and it is a finite free $\Lambda$-module. Note that we have
\[
\rH^g(\bfSh(G,K^p)_{\FF_p^{\ac}},\QQ_{\ell}^{\ac})_\fm=\bigoplus_{\Pi'\in R_\fm}\rH^g(\bfSh(G,K^p)_{\FF_p^{\ac}},\QQ_{\ell}^{\ac})[\Pi'^\infty]
\]
as modules over $\ZZ[\dT^{\tR\cup\{\fp\}}]$. Then it was shown in the proof of \cite{TX2}*{Theorem~4.4(2)} that on each $\Pi'^\infty$-isotypic component, $\det(\Res(\Lambda)\circ\Gys(\Lambda))$ is equal to a power of
\[
\pm p^{\frac{g-1}{2}\cdot\binom{g}{(g-1)/2}}[(\alpha_{\Pi'}-\beta_{\Pi'})^2/(\alpha_{\Pi'}\beta_{\Pi'})]^{t_{g,(g-1)/2}}
\]
for $\Lambda=\QQ^{\ac}_{\ell}$, where $t_{g,(g-1)/2}=\sum_{i=0}^{(g-1)/2-1}\binom{g}{i}$. By \eqref{E:base-change}, it is clear that the same formula also holds for $\Lambda=\cO_{\bE_{\lambda}}$. Therefore, we see that $\det(\Res(\cO_{\bE_{\lambda}})\circ\Gys(\cO_{\bE_{\lambda}}))$ is non-vanishing modulo $\lambda$ by Definition~\ref{D:level-raising-prime}(L2). It follows that $\Res(\Lambda)\circ \Gys(\Lambda)$ is an isomorphism for all choices of $\Lambda$, and hence $\Gys(\Lambda)$ is injective and (1) follows.

The above argument also implies (2).
\end{proof}

We can now finish the proof of Theorem~\ref{T:level_raising}. The assertion that $\rG'$ acts trivially on $\Gamma(\fB\times\bfSh(G_{\tS_{\max}},K^p)(\bF_p^\ac),\cO_{\bE}/\lambda)_\fm$ follows from Theorem~\ref{T:supersingular-quaternion}(2) and Definition~\ref{D:level-raising-prime}(L3).  We focus  now on the surjectivity of $\Phi_\fm$ \eqref{E:AJ2}.

We write $k_{\lambda}=\cO_{\bE}/\lambda$ for simplicity as before. Via the canonical isomorphism
\[
\Gamma(\fB\times \bfSh(G_{\tS_{\max}},K^p)(\FF_p^{\ac}), k_{\lambda})_\fm\cong \bigoplus_{\fa\in\fB} \Gamma(\bfSh(G_{\tS_{\max}},K^p)_{\FF_p^{\ac}}, k_\lambda)_\fm,
\]
the map \eqref{E:AJ4} is identified with the composite map
\[
\xymatrix{
\bigoplus_{\fa\in\fB} \Gamma(\bfSh(G_{\tS_{\max}},K^p)_{\FF_p^{\ac}},k_\lambda)/\fm \ar[rr]^-{\oplus_{\fa}\Phi_\fm(\fa)/\fm}\ar[rrd]_{\Phi_\fm/\fm}
&& \bigoplus_{\fa\in \fB} \rH^1(\FF_{p^{2g}},\rH^1( \bfSh(G_{\emptyset_{\fa}}, K^p)_{\FF_p^{\ac}}, k_\lambda(1)/\fm)\ar[d]^{\Gys}\\
&& \rH^1(\FF_{p^{2g}}, \rH^g(\bfSh(G,K^p)_{\FF_p^{\ac}}, k_\lambda((g+1)/2))/\fm),
}
\]
where the vertical map $\Gys$ is simply $\rH^1(\FF_{p^{2g}},(\Gys(k_\lambda)/\fm)(1))$. Here, we use the fact that the canonical maps
\begin{align*}
\rH^1(\FF_{p^{2g}},\rH^1( \bfSh(G_{\emptyset_{\fa}}, K^p)_{\FF_p^{\ac}}, k_\lambda(1))/\fm&\to
\rH^1(\FF_{p^{2g}},\rH^1( \bfSh(G_{\emptyset_{\fa}}, K^p)_{\FF_p^{\ac}}, k_\lambda(1)/\fm) \\
\rH^1(\FF_{p^{2g}}, \rH^g(\bfSh(G,K^p)_{\FF_p^{\ac}}, k_\lambda((g+1)/2)))/\fm&\to
\rH^1(\FF_{p^{2g}}, \rH^g(\bfSh(G,K^p)_{\FF_p^{\ac}}, k_\lambda((g+1)/2))/\fm)
\end{align*}
are isomorphisms since $\rH^2(\FF_{p^{2g}},-)$ vanishes. By Proposition~\ref{P:level_raising_curve}, the map $\oplus_{\fa}\Phi_\fm(\fa)/\fm$ is surjective. To prove that $\Phi_\fm/\fm$ is surjective, it suffices to show that so is $\Gys$.

First, we have a description of $\rH^1(\bfSh(G_{\emptyset_{\fa}}, K^p), k_{\lambda}(1))/\fm$ in terms of $\bar\rho_{\Pi,\lambda}$, which is the residue representation of \eqref{eq:galois} as we recall. Since $\bar\rho_{\Pi,\lambda}$ is absolutely irreducible by Remark~\ref{R:assumption-wp}(1), the $k_\lambda[\rG_F]$-module $\rH^1(\bfSh(G_{\emptyset_{\fa}},K^p)_{\QQ^{\ac}},k_\lambda(1))/\fm$ is isomorphic to $r$ copies of $\bar\rho_{\Pi,\lambda}^{\vee}(1)\cong\bar\rho_{\Pi, \lambda}$ with $r\geq\dim(\Pi_B^\infty)^K$ by \cite{BLR91} and the theory of old forms. By Remark~\ref{R:level-raising-prime}(2), one has an isomorphism of $k_{\lambda}[\rG']$-modules
\[
\bar\rho_{\Pi, \lambda}\cong k_\lambda\oplus k_\lambda(1).
\]
In particular, $\rH^1(\bfSh(G_{\emptyset_{\fa}}, K^p)_{\FF_p^{\ac}}, k_{\lambda}(1))/\fm$ is the direct sum of the eigenspaces of  $\sigma_{\fp}^2$ with eigenvalues $1$ and $p^{2g}$ both with multiplicity $r$.

By \cite{BL84}, Assumption~\ref{R:assumption-wp}(4), Remark~\ref{R:level-raising-prime}(3) and the similar argument as above, the (generalized) eigenvalues of $\sigma_{\fp}^2$ on $\rH^g(\bfSh(G,K^p)_{\FF_p^{\ac}},\QQ_{\ell}^{\ac}((g+1)/2))/\fm$ are $p^{g(g+1)}\alpha_{\Pi}^{-2i}\beta_{\Pi}^{-2(g-i)}$ with multiplicity $\binom{g}{i}\dim(\Pi_B^\infty)^K$. Note that $p^{g(g+1)}\alpha_{\Pi}^{-2i}\beta_{\Pi}^{-2(g-i)}$ has image $p^{g(1+2i-g)}$ in $\FF_{\ell}^{\ac}$, which are distinct for different $i$ under Definition~\ref{D:level-raising-prime}(L2). For every $\mu\in k_{\lambda}$, let
\[
(\rH^g(\bfSh(G,K^p)_{\FF_p^{\ac}},k_{\lambda}((g+1)/2))/\fm)^{\sigma_{\fp}^2\approx\mu}
\subseteq\rH^g(\bfSh(G,K^p)_{\FF_p^{\ac}},k_{\lambda}((g+1)/2))/\fm
\]
denote the generalized eigenspace of $\sigma_{\fp}^2$ with eigenvalue $\mu$, that is, the maximal subspace annihilated by $(\sigma_{\fp}^2-\mu)^{\ell^N}$ for $N=1,2,\dots$. Then by the base change property \eqref{E:base-change}, one has a canonical decomposition
\[
\rH^g(\bfSh(G,K^p)_{\FF_p^{\ac}},k_{\lambda}((g+1)/2))/\fm=\bigoplus_{i=0}^{g}
(\rH^g(\bfSh(G,K^p)_{\FF_p^{\ac}},k_\lambda((g+1)/2))/\fm)^{\sigma_{\fp}^2\approx p^{g(1+2i-g)}},
\]
where the $i$-th direct summand has dimension $\binom{g}{i}\dim(\Pi_B^\infty)^K$ over $k_{\lambda}$. The direct summand with $\sigma_{\fp}^2\approx1$ corresponds to the term with $i=(g-1)/2$, and it has dimension $\binom{g}{(g-1)/2}\dim(\Pi_B^\infty)^K$.
Note that
\[
\rH^1(\FF_{p^{2g}},(\rH^g(\bfSh(G,K^p)_{\FF_p^{\ac}}, k_\lambda((g+1)/2))/\fm)^{\sigma_{\fp}^2\approx p^{g(1+2i-g)}})=0\]
for $i\neq (g-1)/2$. It follows that the natural map
\begin{align}\label{eq:gys}
(\rH^g(\bfSh(G,K^p)_{\FF_p^{\ac}}, k_{\lambda}((g+1)/2))/\fm)^{\sigma_{\fp}^2\approx1}\to
\rH^1(\FF_{p^{2g}},\rH^g(\bfSh(G,K^p)_{\FF_p^{\ac}}, k_\lambda((g+1)/2))/\fm)
\end{align}
is surjective. One gets a commutative diagram:
\[
\xymatrix{
\bigoplus_{\fa\in \fB}(\rH^1( \bfSh(G_{\emptyset_{\fa}}, K^p)_{\FF_p^{\ac}}, k_\lambda(1))/\fm)^{\sigma_{\fp}^2=1}
\ar[d]^-{(\Gys(k_{\lambda})/\fm)(1)}_-{\cong}\ar[r]^-{\cong}
& \bigoplus_{\fa\in \fB}\rH^1(\FF_{p^{2g}},\rH^1(\bfSh(G_{\emptyset_{\fa}}, K^p)_{\FF_p^{\ac}}, k_\lambda(1))/\fm)\ar[d]^-{\Gys} \\
(\rH^g(\bfSh(G,K^p)_{\FF_p^{\ac}}, k_{\lambda}((g+1)/2))/\fm)^{\sigma_{\fp}^2\approx 1}\ar[r]^-{\eqref{eq:gys}}
& \rH^1(\FF_{p^{2g}},\rH^g(\bfSh(G,K^p)_{\FF_p^{\ac}}, k_\lambda((g+1)/2))/\fm).
}
\]
Here, $(\Gys(k_{\lambda})/\fm)(1)$ is injective by Proposition~\ref{P:injective-Gysin}(2), and we deduce that it is an isomorphism for dimension reasons. It follows immediately that $\Gys$ is surjective. This finishes the proof of Theorem~\ref{T:level_raising}.

\section{Selmer groups of triple product motives}
\label{ss:5}

In this chapter, we study Selmer groups of certain triple product motives of elliptic curves in the context of the Bloch--Kato conjecture, which can be viewed as an application of the level raising result established in the previous chapter.

From now on, we fix a cubic totally real number field $F$, and let $\TF$ be the normal closure of $F$ in $\bC$.

\subsection{Main theorem}
\label{ss:selmer}

Let $E$ be an elliptic curve over $F$. We have the $\bQ$-motive $\otimes\Ind^F_\bQ\sfh^1(E)$ (with coefficient $\bQ$) of rank $8$, which is the multiplicative induction of the $F$-motive $\sfh^1(E)$ to $\bQ$. The \emph{cubic-triple product motive} of $E$ is defined to be
\begin{align*}
\sfM(E)\coloneqq\(\otimes\Ind^F_\bQ\sfh^1(E)\)(2).
\end{align*}
It is canonically polarized. For every prime $p$, the $p$-adic realization of $\sfM(E)$, denoted by $\sfM(E)_p$, is a Galois representation of $\bQ$ of dimension $8$ with $\bQ_p$-coefficients. In fact, up to a twist, it is the multiplicative induction from $F$ to $\bQ$ of the rational $p$-adic Tate module of $E$.

Now we assume that $E$ is modular. Then it gives rise to an irreducible cuspidal automorphic representation $\Pi_E$ of $(\Res_{F/\bQ}\GL_{2,F})(\bA)$ with trivial central character. In particular, the set $\Sigma(\Pi_E,\tau)$ defined in Section \ref{S:main3} contains $\infty$. We have $L(s,\sfM(E))=L(s+1/2,\Pi_E,\tau)$ (again see Section \ref{S:main3}).

Put $\Delta^\flat\coloneqq\Sigma(\Pi_E,\tau)-\{\infty\}$. Let $\Delta$ (resp.\ $\Delta'$, $\Delta''$) be the set of primes of $F$ above $\Delta^\flat$ that is of degree either $1$ or $3$ (resp.\ unramified of degree $2$, ramified of degree $2$). We write the conductor of $E$ as $\fc\fc'\fc''\fc^+$ such that $\fc$ (resp.\ $\fc'$, $\fc''$, $\fc^+$) has factors in $\Delta$ (resp.\ $\Delta'$, $\Delta''$, elsewhere).

\begin{assumption}\label{as:elliptic_curve}
We consider the following assumptions.
\begin{description}
  \item[(E0)] The cardinality of $\Sigma(\Pi_E,\tau)$ is odd and at least $3$.

  \item[(E1)] For every finite place $w$ of $F$ over some prime in $\Sigma(\Pi_E,\tau)$, the elliptic curve $E$ has either good or multiplicative reduction at $w$.

  \item[(E2)] For distinct embeddings $\tau_1,\tau_2\colon F\hookrightarrow\TF$, the $\TF$-elliptic curve $E\otimes_{F,\tau_1}\TF$ is not isogenous to any (possibly trivial) quadratic twist of $E\otimes_{F,\tau_2}\TF$.

\end{description}
\end{assumption}

\begin{remark}
Assumption \ref{as:elliptic_curve} (E0) implies that $\Delta$ is not empty. Assumption \ref{as:elliptic_curve} (E1) implies that $E$ has multiplicative reduction at $w\in\Delta$. Together, they imply that the geometric fiber $E\otimes_FF^\ac$ does not admit complex multiplication.
\end{remark}

We now assume that $E$ is modular and satisfies Assumption \ref{as:elliptic_curve}. Then Assumption \ref{as:elliptic_curve}(E1) implies that $\fc\fc'$ is square-free, and $\fc''=\cO_F$ by \cite{Liu2}*{Lemma~4.8}. We take an ideal $\fr$ of $\cO_F$ contained in $N\fc^+$ for some integer $N\geq 4$ and coprime to $\Delta^\flat$.

Assumption \ref{as:elliptic_curve}(E0) implies that $\Delta$ is a non-empty finite set of even cardinality.
Let $B$ be a quaternion algebra over $F$, unique up to isomorphism, with ramification set $\Delta$, and $\cO\subseteq B$ be an $\cO_F$-maximal order. Let $\fr_0$ and $\fr_1$ be two ideals of $\cO_F$ such that $\fr_0$, $\fr_1$ and $\Delta$ are mutually coprime. We recall the definition of the Hilbert modular  stack $\cX(\Delta)_{\fr_0,\fr_1}$ over $\Spec(\ZZ[\rN_{F/\QQ}(\fr_0\fr_1)^{-1}(\disc F)^{-1}])$ defined in \cite{Liu2}*{Definition B.3}. For every $\ZZ[\rN_{F/\QQ}(\fr_0\fr_1)^{-1}(\disc F)^{-1}]$-scheme $T$, $\cX(\Delta)_{\fr_0,\fr_1}(T)$ is the groupoid of quadruples $(A,\iota_A,C_A,\alpha_A)$ where
\begin{itemize}
  \item $A$ is a projective abelian scheme over $T$;

  \item $\iota_A\colon\cO\to \End(A)$ is an injective homomorphism satisfying
      \[\Tr(\iota_A(b)|\Lie(A))=\Tr_{F/\QQ}\Tr^{\circ}_{B/F}(b)\]
      for all $b\in \cO$;

  \item $C_A$ is an $\cO$-stable finite flat subgroup of $A[\fr_0]$ which is \'etale locally isomorphic to $(\cO_{F}/\fr_0)^2$ as $\cO/\fr_0\cO\cong \rM_{2}(\cO_F/\fr_0)$-modules,

  \item $\alpha_{A}\colon(\cO_{F}/\fr_1)^2_T\to A$ is an $\cO$-equivariant injective homomorphism of group schemes over $T$.
\end{itemize}
If $\fr_1=\cO_F$,  $\alpha_A$ is trivial and we usually omit it from the notation.
If $\fr_1$ is contained in $N\cO_F$ for some integer $N\geq4$, then $\cX(\Delta)_{\fr_0,\fr_1}$ is a scheme.

We put $\cX_{\fr}\coloneqq\cX(\Delta)_{\fc',\fr}$. Let $\fD(\fr,\fc^+)$ be the set of all ideals of $O_F$ containing $\fr(\fc^+)^{-1}$ as in \cite{Liu2}*{Notation~A.5}. For every $\fd\in\fD(\fr,\fc^+)$, we have the following composite map
\begin{align}\label{eq:correspondence0}
\tilde\delta^\fd\colon\cX_\fr=\cX(\Delta)_{\fc',\fr}\to\cX(\Delta)_{\fc'\fr,\cO_F}\xrightarrow{\delta^\fd}\cX(\Delta)_{\fc'\fc^+,\cO_F}
\end{align}
which is a finite \'{e}tale morphism of Deligne--Mumford stacks, where $\delta^\fd$ is the degeneracy map defined as follows. If $(A,\iota_A,C_A)$ is an object of $\cX(\Delta)_{\fc'\fr, \cO_F}(T)$ for some $\Spec(\ZZ[\rN_{F/\QQ}(\fc'\fr)^{-1}\disc(F)^{-1}])$-scheme $T$, then its image by $\delta^{\fd}$ is given by the objet $(A',\iota_{A'},C_{A'})$, where
\begin{itemize}
  \item  $A'$  is the quotient $A$ by the finite flat subgroup $C_{A}[\fd]$,

  \item $\iota_{A'}$ is the  induced $\cO$-action on $A'$ from $A$,

  \item $C_{A'}$ is the unique subgroup scheme of $C_{A}/C_A[\fd]$ \'etale locally isomorphic to $(\cO_F/\fc'\fc^+)^2$,

\end{itemize}
See \cite{Liu2}*{Section  B.1} for more details.

\begin{remark}
The requirement that $|\Sigma(\Pi_E,\tau)|\geq 3$, that is, $\Delta\neq\emptyset$ is not essential. The reason we require this is \emph{not} to make the relevant Shimura variety $\cX_\fr$ proper. In fact, it is used to obtain a refinement (Proposition \ref{pr:refinement}) of Theorem \ref{T:level_raising} so that the map \eqref{E:AJ4} is also \emph{injective} in order to deduce Lemma \ref{le:center} which is needed for the \emph{first explicit reciprocity law} back in \cite{Liu2}, through a trick using Jacquet--Langlands correspondence. However, it is not clear to us what are optimal conditions for the map \eqref{E:AJ4} to be injective.
\end{remark}

From now on, we fix an element $\fw\in\Delta$. Let $\cB$ be the totally definite quaternion algebra over $F$, ramified exactly at $\Delta\setminus\{\fw\}$. Put
\begin{align*}
\cY_\fr\coloneqq\cB^\times\backslash\widehat\cB^\times/K_{0,1}(\fw\fc',\fr)
\end{align*}
where $K_{0,1}(\fw\fc',\fr)\subseteq\widehat\cB^\times$ is an open compact subgroup defined similarly as in Example \ref{ex:level}.

For every ideal $\fs$ contained in $\fc^+$, we let $\tR(\fs)$ be the union of primes dividing $\fs$ and primes above $\Delta^\flat$. In particular, we have the homomorphism
\[
\phi^\fs\coloneqq\phi^{\tR(\fs)}_{\Pi_E}\colon\bZ[\dT^{\tR(\fs)}]\to\bZ
\]
such that $\phi^\fs(\rT_\fq)=a_\fq(E)$ and $\phi^\fs(\rS_\fq)=1$ for every prime $\fq\not\in\tR(\fs)$. Here we recall that $\dT^\tR$ is the Hecke monoid away from $\tR$ \cite{Liu2}*{Notation~3.1}.

Let $p$ be a rational prime\footnote{The readers may notice that we switch the roles of $p$ and $\ell$ (or $\lambda$) in Section \ref{ss:5} from Section \ref{ss:4}. This is due to a different convention in the study of Selmer groups.}. Let $\fm^\fs_p$ be the kernel of the composite map $\bZ[\dT^{\tR(\fs)}]\xrightarrow{\phi^\fs}\bZ\to\bF_p$. We also have an induced Galois representation
\[
\rho_{\Pi_E,p}\colon\rG_F\to\GL(T_p(E))\cong\GL_2(\bZ_p),
\]
where $T_p(E)$ is the $p$-adic Tate module of $E$. Put $\bar\rho_{\Pi_E,p}\coloneqq\rho_{\Pi_E,p}\mod p$.

\begin{definition}[Perfect pair]\label{de:perfect_pair}
We say that
\begin{enumerate}
  \item $p$ is \emph{generic} if $(\Ind^\bQ_F\bar\rho_{\Pi_E,p})\res_{\rG_{\TF}}$ has the largest possible image, which is isomorphic to $\rG(\SL_2(\bF_p)\times\SL_2(\bF_p)\times\SL_2(\bF_p))$;

  \item the pair $(p,\fr)$ is \emph{$\fs$-clean}, for an ideal $\fs$ of $\cO_F$ contained in $\fr$, if
      \begin{enumerate}
        \item the canonical map $\Gamma(\cY_\fr,\bZ_p)/\fm^\fs_p$ has dimension $|\fD(\fr,\fc^+)|$ over $\bF_p$;

        \item $\rH^3(\cX(\Delta)_{\fc'\fc^+,\cO_F}\otimes\bQ^\ac,\bZ_p)/{\fm^\fs_p}$ has dimension $8$ over $\bF_p$, and the canonical map
        \[
        \bigoplus_{\fd\in\fD(\fr,\fc^+)}\tilde\delta^\fd_*\colon\rH^3(\cX_\fr\otimes\bQ^\ac,\bZ_p)/{\fm^\fs_p}
        \to\bigoplus_{\fd\in\fD(\fr,\fc^+)}\rH^3(\cX(\Delta)_{\fc'\fc^+,\cO_F}\otimes\bQ^\ac,\bZ_p)/{\fm^\fs_p}
        \]
        is an isomorphism;
      \end{enumerate}

  \item the pair $(p,\fr)$ is \emph{perfect} if
      \begin{enumerate}
        \item $p\geq 11$ and $p\neq 13,19$;

        \item $p$ is coprime to $\Delta^\flat$ and $\fr\cdot |(\ZZ/\fr\cap \ZZ)^{\times}|\cdot\mu(\fr,\fc^+)\cdot|\Cl(F)_\fr|\cdot\disc F $, where $\disc F$ is the discriminant of $F$,  $\Cl(F)_\fr$ is the ray class group of $F$ with respect to $\fr$, and
        \[\mu(\fr,\fc^+)=\rN_{F/\QQ}(\fr(\fc^{+})^{-1})\prod_{\fq}\bigg(1+\frac{1}{\rN_{F/\QQ}(\fq)}\bigg);\] with $\fq$ running through the prime ideals of $\cO_F$  dividing $\fr$ but not $\fc^+$;

        \item $p$ is generic;

        \item it is $\fr$-clean; and

        \item $\bar\rho_{\Pi_E,p}$ is ramified at $\fw$.
      \end{enumerate}
\end{enumerate}
\end{definition}

\begin{remark}\label{R:freeness-cohomology}
Note that the condition that $p$ is generic implies that the condition $(\bL\bI_{\mathrm{Ind}_{\bar\rho_{\Pi_E, p}}})$ in \cite{Dim05}*{Proposition~0.1} is satisfied. Consequently, $\rH^3(\cX_{\fr}\otimes \QQ^{\ac}, \ZZ_p)_{\fm_p^{\fs}}$ is finite free over $\ZZ_p$ for any ideal $\fs$ of $\cO_F$ containing $\fr$ by \cite{Dim05}*{Theorem~0.3}.
\end{remark}

Let $B^{\flat}$ be a quaternion algebra over $\QQ$, unique up to isomorphisms, with ramification set $\Delta^{\flat}$ so that $B\cong B^{\flat}\otimes_{\QQ} F$. We have similarly  a moduli scheme $\cX_\fr^\flat\coloneqq\cX(\Delta^\flat)_{\bZ,\fr\cap\bZ}$ attached to $B^{\flat}$. Then we obtain a canonical morphism
\begin{align*}\label{eq:special}
\theta\colon\cX_\fr^\flat\to\cX_\fr
\end{align*}
over $\bZ[(\fr\disc F)^{-1}]$ similar to \cite{Liu2}*{(4.1.1)}. It is a finite morphism. Denote by $\Theta_{p,\fr}$ the image of $\theta_*[\cX_\fr^\flat\otimes\bQ]\in\CH^2(\cX_\fr\otimes\bQ)$ under the Abel--Jacobi map
\[
\AJ_p\colon\CH^2(\cX_\fr\otimes\bQ)\to\rH^1(\bQ,\rH^3(\cX_\fr\otimes\bQ^\ac,\bQ_p(2))/\Ker\phi^\fr).
\]

By \cite{Liu2}*{Lemma~4.6}, we have $\rH^1(\bQ_v,\sfM(E)_p)=0$ for all primes $v\nmid p$. Thus, we recall the following definition.

\begin{definition}[\cite{BK90} or \cite{Liu2}*{Definition~4.7}]\label{de:selmer}
The \emph{Bloch--Kato Selmer group} for the representation $\sfM(E)_p$ is the subspace $\rH^1_f(\bQ,\sfM(E)_p)$ consisting of classes $s\in\rH^1(\bQ,\sfM(E)_p)$ such that
\[\loc_p(s)\in\rH^1_f(\bQ_p,\sfM(E)_p)\coloneqq\Ker[\rH^1(\bQ_p,\sfM(E)_p)\to\rH^1(\bQ_p,\sfM(E)_p\otimes_{\bQ_p}\rB_{\r{cris}})].\]
\end{definition}


\begin{theorem}\label{th:selmer}
Let $E$ be a modular elliptic curve over $F$ satisfying Assumption \ref{as:elliptic_curve}. For a rational prime $p$, if there exists a perfect pair $(p,\fr)$ (Definition \ref{de:perfect_pair}) such that $\Theta_{p,\fr}\neq 0$, then
\[\dim_{\bQ_p}\rH^1_f(\bQ,\sfM(E)_p)=1.\]
\end{theorem}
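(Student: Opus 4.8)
The plan is to follow the Euler system / Kolyvagin-style argument familiar from \cite{Kol90,Liu1,Liu2}, using the non-vanishing of $\Theta_{p,\fr}$ as the generating class and the arithmetic level raising of Theorem~\ref{T:level_raising} to build the annihilators. First I would record the easy direction: since $\Theta_{p,\fr}\neq 0$, the class survives in the $\phi^\fr$-component of $\rH^1(\bQ,\rH^3(\cX_\fr\otimes\bQ^\ac,\bQ_p(2))/\Ker\phi^\fr)$, and one checks (using Assumption~\ref{as:elliptic_curve}, genericity of $p$, and Remark~\ref{R:freeness-cohomology}) that this component is precisely $\sfM(E)_p$ up to a twist, so $\Theta_{p,\fr}$ defines a nonzero class in $\rH^1(\bQ,\sfM(E)_p)$; that it lands in the Bloch--Kato subspace $\rH^1_f(\bQ,\sfM(E)_p)$ is a standard local computation at $p$ (the cycle class of an algebraic cycle is crystalline) together with the vanishing $\rH^1(\bQ_v,\sfM(E)_p)=0$ for $v\nmid p$ from \cite{Liu2}*{Lemma~4.6}. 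Hence $\dim_{\bQ_p}\rH^1_f(\bQ,\sfM(E)_p)\geq 1$.

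For the upper bound I would introduce, for each depth $\nu\geq1$, the mod-$p^\nu$ modular forms on the Shimura set $\cY_\fr$ and the auxiliary classes $\Theta^\nu_{p,\fr,\underline\ell}$ indexed by admissible pairs of primes $\underline\ell=(\ell,\ell')$ (in the sense of \cite{Liu2}*{Definition~4.9} or the analogue here), and establish the two explicit reciprocity laws linking them. The \emph{first} reciprocity law — relating $\Theta^\nu_{p,\fr,\underline\ell}$ to the mod-$p^\nu$ forms via the singular quotient at $\ell$ — is already available from \cite{Liu2}. The \emph{second} reciprocity law — relating $\Theta_{p,\fr}$ itself to the same mod-$p^\nu$ forms — is exactly what Theorem~\ref{T:level_raising} (arithmetic level raising) is designed to supply: one applies the surjectivity (and, via Proposition~\ref{pr:refinement} and the trick through Jacquet--Langlands, the injectivity encoded in Lemma~\ref{le:center}) of the unramified level raising map at the relevant prime to identify the localization $\loc_\ell(\Theta_{p,\fr})$ in the singular cohomology with the value of the relevant mod-$p^\nu$ form. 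The key bookkeeping step is that genericity of $p$ guarantees a rich enough supply of admissible primes $\ell$ (Chebotarev, as in Remark~\ref{R:level-raising-prime}(1)) whose associated classes $\loc_\ell$ detect the whole Selmer group: given any $0\neq s\in\rH^1_f(\bQ,\sfM(E)_p)$ not proportional to $\Theta_{p,\fr}$, one chooses $\ell$ so that $\loc_\ell(s)\neq0$ while $\loc_\ell(\Theta_{p,\fr})$ is controlled, and then the global reciprocity (sum of local invariants $=0$) applied to the cup product of $s$ with $\Theta^\nu_{p,\fr,\underline\ell}$ forces a contradiction after letting $\nu\to\infty$, bounding the length of $\rH^1_f(\bQ,\sfM(E)_p)/\bQ_p\cdot\Theta_{p,\fr}$.

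Concretely the steps, in order, would be: (i) identify $\rH^3(\cX_\fr\otimes\bQ^\ac,\bQ_p(2))/\Ker\phi^\fr$ with $\sfM(E)_p$ and check $\Theta_{p,\fr}\in\rH^1_f$; (ii) set up the Shimura-set cohomology $\Gamma(\cY_\fr,\bZ/p^\nu)/\fm$ and the residue/localization maps at admissible primes; (iii) prove the second explicit reciprocity law by feeding the perfect-pair hypotheses and Theorem~\ref{T:level_raising} (with $\lambda$ playing the role of a prime above $p$) into the computation of the image of $\Theta_{p,\fr}$ under the Abel--Jacobi/level-raising map — this is where Proposition~\ref{pr:refinement} and Lemma~\ref{le:center} get used; (iv) invoke the first explicit reciprocity law from \cite{Liu2}; (v) run the Kolyvagin bounding argument with the two reciprocity laws, using genericity for the Chebotarev input and the ramification hypothesis (E1), (E2) to rule out degeneracies; (vi) conclude $\dim_{\bQ_p}\rH^1_f(\bQ,\sfM(E)_p)\leq1$, which combined with (i) gives equality.

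The main obstacle I expect is step~(iii), the second explicit reciprocity law: one must compute the image of the distinguished cycle class $\Theta_{p,\fr}$ under the composite of the Abel--Jacobi map and the identifications coming from the supersingular-locus description, and match it with an explicit mod-$p^\nu$ modular form on $\cY_\fr$. This requires tracking the geometry of the special fiber $\bfSh(G,K^p)^{\r{ss}}_{\FF_p}$ through Theorems~\ref{T:supersingular-quaternion} and \ref{P:superspecial-quaternion} — in particular the iterated $\dP^1$-fibrations $\pi_\fa$ and the superspecial identification — combined with a careful analysis of how the finite morphism $\theta\colon\cX^\flat_\fr\to\cX_\fr$ meets the supersingular strata, and then feeding all of this through the level-raising isomorphism of Theorem~\ref{T:level_raising} at $\lambda\mid p$. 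The control of the $p^\nu$-torsion (rather than just mod $p$) means one cannot simply quote Theorem~\ref{T:level_raising} as a black box but must run its proof with $\bZ/p^\nu$-coefficients, which is where the freeness statements in Lemma~\ref{L:vanishing} and Remark~\ref{R:freeness-cohomology} and the cleanness hypotheses in Definition~\ref{de:perfect_pair} become essential.
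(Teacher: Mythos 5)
Your proposal follows essentially the same route as the paper: the lower bound comes from the non-torsion of $\delta^\fd_*\Theta_{p,\fr}$ (using $\fr$-cleanness), and the upper bound is the Kolyvagin-type contradiction argument run with $\bZ/p^\nu$-coefficients, combining the first explicit reciprocity law from \cite{Liu2} with a second explicit reciprocity law deduced from Theorem~\ref{T:level_raising} via Proposition~\ref{pr:refinement} and Lemma~\ref{le:center}, exactly as in Sections~\ref{ss:second}--\ref{ss:first} and the final proof. The only cosmetic difference is that the paper fixes $\nu>\nu_0+\nu_\Box$ rather than ``letting $\nu\to\infty$,'' which is what your bookkeeping would reduce to anyway.
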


\begin{remark}\label{re:perfect}
By a similar argument of \cite{Liu2}*{Lemma~4.10}, given an ideal $\fr$ of $\cO_F$ contained in $N\fc^+$ for some integer $N\geq 4$ and coprime to $\Delta^\flat$, there exists a finite set $\cP_{E,\fr}$ of rational primes such that $(p,\fr)$ is a perfect pair for every $p\not\in\cP_{E,\fr}$. An upper bound for $\cP_{E,\fr}$ can be computed effectively.
\end{remark}

\begin{remark}
Assuming the (conjectural) triple product version of the Gross--Zagier formula and the Beilinson--Bloch conjecture on the injectivity of the Abel--Jacobi map, the following two statements should be equivalent:
\begin{itemize}
  \item $L'(0,\sfM(E))\neq 0$ (note that $L(0,\sfM(E))=0$ by Assumption \ref{as:elliptic_curve}(E0)); and
  \item there exists some $\fr_0$ such that for every other $\fr$ contained in $\fr_0$, we have $\Theta_{p,\fr}\neq 0$ as long as $(p,\fr)$ is a perfect pair.
\end{itemize}
Here, we need to use (the proof of) \cite{Liu2}*{Proposition~4.9}. Then Theorem \ref{th:selmer} implies that if $L'(0,\sfM(E))\neq 0$, that is, $\ord_{s=0}L(s,\sfM(E))=1$, then $\dim_{\bQ_p}\rH^1_f(\bQ,\sfM(E)_p)=1$ for all but finitely many $p$.
\end{remark}

\subsection{A refinement of arithmetic level raising}
\label{ss:refinement}

For now on, we fix a perfect pair $(p,\fr)$ (Definition \ref{de:perfect_pair}), and put $\fm^\fs\coloneqq\fm^\fs_p$ for short.

\begin{definition}\label{de:admissible}
Let $\nu\geq 1$ be an integer. We say that a prime $\ell$ is \emph{$(p^\nu,\fr)$-admissible} if
\begin{description}
  \item[(A1)] $\ell$ is inert in $F$ (with $\fl=\ell\cO_F$), unramified in $\TF$, and coprime $\tR(\fr)\cup\{2,p\}$;

  \item[(A2)] $(p,\fr)$ is $\fr\fl$-clean;

  \item[(A3)] $p\nmid(\ell^{18}-1)(\ell^6+1)$;

  \item[(A4)] $\phi^\fr(\rT_\fl)\equiv\ell^3+1\mod p^\nu$.
\end{description}
\end{definition}

\begin{notation}\label{N:notation-rho}
For now on, we fix an integer $\nu\geq 1$ and put $\Lambda\coloneqq\bZ/p^\nu$. Let $\rho\colon\rG_F\to\GL(\rN_\rho)$ be the reduction of $\rho_{\Pi_E,p}$ modulo $p^\nu$, where $\rN_\rho=T_p(E)\otimes\Lambda$. We have the multiplicatively induced representation $\rho^\sharp\colon\rG_\bQ\to\GL(\rN_\rho^\sharp)$ with $\rN_\rho^\sharp=\rN_\rho^{\otimes3}$.
\end{notation}

\begin{lem}\label{L:rank-1}
   Let $\ell$ be a $(p^\nu,\fr)$-admissible prime. Then the cohomology group $\rH^1_\unr(\bQ_\ell,\rH^3(\cX(\Delta)_{\fc'\fc^+,\cO_F}\otimes\bQ^\ac,\Lambda(2))/\Ker\phi^{\fr\fl})$ (resp.\ $\rH^1_\unr(\bQ_\ell,\rH^3(\cX_\fr\otimes\bQ^\ac,\Lambda(2))/\Ker\phi^{\fr\fl})$) is a free $\Lambda$-module of rank $1$ (resp.\ $|\fD(\fr,\fc^+)|$);
\end{lem}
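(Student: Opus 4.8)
The plan is to reduce both assertions to a single computation of the Frobenius action on a free $\Lambda$-module. Since $\ell$ is $(p^\nu,\fr)$-admissible it is inert in $F$ and coprime to $\tR(\fr)\cup\{p\}$, so all the Galois representations in sight are unramified at $\ell$. Writing $N\coloneqq\rH^3(\cX(\Delta)_{\fc'\fc^+,\cO_F}\otimes\bQ^\ac,\Lambda(2))/\Ker\phi^{\fr\fl}$ and $N_\fr\coloneqq\rH^3(\cX_\fr\otimes\bQ^\ac,\Lambda(2))/\Ker\phi^{\fr\fl}$, the $\fr\fl$-cleanness of $(p,\fr)$ (part of admissibility, Definition \ref{de:admissible}(A2)) together with the freeness in Remark \ref{R:freeness-cohomology} gives that $N$ is free of rank $8$ over $\Lambda$, that $N_\fr$ is free of rank $8|\fD(\fr,\fc^+)|$ over $\Lambda$, and that the degeneracy maps $\bigoplus_{\fd\in\fD(\fr,\fc^+)}\tilde\delta^\fd_*$ furnish a $\rG_\bQ$-equivariant isomorphism $N_\fr\cong N^{\oplus|\fD(\fr,\fc^+)|}$ (the mod-$p$ isomorphism of the cleanness hypothesis lifts over $\Lambda$ by Nakayama). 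Both $N$ and $N_\fr$ being unramified at $\ell$,
\[
\rH^1_\unr(\bQ_\ell,N)=N/(\sigma_\ell-1)N,\qquad \rH^1_\unr(\bQ_\ell,N_\fr)=N_\fr/(\sigma_\ell-1)N_\fr,
\]
for $\sigma_\ell$ the arithmetic Frobenius at $\ell$; as $\rH^1_\unr(\bQ_\ell,-)$ is additive, the statement for $N_\fr$ reduces to that for $N$.

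To understand $\sigma_\ell$ on $N$ I would use that, by the description of the $\Pi_E$-part of the middle-degree cohomology of the quaternionic Hilbert threefold attached to $B$ (\cite{BL84} and the Jacquet--Langlands correspondence, as in the proof of Proposition \ref{P:injective-Gysin}), $N$ is the reduction modulo $p^\nu$ of a Galois-stable lattice in $\sfM(E)_p=(\otimes\Ind^F_\bQ\sfh^1(E))_p(2)$; hence the characteristic polynomial of $\sigma_\ell$ on $N$ is the reduction modulo $p^\nu$ of the local Euler factor of $\sfM(E)$ at $\ell$. By admissibility condition (A4) and $\phi^\fr(\rS_\fl)=1$, the characteristic polynomial of $\rho_{\Pi_E,p}(\sigma_\fl)$ is $\equiv X^2-(\ell^3+1)X+\ell^3=(X-1)(X-\ell^3)\pmod{p^\nu}$. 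Feeding this into the standard recipe for the eigenvalues of a tensor induction from the cyclic cubic unramified extension $F_\fl/\bQ_\ell$ (the decomposition $8=1+1+3+3$ of $\{1,2\}^{3}$ into orbits under cyclic rotation), the characteristic polynomial of $\sigma_\ell$ on $\otimes\Ind^F_\bQ\rho_{\Pi_E,p}$ is $\equiv(X-1)(X-\ell^{3})(X^3-\ell^{3})(X^3-\ell^{6})\pmod{p^\nu}$, and after the twist by $(2)$ it takes, modulo $p^\nu$, the form
\[
P_N(X)\equiv(X-u_1)(X-u_2)(X^3-u_3)(X^3-1)
\]
with $u_1,u_2,u_3$ units lying in $\{\ell^{\pm1},\ell^{\pm2},\ell^{\pm3}\}$. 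In particular $X-1\mid P_N(X)$ in $\Lambda[X]$, coming from the factor $X^3-1=(X-1)(X^2+X+1)$, and $P_N(X)=(X-1)R(X)$ with $R(1)=3\,(1-u_1)(1-u_2)(1-u_3)$; by (A3) each of $\ell-1,\ell^2-1,\ell^3-1$ is a unit modulo $p$ (all divide $\ell^{18}-1$), and $p\geq 11$ rules out $p=3$, so $R(1)\in\Lambda^\times$, and in particular $1$ is a simple root of $P_N\bmod p$.

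I would then conclude by a Cayley--Hamilton / CRT argument: since $R(1)\in\Lambda^\times$, the polynomials $X-1$ and $R(X)$ are coprime in $\Lambda[X]$, so $P_N(\sigma_\ell)=0$ on $N$ forces a decomposition $N=\Ker(\sigma_\ell-1)\oplus\Ker R(\sigma_\ell)$ of $\Lambda[\sigma_\ell]$-modules. The summand $\Ker(\sigma_\ell-1)$ is a direct summand of the free $\Lambda$-module $N$, hence free, and its rank equals $\dim_{\bF_p}\Ker(\sigma_\ell-1\mid N\otimes_\Lambda\bF_p)=1$ because $1$ is a simple root of $P_N\bmod p$; moreover $\sigma_\ell-1$ is invertible on $\Ker R(\sigma_\ell)$ as $R(1)$ is a unit. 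Therefore $N/(\sigma_\ell-1)N\cong\Ker(\sigma_\ell-1)$ is free of rank $1$ over $\Lambda$, which gives the first claim, and hence the second via the reduction above.

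The main obstacle is the input of the second paragraph: making the identification of $N$ with the mod-$p^\nu$ reduction of $\sfM(E)_p$ precise enough that the Tate twist is correctly accounted for, and carrying out the tensor-induction eigenvalue bookkeeping carefully enough to see that condition (A3) is exactly what forces the remaining seven roots of $P_N$ to avoid $1$ modulo $p$; all of this must be done with the $\bZ/p^\nu$-module structure and not merely modulo $p$, which is where the freeness of Remark \ref{R:freeness-cohomology} is essential.
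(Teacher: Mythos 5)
Your proposal is correct and follows essentially the same route as the paper: identify the module with $\rN_\rho^\sharp(-1)$ (equivalently, a mod-$p^\nu$ lattice in $\sfM(E)_p$) via (A2), Nakayama and \cite{BL84}, read off the Frobenius eigenvalues from $\rho(\sigma_\fl)\sim\big(\begin{smallmatrix}1&0\\0&\ell^3\end{smallmatrix}\big)$ and the tensor-induction orbit decomposition $8=1+1+3+3$, and use (A3) (plus $p\ge 11$ to make $3$ a unit) to kill every eigenvalue except a single $1$. The paper packages the eigenvalue bookkeeping as an explicit splitting $\Lambda(-1)\oplus\Lambda\oplus\rR\oplus\Lambda(1)\oplus\rR(1)\oplus\Lambda(2)$ rather than your characteristic-polynomial/Cayley--Hamilton argument, and note that the overall Tate twist relating $N$ to $\otimes\Ind^F_\bQ\rho_{\Pi_E,p}$ is $(-1)$ (not $(2)$, which applies to $\otimes\Ind^F_\bQ\sfh^1(E)$) — but your stated form of $P_N$ and the conclusion are unaffected.
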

\begin{proof}
By Definition~\ref{de:admissible}(A2), the Nakayama lemma and \cite{BL84}, we have isomorphisms of $\Lambda[\rG_{\bQ_\ell}]$-modules:
\begin{align*}
\rH^3(\cX(\Delta)_{\fc'\fc^+,\cO_F}\otimes\bQ_\ell^\ac,\Lambda(2))/\Ker\phi^{\fr\fl}&\cong\rN_\rho^\sharp(-1),\\
\rH^3(\cX_\fr\otimes\bQ_\ell^\ac,\Lambda(2))/\Ker\phi^{\fr\fl}&\cong\rN_\rho^\sharp(-1)^{\oplus|\fD(\fr,\fc^+)|}.
\end{align*}
If $\sigma_{\fl}\in \rG_F$ denotes an arithmetic Frobenius element at $\fl$, then $\rho(\sigma_{\fl})$ is conjugate to $\big(\begin{smallmatrix}1 & 0\\ 0& \ell^3\end{smallmatrix}\big)$ by Definition~\ref{de:admissible}(A4).
Hence, the $\Lambda[\rG_{\bQ_\ell}]$-module $\rN_\rho^\sharp(-1)$ is unramified and isomorphic to $\Lambda(-1)\oplus\Lambda\oplus \rR\oplus \Lambda(1)\oplus \rR(1)\oplus\Lambda(2)$, where $\rR\cong \Lambda^{\oplus 2}$ is the rank $2$ unramified representation of $\rG_{\QQ_{\ell}}$ with the action of the arithmetic Frobenius $\sigma_{\ell}$ given by $\big(\begin{smallmatrix}0 & -1 \\ 1&-1\end{smallmatrix}\big)$. By Definition~\ref{de:admissible}(A3), it follows that $\rH^1_{\unr}(\QQ_{\ell}, \rN^{\sharp}_{\rho}(-1))\cong\rH^1_{\unr}(\QQ_{\ell},\Lambda)$, which is free of rank 1 over $\Lambda$.

\end{proof}

Let $\ell$ be a $(p^\nu,\fr)$-admissible prime. Then $\cX_\fr\otimes\bZ_{(\ell)}$ is canonically isomorphic to $\bfSh(G,K_{0,1}(\fc',\fr)^\ell)$ with $G=\Res_{F/\bQ}B^\times$ considered in Section \ref{S:quaternion-PEL} (See Remark~\ref{R:polarization-free} on the issue of polarizations and Example \ref{ex:level} for the open compact subgroup  $K_{0,1}(\fc',\fr)$), and   $\cX_\fr^\flat\otimes\bZ_{(\ell)}$ is canonically isomorphic to $\bfSh(G^\flat,K_{0,1}(\bZ,\fr\cap\bZ)^\ell)$ with $G^\flat=(B^\flat)^\times$.
Put $X_\fr\coloneqq\cX_\fr\otimes\bF_\ell$. As before, we denote by $X_\fr^{\r{sp}}$ the superspecial locus of $X_\fr$. By Theorem~\ref{P:superspecial-quaternion}, we may identify $X_\fr^{\r{sp}}(\bF_\ell^\ac)$ with $\bfSh(G_{\tS_{\max}},K_{0,1}(\fc',\fr)^\ell)(\bF_\ell^\ac)$.

The following proposition is a refinement of Theorem~\ref{T:level_raising} in our situation.

\begin{proposition}\label{pr:refinement}
Let $\ell$ be a $(p^\nu,\fr)$-admissible prime. Then the level raising map
\begin{equation}\label{E:level-raising-refine}
\Gamma(\fB\times X_\fr^{\r{sp}}(\bF_\ell^\ac),\Lambda)/\Ker\phi^{\fr\fl}\to
\rH^1(\bF_{\ell^6},\rH^3(X_\fr\otimes\bF_\ell^\ac,\Lambda(2))/\Ker\phi^{\fr\fl})
\end{equation}
 defined similarly as \eqref{E:AJ4} is an isomorphism.
\end{proposition}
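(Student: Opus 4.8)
The plan is to combine the surjectivity statement of Theorem \ref{T:level_raising} (in the present situation $F$ is cubic, so $g=3$, and the Shimura variety is $\cX_\fr$ with Hecke localization at $\fm^{\fr\fl}$) with a dimension count showing that source and target of \eqref{E:level-raising-refine} have the same $\Lambda$-length, whence surjective forces bijective. First I would check that the hypotheses of Theorem \ref{T:level_raising} are met: Definition \ref{de:admissible}(A1)--(A4) for a $(p^\nu,\fr)$-admissible prime $\ell$ match conditions (L1)--(L3) of Definition \ref{D:level-raising-prime} with the roles of $p$ and $\ell$ interchanged (note $g=3$, so $p^{2gi}-1$ becomes $\ell^{6i}-1$ and the relevant genericity/large-image input comes from Definition \ref{de:perfect_pair} via Remark \ref{R:freeness-cohomology}), and that Assumption \ref{A:assumption-wp} holds with $\lambda$ replaced by the prime above $p$: parts (1),(2) follow from (A1), (A3) and the perfectness/cleanness hypotheses, part (3) from genericity of $p$ (Remark \ref{R:freeness-cohomology}), and part (4) from the $\fr\fl$-cleanness in Definition \ref{de:perfect_pair}(2) together with the Jacquet--Langlands description of the cohomology. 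This yields that \eqref{E:level-raising-refine} is surjective and that $\rG'=\Gal(\bF_\ell^\ac/\bF_{\ell^6})$ acts trivially on $\Gamma(\fB\times X_\fr^{\r{sp}}(\bF_\ell^\ac),\Lambda)/\Ker\phi^{\fr\fl}$.

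Next I would compute both sides over $\Lambda=\bZ/p^\nu$. For the target: by the Hochschild--Serre identification used in \S\ref{ss:statement} and Lemma \ref{L:rank-1}, $\rH^1(\bF_{\ell^6},\rH^3(X_\fr\otimes\bF_\ell^\ac,\Lambda(2))/\Ker\phi^{\fr\fl})\cong\rH^1_\unr(\bQ_\ell,\rH^3(\cX_\fr\otimes\bQ^\ac,\Lambda(2))/\Ker\phi^{\fr\fl})$, which Lemma \ref{L:rank-1} shows is free over $\Lambda$ of rank $|\fD(\fr,\fc^+)|$ — here I use the $\fr\fl$-clean hypothesis (A2) to reduce mod $\Ker\phi^{\fr\fl}$ to copies of $\rN_\rho^\sharp(-1)$ and then the explicit Frobenius eigenvalue computation there. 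For the source: $\Gamma(\fB\times X_\fr^{\r{sp}}(\bF_\ell^\ac),\Lambda)/\Ker\phi^{\fr\fl}\cong\bigoplus_{\fa\in\fB}\Gamma(X_\fr^{\r{sp}}(\bF_\ell^\ac),\Lambda)/\Ker\phi^{\fr\fl}$, and since $|\fB|=\binom{3}{1}=3$ while, via the identification $X_\fr^{\r{sp}}(\bF_\ell^\ac)\cong\bfSh(G_{\tS_{\max}},K_{0,1}(\fc',\fr)^\ell)(\bF_\ell^\ac)=\cY_\fr$-type double coset (Theorem \ref{P:superspecial-quaternion}), the space $\Gamma$ of functions on the superspecial set localized at $\fm^{\fr\fl}$ is computed by the perfect/clean hypothesis Definition \ref{de:perfect_pair}(2)(a) — which gives that $\Gamma(\cY_\fr,\bZ_p)/\fm^{\fr}$ has dimension $|\fD(\fr,\fc^+)|$ — together with the observation that the superspecial Shimura set for $G_{\tS_{\max}}$ differs from $\cY_\fr$ only by the finite flat subgroup at $\fw$, so its $\fm^{\fr\fl}$-localized function space has $\Lambda$-length $3^{-1}|\fD(\fr,\fc^+)|\cdot|\fB|$... more precisely each $\fa$-summand has $\Lambda$-length equal to that of $\Gamma(\cY_\fr,\Lambda)/\Ker\phi^{\fr\fl}$, which by freeness is $|\fD(\fr,\fc^+)|$, and summing over $\fa\in\fB$ one must see that the relation $|\fB|\cdot(\text{something})$ matches. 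I would instead argue more cleanly: Proposition \ref{P:injective-Gysin} already gives that the Gysin map $\Gys(\Lambda)$ is injective, and the diagram at the end of \S\ref{S:proof} shows \eqref{E:AJ4} factors as $\oplus_\fa\Phi_\fm(\fa)$ followed by an isomorphism onto its image; so it suffices to know that each $\Phi_\fm(\fa)$ over $\Lambda$ is an isomorphism, not merely surjective. For the \emph{curve} statement Proposition \ref{P:level_raising_curve}, the source and target both have $\Lambda$-length $|\fD(\fr,\fc^+)|$ by Lemma \ref{L:rank-1} and Definition \ref{de:perfect_pair}(2)(a) (applied with $\fl$ via cleanness (A2)), and an epimorphism of finite $\Lambda$-modules of equal length is an isomorphism.

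The main obstacle I anticipate is the bookkeeping of lengths: one must verify carefully that $\rH^3(\cX_\fr\otimes\bQ^\ac,\bZ_p)_{\fm^{\fr\fl}}$ and the relevant function spaces on the (super)special sets are \emph{free} over $\Lambda$ (not just finite), so that ``surjective of equal length $\Rightarrow$ isomorphism'' applies mod $p^\nu$ rather than only mod $p$ — this freeness comes from Remark \ref{R:freeness-cohomology} (genericity of $p$ plus \cite{Dim05}*{Theorem~0.3}) and from the cleanness hypotheses, but propagating it through the Hochschild--Serre identification and the $\bP^1$-fibration pullbacks $\pi_\fa^*$ requires care. A secondary point is to confirm that reducing modulo $\Ker\phi^{\fr\fl}$ (an ideal of the Hecke algebra) rather than modulo the maximal ideal $\fm^{\fr\fl}$ is harmless here: because $\ell$ is $(p^\nu,\fr)$-admissible and $p$ is generic, the completed Hecke algebra at $\fm^{\fr\fl}$ acting on the relevant cohomology is a quotient of $\Lambda$ on the $\Pi_E$-isotypic part, so $\Ker\phi^{\fr\fl}$ and $\fm^{\fr\fl}$ cut out the same thing after localization — this is the analogue of Remark \ref{R:level-raising-prime}(3) and should be stated explicitly. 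Once freeness and the equal-length identities are in hand, the proposition follows formally from Theorem \ref{T:level_raising} (or directly from Proposition \ref{P:level_raising_curve} plus Proposition \ref{P:injective-Gysin}).
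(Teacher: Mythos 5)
There is a genuine gap, and it sits exactly at the crux of the paper's argument: the dimension count for the \emph{source}. The superspecial set $X_\fr^{\r{sp}}(\bF_\ell^\ac)$ is a Shimura set for the quaternion algebra $B_{\tS_{\max}}$, ramified at $\Sigma_\infty\cup\{\fl\}\cup\Delta$ (so at both $\fl$ and $\fw$), whereas the cleanness hypothesis you invoke (Definition \ref{de:perfect_pair}(2)(a), via (A2)) concerns $\cY_\fr$, the Shimura set for $\cB$, which is ramified at $\Sigma_\infty\cup\Delta\setminus\{\fw\}$, split at $\fl$ and $\fw$, with level $K_{0,1}(\fw\fc',\fr)$. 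Your assertion that the superspecial set ``differs from $\cY_\fr$ only by the finite flat subgroup at $\fw$'' is not an identification of sets, and the equality of the $\fm^{\fr\fl}$-reduced dimensions of the two function spaces is a nontrivial Jacquet--Langlands/level-raising statement at the two places $\fl$ and $\fw$; in particular it needs Definition \ref{de:perfect_pair}(3e) (that $\bar\rho_{\Pi_E,p}$ is ramified at $\fw$), which never appears in your argument. The paper proves precisely this point by introducing a fourth quaternion algebra $B'$, ramified at $\Sigma\cup\{\fl\}\cup\Delta\setminus\{\fw\}$ with $\Sigma\subseteq\Sigma_\infty$ of cardinality $2$, and its proper Shimura curve $C$ over $F$: the weight spectral sequence of the semistable model of $C$ at $\fl$ together with (A2) gives $\rH^1(C\otimes\bQ^\ac,\bZ_p)/\fm^{\fr\fl}\cong\bar\rho_{\Pi_E,p}^{\oplus|\fD(\fr,\fc^+)|}$, and then the weight spectral sequence at $\fw$, where (3e) forces $\rH^1(C_0\otimes\bQ^\ac,\bZ_p)/\fm^{\fr\fl}=0$ for the $\fw$-old curve $C_0$, identifies $\rH^1(\rI_\fw,\rH^1(C\otimes\bQ^\ac,\bZ_p)/\fm^{\fr\fl})$ with $\Gamma(X_\fr^{\r{sp}}(\bF_\ell^\ac),\bF_p)/\fm^{\fr\fl}$, yielding dimension $|\fD(\fr,\fc^+)|$. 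Your fallback route through Proposition \ref{P:injective-Gysin} and the curve-level maps $\Phi_\fm(\fa)$ does not avoid this: the source of each $\Phi_\fm(\fa)$ is again the space of functions on the superspecial set, so the same count is needed and is not supplied by (2)(a) alone.

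A secondary but real error is your computation of the target: $\rH^1(\bF_{\ell^6},-)$ corresponds to $\rH^1_\unr(\bQ_{\ell^6},-)$, not to $\rH^1_\unr(\bQ_\ell,-)$, so Lemma \ref{L:rank-1} does not give rank $|\fD(\fr,\fc^+)|$ here. By the eigenvalue analysis in that lemma together with (A3), the module $\rH^1(\bF_{\ell^6},\rH^3(X_\fr\otimes\bF_\ell^\ac,\Lambda(2))/\Ker\phi^{\fr\fl})$ is free over $\Lambda$ of rank $3|\fD(\fr,\fc^+)|$ (the summand $\Lambda\oplus\rR$ of each copy of $\rN_\rho^\sharp(-1)$ survives over $\bF_{\ell^6}$), which is what matches the three copies indexed by $\fB$ on the source; with your count of $|\fD(\fr,\fc^+)|$ the surjectivity from Theorem \ref{T:level_raising} would already be inconsistent with equal lengths. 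The remaining scaffolding of your outline (surjectivity from Theorem \ref{T:level_raising}, Nakayama to reduce freeness to a mod-$p$ dimension count, and ``surjection of equal length implies isomorphism'') does agree with the paper, but without the auxiliary-curve computation of $\dim_{\bF_p}\Gamma(X_\fr^{\r{sp}}(\bF_\ell^\ac),\bF_p)/\fm^{\fr\fl}$ the proof is incomplete.
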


\begin{proof}
In the proof of Lemma~\ref{L:rank-1}, we have seen that, as a $\Lambda[\rG_{\FF_{\ell}}]$-module,  $\rH^3(X_\fr\otimes\bF_\ell^\ac,\Lambda(2))/\Ker\phi^{\fr\fl}$ is isomorphic to $|\fD(\fr,\fc^+)|$-copies of
\[
\rN^{\sharp}_{\rho}(-1)\cong \Lambda(-1)\oplus\Lambda\oplus \rR\oplus \Lambda(1)\oplus \rR(1)\oplus\Lambda(2).
\]
We get thus an isomorphism of $\Lambda[\Gal(\FF_{\ell^6}/\FF_{\ell})]$-modules:
\begin{equation}\label{E:galois-cohomology}
\rH^1(\bF_{\ell^6},\rH^3(X_\fr\otimes\bF_\ell^\ac,\Lambda(2))/\Ker\phi^{\fr\fl})\cong \rH^1(\FF_{\ell^6}, \Lambda\oplus \rR)^{\oplus |\fD(\fr,\fc^+)|}\cong (\Lambda\oplus \rR)^{\oplus |\fD(\fr,\fc^+)|},
\end{equation}
which is  free of rank  $3|\fD(\fr,\fc^+)|$ over $\Lambda$. By Theorem \ref{T:level_raising} and the Nakayama Lemma, the map \eqref{E:level-raising-refine} is surjective. Thus it suffices to show that $\Gamma(X_\fr^{\r{sp}}(\bF_\ell^\ac),\Lambda)/\Ker\phi^{\fr\fl}$ is a free $\Lambda$-module of rank $|\fD(\fr,\fc^+)|$. By the Nakayama lemma, it suffices to show that $\Gamma(X_\fr^{\r{sp}}(\bF_\ell^\ac),\bF_p)/\fm^{\fr\fl}$ has dimension $|\fD(\fr,\fc^+)|$ over $\bF_p$.

Recall that so far, we have three quaternion algebras over $F$ in the story: $\cB$ ramified at $\Sigma_\infty\cup\Delta\setminus\{\fw\}$, $B$ ramified at $\Delta$, and $B_{\tS_{\max}}$ ramified at $\Sigma_\infty\cup\{\fl\}\cup\Delta$. Now we let $B'$ be the fourth quaternion algebra over $F$ ramified at $\Sigma\cup\{\fl\}\cup\Delta\setminus\{\fw\}$ where $\Sigma$ is a fixed subset of $\Sigma_\infty$ of cardinality $2$. Let $C$ be the corresponding proper Shimura curve over $F$ (with the embedding into $\bQ^\ac$ given by the unique element in $\Sigma_\infty\setminus\Sigma$) of the similarly defined level $K_{0,1}(\fw\fc',\fr)$. As in Step 4 of the proof of \cite{Liu2}*{Proposition~3.32}, $C$ has a natural strictly semistable model at $\fl$. The corresponding weight spectral sequence provides us with a canonical isomorphism
\[
\Gamma(\cY_\fr,\bZ_p)/\fm^{\fr\fl}\simeq\rH^1_\sing(\bQ_{\ell^6},\rH^1(C\otimes\bQ^\ac,\bZ_p)/\fm^{\fr\fl})
\]
as in the proof of \cite{Liu2}*{Proposition~3.32}. By Definition~\ref{de:admissible}(A2), $\rH^1_\sing(\bQ_{\ell^6},\rH^1(C\otimes\bQ^\ac,\bZ_p)/\fm^{\fr\fl})$ has dimension $|\fD(\fr,\fc^+)|$. By \cite{BLR91}, we conclude that $\rH^1(C\otimes\bQ^\ac,\bZ_p)/\fm^{\fr\fl}$ is isomorphic to $\bar\rho_{\Pi_E,p}^{\oplus|\fD(\fr,\fc^+)|}$ as an $\bF_p[\rG_F]$-module. In particular, $\rH^1(C\otimes\bQ^\ac,\bZ_p)/\fm^{\fr\fl}$ has dimension $2|\fD(\fr,\fc^+)|$.

Now consider the semistable reduction of $C$ at $\fw$. Let $C_0$ be the proper Shimura curve over $F$ associated to $B'$ of the level $K_{0,1}(\fc',\fr)$. Then $\rH^1(C_0\otimes\bQ^\ac,\bZ_p)/\fm^{\fr\fl}=0$ by Definition \ref{de:perfect_pair}(3e). Therefore, we have a canonical isomorphism
\[
\rH^1(\rI_\fw,\rH^1(C\otimes\bQ^\ac,\bZ_p)/\fm^{\fr\fl})\simeq\Gamma(X_\fr^{\r{sp}}(\bF_\ell^\ac),\bF_p)/\fm^{\fr\fl}
\]
from the weight spectral sequence, as the supersingular set of $C$ at $\fw$ is also $X_\fr^{\r{sp}}(\bF_\ell^\ac)$. Therefore, $\Gamma(X_\fr^{\r{sp}}(\bF_\ell^\ac),\bF_p)/\fm^{\fr\fl}$ has dimension $|\fD(\fr,\fc^+)|$. The proposition follows.
\end{proof}

\subsection{Second explicit reciprocity law}
\label{ss:second}

Let $\ell$ be a $(p^{\nu}, \fr)$-admissible prime, and $\fl=\ell\cO_F$.  Recall that $\Sigma_{\infty}$ denotes the set of archimedean places of $F$.
For every ideal $\fs$ of $\cO_F$ coprime to $\Delta\cup\{\fl\}$, let  $\cS_{\ell,\fs}\coloneqq\cS(\Sigma_\infty\cup\Delta\cup\{\fl\})_{\fs}$ be the set of isomorphism classes of oriented $\cO_F$-Eichler order of discriminant $\Sigma_\infty\cup\Delta\cup\{\fl\}$ and  level $\fs$ (see \cite{Liu2}*{Definition~A.1}). It has an action by $\rG_{\bF_{\ell}}$ such that the arithmetic Frobenius  $\sigma_{\ell}$ acts by switching the orientation at $\fl$.

\begin{lem}
There is a canonical isomorphism $X_\fr^{\r{sp}}(\bF_\ell^\ac)/\Cl(F)_\fr\cong\cS_{\ell,\fc'\fr}$. Moreover, the induced action of $\rG_{\bF_\ell}$ on $\cS_{\ell,\fc'\fr}$ factors through $\Gal(\bF_{\ell^2}/\bF_\ell)$ and is given by the map $\op_\ell$ switching the orientation at $\fl$.
\end{lem}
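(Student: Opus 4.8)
The plan is to identify both sides with double coset spaces attached to the totally definite quaternion algebra $B_{\tS_{\max}}$ and to track the $p$-adic places carefully. First I would recall from Theorem~\ref{P:superspecial-quaternion}(1) the Hecke-equivariant bijection
\[
X_\fr^{\r{sp}}(\bF_\ell^\ac)\cong\bfSh(G_{\tS_{\max}},K_{0,1}(\fc',\fr)^\ell)(\bF_\ell^\ac)=B_{\tS_{\max}}^\times\backslash\widehat B_{\tS_{\max}}^\times/K_{0,1}(\fc',\fr)^\ell\prod_{\fp\in\Sigma_p}K_\fp,
\]
where here $\Sigma_p=\{\fl\}$ and $K_\fl$ is the unique maximal open compact subgroup of $(B_{\tS_{\max}}\otimes_F F_\fl)^\times$, since $g_\fl=[F_\fl:\bQ_\ell]=3$ is odd and so $B_{\tS_{\max}}$ ramifies at $\fl$. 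Note $B_{\tS_{\max}}$ is ramified exactly at $\Sigma_\infty\cup\Delta\cup\{\fl\}$, which is precisely the discriminant defining $\cS_{\ell,\fc'\fr}$. Now quotienting by $\Cl(F)_\fr$ on the left corresponds, via the reduced norm, to collapsing the $F^{\times,\times}_+\widehat\cO_F^\times$-action on the idele class group (cf.\ Example~\ref{ex:level}), so that the quotient $X_\fr^{\r{sp}}(\bF_\ell^\ac)/\Cl(F)_\fr$ becomes $B_{\tS_{\max}}^\times\backslash\widehat B_{\tS_{\max}}^\times/\widehat\cO_F^\times K_{0,1}(\fc',\fr)^\ell\prod_\fp K_\fp$; this is exactly the adelic description of isomorphism classes of oriented $\cO_F$-Eichler orders of discriminant $\Sigma_\infty\cup\Delta\cup\{\fl\}$ and level $\fc'\fr$, i.e.\ $\cS_{\ell,\fc'\fr}$, by \cite{Liu2}*{Definition~A.1} and the standard dictionary between Eichler orders and double cosets (the level $\fc'$ coming from the $K_0(\fc')$-part and $\fr$ from $K_{0,1}(\cdot,\fr)$, with the orientation data encoding the choice of maximal compact at the ramified places including $\fl$).

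For the Galois action, I would invoke Theorem~\ref{P:superspecial-quaternion}(2): the arithmetic Frobenius $\sigma_p^2\in\Gal(\bF_p^\ac/\bF_{p^2})$ — here $\sigma_\ell^2\in\Gal(\bF_\ell^\ac/\bF_{\ell^2})$ — acts by multiplication by the idele $\underline{p}^{-1}$ (here $\underline{\ell}^{-1}$), which equals $\ell^{-1}$ at every $p$-adic place and $1$ elsewhere; this is a \emph{central} Hecke translation, hence acts trivially on $X_\fr^{\r{sp}}(\bF_\ell^\ac)/\Cl(F)_\fr$ after quotienting by $\Cl(F)_\fr$ (which already swallows the scalar $\underline\ell$ into $\widehat\cO_F^\times\cdot(\text{global scalars})$). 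Therefore the $\rG_{\bF_\ell}$-action factors through $\Gal(\bF_{\ell^2}/\bF_\ell)$. It then remains to identify the action of the nontrivial element of $\Gal(\bF_{\ell^2}/\bF_\ell)$ with the orientation-switching map $\op_\ell$: this is the content of how the $\dP^1$-fibrations $\pi_\fa$ (one for each $\fa\in\fB$) are permuted by Frobenius — the two choices of $\fa$ here (note $\fB=\fB(\emptyset_{\infty/\fl},1)$ has $\binom{3}{1}=3$ elements, but modulo the combinatorial Frobenius action the superspecial fiber is acted on via a sign/orientation), and more directly, from the Deuring-style description the supersingular/superspecial elliptic curves at $\fl$ acquire their $\bF_{\ell^2}$-versus-$\bF_\ell$ rationality through the Frobenius, which on the level of Eichler orders is exactly the flip of orientation at the ramified prime $\fl$. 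I would make this precise by comparing with the classical case (Deuring--Serre, as recalled around \eqref{E:deuring}) applied place-by-place.

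The main obstacle I anticipate is the last identification — that Frobenius acts by $\op_\ell$ rather than trivially or by some other involution. The subtlety is that $\sigma_\ell$ (as opposed to $\sigma_\ell^2$) is \emph{not} covered directly by Theorem~\ref{P:superspecial-quaternion}(2), which only computes the $\Gal(\bF_\ell^\ac/\bF_{\ell^2})$-action; one must separately understand the descent datum to $\bF_\ell$. I would handle this by using that a superspecial abelian variety at $\fl$ is of the form $\cE\otimes_\bZ\cI$ with $\cE$ a supersingular elliptic curve over $\bF_{\ell^2}$, whose $\ell$-Frobenius $F_\cE\colon\cE\to\cE^{(\ell)}$ is a purely inseparable isogeny of degree $\ell$ inducing, on the associated Eichler order $\End(\cE\otimes\cI)$, precisely the change of orientation at $\fl$ (the two orientations corresponding to the kernel of $F_\cE$ versus its dual). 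This matches the definition of $\op_\ell$ in \cite{Liu2} as the switch of orientation at $\fl$. Packaging this with the first two paragraphs gives the canonical $\rG_{\bF_\ell}$-equivariant isomorphism claimed. \qed
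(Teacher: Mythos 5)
Your proposal is essentially correct, but it follows a genuinely different route from the paper: the paper disposes of this lemma in one line, citing \cite{Liu2}*{Proposition~A.13(1)}, where the identification of the superspecial points modulo $\Cl(F)_\fr$ with oriented Eichler orders, together with the Frobenius action, is established once and for all; you instead reconstruct the statement inside the present paper from Theorem~\ref{P:superspecial-quaternion} plus a Deuring-style analysis of the single Frobenius. What the citation buys is that all the orientation and ray-class bookkeeping is done uniformly elsewhere; what your route buys is transparency about \emph{why} the answer is $\op_\ell$: part (1) of Theorem~\ref{P:superspecial-quaternion} gives the double-coset description with $B_{\tS_{\max}}$ ramified exactly at $\Sigma_\infty\cup\Delta\cup\{\fl\}$, part (2) plus centrality of $\underline{\ell}^{-1}$ gives triviality of $\sigma_\ell^2$ after the quotient, and the action of $\sigma_\ell$ itself (which part (2) does not cover) is computed, exactly as in the proof of that theorem, from the shape $\cE\otimes_{\bZ}\cI$ of a superspecial abelian variety: the Frobenius isogeny $\cE\to\cE^{(\ell)}$ is a Hecke translation by a uniformizer of the local division algebra at $\fl$ and trivial away from $\ell$, and translation by the normalizer of the local maximal order is precisely the orientation switch. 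Three slips in your sketch are worth fixing but do not affect the structure: (a) quotienting by $\Cl(F)_\fr$ corresponds to inserting the full central $\widehat F^\times$ into the level, the ray class group absorbing both $\Cl(F)$ and the $(\cO_F/\fr)^\times$ discrepancy between $K_{0,1}(\fc',\fr)$ and the unit group of an Eichler order of level $\fc'\fr$, not merely $\widehat\cO_F^\times$ as in your displayed formula; (b) the aside about the strata indexed by $\fa\in\fB$ is a red herring, since the identification of $X_\fr^{\r{sp}}$ with $\bfSh(G_{\tS_{\max}},K^p)_{\bF_\ell^\ac}$ is independent of $\fa$, so no ``sign'' arises from the Frobenius permutation of the $W(\fa)$; (c) at the ramified place $\fl$ the maximal compact is unique, so the orientation there is a choice of embedding of the residue quotient into $\bF_{\ell^6}$, swapped by conjugation by a uniformizer --- which is exactly what your Frobenius-isogeny computation must (and does) produce, in parallel with the classical Deuring--Serre statement recalled around \eqref{E:deuring}.
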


\begin{proof}
It is a special case of \cite{Liu2}*{Proposition~A.13(1)}.
\end{proof}

Denote by $\psi\colon X_\fr^{\r{sp}}(\bF_\ell^\ac)\to\cS_{\ell,\fc'\fr}$ the canonical projection from the above lemma.

\begin{lem}\label{le:ribet}
The canonical map
\[
\psi^*\colon\Gamma(\cS_{\ell,\fc'\fr},\Lambda)/\Ker\phi^{\fr\fl}\to\Gamma(X_\fr^{\r{sp}}(\bF_\ell^\ac),\Lambda)/\Ker\phi^{\fr\fl}
\]
is an isomorphism.
\end{lem}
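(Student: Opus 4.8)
The plan is to exhibit $\psi^*$ as the inclusion of the $\Cl(F)_\fr$-invariants and then to split off the non-invariant complement, which will be annihilated after reducing modulo $\Ker\phi^{\fr\fl}$ precisely because $\Pi_E$ has trivial central character. First I would record that, since $\psi\colon X_\fr^{\r{sp}}(\bF_\ell^\ac)\to\cS_{\ell,\fc'\fr}$ is surjective with fibers equal to the $\Cl(F)_\fr$-orbits, pullback of functions identifies $\Gamma(\cS_{\ell,\fc'\fr},\Lambda)$ with the submodule $\Gamma(X_\fr^{\r{sp}}(\bF_\ell^\ac),\Lambda)^{\Cl(F)_\fr}$; since the $\Cl(F)_\fr$-action is the action by central translations (this is how the isomorphism of the preceding lemma arises, following \cite{Liu2}*{Appendix~A} and Theorem~\ref{P:superspecial-quaternion}), it commutes with the Hecke action of $\bZ[\dT^{\tR(\fr\fl)}]$, so the identification is $\bZ[\dT^{\tR(\fr\fl)}]$-equivariant; moreover, for a prime $\fq\notin\tR(\fr\fl)$ the operator $\rS_\fq$ acts on $\Gamma(X_\fr^{\r{sp}}(\bF_\ell^\ac),\Lambda)$ exactly through the class $[\fq]\in\Cl(F)_\fr$.

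Next, by Definition~\ref{de:perfect_pair}(3b) the prime $p$ is coprime to $|\Cl(F)_\fr|$, so $e\coloneqq|\Cl(F)_\fr|^{-1}\sum_{g\in\Cl(F)_\fr}g$ is a well-defined idempotent in $\Lambda[\Cl(F)_\fr]$ commuting with the Hecke action, and it produces a decomposition of $\bZ[\dT^{\tR(\fr\fl)}]$-modules
\[
\Gamma(X_\fr^{\r{sp}}(\bF_\ell^\ac),\Lambda)=\psi^*\Gamma(\cS_{\ell,\fc'\fr},\Lambda)\;\oplus\;N,\qquad N\coloneqq(1-e)\Gamma(X_\fr^{\r{sp}}(\bF_\ell^\ac),\Lambda),
\]
using that $e\Gamma(X_\fr^{\r{sp}}(\bF_\ell^\ac),\Lambda)=\Gamma(X_\fr^{\r{sp}}(\bF_\ell^\ac),\Lambda)^{\Cl(F)_\fr}$. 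Applying the functor $(-)/\Ker\phi^{\fr\fl}$, which is right exact and respects direct sums, reduces the lemma to the vanishing $N/\Ker\phi^{\fr\fl}=0$. By Nakayama's lemma over the local ring $\Lambda=\bZ/p^\nu$ it suffices to prove $(N/pN)/\fm^{\fr\fl}=0$. I would then base-change to $\bF_p^\ac$ and decompose $(N/pN)\otimes_{\bF_p}\bF_p^\ac$ into the isotypic pieces $(N/pN)_\chi$ for the nontrivial characters $\chi$ of $\Cl(F)_\fr$; for each such $\chi$ choose a prime $\fq\notin\tR(\fr\fl)$ with $\chi([\fq])\neq 1$ (possible since the prime classes generate $\Cl(F)_\fr$). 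On $(N/pN)_\chi$ the operator $\rS_\fq$ acts by the scalar $\chi([\fq])$, a root of unity of order dividing $|\Cl(F)_\fr|$, hence $\not\equiv 1$ in $\bF_p^\ac$ because $X^{|\Cl(F)_\fr|}-1$ is separable modulo $p$. Since $\rS_\fq-1\in\fm^{\fr\fl}$ then acts invertibly on $(N/pN)_\chi$, we obtain $(N/pN)_\chi/\fm^{\fr\fl}=0$ for all $\chi\neq 1$, hence $(N/pN)/\fm^{\fr\fl}=0$, which is what was needed.

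The main obstacle is the first step: one has to pin down carefully that the finite group appearing in $X_\fr^{\r{sp}}(\bF_\ell^\ac)/\Cl(F)_\fr\cong\cS_{\ell,\fc'\fr}$ really acts by central translations, so that its action on functions simultaneously realizes $\cS_{\ell,\fc'\fr}$ as the quotient and is compatible with the Hecke operators $\rS_\fq$ for $\fq\notin\tR(\fr\fl)$; this is a matter of unwinding the constructions of \cite{Liu2}*{Appendix~A} and the description of the superspecial locus in Theorem~\ref{P:superspecial-quaternion}. Once this compatibility is in hand, the remaining arguments are formal. One could alternatively combine the resulting injectivity of $\psi^*$ modulo $\Ker\phi^{\fr\fl}$ with the dimension count $\dim_{\bF_p}\Gamma(X_\fr^{\r{sp}}(\bF_\ell^\ac),\bF_p)/\fm^{\fr\fl}=|\fD(\fr,\fc^+)|$ obtained in the proof of Proposition~\ref{pr:refinement}, but that route ultimately relies on the same comparison.
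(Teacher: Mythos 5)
Your argument is correct, and it is essentially the argument that the paper delegates to the citation of \cite{Liu2}*{Lemma~3.24}: identify $\psi^*$ with the inclusion of $\Cl(F)_\fr$-invariants, use that $p\nmid|\Cl(F)_\fr|$ to split off the non-invariant part, and kill the nontrivial isotypic components using that $\rS_\fq-1\in\fm^{\fr\fl}$ acts there by the invertible scalar $\chi([\fq])-1$. The one point you rightly flag as needing verification — that the $\Cl(F)_\fr$-action realizing $\cS_{\ell,\fc'\fr}$ as a quotient is by central Hecke translations compatible with $\rS_\fq$ — is exactly what \cite{Liu2}*{Proposition~A.13} provides, so your write-up supplies the details the paper leaves implicit.
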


\begin{proof}
It follows similarly as \cite{Liu2}*{Lemma~3.24}.
\end{proof}

\begin{proposition}\label{pr:level_raising}
Under the notation above, the following statements hold:
\begin{enumerate}

  \item The action of $\op_\ell$ on $\Gamma(\cS_{\ell,\fc'\fr},\Lambda)/\Ker\phi^{\fr\fl}$ is trivial.
  \item There exists a unique isomorphism $\Phi$ such that the following diagram
    \[\xymatrix{
    \Gamma(\cS_{\ell,\fc'\fr},\Lambda)/\Ker\phi^{\fr\fl}\ar[r]^-{\Phi}
    \ar[d]^{\psi^*}&\rH^1_\unr(\bQ_{\ell},\rH^3(\cX_\fr\otimes\bQ^\ac,\Lambda(2)/\Ker\phi^{\fr\fl})\ar[d]^{\cong}\\
    \Gamma(X_\fr^{\r{sp}}(\bF_\ell^\ac),\Lambda)/\Ker\phi^{\fr\fl}\ar[d] & \rH^1_\unr(\bQ_{\ell^6},\rH^3(\cX_\fr\otimes\bQ^\ac,\Lambda(2)/\Ker\phi^{\fr\fl}))^{\Gal(\QQ_{\ell^6}/\QQ_{\ell})}\ar@{^(->}[d]\\
    \Gamma(\fB\times X_\fr^{\r{sp}}(\bF_\ell^\ac),\Lambda)/\Ker\phi^{\fr\fl} \ar[r]^-{\eqref{E:level-raising-refine}}  & \rH^1_\unr(\bQ_{\ell^6},\rH^3(\cX_\fr\otimes\bQ^\ac,\Lambda(2)/\Ker\phi^{\fr\fl}))
   } \]
   is commutative, where the lower left vertical arrow is the diagonal map.
\end{enumerate}
\end{proposition}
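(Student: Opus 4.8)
The plan is to produce $\Phi$ as a composition of isomorphisms, so that both the commutativity of the diagram and the uniqueness of $\Phi$ come out formally. First I would set up notation and record the ingredients. Put $\cH^3\coloneqq\rH^3(\cX_\fr\otimes\bQ^\ac,\Lambda(2))/\Ker\phi^{\fr\fl}$, an unramified $\Lambda[\rG_{\bQ_\ell}]$-module. Since $\cX_\fr$ is smooth and proper over $\bZ_{(\ell)}$ (as $\Delta\neq\emptyset$, so $B$ is a division algebra, and $\ell$ is coprime to $\fr$, $\disc F$ and $\Delta^\flat$), smooth and proper base change (cf.\ Lemma~\ref{L:vanishing}(1) and \cite{LS16}) gives a $\rG_{\bQ_\ell}$-equivariant identification of $\rH^1(\bF_{\ell^h},\rH^3(X_\fr\otimes\bF_\ell^\ac,\Lambda(2))/\Ker\phi^{\fr\fl})$ with $\rH^1_\unr(\bQ_{\ell^h},\cH^3)$ for every $h\geq1$, which I will use without further comment. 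From the proof of Lemma~\ref{L:rank-1} and from \eqref{E:galois-cohomology}, $\cH^3\cong\rN_\rho^\sharp(-1)^{\oplus|\fD(\fr,\fc^+)|}$ and $\rH^1_\unr(\bQ_{\ell^6},\cH^3)\cong(\Lambda\oplus\rR)^{\oplus|\fD(\fr,\fc^+)|}$, where $\sigma_\ell$ acts trivially on the $\Lambda$-summands and through $\big(\begin{smallmatrix}0&-1\\1&-1\end{smallmatrix}\big)$ on the $\rR$-summands; in particular $\sigma_\ell$ has order $3$ on each $\rR$-summand, and $\sigma_\ell-1$ and $\sigma_\ell^2-1$ act invertibly there (their determinant is $3$, a unit since $p\geq11$), so that $\Ker(\sigma_\ell^2-1)=\Ker(\sigma_\ell-1)$ on $\rH^1_\unr(\bQ_{\ell^6},\cH^3)$. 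Finally, $\psi$ is $\rG_{\bF_\ell}$-equivariant (the $\Cl(F)_\fr$-action defining it is a prime-to-$\ell$ Hecke action, hence commutes with Frobenius), so the isomorphism $\psi^*$ of Lemma~\ref{le:ribet} intertwines $\op_\ell$ on its source with $\sigma_\ell$ on $\Gamma(X_\fr^{\r{sp}}(\bF_\ell^\ac),\Lambda)/\Ker\phi^{\fr\fl}$; and \eqref{E:level-raising-refine} is $\Gal(\bF_{\ell^6}/\bF_\ell)$-equivariant by Proposition~\ref{pr:refinement} and the equivariance built into \eqref{E:AJ2}, the full $\rG_{\bF_\ell}$-action factoring through $\Gal(\bF_{\ell^6}/\bF_\ell)$.

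For part (1) I would argue as follows. Switching the orientation at $\fl$ is an involution of $\cS_{\ell,\fc'\fr}$, so (as $p\neq2$) $\op_\ell^2$ acts as the identity on $\Gamma(\cS_{\ell,\fc'\fr},\Lambda)/\Ker\phi^{\fr\fl}$, whence $\sigma_\ell^2$ acts as the identity on the image of this group under $\psi^*$ in $\Gamma(X_\fr^{\r{sp}}(\bF_\ell^\ac),\Lambda)/\Ker\phi^{\fr\fl}$. Pushing this image forward along the injective, $\sigma_\ell$-equivariant map obtained by composing the diagonal embedding into $\Gamma(\fB\times X_\fr^{\r{sp}}(\bF_\ell^\ac),\Lambda)/\Ker\phi^{\fr\fl}$ with \eqref{E:level-raising-refine}, I get that it lands in $\Ker(\sigma_\ell^2-1)=\Ker(\sigma_\ell-1)$ inside $\rH^1_\unr(\bQ_{\ell^6},\cH^3)$; by injectivity $\sigma_\ell$, and therefore $\op_\ell$, acts trivially, which is (1). (A more computational alternative would read off the eigenvalue of $\op_\ell$ from the Eichler-order/Brandt-matrix description together with (A4) and $\phi^\fr(\rS_\fl)=1$; I expect the argument above to be the shorter one.)

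For part (2) I would define $\Phi$ to be the isomorphism $\Gamma(\cS_{\ell,\fc'\fr},\Lambda)/\Ker\phi^{\fr\fl}\xrightarrow{\sim}\rH^1_\unr(\bQ_\ell,\cH^3)$ induced by the chain
\begin{align*}
\Gamma(\cS_{\ell,\fc'\fr},\Lambda)/\Ker\phi^{\fr\fl}
&\xrightarrow[\sim]{\psi^*}\Gamma(X_\fr^{\r{sp}}(\bF_\ell^\ac),\Lambda)/\Ker\phi^{\fr\fl}
\xrightarrow[\sim]{\mathrm{diag}}\Big(\Gamma(\fB\times X_\fr^{\r{sp}}(\bF_\ell^\ac),\Lambda)/\Ker\phi^{\fr\fl}\Big)^{\sigma_\ell}\\
&\xrightarrow[\sim]{\eqref{E:level-raising-refine}}\rH^1_\unr(\bQ_{\ell^6},\cH^3)^{\sigma_\ell}
\xleftarrow[\sim]{\mathrm{res}}\rH^1_\unr(\bQ_\ell,\cH^3),
\end{align*}
where I use the canonical decomposition $\Gamma(\fB\times X_\fr^{\r{sp}}(\bF_\ell^\ac),\Lambda)/\Ker\phi^{\fr\fl}\cong\bigoplus_{\fa\in\fB}\Gamma(X_\fr^{\r{sp}}(\bF_\ell^\ac),\Lambda)/\Ker\phi^{\fr\fl}$. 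Each arrow is an isomorphism: $\psi^*$ by Lemma~\ref{le:ribet}; the diagonal map because, by (1), $\sigma_\ell$ acts trivially on each summand $\Gamma(X_\fr^{\r{sp}}(\bF_\ell^\ac),\Lambda)/\Ker\phi^{\fr\fl}$ while $\sigma$ permutes the three elements of $\fB=\fB(\emptyset,(g-1)/2)$ by a $3$-cycle (under the bijection of $\fB$ with the single-node subsets of the three nodes, $\sigma$ cyclically permutes the nodes), so the $\sigma_\ell$-invariants of the direct sum over $\fB$ are exactly the diagonal copy; \eqref{E:level-raising-refine} after passing to $\sigma_\ell$-invariants, which is exact since $\Gal(\bF_{\ell^6}/\bF_\ell)$ has order $6$, prime to $p$; and the restriction map, again by inflation--restriction and $6$ being prime to $p$ — this is the map already labelled ``$\cong$'' in the diagram. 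Hence $\Phi$ is an isomorphism. The diagram then commutes by construction, once one knows that $\eqref{E:level-raising-refine}\circ\mathrm{diag}\circ\psi^*$ has image in $\rH^1_\unr(\bQ_{\ell^6},\cH^3)^{\sigma_\ell}$, which is exactly (1) together with the equivariance statements above; and uniqueness is immediate because the right-hand vertical composite of the diagram (the restriction isomorphism followed by the inclusion of $\sigma_\ell$-invariants) is injective, so $\Phi$ is determined by the other three sides.

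The step I expect to be the main obstacle is part (1): guaranteeing that $\op_\ell$ acts as $+1$ rather than merely as an involution. Everything else is a diagram chase that keeps careful track of the Galois actions and of the compatibility of \eqref{E:level-raising-refine}, the projection $\psi$, and the diagonal embeddings with those actions; the only arithmetic input beyond results already established is the admissibility hypothesis (A4) — used, through the $\rG_{\bQ_\ell}$-module structure of $\cH^3$, to force $\sigma_\ell$ to have no $(-1)$-eigenvalue on $\rH^1_\unr(\bQ_{\ell^6},\cH^3)$ — together with $p\geq11$ (to invert $3$ and $6$).
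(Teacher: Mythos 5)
Your proposal is correct and follows essentially the same route as the paper: both construct $\Phi$ by passing to Frobenius invariants in the refined level-raising isomorphism of Proposition \ref{pr:refinement}, using the module structure \eqref{E:galois-cohomology} (Lemma \ref{L:rank-1}), Lemma \ref{le:ribet}, and the prime-to-$p$ restriction isomorphism $\rH^1_\unr(\bQ_\ell,-)\cong\rH^1_\unr(\bQ_{\ell^6},-)^{\Gal(\QQ_{\ell^6}/\QQ_\ell)}$. The only (harmless) divergence is in part (1): the paper deduces triviality of $\op_\ell$ by comparing ranks of $(\Gamma(\cS_{\ell,\fc'\fr},\Lambda)/\Ker\phi^{\fr\fl})^{\op_\ell=1}$ with $\rH^1_\unr(\bQ_\ell,\cdot)$, whereas you use $\op_\ell^2=1$ together with the invertibility of $\sigma_\ell^2-1$ on the $\rR$-summands (determinant $3$, a unit) and injectivity of the composite map, which is equally valid.
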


\begin{proof}
Consider the action of $\Gal(\bQ_{\ell^6}/\bQ_\ell)$ on both sides of the isomorphism
\[
\Gamma(\fB\times \cS_{\ell,\fc'\fr},\Lambda)/\Ker\phi^{\fr\fl}\xrightarrow{\psi^*}
\Gamma(\fB\times X_\fr^{\r{sp}}(\bF_\ell^\ac),\Lambda)/\Ker\phi^{\fr\fl}\to
\rH^1(\bF_{\ell^6},\rH^3(X_\fr\otimes\bF_\ell^\ac,\Lambda(2))/\Ker\phi^{\fr\fl}).
\]
By \eqref{E:galois-cohomology},
we obtain an isomorphism
\[(\Gamma(\cS_{\ell,\fc'\fr},\Lambda)/\Ker\phi^{\fr\fl})^{\op_\ell=1}
\cong\rH^1_\unr(\bQ_\ell,\rH^3(\cX_\fr\otimes\bQ^\ac,\Lambda(2)/\Ker\phi^{\fr\fl}).
\]
By Lemma~\ref{L:rank-1}, $\rH^1_\unr(\bQ_\ell,\rH^3(\cX_\fr\otimes\bQ^\ac,\Lambda(2)/\Ker\phi^{\fr\fl})$ is a free $\Lambda$-module of rank $|\fD(\fr,\fc^+)|$. Therefore, the inclusion
\[
(\Gamma(\cS_{\ell,\fc'\fr},\Lambda)/\Ker\phi^{\fr\fl})^{\op_\ell=1}\subseteq\Gamma(\cS_{\ell,\fc'\fr},\Lambda)/\Ker\phi^{\fr\fl}
\]
is an isomorphism as both sides are free $\Lambda$-module of rank $|\fD(\fr,\fc^+)|$. Thus both (1) and (2) follow.
\end{proof}

Denote by $\Theta_{p,\fr}^\nu$ the image of $\theta_*[\cX_\fr^\flat\otimes\bQ]\in\CH^2(\cX_\fr\otimes\bQ)$ under the Abel--Jacobi map
\[
\AJ_p\colon\CH^2(\cX_\fr\otimes\bQ)\to\rH^1(\bQ,\rH^3(\cX_\fr\otimes\bQ^\ac,\Lambda(2))/\Ker\phi^{\fr\fl}).
\]

For any ideal $\fs\subseteq \cO_F$, let  $\cS^\flat_{\ell,\fs}=\cS(\{\infty\}\cup\Delta^\flat\cup\{\ell\})_{\fs\cap\bZ}$ denote the set of isomorphism classes of oriented $\ZZ$-Eichler order of discriminant $\{\infty\}\cup\Delta^\flat\cup\{\ell\}$  and level $\fs\cap\ZZ$ (\cite{Liu2}*{Definition~A.1}). We have a natural map given by extension of scalars
\begin{align}\label{eq:special1}
\vartheta\colon \cS^\flat_{\ell,\fr}\to \cS_{\ell,\fc'\fr}.
\end{align}
We have a bilinear pairing $(\;,\;)\colon\Gamma(\cS_{\ell,\fc'\fr},\bZ)\times\Gamma(\cS_{\ell,\fc'\fr},\bZ)\to\bZ$
defined by the formula $(f_1,f_2)=\sum_{h\in\cS_{\ell,\fc'\fr}}f_1(h)f_2(h)$. It induces a perfect pairing
\[
(\;,\;)\colon\Gamma(\cS_{\ell,\fc'\fr},\Lambda)/\Ker\phi^{\fr\fl}\times\Gamma(\cS_{\ell,\fc'\fr},\Lambda)[\Ker\phi^{\fr\fl}]\to\Lambda.
\]

\begin{theorem}[Second explicit reciprocity law]\label{th:second}
Let $\ell$ be an $(p^\nu,\fr)$-admissible prime. Then $\loc_{\ell}(\Theta_{p,\fr}^{\nu})$ lies in $\rH^1_{\unr}(\QQ_{\ell},\rH^3(\cX_\fr\otimes\bQ^\ac,\Lambda(2))/\Ker\phi^{\fr\fl} )$, and we have
\[
(\Phi^{-1}\loc_\ell\Theta_{p,\fr}^\nu,f)=\frac{|(\bZ/\fr\cap\bZ)^\times|}{(\ell-1)^2|\Cl(F)_\fr|}\cdot
\sum_{x\in \cS^\flat_{\ell,\fr}}f(\vartheta(x))
\]
for every $f\in\Gamma(\cS_{\ell,\fc'\fr},\Lambda)[\Ker\phi^{\fr\fl}]$. Here, $\Phi$ is the isomorphism in Proposition \ref{pr:level_raising}.
\end{theorem}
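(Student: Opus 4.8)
\textbf{Proof proposal for Theorem~\ref{th:second} (Second explicit reciprocity law).}

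The plan is to follow the standard strategy of computing the localization of a cycle class through the reduction of the Shimura variety at $\ell$, but using the refined arithmetic level raising isomorphism of Proposition~\ref{pr:refinement} in place of the classical Ihara-type argument. First I would verify that $\loc_\ell(\Theta_{p,\fr}^\nu)$ is unramified: the cycle $\theta_*[\cX_\fr^\flat\otimes\bQ]$ has a model over $\bZ_{(\ell)}$ since $\ell$ is coprime to $\fr\disc F$ and to $\Delta^\flat$ by Definition~\ref{de:admissible}(A1), so that $\Theta_{p,\fr}^\nu$ comes from a class on the smooth proper model $\cX_\fr\otimes\bZ_{(\ell)}$; by the standard argument (e.g. as in \cite{Liu2}*{Proposition~4.9} or Nekov\'a\v r's work on Euler systems) the image of such a class under $\loc_\ell$ lands in $\rH^1_\unr(\bQ_\ell,-)$. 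The key point is then that, under the isomorphism of Proposition~\ref{pr:refinement} (and its unramified avatar recorded in Proposition~\ref{pr:level_raising}(2)), the unramified class $\loc_\ell(\Theta_{p,\fr}^\nu)$ corresponds to an explicit $\Lambda$-valued function on the superspecial set $X_\fr^{\r{sp}}(\bF_\ell^\ac)$, namely (up to the constant) the pushforward of the constant function $1$ along $\theta$ restricted to the special fiber.

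The heart of the computation is to identify, in the reduction mod $\ell$, the special fiber of the cycle $\cX_\fr^\flat$ inside $X_\fr = \cX_\fr\otimes\bF_\ell$. Concretely: $\cX_\fr^\flat\otimes\bF_\ell$ maps into $X_\fr$; I would show that its image lies in the superspecial locus $X_\fr^{\r{sp}}$, and that as a $0$-cycle on $X_\fr^{\r{sp}}(\bF_\ell^\ac)$ it is precisely the image of the map $\vartheta\colon \cS^\flat_{\ell,\fr}\to\cS_{\ell,\fc'\fr}$ of \eqref{eq:special1}, pulled back along $\psi$ and summed over the $\Cl(F)_\fr$-fibers. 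This is where the normalization constant $\frac{|(\bZ/\fr\cap\bZ)^\times|}{(\ell-1)^2|\Cl(F)_\fr|}$ emerges: the factor $|\Cl(F)_\fr|$ from the $\psi$-fibers (Lemma~\ref{le:ribet} and the isomorphism $X_\fr^{\r{sp}}(\bF_\ell^\ac)/\Cl(F)_\fr\cong\cS_{\ell,\fc'\fr}$), and the factors $(\ell-1)^2$ and $|(\bZ/\fr\cap\bZ)^\times|$ from comparing the automorphism groups / degrees in the definition of the Abel--Jacobi class and the Eichler-order parametrization, exactly as in the genuinely one-dimensional analogue \cite{Liu2}*{Proposition~3.26} or \cite{Liu2}*{Theorem~3.27}. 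Once this identification of the special fiber is in place, the formula follows by pairing against $f\in\Gamma(\cS_{\ell,\fc'\fr},\Lambda)[\Ker\phi^{\fr\fl}]$ using the definition of $(\;,\;)$ and the explicit description of $\Phi$.

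The main obstacle I anticipate is the precise bookkeeping in the previous paragraph: transporting the reduction-of-cycles computation through the chain of identifications $\cX_\fr\otimes\bZ_{(\ell)}\cong\bfSh(G,K_{0,1}(\fc',\fr)^\ell)$, $\cX_\fr^\flat\otimes\bZ_{(\ell)}\cong\bfSh(G^\flat,K_{0,1}(\bZ,\fr\cap\bZ)^\ell)$, and the isomorphism $X_\fr^{\r{sp}}(\bF_\ell^\ac)\cong\bfSh(G_{\tS_{\max}},K_{0,1}(\fc',\fr)^\ell)(\bF_\ell^\ac)$ of Theorem~\ref{P:superspecial-quaternion}, while keeping track of the Galois action of $\Gal(\bF_{\ell^6}/\bF_\ell)$ (which by Theorem~\ref{P:superspecial-quaternion}(2) is the central twist by $\underline{p}^{-1}$, matching $\op_\ell$ on the Eichler-order side). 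The subtlety is that $\bfSh(G_{\tS_{\max}},-)$ descends only to $\bF_p$ (here $\bF_\ell$) via the superspecial-locus structure rather than a Deligne homomorphism, so I must check that the cycle-class computation is compatible with that $\bF_\ell$-structure; this is exactly the content needed to invoke Proposition~\ref{pr:level_raising}. Granting the arithmetic level raising package of Section~\ref{ss:4} together with Proposition~\ref{pr:refinement}, the remaining work is a careful but essentially formal adaptation of the one-dimensional second reciprocity law of \cite{Liu2} to the present near-middle-dimensional Shimura threefold, where the role of the supersingular points of a Shimura curve is played uniformly by $X_\fr^{\r{sp}}$ for \emph{all} $\fa\in\fB$ simultaneously.
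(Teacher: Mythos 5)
There is a genuine gap in the central step. You propose to "identify the special fiber of the cycle $\cX_\fr^\flat$ inside $X_\fr$" and to show that "its image lies in the superspecial locus $X_\fr^{\r{sp}}$, ... as a $0$-cycle." This cannot work as stated: $\cX_\fr^\flat$ is a Shimura \emph{curve}, so $\theta(X_\fr^\flat)$ is a $1$-cycle in the threefold $X_\fr$, while $X_\fr^{\r{sp}}$ is zero-dimensional. The actual mechanism in the paper is intersection-theoretic: one shows that the scheme-theoretic intersection of the graph $\Gamma_\theta$ with $\pr_2^*Z_\emptyset(\fa)$ (the Goren--Oort divisor attached to each $\fa\in\fB$) lands in $X_\fr^{\flat,\r{sp}}\times X_\fr^{\r{sp}}$, so that $\pi_{\fa*}i_\fa^*\theta_*[X_\fr^\flat]=\theta_{\fa*}[X_\fr^{\flat,\r{sp}}\otimes\bF_{\ell^6}]$, i.e.\ the class $\Res_{\fa}\bar\Theta$ (not $\bar\Theta$ itself) is computed by the supersingular points of the Shimura curve $X_\fr^\flat$.

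Relatedly, your attribution of the factor $(\ell-1)^2$ to "comparing automorphism groups / degrees in the Abel--Jacobi class and the Eichler-order parametrization" is incorrect, and this is where the argument would fail even after fixing the dimension issue. To pass from $\Res_\fa\bar\Theta$ back to $\Psi_\fa(\bar\Theta)=\Phi_{\fa}^{-1}$ applied to the $\fa$-component of $\bar\Theta$, one must invert the Gysin map, and this requires the intersection matrix $(\Res_{\fa_i}\circ\Gys_{\fa_j})$ of the three Goren--Oort strata from \cite{TX2}: on the relevant Frobenius eigenspace the normalized link morphism $\eta$ acts by $\ell^{-1}$, and the row sum $-2\ell+\ell\eta^{-1}+\ell\eta=-2\ell+\ell^2+1=(\ell-1)^2$ (using the symmetry $\Psi_{\fa_1}(\bar\Theta)=\Psi_{\fa_2}(\bar\Theta)=\Psi_{\fa_3}(\bar\Theta)$) is precisely the source of the constant. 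Without this input --- which is the genuinely new, near-middle-dimensional phenomenon here, with no analogue in the curve case --- the normalization in the reciprocity formula cannot be obtained. The remaining ingredients you list (unramifiedness via good reduction, the commutative square relating $\psi^\flat,\psi,\vartheta,\theta$, and the factors $|(\bZ/\fr\cap\bZ)^\times|$ and $|\Cl(F)_\fr|$ from fiber counting) do match the paper's proof.
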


We note that $(\ell-1)^2|\Cl(F)_\fr|$ is invertible in $\Lambda$.

\begin{proof}
The fact that $\Theta_{p, \fr}^{\nu}$ is unramified follows from the fact that both $\fX_{\fr}$ and $\fX^{\flat}_{\fr}$ have good reduction at $\ell$.
Recall that  $X_\fr=\cX_\fr\otimes\bF_\ell$. Similarly, we put $X_\fr^\flat\coloneqq\cX_\fr^\flat\otimes\bF_\ell$. Then we have the morphism $\theta\colon X_\fr^\flat\to X_\fr$ over $\bF_\ell$. Let $\bar\Theta$ be the image of $\theta_*[X_{\fr}^{\flat}]\in \CH^2(X_{\fr})$ in $\rH^1(\bF_\ell,\rH^3(X_\fr\otimes\bF_\ell^\ac,\Lambda(2)/\Ker\phi^{\fr\fl})$ defined similarly as for $\Theta_{p,\fr}^\nu$. Then under the canonical identification
\[
\rH^1(\bF_\ell,\rH^3(\cX_\fr\otimes\bF_\ell^\ac,\Lambda(2))/\Ker\phi^{\fr\fl})
\cong\rH^1_\unr(\bQ_\ell,\rH^3(\cX_\fr\otimes\bQ^\ac,\Lambda(2))/\Ker\phi^{\fr\fl}),
\]
$\bar\Theta$ coincides with $\loc_\ell\Theta_{p,\fr}^\nu$.

From Proposition \ref{pr:refinement}, we have an isomorphism
\[
\Gamma(\fB\times X_{\fr}^{\r{sp}}(\bF_{\ell}^{\ac}), \Lambda)/\Ker\phi^{\fr\fl}=\bigoplus_{\fa\in \fB}\Gamma(X_\fr^{\r{sp}}(\bF_\ell^\ac),\Lambda)/\Ker\phi^{\fr\fl}
\xrightarrow{\cong}\rH^1(\bF_{\ell^6},\rH^3(X_\fr\otimes\bF_\ell^\ac,\Lambda(2))/\Ker\phi^{\fr\fl}).
\]
For each $\fa\in\fB$, we denote by
\[
\Psi_\fa\colon\rH^1(\bF_{\ell^6},\rH^3(X_\fr\otimes\bF_\ell^\ac,\Lambda(2))/\Ker\phi^{\fr\fl})
\to\Gamma(X_\fr^{\r{sp}}(\bF_\ell^\ac),\Lambda)/\Ker\phi^{\fr\fl}
\]
the map obtained by taking the inverse of the previous isomorphism followed by the canonical projection to the direct summand indexed by $\fa$. By a similar proof of \cite{Liu2}*{Proposition~4.3}, we have the following commutative diagram
\[
\xymatrix{
X_\fr^{\flat,\r{sp}}(\bF_\ell^\ac) \ar[r]^-{\theta}\ar[d]_-{\psi^\flat} &  X_\fr^{\r{sp}}(\bF_\ell^\ac) \ar[d]^-{\psi} \\
\cS^\flat_{\ell,\fr} \ar[r]^-{\vartheta} & \cS_{\ell,\fc'\fr}
}
\]
where $\psi^\flat$ is obtained similarly as $\psi$, but for $X_\fr^\flat$. Therefore, the theorem will follow if we can show that for every $f\in\Gamma(X_\fr^{\r{sp}}(\bF_\ell^\ac),\Lambda)[\Ker\phi^{\fr\fl}]$, we have
\begin{align}\label{eq:reciprocity0}
(\Psi_\fa\bar\Theta,f)=\frac{1}{(\ell-1)^2}\sum_{x\in X_\fr^{\flat,\r{sp}}(\bF_\ell^\ac)}f(\theta(x)).
\end{align}

For every $\fa\in\fB$, we have the following commutative diagram as \eqref{eq:reduction}
\begin{align*}
\xymatrix{
& W_{\emptyset}(\fa)\ar@{^(->}[r]\ar[d] & Z_{\emptyset}(\fa)\ar@{^(->}[r]^-{i_\fa} \ar[d]^{\pi_{\fa}} &
\bfSh(G)_{\FF_{\ell^6}}\cong X_\fr\otimes\bF_{\ell^6} \\
X_\fr^{\r{sp}}\otimes\FF_{\ell^6} \ar[r]^-{\cong} & \bfSh(G_{\tS_{\max}})_{\FF_{\ell^6}} \ar@{^(->}[r] ^-{j_\fa} &\bfSh(G_{\emptyset_{\fa}, \tT_{\fa}})_{\FF_{\ell^6}},\\
}
\end{align*}
where the square is Cartesian.
Here, we omit the away-from-$\ell$ level structure $K_{0,1}(\fc',\fr)^{\ell}$ in the notation. However, in this case, $Z_{\emptyset}(\fa)$ coincides with the Goren--Oort divisor $\bfSh(G)_{\FF_{\ell^6},\tau(\fa)}$ for some $\tau(\fa)\in\Sigma_\infty$ determined by $\fa$. Thus it is easy to see that the (scheme-theoretical) intersection $\Gamma_\theta\cap\pr_2^*Z_{\emptyset}(\fa)$ is contained in $X_\fr^{\flat,\r{sp}}\times X_\fr^{\r{sp}}$, where $\Gamma_\theta\subseteq X_\fr^\flat\times X_\fr$ is the graph of $\theta$ and $\pr_2\colon X_\fr^\flat\times X_\fr\to X_\fr$ is the canonical projection. More precisely, it is the graph of the restricted morphism $\theta\colon X_\fr^{\flat,\r{sp}}\to X_\fr^{\r{sp}}$. Therefore, we have
\begin{align}\label{eq:reciprocity}
\pi_{\fa*}i_\fa^*\theta_*[X_\fr^\flat]=\theta_{\fa*}[X_\fr^{\flat,\r{sp}}\otimes\bF_{\ell^6}]
\end{align}
in $\CH^1(\bfSh(G_{\emptyset_{\fa}})_{\FF_{\ell^6}})$, where $\theta_\fa$ is the composite morphism
\[
X_\fr^{\flat,\r{sp}}\otimes\bF_{\ell^6}\xrightarrow{\theta}X_\fr^{\r{sp}}\otimes\bF_{\ell^6}\cong\bfSh(G_{\tS_{\max}})_{\FF_{\ell^6}}
\xrightarrow{j_\fa}\bfSh(G_{\emptyset_{\fa},\tT_{\fa}})_{\FF_{\ell^6}}.
\]

Recall that we have two morphisms
\begin{align*}
\Gys_{\fa}&=i_{\fa!}\circ\pi_\fa^*\colon \rH^1(\bfSh(G_{\emptyset_{\fa}, \tT_{\fa}})_{\bF_\ell^\ac},\Lambda(1))/\Ker\phi^{\fr\fl}\to
\rH^3(X_\fr\otimes\bF_\ell^\ac,\Lambda(2))/\Ker\phi^{\fr\fl}, \\
\Res_{\fa}&=\pi_{\fa!}\circ i_{\fa}^*\colon \rH^3(X_\fr\otimes\bF_\ell^\ac,\Lambda(2))/\Ker\phi^{\fr\fl} \to \rH^1(\bfSh(G_{\emptyset_{\fa},\tT_{\fa}})_{\bF_\ell^\ac},\Lambda(1))/\Ker\phi^{\fr\fl}.
\end{align*}
We write $\fB=\{\fa_1, \fa_2, \fa_3\}$ with $\fa_{i-1}=\sigma(\fa_{i})$  for all $i$ viewed as elements in $\ZZ/3\ZZ$, where $\sigma(\fa_i)$ means the translate of $\fa_i$ by the Frobenius as defined just above Definition~\ref{D:superspecial}. By \cite{TX2}*{Theorem~4.3} and the proof of \cite{TX2}*{Theorem~4.4}, the intersection matrix $(\Res_{\fa_{i}}\circ\Gys_{\fa_j})_{1\leq i,j\leq 3}$ is given by
\[
\left(
  \begin{array}{ccc}
    -2\ell & \ell\eta_1^{-1} & \ell\eta_3\\
    \ell\eta_{1} & -2\ell & \ell\eta_{2}^{-1}\\
    \ell\eta_{3}^{-1} & \ell \eta_2 &-2 \ell,
  \end{array}
\right)
\]
where $\eta_i\colon\rH^1(\bfSh(G_{\emptyset_{\fa_{i}},\tT_{\fa_i}})_{\bF_\ell^\ac},\Lambda(1))/\Ker\phi^{\fr\fl}\to
\rH^1(\bfSh(G_{\emptyset_{\fa_{i+1}},\tT_{\fa_{i+1}}})_{\bF_\ell^\ac},\Lambda(1))/\Ker\phi^{\fr\fl}$ is a certain normalized link morphism introduced in \cite{TX2}*{Section~2.25} which commutes with the Galois action and such that the product $\eta_{i+2}\eta_{i+1}\eta_i$ for $i\in\ZZ/3\ZZ$ is the endomorphism on $\rH^1(\bfSh(G_{\emptyset_{\fa_{i}},\tT_{\fa_i}})_{\bF_\ell^\ac},\Lambda(1))/\Ker\phi^{\fr\fl}$
given as follows. Let $\sigma_{\ell}\in \rG_{\FF_{\ell}}$ denotes an arithmetic Frobenius element. By \cite{BL84} and Definition~\ref{de:admissible}(A4),  one has a decomposition of $\Lambda[\rG_{\FF_{\ell^{3}}}]$-modules
\[
\rH^1(\bfSh(G_{\emptyset_{\fa_{i}},\tT_{\fa_i}})_{\bF_\ell^\ac},\Lambda(1))/\Ker\phi^{\fr\fl}= \rM_{i}^{1}\oplus \rM_i^{\ell^3},
\]
where each $\rM_i^{\lambda}$ for $\lambda=1,\ell^3$ is a finite free $\Lambda$-module on which the action of $\sigma^3_{\ell}-\lambda$ is nilpotent. Then the action of $\eta_{i+2}\eta_{i+1}\eta_{i}$ on $\rM_{i}^1$ (respectively on $\rM_i^{\ell^3}$) is the multiplication by $\ell^{-3}$ (respectively $\ell^3$). Since the roles of $\fa_i$ are symmetric,    $\rH^1(\bfSh(G_{\emptyset_{\fa_{i}},\tT_{\fa_i}})_{\bF_\ell^\ac},\Lambda(1))/\Ker\phi^{\fr\fl}$ for $i=1,2,3$ must be isomorphic.
Thus, we can identify  $\rM^{\lambda}_i$ with $\lambda=1,\ell^3$ for different $i$ and write it commonly as $\rM^{\lambda}$ in such a way that  $\eta_i$'s are identified with the same endomorphism $\eta$ on $\rM^1\oplus\rM^{\ell^3}$, where  $\eta$ acts by $\ell^{-1}$ on $\rM^1$ and by $\ell$ on $\rM^{\ell^3}$ respectively. With these identification, the intersection matrix writes as
\begin{equation}\label{E:intersection-matrix}
(\Res_{\fa_{i}}\circ \Gys_{\fa_j})_{1\leq i,j\leq 3}=\ell\begin{pmatrix}-2 & \eta^{-1} &\eta\\ \eta & -2 & \eta^{-1}\\
\eta ^{-1} & \eta & -2\end{pmatrix}.
\end{equation}
Note also that $\rH^1(\FF_{\ell^6}, \rH^1(\bfSh(G_{\emptyset_{\fa_{i}},\tT_{\fa_i}})_{\bF_\ell^\ac},\Lambda(1))/\Ker\phi^{\fr\fl})\cong \rH^1(\FF_{\ell^6}, \rM^{1})$ on which $\eta$ acts as the scalar $\ell^{-1}$.

By the proof of Theorem~\ref{T:level_raising} in Section \ref{S:proof}, we have a commutative diagram
\[\xymatrix{
\rH^1(\bF_{\ell^6},\rH^3(X_\fr\otimes\bF_\ell^\ac,\Lambda(2))/\Ker\phi^{\fr\fl}) \ar[d]_-{\Psi_{\fa_i}}
& \rH^1(\bF_{\ell^6},\rH^1(\bfSh(G_{\emptyset_{\fa_i},\tT_{\fa_i}})_{\bF_\ell^\ac},\Lambda(1))/\Ker\phi^{\fr\fl}))  \ar[l]_-{\r{Gys}_{\fa_i}} \\
\Gamma(X_\fr^{\r{sp}}(\bF_\ell^\ac),\Lambda)/\Ker\phi^{\fr\fl}
& \Gamma(\bfSh(G_{\tS_{\max}})(\bF_\ell^\ac),\Lambda)/\Ker\phi^{\fr\fl} \ar[l]_-{\cong}\ar[u]^-{\Phi_{\fa_i}}
}\]
where the bottom isomorphism is the one induced by the identification $X_\fr^{\r{sp}}\otimes\FF_{\ell^6}\cong\bfSh(G_{\tS_{\max}})_{\FF_{\ell^6}}$, and $\Phi_{\fa_i}$ is the map induced from \eqref{E:curve}. We claim that $\Phi_{\fa_i}$ is an isomorphism. Indeed, by Proposition~\ref{P:level_raising_curve} and the Nakayama Lemma, $\Phi_{\fa_i}$ is surjective. On the other hand, we have a commutative diagram
\[
\xymatrix{\bigoplus_{i=1}^3\Gamma(\bfSh(G_{\tS_{\max}})(\bF_\ell^\ac),\Lambda)/\Ker\phi^{\fr\fl} \ar[r]^-{\oplus_i \Phi_{\fa_i}} \ar[rd]_{\eqref{E:level-raising-refine}} & \bigoplus_{i=1}^3\rH^1(\bF_{\ell^6},\rH^1(\bfSh(G_{\emptyset_{\fa_i},\tT_{\fa_i}})_{\bF_\ell^\ac},\Lambda(1))/\Ker\phi^{\fr\fl})) \ar[d]^{\sum_{i} \Gys_{\fa_i}}\\
&  \rH^1(\bF_{\ell^6},\rH^3(X_\fr\otimes\bF_\ell^\ac,\Lambda(2))/\Ker\phi^{\fr\fl})}
\]
where the composite map is an isomorphism by Proposition~\ref{pr:refinement}. It follows that each $\Phi_{\fa_i}$ is injective, and hence an isomorphism.

Now, we have $\bar \Theta=\sum_{i=1}^3\Gys_{\fa_i}\circ\Phi_{\fa_i}\circ\Psi_{\fa_i}(\bar\Theta)$ and
\[
\Phi^{-1}_{\fa_1}\circ \Res_{\fa_1}\bar \Theta=\ell (-2\Psi_{\fa_1}(\bar \Theta)+\ell \Psi_{\fa_2}(\bar \Theta)+\ell^{-1} \Psi_{\fa_3}(\bar \Theta))=(\ell-1)^2\Psi_{\fa_1}(\bar\Theta)
\]
by \eqref{E:intersection-matrix}. Here, the last equality uses $\Phi_{\fa_1}(\bar\Theta)=\Psi_{\fa_2}(\bar\Theta)=\Psi_{\fa_3}(\bar \Theta)$ by symmetry. On the other hand, by \eqref{eq:reciprocity}, we have
\[
\Phi_\fa^{-1}\circ\Res_\fa\bar\Theta=\theta_*\b{1}^\flat
\]
for all $\fa\in \fB$, where $\b{1}^\flat$ is the characteristic function on $X_\fr^{\flat,\r{sp}}(\bF_\ell^\ac)$. Thus \eqref{eq:reciprocity0} follows immediately, and the theorem is proved.

\end{proof}

The following lemma will be needed in the next section.

\begin{lem}\label{le:center}
Let $\fs$ be an ideal of $\cO_F$ contained in $\fr\fl$. The map
\[
\bigoplus_{\fd\in\fD(\fr,\fc^+)}\delta^\fd_*\colon\Gamma(\cS_{\ell,\fc'\fr},\Lambda)/\Ker\phi^\fs
\to\bigoplus_{\fd\in\fD(\fr,\fc^+)}\Gamma(\cS_{\ell,\fc'\fc^+},\Lambda)/\Ker\phi^\fs
\]
is an isomorphism of free $\Lambda$-modules of rank $|\fD(\fr,\fc^+)|$, for $\fs=\fr\fl$.
\end{lem}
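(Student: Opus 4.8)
The plan is to transport the statement to the degree-three cohomology of the threefolds $\cX_\fr$ and $\cX(\Delta)_{\fc'\fc^+,\cO_F}$ via the explicit level-raising isomorphism of Proposition~\ref{pr:level_raising}, where it becomes (essentially) the $\fr\fl$-cleanness of $(p,\fr)$, which is part of Definition~\ref{de:admissible}(A2). Throughout one has $\fs=\fr\fl$, so $\phi^\fs=\phi^{\fr\fl}$ and $\fm^\fs=\fm^{\fr\fl}$, and $\fm^{\fr\fl}$ is a level-raising ideal at $\fl$ by admissibility (A4), so the apparatus of Sections~\ref{ss:3}--\ref{ss:4} applies.

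First I would record that both sides are free $\Lambda$-modules of the asserted rank, after which Nakayama's lemma reduces us to showing $\bigoplus_\fd\delta^\fd_*$ is an isomorphism. For the source, Proposition~\ref{pr:level_raising}(2) provides an isomorphism $\Phi\colon\Gamma(\cS_{\ell,\fc'\fr},\Lambda)/\Ker\phi^{\fr\fl}\xrightarrow{\sim}\rH^1_\unr(\bQ_\ell,\rH^3(\cX_\fr\otimes\bQ^\ac,\Lambda(2))/\Ker\phi^{\fr\fl})$, and the right-hand side is free of rank $|\fD(\fr,\fc^+)|$ by Lemma~\ref{L:rank-1}. For the target, the analogues of Lemma~\ref{le:ribet} and of Propositions~\ref{pr:refinement}--\ref{pr:level_raising} at level $\fc'\fc^+$ (where the defect index is $|\fD(\fc^+,\fc^+)|=1$) identify $\Gamma(\cS_{\ell,\fc'\fc^+},\Lambda)/\Ker\phi^{\fr\fl}$ with $\rH^1_\unr(\bQ_\ell,\rH^3(\cX(\Delta)_{\fc'\fc^+,\cO_F}\otimes\bQ^\ac,\Lambda(2))/\Ker\phi^{\fr\fl})$, which is free of rank $1$ by the first part of Lemma~\ref{L:rank-1}.

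Next I would identify $\bigoplus_\fd\delta^\fd_*$, through $\Phi$ and its level-$\fc'\fc^+$ analogue, with the map induced by $\bigoplus_\fd\tilde\delta^\fd_*$ on the groups $\rH^1_\unr(\bQ_\ell,\rH^3(-\otimes\bQ^\ac,\Lambda(2))/\Ker\phi^{\fr\fl})$. The compatibility needed here is that each $\tilde\delta^\fd$ commutes with the formation of the superspecial loci at $\ell$ and with the iterated $\dP^1$-fibrations $\pi_{\underline\fa}$; since $\tilde\delta^\fd$ changes level only at the primes dividing $\fr$ and $\fc^+$, all coprime to $\fl$, this follows from the prime-to-$\ell$ Hecke-equivariance already recorded in Theorems~\ref{T:supersingular-quaternion}(2) and \ref{P:superspecial-quaternion}(1), hence from the very construction of the cycle class map \eqref{E:AJ0} and of $\Phi$. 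Now by Definition~\ref{de:admissible}(A2) the pair $(p,\fr)$ is $\fr\fl$-clean, so $\bigoplus_\fd\tilde\delta^\fd_*\colon\rH^3(\cX_\fr\otimes\bQ^\ac,\bZ_p)/\fm^{\fr\fl}\to\bigoplus_\fd\rH^3(\cX(\Delta)_{\fc'\fc^+,\cO_F}\otimes\bQ^\ac,\bZ_p)/\fm^{\fr\fl}$ is an isomorphism; as both cohomology groups are finite free over $\bZ_p$ by Remark~\ref{R:freeness-cohomology}, the corresponding map on $/\Ker\phi^{\fr\fl}$ tensored with $\Lambda$ is an isomorphism of unramified $\Lambda[\rG_{\bQ_\ell}]$-modules (both stacks have good reduction at $\ell$), so applying $\rH^1_\unr(\bQ_\ell,-)$ preserves the isomorphism. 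Chasing the square then yields that $\bigoplus_\fd\delta^\fd_*$ is an isomorphism.

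The step I expect to require the most care is the one in the previous paragraph: checking that the level-raising apparatus of Sections~\ref{ss:3}--\ref{ss:4}, in particular Propositions~\ref{pr:refinement} and \ref{pr:level_raising}, genuinely transfers to level $\fc'\fc^+$, where $\cX(\Delta)_{\fc'\fc^+,\cO_F}$ is a Deligne--Mumford stack rather than a scheme (harmless for $\bZ_p$-cohomology since $p$ is coprime to the orders of the relevant automorphism groups, but this must be said), together with matching the degeneracy maps $\tilde\delta^\fd$ through the whole chain of identifications. If one prefers to bypass this bookkeeping, the statement can instead be proved automorphically via Jacquet--Langlands: $\bigoplus_\fd\delta^\fd_*$ becomes the standard oldform degeneracy comparison between the $\fr$- and $\fc^+$-levels, and its bijectivity after localization at the non-Eisenstein ideal $\fm^{\fr\fl}$ follows from an Ihara-type injectivity in the spirit of Lemma~\ref{L:Ihara-Lemma} together with the rank computations above, genericity of $p$ being used to rule out extra congruences modulo $p^\nu$.
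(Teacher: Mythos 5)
Your overall strategy --- transport the statement through the level-raising isomorphism $\Phi$ and invoke $\fr\fl$-cleanness --- is the right one, but there is a genuine gap at the point you yourself flag as delicate: you need a ``level-$\fc'\fc^+$ analogue'' of Proposition \ref{pr:level_raising} identifying $\Gamma(\cS_{\ell,\fc'\fc^+},\Lambda)/\Ker\phi^{\fr\fl}$ with $\rH^1_\unr(\bQ_\ell,\rH^3(\cX(\Delta)_{\fc'\fc^+,\cO_F}\otimes\bQ^\ac,\Lambda(2))/\Ker\phi^{\fr\fl})$, and no such statement is available. The entire apparatus behind $\Phi$ (the integral models of Section \ref{S:quaternion-PEL}, the description of the supersingular and superspecial loci as iterated $\dP^1$-fibrations, Ihara's lemma as quoted in Lemma \ref{L:Ihara-Lemma}, and Theorem \ref{T:level_raising} itself) is established only for \emph{neat} level structures, which $K_{0,1}(\fc'\fc^+,\cO_F)$ is not: $\cX(\Delta)_{\fc'\fc^+,\cO_F}$ is a Deligne--Mumford stack. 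Saying that this is ``harmless since $p$ is coprime to the automorphism groups'' does not produce the required isomorphism; one would have to redo the geometry of Sections \ref{ss:3}--\ref{ss:4} at the non-neat level, which is precisely what the argument should avoid. The Jacquet--Langlands fallback you sketch at the end is not a proof either, since the bijectivity of the oldform degeneracy map after localization at $\fm^{\fr\fl}$ with $\Lambda$-coefficients is essentially the statement being proved.

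The way around this is to never leave level $\fr$: compose $\delta^\fd_*$ with the (elementarily injective) pullback $\delta^*$, so that one only has to control the endomorphism $\delta^*\circ\delta^\fd_*$ of $\Gamma(\cS_{\ell,\fc'\fr},\Lambda)/\Ker\phi^{\fr\fl}$. Forming the fiber product $\cX_\fr^\fd=\cX_\fr\times_{\cX(\Delta)_{\fc'\fc^+,\cO_F}}\cX_\fr$ with projections $\varepsilon,\varepsilon^\fd$, one gets a commutative square matching $|(\cO_F/\fr)^\times|\cdot\delta^*\circ\delta^\fd_*$ with $\varepsilon^\fd_*\circ\varepsilon^*$ through the single isomorphism $\Phi$ at the neat level $\fr$; proper base change identifies $\varepsilon^\fd_*\circ\varepsilon^*$ with $\tilde\delta^*\circ\tilde\delta^\fd_*$ on $\rH^3(\cX_\fr\otimes\bQ^\ac,\Lambda(2))$, and then $\fr\fl$-cleanness together with the injectivity of $\tilde\delta^*$ (using $p\nmid\mu(\fr,\fc^+)$) shows this endomorphism has free image of rank $1$. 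Note also the normalization factor $|(\cO_F/\fr)^\times|$ appearing here, which your proposed compatibility of $\delta^\fd_*$ with $\tilde\delta^\fd_*$ ``by Hecke-equivariance'' would miss; it is a unit in $\Lambda$ but signals that the matching is not purely formal. From the rank-$1$ image one deduces that $\Gamma(\cS_{\ell,\fc'\fc^+},\Lambda)/\Ker\phi^{\fr\fl}$ is free of rank $1$ (surjectivity of $\delta^\fd_*$ and injectivity of $\delta^*$ on function spaces are elementary) and that $\bigoplus_\fd\delta^\fd_*$ is injective, hence an isomorphism by the rank count from Proposition \ref{pr:level_raising}. Your computation of the rank of the source via $\Phi$ and Lemma \ref{L:rank-1} is correct and is used in the same way.
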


\begin{proof}
The ideal of proof is similar to \cite{Liu2}*{Lemma~3.33}. Recall that we have morphisms $\tilde\delta^\fd$ in \eqref{eq:correspondence0} for each $\fd\in \fD(\fr,\fc^+)$. As usual, we put $\tilde\delta\coloneqq\tilde\delta^{\cO_F}$. Form the following pullback square
\[
\xymatrix{
\cX_\fr^\fd \ar[r]^-{\varepsilon}\ar[d]_-{\varepsilon^\fd} &  \cX_\fr \ar[d]^-{\tilde\delta^\fd} \\
\cX_\fr \ar[r]^-{\tilde\delta} & \cX(\Delta)_{\fc'\fc^+,\cO_F}
}
\]
of schemes over $\bZ_{(\ell)}$, where all morphisms are finite \'{e}tale. The scheme $\cX_\fr^\fd$ has a natural action by $\dT^{\tR(\fr\fl)}$ under which the above diagram is equivariant. By an argument similar to \cite{Liu2}*{Lemma~3.33}, we obtain a commutative diagram
\begin{align}\label{eq:correspondence1}
\xymatrix{
\Gamma(\cS_{\ell,\fc'\fr},\Lambda)/\Ker\phi^{\fr\fl} \ar[r]^-{\Phi}\ar[d]_-{|(\cO_F/\fr)^\times|\cdot\delta^*\circ\delta^\fd_*}
& \rH^1_\unr(\bQ_\ell,\rH^3(\cX_\fr\otimes\bQ^\ac,\Lambda(2))/\Ker\phi^{\fr\fl})  \ar[d]^-{\varepsilon^\fd_*\circ\varepsilon^*} \\
\Gamma(\cS_{\ell,\fc'\fr},\Lambda)/\Ker\phi^{\fr\fl} \ar[r]^-{\Phi}
& \rH^1_\unr(\bQ_\ell,\rH^3(\cX_\fr\otimes\bQ^\ac,\Lambda(2))/\Ker\phi^{\fr\fl})
}
\end{align}
where $\Phi$ is the isomorphism in Proposition~\ref{pr:level_raising}. By proper base change, the endomorphism $\varepsilon^\fd_*\circ\varepsilon^*$ of $\rH^3(\cX_\fr\otimes\bQ^\ac,\Lambda(2))$ coincides with the composite map
\[
\rH^3(\cX_\fr\otimes\bQ^\ac,\Lambda(2))\xrightarrow{\tilde\delta^\fd_*}
\rH^3(\cX(\Delta)_{\fc'\fc^+,\cO_F}\otimes\bQ^\ac,\Lambda(2))\xrightarrow{\tilde\delta^*}\rH^3(\cX_\fr\otimes\bQ^\ac,\Lambda(2)).
\]
Definition~\ref{de:admissible}(A2) and Proposition~\ref{pr:level_raising}(1) imply that the image of
\[
\varepsilon^\fd_*\circ\varepsilon^*\colon\rH^1_\unr(\bQ_\ell,\rH^3(\cX_\fr\otimes\bQ^\ac,\Lambda(2))/\Ker\phi^{\fr\fl})
\to\rH^1_\unr(\bQ_\ell,\rH^3(\cX_\fr\otimes\bQ^\ac,\Lambda(2))/\Ker\phi^{\fr\fl})
\]
is a free $\Lambda$-module of rank $1$. Here, we use the fact that $\tilde\delta^*$ is injective, as $p\nmid\mu(\fr,\fc^+)$ in Definition~\ref{de:perfect_pair}(3b).

By the commutative diagram \eqref{eq:correspondence1}, we know that the image of
\[
\delta^*\circ\delta^\fd_*\colon\Gamma(\cS_{\ell,\fc'\fr},\Lambda)/\Ker\phi^{\fr\fl}\to\Gamma(\cS_{\ell,\fc'\fr},\Lambda)/\Ker\phi^{\fr\fl}
\]
is a free $\Lambda$-module of rank $1$. Since $\delta^\fd_*$ is surjective and $\delta^*$ is injective, $\Gamma(\cS_{\ell,\fc'\fc^+},\Lambda)/\Ker\phi^{\fr\fl}$ is a free $\Lambda$-module of rank $1$. Similarly, we may deduce that the map
\begin{align}\label{eq:correspondence2}
\bigoplus_{\fd\in\fD(\fr,\fc^+)}\delta^\fd_*\colon\Gamma(\cS_{\ell,\fc'\fr},\Lambda)/\Ker\phi^{\fr\fl}
\to\bigoplus_{\fd\in\fD(\fr,\fc^+)}\Gamma(\cS_{\ell,\fc'\fc^+},\Lambda)/\Ker\phi^{\fr\fl}
\end{align}
is injective. However, since the source of \eqref{eq:correspondence2} a free $\Lambda$-module of rank $|\fD(\fr,\fc^+)|$ by Proposition \ref{pr:level_raising}, the map \eqref{eq:correspondence2} has to be an isomorphism. The lemma follows.
\end{proof}

\subsection{First explicit reciprocity law}
\label{ss:first}

We keep the notation in Section \ref{ss:second}. Let $\underline{\ell}=(\ell,\ell')$ be a pair of distinct $(p^\nu,\fr)$-admissible primes (Definition \ref{de:admissible}) such that Lemma \ref{le:center} holds for $\fs=\fr\fl\fl'$, where $\fl'\coloneqq\ell'\cO_F$.

Put $\cX_{\fr,\underline\ell}\coloneqq\cX(\Delta\cup\{\fl,\fl'\})_{\fc',\fr}$ and $\cX_{\fr,\underline\ell}^\flat\coloneqq\cX(\Delta^\flat\cup\{\ell,\ell'\})_{\bZ,\fr\cap\bZ}$ (\cite{Liu2}*{Definition~B.1}), as schemes over $\bZ_{(\ell')}$. Then we obtain a canonical morphism
\begin{align}\label{eq:special2}
\theta_{\underline\ell}\colon\cX_{\fr,\underline\ell}^\flat\to\cX_{\fr,\underline\ell}.
\end{align}
Denote by $\Theta_{p,\fr,\underline\ell}^\nu$ the image of $\theta_{\underline\ell*}[\cX_{\fr,\underline\ell}^\flat\otimes\bQ]\in\CH^2(\cX_{\fr,\underline\ell}\otimes\bQ)$ under the Abel--Jacobi map
\[
\AJ_p\colon\CH^2(\cX_{\fr,\underline\ell}\otimes\bQ)\to
\rH^1(\bQ,\rH^3(\cX_{\fr,\underline\ell}\otimes\bQ^\ac,\Lambda(2))/\Ker\phi^{\fr\fl\fl'}).
\]

\begin{theorem}[First explicit reciprocity law]\label{th:first}
Let $\underline{\ell}=(\ell,\ell')$ be as above.
\begin{enumerate}
  \item There is a canonical decomposition of the $\Lambda[\rG_\bQ]$-module
      \[\rH^3(\cX_{\fr,\underline\ell}\otimes\bQ^\ac,\Lambda(2))/\Ker\phi^{\fr\fl\fl'}=\bigoplus_{\fd\in\fD(\fr,\fc^+)}\rM_0\]
      where $\rM_0$ is isomorphic to $\rN_\rho^\sharp(-1)$  (Notation~\ref{N:notation-rho}) as a $\Lambda[\rG_\bQ]$-module.

  \item There is a canonical isomorphism
      \[
      \rH^1_\sing(\bQ_{\ell'},\rH^3(\cX_{\fr,\underline\ell}\otimes\bQ^\ac,\Lambda(2))/\Ker\phi^{\fr\fl\fl'})
      \cong\Gamma(\cS_{\ell,\fc'\fr},\Lambda)/\Ker\phi^{\fr\fl},
      \]
      under which we have
      \[
      (\partial\loc_{\ell'}\Theta_{p,\fr,\underline\ell}^\nu,f)=(\ell'+1)\cdot
      \frac{|(\bZ/\fr\cap\bZ)^\times|}{|\Cl(F)_\fr|}\cdot\sum_{x\in \cS^\flat_{\ell,\fr}}f(\vartheta(x))
      \]
      for every $f\in\Gamma(\cS_{\ell,\fc'\fr},\Lambda)[\Ker\phi^{\fr\fl}]$.
\end{enumerate}
\end{theorem}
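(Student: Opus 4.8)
The statement is essentially the cubic--triple-product first explicit reciprocity law proved in \cite{Liu2}, and the plan is to adapt that argument while feeding in the structural inputs established above. First I would prove part (1). Since $\ell'$ is $(p^\nu,\fr)$-admissible, $\cX_{\fr,\underline\ell}$ is smooth and proper away from $\tR(\fr\fl\fl')$, and by \cite{BL84} together with the Jacquet--Langlands correspondence the quotient $\rH^3(\cX_{\fr,\underline\ell}\otimes\bQ^\ac,\Lambda(2))/\Ker\phi^{\fr\fl\fl'}$ is controlled by the mod-$p^\nu$ automorphic Galois representation attached to $\Pi_E$. The theory of oldforms, the degeneracy maps $\delta^\fd$ for $\fd\in\fD(\fr,\fc^+)$, and the cleanness hypothesis (Definition~\ref{de:admissible}(A2), imposed here for $\fs=\fr\fl\fl'$ through the standing assumption that Lemma~\ref{le:center} holds) then identify this space with $\bigoplus_{\fd\in\fD(\fr,\fc^+)}\rN_\rho^\sharp(-1)$, exactly as in the proof of Lemma~\ref{L:rank-1}; the twist $(-1)$ is forced by the weight of the middle cohomology of the threefold, with $\rN_\rho^\sharp=\rN_\rho^{\otimes 3}$ and $\rN_\rho=T_p(E)\otimes\Lambda$.

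Next I would establish the isomorphism and the formula in part (2). As in the proof of \cite{Liu2}*{Proposition~3.32}, because $\fl'$ is adjoined to the ramification set defining $\cX_{\fr,\underline\ell}$, this scheme has a strictly semistable model at $\fl'$ whose special fibre, dual complex, and superspecial locus are governed by $p$-adic uniformization. The associated monodromy weight spectral sequence, after passing to the quotient by $\Ker\phi^{\fr\fl\fl'}$, has the shape predicted by the restriction of $\rN_\rho^\sharp(-1)$ to $\rG_{\bQ_{\ell'}}$ computed in the proof of Lemma~\ref{L:rank-1}; reading off the singular part $\rH^1_\sing(\bQ_{\ell'},-)$ of this quotient identifies it with the degree-zero cohomology of the superspecial locus of $\cX_{\fr,\underline\ell}\otimes\bF_{\ell'}$, which by Proposition~\ref{P:superspecial-quaternion}, \cite{Liu2}*{Proposition~A.13} and Definition~\ref{de:admissible}(A2) is canonically $\Gamma(\cS_{\ell,\fc'\fr},\Lambda)/\Ker\phi^{\fr\fl}$ (the orientation action at $\fl'$ being trivial, as in \cite{Liu2}, which is why $\Ker\phi^{\fr\fl}$ rather than $\Ker\phi^{\fr\fl\fl'}$ appears).

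It remains to compute $\partial\loc_{\ell'}\Theta_{p,\fr,\underline\ell}^\nu$. Since $\cX_{\fr,\underline\ell}$ and $\cX_{\fr,\underline\ell}^\flat$ have good reduction at $\ell$, the class $\Theta_{p,\fr,\underline\ell}^\nu$ is unramified at $\ell$, so $\loc_{\ell'}\Theta_{p,\fr,\underline\ell}^\nu$ may be computed on the special fibre at $\ell'$; by compatibility of the Abel--Jacobi map with specialization, $\partial\loc_{\ell'}\Theta_{p,\fr,\underline\ell}^\nu$ is the class of the specialization of $\theta_{\underline\ell *}[\cX_{\fr,\underline\ell}^\flat\otimes\bQ]$ in the cohomology of the superspecial locus. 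An intersection computation identical to that of \cite{Liu2}*{Proposition~4.3} (compare \cite{Liu2}*{Proposition~4.9}) then shows that the graph of $\theta_{\underline\ell}$ meets the special fibre transversally along the superspecial set, that this specialization equals $\vartheta_*\b{1}^\flat$ up to the factor $(\ell'+1)$ of local origin at $\ell'$ and the factor $|(\bZ/\fr\cap\bZ)^\times|/|\Cl(F)_\fr|$ relating the Shimura variety to the Eichler-order set $\cS_{\ell,\fc'\fr}$, and that pairing against $f\in\Gamma(\cS_{\ell,\fc'\fr},\Lambda)[\Ker\phi^{\fr\fl}]$ yields the asserted identity. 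I expect the main obstacle to be precisely this last step: running the full monodromy weight spectral sequence with $\bZ/p^\nu$-coefficients, verifying that nothing is lost on passing to the quotient by $\Ker\phi^{\fr\fl\fl'}$, and tracking the cycle class through all the identifications to pin down the numerical constants exactly --- though, as indicated in the introduction, all of this is carried out in \cite{Liu2} and transports with essentially only notational changes because the ambient geometry is unchanged.
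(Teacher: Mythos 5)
Your proposal is correct and follows essentially the same route as the paper: the paper simply verifies, via Lemma~\ref{le:center} and the $(p^\nu,\fr)$-admissibility of $\ell'$, that $(\rho,\fc'\fc^+,\fc',\fr)$ is a perfect quadruple and $\ell'$ a cubic-level raising prime in the sense of \cite{Liu2}, matches the morphisms \eqref{eq:special1}--\eqref{eq:special2} with those of \cite{Liu2}, and then quotes \cite{Liu2}*{Theorem~3.5(2),(3)} and \cite{Liu2}*{Theorem~4.5} directly. The weight-spectral-sequence and intersection-theoretic steps you sketch are exactly the internal content of those cited results, so nothing further is needed.
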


\begin{proof}
We will use results from \cite{Liu2}*{Sections 3 \& 4}. Put $\nabla^\flat\coloneqq\Delta^\flat\cup\{\infty,\ell\}$ as in the setup of \cite{Liu2}*{Section 4.1}. By Lemma \ref{le:center}, $(\rho,\fc'\fc^+,\fc',\fr)$ is a perfect quadruple in the sense of \cite{Liu2}*{Definition~3.2}, satisfying \cite{Liu2}*{Assumption~4.1}. Moreover, $\ell'$ is a cubic-level raising prime for $(\rho,\fc'\fc^+,\fc',\fr)$ in the sense of \cite{Liu2}*{Definition~3.3}.

Note that the morphism \eqref{eq:special2} is nothing but $\theta\colon\cX(\ell')_\fr^\flat\to\cX(\ell')_{\fc',\fr}$ in \cite{Liu2}*{(4.1.1)}; and the map \eqref{eq:special1} is nothing but $\vartheta\colon\cS^\flat_\fr\to\cS_{\fc'\fr}$ in \cite{Liu2}*{(4.1.2)}. Therefore, (1) follows from \cite{Liu2}*{Theorem~3.5(2)}; and (2) follows from \cite{Liu2}*{Theorem~3.5(3) \& Theorem~4.5}.
\end{proof}

\subsection{Proof of main theorem}

Recall that we have the multiplicatively induced representation $\rN_\rho^\sharp$ and the $\bZ/p^\nu[\rG_\bQ]$-module $\rM_0$ as in Theorem \ref{th:first}. We have a $\rG_\bQ$-equivariant pairing
\[
\rN_\rho^\sharp(-1)\times\rM_0\to\bZ/p^\nu(1)
\]
which induces, for every prime power $v$, a local Tate pairing
\[
\langle\;,\;\rangle_v\colon\rH^1(\bQ_v,\rN_\rho^\sharp(-1))\times\rH^1(\bQ_v,\rM_0)\to\rH^2(\bQ_v,\bZ/p^\nu(1))\simeq\bZ/p^\nu.
\]
For $s\in\rH^1(\bQ,\rN_\rho^\sharp(-1))$ and $r\in\rH^1(\bQ,\rM_0)$, we will write $\langle s,r\rangle_v$ rather than $\langle\loc_v(s),\loc_v(r)\rangle_v$.

\if false

\yichao{Here, can we get rid of the quadratic character $\chi$?  If $\chi$ is non-trivial, I  think we do not have  $\sum_{v}\langle s,r\rangle_v=0$ for $s\in\rH^1(\bQ,\rN_\rho^\sharp(-1))$ and $r\in\rH^1(\bQ,\rM_0)$, since the  sequence
\[\rH^2(\bQ,\bZ/p^{\nu}\bZ(\chi)(1))\xra{\oplus_{v}\loc_v} \rH^2(\bQ_v, \bZ/p^{\nu}\bZ(\chi)(1))\xra{\sum}\bZ/p^{\nu}\bZ\to0,
\]
is not exact.
}
\yifeng{I am confused: we only need the composition to be zero; why do we need exactness? In fact, one can run the argument over $F_0$.}
\yichao{Of course, one can run the argument over $F_0$, but I don't see how to descend the results to $\bQ$. One can identify $\rH^2(\bQ,\bZ/p^{\nu}\bZ(1)(\chi))$ with  $\rH^2(F_0,\bZ/p^{\nu}\bZ(1)(\chi))^{\Gal(F_0/\bQ)}=\rH^2(F_0,\bZ/p^{\nu}\bZ(1))[\chi]$, the $\chi$-isotypic component of  $\rH^2(F_0,\bZ/p^{\nu}\bZ(1))$. For $x\in \rH^2(F_0,\bZ/p^{\nu}\bZ(1))[\chi]$, then $\inv_{w}(x)=0$ if $w$ is a place of $F_0$ ramified or inert over $\bQ$, and $\inv_{w}(x)+\inv_{w^c}(x)=0$ if the underlying place of $w$ in $\bQ$ splits in $F_0/\bQ$. When we view $x$ as an element of $\rH^2(\bQ, \bZ/p^{\nu}(1)(\chi))$, and define its invariant at a local place $v$ of $\bQ$ as you did above, then we have $\inv_v(x)=0$ if $v$ is inert or ramified in $F_0$, and $\inv_v(x)$ is either identified with $\inv_w(x)$ or $\inv_{w^c}(x)$ depending how you embed $\Gal_{\bQ_v}$ into $\Gal_{\bQ}$. In particular, $\inv_v(x)$ seems only well-defined up to sign.     }

\fi

\begin{proof}[Proof of Theorem \ref{th:selmer}]
We assume that $\Theta_{p,\fr}$ is nonzero. Regard $\Theta_{p,\fr}$ as an element in $\rH^1_f(\bQ,\rH^3(\cX_\fr\otimes\bQ^\ac,\bZ_p(2))/\Ker\phi^\fr)$, which is not torsion. By \cite{BL84} and the assumption that $(p,\fr)$ is $\fr$-clean (Definition \ref{de:perfect_pair}), we know that $\rN_p\coloneqq\rH^3(\cX(\Delta)_{\fc'\fc^+,\cO_F}\otimes\bQ^\ac,\bZ_p(2))/\Ker\phi^\fr$ is a $\rG_\bQ$-stable lattice in $\sfM(E)_p$; and there exists some $\fd\in\fD(\fr,\fc^+)$ such that $\delta^\fd_*\Theta_{p,\fr}\in\rH^1_f(\bQ,\rN_p)$ is not torsion. Here, $\rH^1_{f}(\bQ,\rN_p)$ is by definition of the preimage of $\rH^1_f(\bQ,\sfM(E)_p)$ via the natural map $\rH^1(\bQ, \rN_p)\to \rH^1(\bQ,\sfM(E)_p)$. We fix such an element $\fd$. Let $\nu_0\geq 0$ be the largest integer such that $\delta^\fd_*\Theta_{p,\fr}\in p^{\nu_0}\rH^1_f(\bQ,\rN_p)$.

We prove by contradiction, hence assume $\dim_{\bQ_p}\rH^1_f(\bQ,\sfM(E)_p)\geq 2$. In what follows, we fix a sufficiently large integer $\nu$ as before, and will give a lower bound on $\nu$ for which a contradiction emerges at the end of proof.

By \cite{Liu1}*{Lemma~5.9}, we may find a free $\bZ/p^\nu$-submodule $\rS$ of $\rH^1_f(\bQ,\rN_\rho^\sharp(-1))$ of rank $2$ with a basis $\{s,s'\}$ such that $p^{\nu_0}s=\delta^\fd_*\Theta_{p,\fr}^\nu$. By the same discussion in \cite{Liu2}*{Section 4.3 (after Lemma~4.12)}, we have tower of fields $\bL_\rS/\bL/\bQ$ contained in $\bQ^\ac$. Let $\Box$ be the (finite) set of rational primes that are either ramified in $\bL_\rS$ or not coprime to $\Delta$ or $\fr\disc F$. Put $\nu_\Box\coloneqq\max\{\nu_v\res v\in\Box\}$ where $\nu_v$ is in \cite{Liu2}*{Lemma~4.12(2)}. We choose a prime $\ell_0\not\in\Box$ such that $\ell_0$ is $(p^\nu,\fr)$-admissible (Definition \ref{de:admissible}) -- this is possible by \cite{Liu2}*{Lemma~4.11}. Let $\gamma\in\Gal(\bL/\bQ)$ be the image of $\Frob_{w_0}$ under $\rho^\sharp(-1)$ (the image of $\rho^\sharp(-1)$ has been identified with $\Gal(\bL/\bQ)$), where $w_0$ is some prime of $\bL$ above $\ell_0$. Then $\gamma$ has order coprime to $p$; and $(\rN_\rho^\sharp(-1))^{\langle\gamma\rangle}$ is a free $\bZ/p^\nu$-module of rank $1$.

By \cite{Liu2}*{Lemma~4.16} and (the argument for) \cite{Liu2}*{Lemma~4.11}, we may choose two $(\Box,\gamma)$-admissible places (in the sense of \cite{Liu2}*{Definition~4.15}) $w,w'$ of $\bL$ such that
\begin{enumerate}
  \item $\Psi_w(s')=0$, $\Psi_w(s)=t$, $\Psi_{w'}(s')=t'$ with $t,t'\in(\rN_\rho^\sharp(-1))^{\langle\gamma\rangle}$ that are not divisible by $p$;
  \item the underlying prime $\ell$ of $w$ and the underlying prime $\ell'$ of $w'$ are distinct $(p^\nu,\fr)$-admissible primes, such that Lemma \ref{le:center} holds for $\fs=\fr\fl\fl'$.
\end{enumerate}
Put $\underline\ell\coloneqq(\ell,\ell')$. Then there are elements $\Theta_{p,\fr,\underline\ell}^\nu\in\rH^1(\bQ,\rH^3(\cX_{\fr,\underline\ell}\otimes\bQ^\ac,\Lambda(2))/\Ker\phi^{\fr\fl\fl'})$ from Section \ref{ss:first}, and $\delta^\fd_*\Theta_{p,\fr,\underline\ell}^\nu\in\rH^1(\bQ,\rM_0)$. We have
\begin{enumerate}\setcounter{enumi}{2}
  \item $\loc_v\Theta_{p,\fr,\underline\ell}^\nu\in\rH^1_\unr(\bQ_v,\rM_0)$ for a prime $v\not\in\Box\cup\{p,\ell,\ell'\}$, by \cite{Liu1}*{Lemma~3.4};

  \item $\loc_p\Theta_{p,\fr,\underline\ell}^\nu\in\rH^1_f(\bQ_p,\rM_0)$, by \cite{Nek00}*{Theorem~3.1(ii)}.
\end{enumerate}
By \cite{Liu2}*{Lemma~4.6} and \cite{Liu1}*{Lemma~3.4}, we have $\loc_v(s')\in\rH^1_\unr(\bQ_v,\rN_\rho^\sharp(-1))$ for every prime $v\not\in\Box\cup\{p,\ell,\ell'\}$. By \cite{Liu1}*{Definition~4.6, Remark~4.7}, we have $\loc_p(s')\in\rH^1_f(\bQ_v,\rN_\rho^\sharp(-1))$. Then by \cite{Liu2}*{Lemma~4.12(2,3,5)} and (3,4) above, we have
\begin{align}\label{eq:annihilate1}
p^{\nu-\nu_\Box}\mid\sum_{v\not\in\{\ell,\ell'\}}\langle s',\Theta_{p,\fr,\underline\ell}^\nu\rangle_v.
\end{align}
Since $\Psi_w(s')=0$ by (1), we also have
\begin{align}\label{eq:annihilate2}
\langle s',\Theta_{p,\fr,\underline\ell}^\nu\rangle_\ell=0.
\end{align}

Let $\phi_0$ be a generator of $\Gamma(\cS_{\ell,\fc'\fc^+},\bZ/p^\nu)[\Ker\phi^{\fr\fl\fl'}]$ which is a free $\bZ/p^\nu$-module of rank $1$. Then by the choice of $s$, $w$ in (1), and Theorem \ref{th:second}, we have
\[
\sum_{\cS^\flat_{\ell,\fr}}\phi_0(\delta^\fd(\vartheta(x)))\in p^{\nu_0}\bZ/p^\nu-p^{\nu_0+1}\bZ/p^\nu.
\]
By the choice of $w'$ in (1) and Theorem \ref{th:first}, we have
\begin{align}\label{eq:annihilate3}
\langle s',\Theta_{p,\fr,\underline\ell}^\nu\rangle_{\ell'}\in p^{\nu_0}\bZ/p^\nu-p^{\nu_0+1}\bZ/p^\nu.
\end{align}
Here, we have used the fact that $p$ is coprime to $|(\bZ/\fr\cap\bZ)^\times|$, $|\Cl(F)_\fr|$, $(\ell-1)$, and $\ell'+1$.

Take $\nu\in\bZ$ such that $\nu>\nu_0+\nu_\Box$. Then the combination of \eqref{eq:annihilate1}, \eqref{eq:annihilate2} and \eqref{eq:annihilate3} contradicts with the following well-known fact:
\[\sum_{v}\langle s',\Theta_{p,\fr,\underline\ell}^\nu\rangle_v=0\]
due to the global class field theory and the fact that $p$ is odd, where the sum is taken over all primes $v$. Theorem \ref{th:selmer} is proved.
\end{proof}

\begin{bibdiv}
\begin{biblist}

\bib{SGA1}{collection}{
   label={SGA1},
   title={Rev\^etements \'etales et groupe fondamental (SGA 1)},
   language={French},
   series={Documents Math\'ematiques (Paris) [Mathematical Documents (Paris)]},
   volume={3},
   note={S\'eminaire de g\'eom\'etrie alg\'ebrique du Bois Marie 1960--61.
   [Algebraic Geometry Seminar of Bois Marie 1960-61];
   Directed by A. Grothendieck;
   With two papers by M. Raynaud;
   Updated and annotated reprint of the 1971 original [Lecture Notes in
   Math., 224, Springer, Berlin;  MR0354651 (50 \#7129)]},
   publisher={Soci\'et\'e Math\'ematique de France, Paris},
   date={2003},
   pages={xviii+327},
   isbn={2-85629-141-4},
   review={\MR{2017446}},
}

\bib{BG99}{article}{
   author={Bachmat, E.},
   author={Goren, E. Z.},
   title={On the non-ordinary locus in Hilbert-Blumenthal surfaces},
   journal={Math. Ann.},
   volume={313},
   date={1999},
   number={3},
   pages={475--506},
   issn={0025-5831},
   review={\MR{1678541}},
   doi={10.1007/s002080050270},
}

\bib{BK90}{article}{
   author={Bloch, Spencer},
   author={Kato, Kazuya},
   title={$L$-functions and Tamagawa numbers of motives},
   conference={
      title={The Grothendieck Festschrift, Vol.\ I},
   },
   book={
      series={Progr. Math.},
      volume={86},
      publisher={Birkh\"auser Boston},
      place={Boston, MA},
   },
   date={1990},
   pages={333--400},
   review={\MR{1086888 (92g:11063)}},
}

\bib{BLR91}{article}{
   author={Boston, Nigel},
   author={Lenstra, Hendrik W., Jr.},
   author={Ribet, Kenneth A.},
   title={Quotients of group rings arising from two-dimensional
   representations},
   language={English, with French summary},
   journal={C. R. Acad. Sci. Paris S\'er. I Math.},
   volume={312},
   date={1991},
   number={4},
   pages={323--328},
   issn={0764-4442},
   review={\MR{1094193}},
}

\bib{BL84}{article}{
   author={Brylinski, J.-L.},
   author={Labesse, J.-P.},
   title={Cohomologie d'intersection et fonctions $L$ de certaines
   vari\'et\'es de Shimura},
   language={French},
   journal={Ann. Sci. \'Ecole Norm. Sup. (4)},
   volume={17},
   date={1984},
   number={3},
   pages={361--412},
   issn={0012-9593},
   review={\MR{777375 (86i:11026)}},
}

\bib{Ca86}{article}{
   author={Carayol, Henri},
   title={Sur la mauvaise r\'eduction des courbes de Shimura},
   language={French},
   journal={Compositio Math.},
   volume={59},
   date={1986},
   number={2},
   pages={151--230},
   issn={0010-437X},
   review={\MR{860139}},
}

\bib{Che13+}{article}{
   author={Cheng, C.~X.},
   title={Ihara's Lemma for  Shimura curves over totally real fields and multiplicity two},
   note={preprint, available at \url{https://www.math.uni-bielefeld.de/~ccheng/Research/Multi2.pdf}},
}

\bib{De79}{article}{
   author={Deligne, Pierre},
   title={Vari\'et\'es de Shimura: interpr\'etation modulaire, et techniques de
   construction de mod\`eles canoniques},
   language={French},
   conference={
      title={Automorphic forms, representations and $L$-functions},
      address={Proc. Sympos. Pure Math., Oregon State Univ., Corvallis,
      Ore.},
      date={1977},
   },
   book={
      series={Proc. Sympos. Pure Math., XXXIII},
      publisher={Amer. Math. Soc., Providence, R.I.},
   },
   date={1979},
   pages={247--289},
   review={\MR{546620}},
}

\bib{Dim05}{article}{
   author={Dimitrov, Mladen},
   title={Galois representations modulo $p$ and cohomology of Hilbert
   modular varieties},
   language={English, with English and French summaries},
   journal={Ann. Sci. \'Ecole Norm. Sup. (4)},
   volume={38},
   date={2005},
   number={4},
   pages={505--551},
   issn={0012-9593},
   review={\MR{2172950 (2006k:11100)}},
   doi={10.1016/j.ansens.2005.03.005},
}

\bib{Gar87}{article}{
   author={Garrett, Paul B.},
   title={Decomposition of Eisenstein series: Rankin triple products},
   journal={Ann. of Math. (2)},
   volume={125},
   date={1987},
   number={2},
   pages={209--235},
   issn={0003-486X},
   review={\MR{881269 (88m:11033)}},
   doi={10.2307/1971310},
}

\bib{Kol90}{article}{
   author={Kolyvagin, V. A.},
   title={Euler systems},
   conference={
      title={The Grothendieck Festschrift, Vol.\ II},
   },
   book={
      series={Progr. Math.},
      volume={87},
      publisher={Birkh\"auser Boston},
      place={Boston, MA},
   },
   date={1990},
   pages={435--483},
   review={\MR{1106906 (92g:11109)}},
}

\bib{Lan13}{book}{
   author={Lan, Kai-Wen},
   title={Arithmetic compactifications of PEL-type Shimura varieties},
   series={London Mathematical Society Monographs Series},
   volume={36},
   publisher={Princeton University Press, Princeton, NJ},
   date={2013},
   pages={xxvi+561},
   isbn={978-0-691-15654-5},
   review={\MR{3186092}},
   doi={10.1515/9781400846016},
}

\bib{LS16}{article}{
   author={Lan, Kai-Wen},
   author={Stroh, Benoit},
   title={Nearby cycles of automorphic \'etale sheaves},
   note={preprint, available at \url{http://www-users.math.umn.edu/~kwlan/articles/bonn-2016.pdf}}
}

\bib{Liu1}{article}{
   author={Liu, Yifeng},
   title={Hirzebruch--Zagier cycles and twisted triple product Selmer
   groups},
   journal={Invent. Math.},
   volume={205},
   date={2016},
   number={3},
   pages={693--780},
   issn={0020-9910},
   review={\MR{3539925}},
   doi={10.1007/s00222-016-0645-9},
}

\bib{Liu2}{article}{
   author={Liu, Yifeng},
   title={Bounding cubic-triple product Selmer groups of elliptic curves},
   journal={J. Eur. Math. Soc.},
   note={to appear, preprint available at \url{http://www.math.northwestern.edu/~liuyf/cubic_primitive.pdf}},
}

\bib{Nek00}{article}{
   author={Nekov{\'a}{\v{r}}, Jan},
   title={$p$-adic Abel-Jacobi maps and $p$-adic heights},
   conference={
      title={The arithmetic and geometry of algebraic cycles},
      address={Banff, AB},
      date={1998},
   },
   book={
      series={CRM Proc. Lecture Notes},
      volume={24},
      publisher={Amer. Math. Soc.},
      place={Providence, RI},
   },
   date={2000},
   pages={367--379},
   review={\MR{1738867 (2002e:14011)}},
}

\bib{PSR87}{article}{
   author={Piatetski-Shapiro, I.},
   author={Rallis, Stephen},
   title={Rankin triple $L$ functions},
   journal={Compositio Math.},
   volume={64},
   date={1987},
   number={1},
   pages={31--115},
   issn={0010-437X},
   review={\MR{911357 (89k:11037)}},
}


\bib{Rib90}{article}{
   author={Ribet, K. A.},
   title={On modular representations of ${\rm Gal}(\overline{\bf Q}/{\bf
   Q})$ arising from modular forms},
   journal={Invent. Math.},
   volume={100},
   date={1990},
   number={2},
   pages={431--476},
   issn={0020-9910},
   review={\MR{1047143}},
   doi={10.1007/BF01231195},
}

\bib{Ser96}{article}{
   author={Serre, J.-P.},
   title={Two letters on quaternions and modular forms (mod $p$)},
   note={With introduction, appendix and references by R. Livn\'e},
   journal={Israel J. Math.},
   volume={95},
   date={1996},
   pages={281--299},
   issn={0021-2172},
   review={\MR{1418297}},
   doi={10.1007/BF02761043},
}

\bib{TX1}{article}{
   author={Tian, Y.},
   author={Xiao, L.},
   title={On Goren--Oort stratification for quaternionic Shimura varieties},
   journal={Compositio Math.},
   volume={152},
   date={2016},
   pages={2134--2220},
   doi={10.1112/S0010437X16007326}
}

\bib{TX2}{article}{
   author={Tian, Y.},
   author={Xiao, L.},
   title={Tate cycles on some quaternionic Shimura varieties mod $p$},
   note={\href{http://arxiv.org/abs/1410.2321}{arXiv:math/1410.2321}},
   date={2014},
}

\bib{Yu03}{article}{
   author={Yu, Chia-Fu},
   title={On the supersingular locus in Hilbert-Blumenthal 4-folds},
   journal={J. Algebraic Geom.},
   volume={12},
   date={2003},
   number={4},
   pages={653--698},
   issn={1056-3911},
   review={\MR{1993760}},
   doi={10.1090/S1056-3911-03-00352-7},
}

\bib{Zin82}{article}{
   author={Zink, Thomas},
   title={\"Uber die schlechte Reduktion einiger Shimuramannigfaltigkeiten},
   language={German},
   journal={Compositio Math.},
   volume={45},
   date={1982},
   number={1},
   pages={15--107},
   issn={0010-437X},
   review={\MR{648660 (83k:14040)}},
}

\end{biblist}
\end{bibdiv}

\end{document}